\newcommand\ddfrac[2]{\frac{\displaystyle #1}{\displaystyle #2}}
\theoremstyle{plain}
\newtheorem{theorem}{Theorem}[section]
\newtheorem{lemma}[theorem]{Lemma}
\newtheorem{proposition}[theorem]{Proposition}
\theoremstyle{definition}
\newtheorem{remark}[theorem]{Remark}
\newtheorem{example}[theorem]{Example}
\numberwithin{equation}{section} \setcounter{tocdepth}{1}
\newcommand{\T}{\mathbb{T}}
\newcommand{\N}{\mathbb{N}}
\newcommand{\R}{\mathbb{R}}
\newcounter{hypocounter}
\renewcommand\thehypocounter{(H\arabic{hypocounter})} 
\newenvironment{hypothesis}{
	\refstepcounter{hypocounter}
	\begin{flushright}
		\begin{tabular}{m{0.85\linewidth}r} 
		}{&\hfill{\bf \thehypocounter}\end{tabular}\end{flushright}}
\begin{document}

\subjclass[2020]{37H15,37H20}

\title[Intermittent two-point dynamics at the transition to chaos]{Intermittent two-point dynamics at the transition to chaos for random circle endomorphisms }

\author{V.P.H. Goverse, A.J. Homburg, J.S.W. Lamb}

\address{V.P.H. Goverse\\  Department of Mathematics,
	Imperial College London,
	180 Queen's Gate,
	London SW7 2AZ,
	United Kingdom}
\email{vincent.goverse21@imperial.ac.uk}	

\address{A.J. Homburg\\ KdV Institute for Mathematics, University of Amsterdam, Science park 107, 1098 XG Amsterdam, Netherlands\newline Mathematics Institute, Leiden University, 
Einsteinweg 55,
2333 CC Leiden, Netherlands}
\email{a.j.homburg@uva.nl}

\address{J.S.W. Lamb\\  Department of Mathematics,
	Imperial College London,
	180 Queen's Gate,
	London SW7 2AZ,
	United Kingdom\newline
	International Research Center for Neurointelligence (IRCN), The University of Tokyo, Tokyo 
113-0033, Japan\newline
Centre for Applied Mathematics and Bioinformatics, Department of Mathematics and Natural Sciences, Gulf University for Science and Technology, Halwally, 32093 Kuwait.
	}
\email{jsw.lamb@imperial.ac.uk}

\keywords{Random circle endomorphisms, two-point motion, Lyapunov exponent, moment Lyapunov function, intermittency, transition to chaos}

\begin{abstract}
We establish the existence of intermittent two-point dynamics and infinite stationary measures for a class of random circle endomorphisms with zero Lyapunov exponent, as a dynamical characterisation of the transition from synchronisation (negative Lyapunov exponent) to chaos (positive Lyapunov exponent).  
\end{abstract}

\maketitle

\section{Introduction}
%
%
%
In dynamical systems theory, the phenomenon of chaos (with hallmark sensitive dependence on initial conditions) has been an important motivation and focal point of research. In particular, the question of when and why chaotic dynamics emerges from more predictable motion, for instance in a parametrized family of dynamical systems, has been a central question in bifurcation theory. 

While several routes to chaos have been identified for deterministic dynamical systems (see for instance \cite{MR629208}), the corresponding transition in random dynamical systems (dynamical systems driven by a signal with certain probabilistic characteristics) remains much less understood. 

In this paper, we address this problem for a class of random circle endomorphisms, adopting notions of order and chaos in terms of the sign of the Lyapunov exponent, with negative Lyapunov exponent implying synchronisation (almost sure convergence of the distance between different trajectories with different initial conditions) and positive Lyapunov exponent implying chaos (including sensitive dependence on initial conditions). We are led to address the question how this transition happens, and in particular to identify dynamical aspects that accompany the change of sign of the Lyapunov exponent. We focus in this respect on the so-called two-point motion from the topological and (invariant) measure theoretical point of view.

In late 1980s, Baxendale and Stroock \cite{MR968817,MR1144097} studied the dynamics of stochastic differential equations, characterising stationary measures for the two-point motion in different Lyapunov exponent regimes, establishing in particular the existence of an infinite ergodic invariant measure at the zero Lyapunov transition point between synchronisation (stationary probability measure on the diagonal) and chaos (stationary probability measures on and off the diagonal). 

In the early 1990s, Pikovsky \cite{MR1165510} and Yu, Ott and Chen \cite{PhysRevLett.65.2935} studied the transition to chaos in the discrete time setting (random maps), leading to numerical evidence supporting several heuristic conjectures concerning intermittency.

Recently, Homburg and Kalle \cite{https://doi.org/10.48550/arxiv.2207.09987} obtained explicit results about stationary measures for certain random affine iterated function systems on the circle. They also point out the fact that the infinite stationary measure of the two-point motion at the transition corresponds to intermittent dynamics.

This leads to the natural question whether intermittency and associated infinite ergodic stationary measure of the two-point motion are generic features of the transition to chaos in random dynamical systems.

In this paper, we answer this question in the affirmative, in the specific setting of circle endomorphisms of degree two with additive noise.
Importantly, we develop techniques to analyse the two-point dynamics near the diagonal, expanding on results developed by Baxendale and Stroock \cite{MR968817,MR1144097} for stochastic differential equations. 
In particular, we analyse the spectral properties and actions of annealed Koopman operators for one- and two-point motions,
allowing us to derive quantitative estimates on various escape times of the quenched process, which are key 
to our results. In Section~\ref{s:strategy}, we present a summary of our techniques.



We anticipate that the techniques introduced in this paper will turn out to be fundamental to settle our question in general. 

\subsection{Main results}
Consider a smooth monotone circle endomorphism $T: \mathbb{T} \to \mathbb{T}$ of degree $k>1$, where $\mathbb T = \mathbb{R}/\mathbb{Z}$ denotes the circle endowed with the topology induced by the arc-length metric 
$d$, and the parameter family of maps defined as
\[T_a(x):=T(x+a\pmod{1}).\] 

We consider random iteration of maps from this family 

\[
x_n = T^n_\omega(x_0):= T_{\omega_{n-1}} \circ \cdots \circ T_{\omega_1} \circ T_{\omega_0} (x_0)
\]
where $\omega := (\omega_i)_{i\in\mathbb{N}}$ and $\omega_i$ is drawn randomly (i.i.d.) from
$[-\vartheta,\vartheta]$ with uniform measure $\mathrm{Leb}/(2\vartheta)$.
We denote the corresponding sequence space
\[
\Sigma_\vartheta \coloneqq [-\vartheta,\vartheta]^\mathbb{N}.
\]
with corresponding product measure $\mathbb{P}$.

Our results require certain mild hypotheses, Hypothesis~\ref{h:endo}-\ref{h:openaftertwo}, which we proceed to sketch with reference to Section~\ref{s:setting} for details. 

We assume that $T$ and $\vartheta$ are such that the random dynamical system has a unique stationary measure $\mu$ with smooth and everywhere positive density. 
As a result, the random dynamical system has a single Lyapunov exponent
\begin{equation*}
\lambda  =  \frac{1}{2\vartheta}
\int_\mathbb{T}\int_{[-\vartheta,\vartheta]} \ln \left(DT_\omega(x)\right)\, d\mathrm{Leb}(\omega) d\mu (x),
\end{equation*}
which can be negative, zero, or positive, depending on $T$ and $\vartheta$. 
When we consider the case where the  random dynamical system depends smoothly on an additional parameter, the Lyapunov exponent also varies smoothly with this parameter (due to our hypotheses). Then, a transition to chaos corresponds to the Lyapunov exponent  traversing zero from below.

We consider the two-point motion to gain insights into the dynamics:
\[
(x_n,y_n) = (T^{(2)}_\omega)^n (x_0,y_0) \coloneqq   \left( T^n_\omega  (x_0) , T^n_\omega  (y_0)
\right).
\]
Stationary measures for the two-point random dynamical system provide information well beyond stationary measure for (the one-point dynamics of) $T_\omega$.
Note that the stationary measure for $T_\omega$ is also a stationary measure of the two-point motion, supported on the diagonal 
\[
\Delta \coloneqq \left\{ (x,x) \in \mathbb{T}^2 \; ; \; x \in \mathbb{T} \right\}.
\]
We are primarily interested in stationary measures with support outside of $\Delta$, providing detail on the comparison between orbits with different initial conditions. The $\varepsilon$-neighbourhood of the diagonal is denoted as 
\[
\Delta_\varepsilon  \coloneqq \left\{ (x,y) \in \mathbb{T}^2 \; ; \; d(x,y) < \varepsilon\right\}.
\]

Our main result concerns the following trichotomy in phenomenology from a topological and measure theoretic point of view:

\begin{theorem}[Topological random dynamics]\label{thm:main1}
Let $T_\omega$ be a random dynamical system satisfying hypotheses
\ref{h:endo} to \ref{h:openaftertwo},
$\lambda$ be its Lyapunov exponent, and let $T^{(2)}_\omega$ be the corresponding two-point random dynamical system. Then,

	\begin{enumerate}
		\item \label{i:thm1:syn}\textbf{Synchronisation:} 	if $\lambda<0$, 
		for all $x,y \in \mathbb{T}$,
        \[
		\lim_{n\to\infty} d\left(T^n_\omega (x),T^n_\omega(y)\right) = 0,~\mathbb{P}-\mathrm{a.s.}
		\]
		
		\item \label{i:thm1:iter}\textbf{Intermittency:} if $\lambda=0$, 				
		for all $(x,y) \in \mathbb{T}^2\setminus \Delta$, 
		\[
			\lim_{n\to\infty} \frac{1}{n}\sum_{i=0}^{n-1}    d\left(T^i_\omega (x) ,T^i_\omega(y)\right)= 0
		~\textrm{and}~ 
			\limsup_{n\to\infty}d\left(T^n_\omega (x), T^n_\omega(y)\right) > 0,~\mathbb{P}-\mathrm{a.s.}
		\]

		\item \label{i:thm1:chao}\textbf{Chaos:} if $\lambda > 0$, for all $(x,y) \in \mathbb{T}^2\setminus \Delta$, 
\[
\lim_{n\to\infty}\frac{1}{n}\sum_{i=0}^{n-1}   d \left(T^i_\omega (x), T^i_\omega(y)\right) > 0
~\textrm{and}~
\liminf_{n\to\infty}d\left(T^n_\omega (x) ,T^n_\omega(y)\right) = 0,~\mathbb{P}-\mathrm{a.s.}
\]	
\end{enumerate}
\end{theorem}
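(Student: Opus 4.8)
The plan is to reduce the global two-point dynamics to two ingredients: a fine analysis of the difference $d(x_n,y_n)$ while the orbit stays in a small neighbourhood $\Delta_\varepsilon$ of the diagonal, and a recurrence statement guaranteeing that $\Delta_\varepsilon$ is reached from every initial condition. For the first ingredient I would introduce the annealed Koopman (transfer) operator of the linearised two-point motion near $\Delta$, acting on functions of the base point and the normalised difference direction and weighted by $\lvert DT_\omega\rvert^{p}$, and let $\Lambda(p)$ be its leading eigenvalue, i.e.\ the moment Lyapunov function. From hypotheses \ref{h:endo}--\ref{h:openaftertwo} (in particular a unique stationary measure with smooth positive density, hence a spectral gap for the one-point annealed operator) one obtains that $\Lambda$ is real-analytic and strictly convex near $0$, with $\Lambda(0)=0$ and $\Lambda'(0)=\lambda$. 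Passing from this annealed spectral data to quenched statements via Borel--Cantelli and martingale/large-deviation arguments for $\ln d(x_n,y_n)$ — which, as long as the orbit stays in $\Delta_\varepsilon$, is comparable up to a controlled multiplicative error to the ergodic sum of $\ln DT$ along the base orbit, a process of drift $\lambda$ — yields: (a) exponential decay of $\mathbb{E}\!\left[d(x_n,y_n)^{p}\,\mathbf 1(\text{orbit stays in }\Delta_\varepsilon)\right]$ for small $p>0$ when $\lambda<0$; (b) escape-time bounds from $\Delta_\varepsilon$; (c) occupation-time estimates for $\{d>\delta\}$.

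The recurrence ingredient is where the degree-two structure enters decisively: because $T$ wraps twice, the induced map doubles arc-length locally, so near-antipodal pairs get mapped close together. Combining this folding with the positive density of the noise and the mixing of the one-point motion, I would show that from every $(x,y)\in\mathbb{T}^2$ the two-point orbit enters $\Delta_\varepsilon$ within a bounded number of steps with probability bounded below, and then a conditional Borel--Cantelli argument upgrades this to: for every $(x,y)$, the orbit enters $\Delta_\varepsilon$ infinitely often, $\mathbb{P}$-almost surely.

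With these in hand the three cases follow. If $\lambda<0$, estimate (a) shows that on the event of never escaping $\Delta_\varepsilon$ after an entry one has $d(x_n,y_n)\to 0$, and that the absorption probability from a smaller $\Delta_{\varepsilon'}\subset\Delta_\varepsilon$ is at least $1-\rho$ with $\rho<1$; together with infinitely many returns to $\Delta_{\varepsilon'}$ this forces $d(x_n,y_n)\to 0$ almost surely, for all $x,y$. If $\lambda=0$, the near-diagonal process $\ln d(x_n,y_n)$ behaves like a recurrent, zero-drift (from $\Lambda'(0)=0$), non-degenerate (from $\Lambda''(0)>0$) random walk on the half-line $(-\infty,\ln\tfrac12]$: null recurrence gives that the occupation time of $\{d>\delta\}$ is $o(n)$ for every $\delta>0$, whence $\frac1n\sum_{i<n} d(T^i_\omega x,T^i_\omega y)\le \delta+o(1)$, which tends to $0$; recurrence of this walk to its right end shows $d(x_n,y_n)$ exceeds a fixed threshold infinitely often, so $\limsup_n d(x_n,y_n)>0$. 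If $\lambda>0$, the positive drift $\Lambda'(0)=\lambda$ makes $d$ leave $\Delta_\varepsilon$ in bounded time, and, applying the ergodic theorem to a stationary measure of the two-point motion supported off $\Delta$ (existing by mixing of the one-point motion together with the repulsion from $\Delta$), a positive fraction of the orbit lies in $\{d>\delta\}$, so $\lim_n\frac1n\sum_{i<n} d(T^i_\omega x,T^i_\omega y)>0$; combined with the recurrence of the previous paragraph this gives $\liminf_n d(x_n,y_n)=0$.

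The main obstacle is the first paragraph: transporting the Baxendale--Stroock SDE machinery to the discrete random-map setting — establishing the two-sided comparison between $d(x_n,y_n)$ and the linearised cocycle uniformly over orbits confined to $\Delta_\varepsilon$ (taming the nonlinear error and the non-uniformities from the density of $\mu$ and the variation of $DT$), deriving the analytic and convexity properties of $\Lambda$ from the spectral gap, and converting these into the quenched escape-, absorption- and occupation-time estimates. The borderline case $\lambda=0$ is further delicate, since separating null recurrence from positive recurrence or transience requires the non-degeneracy $\Lambda''(0)>0$ together with a careful renewal/central-limit-type argument rather than a soft one.
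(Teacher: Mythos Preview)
Your overall architecture is essentially the paper's: moment Lyapunov function $\Lambda$ with $\Lambda(0)=0$, $\Lambda'(0)=\lambda$, $\Lambda''(0)>0$; sub/supermartingales for the stopped two-point motion near $\Delta$; a uniform-probability recurrence lemma into $\Delta_\varepsilon$; and then Borel--Cantelli/renewal conclusions. The paper implements exactly this, constructing explicit functions $\phi^\pm,\eta^\pm,W_q^\pm$ on $\mathbb{T}^2\setminus\Delta$ via eigenfunctions of the twisted Koopman operator, and deriving the escape-time and hitting-probability estimates you describe from Doob's optional stopping theorem. Your $\lambda<0$ and $\lambda=0$ sketches match the paper's Propositions~\ref{prop:synchron} and~\ref{prop:SynchronOnAverage} closely (for $\lambda=0$ the paper phrases the null-recurrence conclusion as: expected escape time from $\Delta_\delta$ is infinite once you are deep enough inside, hence by SLLN the empirical fraction of time in $\Delta_\varepsilon$ equals $1$).

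There is, however, a genuine gap in your $\lambda>0$ argument. You propose to invoke the ergodic theorem for a stationary probability measure $\mu^{(2)}$ of the two-point motion off $\Delta$ to conclude that a positive fraction of the orbit lies in $\{d>\delta\}$. This only gives the conclusion for $\mu^{(2)}$-almost every initial condition, whereas the theorem asserts it for \emph{every} $(x,y)\in\mathbb{T}^2\setminus\Delta$; upgrading from a.e.\ to all is not automatic here (and in any case the construction of $\mu^{(2)}$ in the paper comes \emph{after} this theorem, using it as input). The paper avoids this entirely: for $\lambda>0$ it shows directly (Lemma~\ref{lem:returntime>}) that the expected escape time from $\Delta_\delta$ satisfies $\mathbb{E}[\tau_{\delta,+}]\le \lambda^{-1}(\ln(\delta/d(x,y))+2K)$, and combines this with the integrability bound $\mathbb{E}[-\ln d(T^{(2)}_\omega(x,y))]<C_1$ on $\mathbb{T}^2\setminus\Delta_R$ (Lemma~\ref{lem:logbnd}, which is where Hypothesis~\ref{h:genericnoise} is used and which you have not invoked) to run a renewal/SLLN argument showing the empirical fraction of time in $\Delta_\varepsilon$ is strictly less than $1$ for every starting pair. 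Without this direct control on how close to $\Delta$ a single iterate can land from outside $\Delta_R$, your argument does not close.
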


This result mirrors those in \cite{https://doi.org/10.48550/arxiv.2207.09987}, obtained for special random affine circle maps. 
The most interesting aspect concerns the existence of intermittency in part \eqref{i:thm1:iter}. The existence of synchronisation in the presence of negative Lyapunov exponent has already been well-studied, see e.g. \cite{MR3820004}, and the properties highlighted under part \eqref{i:thm1:chao} also align with existing insights. 
The characterization of intermittency in part \eqref{i:thm1:iter} is reminiscent of synchronisation on average \cite{MR3519583} and that of chaos in part \eqref{i:thm1:chao} of Li-Yorke chaos \cite{MR385028,MR1900136}. 
These characterisations are not exhaustive. For instance, recently the positive Lyapunov exponent regime has been associated with the existence of so-called random horse-shoes \cite{lamb_horseshoes_2025,lamb2025nonuniformexpansionadditive}.

The proof of Theorem~\ref{thm:main1}, which uses techniques introduced in Sections~\ref{s:setting} and~\ref{s:koopman}, is given in Section~\ref{s:trd}. 
The results for $\lambda<0,\lambda =0$ and $\lambda >0$ are presented separately in   Propositions~\ref{prop:synchron},~\ref{prop:SynchronOnAverage} and \ref{prop:chaos}, respectively. 
	
\begin{theorem}[Stationary measures]\label{thm:main2}
Let $T_\omega$ be as in Theorem~\ref{thm:main1}, and let $\mu$ denote its unique stationary (probability) measure. 

Then, 
	\begin{enumerate}
		
		\item \label{i:main2:1}If $\lambda < 0$, $\mu$ on $\Delta$ is the unique stationary measure for $T^{(2)}_\omega$.
		\item \label{i:main2:2}If $\lambda=0$,  $\mu$ on $\Delta$ is the unique stationary probability measure for $T^{(2)}_\omega$. In addition,
        $T^{(2)}_\omega$ admits an infinite  stationary (Radon) measure $\mu^{(2)}$ on $\mathbb{T}^2\setminus \Delta$, which has full support.  Moreover, for each such measure
there exist $\alpha, \beta \in (0,\infty)$ such that
		\begin{equation*}
			\alpha \leq 	\liminf_{\varepsilon \to 0} \frac{\mu^{(2)}(\mathbb{T}^2\setminus\Delta_\varepsilon)}{-\ln(\varepsilon)} \leq \limsup_{\varepsilon \to 0} \frac{ \mu^{(2)}(\mathbb{T}^2\setminus\Delta_\varepsilon)}{-\ln(\varepsilon)} \leq \beta.
		\end{equation*}
\item \label{i:main2:3} If $\lambda >0$, in addition to the unique stationary measure $\mu$ on $\Delta$, $T^{(2)}_\omega$ admits a  stationary probability measure 
$\mu^{(2)}$ on $\mathbb{T}^2\setminus \Delta$, which has full support.

Moreover, if the non-zero root $\gamma<0$ of the moment Lyapunov function\footnote{The moment Lyapunov function will be defined below, see Eq.~\eqref{momlf} and Lemma~\ref{lem:second0}. }  associated with $T_\omega$  
is also larger than $-\frac{1}{2}$, then for each such measure
there exists $\alpha, \beta \in (0,\infty)$, such that
		\begin{equation*}
			\alpha \leq \liminf_{\varepsilon \to 0} \frac{\mu^{(2)}(\Delta_\varepsilon)}{\varepsilon^{-\gamma}} \leq \limsup_{\varepsilon \to 0} \frac{\mu^{(2)}(\Delta_\varepsilon)}{\varepsilon^{-\gamma}} \leq \beta.
		\end{equation*}
	\end{enumerate}
\end{theorem}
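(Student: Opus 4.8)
\textit{Proof proposal.} All three parts are driven by one mechanism: a reduction of the two-point motion in a neighbourhood of $\Delta$ to a Markov additive process over the one-point chain, analysed through the twisted annealed Koopman operators of Section~\ref{s:koopman}. The plan is to introduce coordinates $(x,\ell)$ near $\Delta$, with $x$ the base point and $\ell=\ln d(x,y)$, in which $(T^{(2)}_\omega)^n$ becomes, up to a correction that is uniformly bounded and summable along orbits, the skew product $(x,\ell)\mapsto(T_\omega(x),\,\ell+\ln DT_\omega(x))$. The additive cocycle $\ell_n-\ell_0=\sum_{k<n}\ln DT_{\omega_k}(x_k)$ is driven by the one-point chain, and its annealed moment generating function is governed by the top eigenvalue $e^{\Lambda(q)}$ of $f\mapsto\mathbb{E}[(DT_\omega)^q\,f\circ T_\omega]$, where $\Lambda$ is the moment Lyapunov function \eqref{momlf}, strictly convex with $\Lambda(0)=0$, $\Lambda'(0)=\lambda$; the root $\gamma<0$ in part \eqref{i:main2:3} is the one supplied by Lemma~\ref{lem:second0}. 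Accordingly, $\ell_n$ is transient to $-\infty$ if $\lambda<0$, null-recurrent if $\lambda=0$, and transient away from $\Delta$ (toward the folding scale $d\asymp 1/2$, from which degree $>1$ returns orbits to moderate distances) if $\lambda>0$; all quantitative statements are then read off from escape-time and Green's-function estimates for this process. Throughout, $\mathbb{P}_z,\mathbb{E}_z$ denote the law of the chain started at $z$, and $Z_n=(T^{(2)}_\omega)^n(z)$, $d(Z_n)=d(T^n_\omega x,T^n_\omega y)$ for $z=(x,y)$.

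The soft statements are part \eqref{i:main2:1} and the uniqueness in \eqref{i:main2:2}. For \eqref{i:main2:1}: by Theorem~\ref{thm:main1}\eqref{i:thm1:syn} and bounded convergence, any stationary probability measure $\nu$ has $\int d\,d\nu=\lim_n\int\mathbb{E}_z[d(Z_n)]\,d\nu=0$, so $\nu$ is carried by $\Delta$, where $T^{(2)}_\omega$ restricts to the uniquely ergodic one-point chain; that no infinite stationary Radon measure lives off $\Delta$ follows from the transience of $\ell_n$ (no recurrence to any compact subset of $\mathbb{T}^2\setminus\Delta$). For the uniqueness in \eqref{i:main2:2}: since $\Delta$ is forward invariant and (as collisions occur with conditional probability $0$) orbits started off $\Delta$ stay off $\Delta$ a.s., a putative stationary probability $\nu$ splits as $\nu|_\Delta+\nu|_{\mathbb{T}^2\setminus\Delta}$ with the second part again stationary; but Theorem~\ref{thm:main1}\eqref{i:thm1:iter} gives $\tfrac1n\sum_{i<n}d(Z_i)\to0$ a.s.\ from every off-diagonal point, hence $\tfrac1n\sum_{i<n}\mathbb{P}_z(d(Z_i)>\delta)\to0$ by Markov and bounded convergence, and stationarity forces $\nu|_{\mathbb{T}^2\setminus\Delta}(\{d>\delta\})=0$ for all $\delta>0$, a contradiction; uniqueness on $\Delta$ is as before.

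For existence I would use two constructions. When $\lambda>0$: run Krylov--Bogolyubov from a fixed $z\notin\Delta$ on the compact $\mathbb{T}^2$; since $\Lambda(\gamma)=0$ and (via the spectral gap of the twisted operator at exponent $\gamma$) $\mathbb{E}[(DT^n_\omega)^\gamma]=O(1)$, one gets $\sup_n\mathbb{E}_z[d(Z_n)^\gamma]<\infty$, so $\mathbb{P}_z(d(Z_n)<\varepsilon)\le\varepsilon^{-\gamma}\sup_n\mathbb{E}_z[d(Z_n)^\gamma]$; letting $\varepsilon\to0$ uniformly along the Ces\`aro averages shows the limit $\mu^{(2)}$ gives no mass to $\Delta$, hence is a stationary probability on $\mathbb{T}^2\setminus\Delta$, and the same bound passed to $\mu^{(2)}$ gives $\mu^{(2)}(\Delta_\varepsilon)\le\beta\,\varepsilon^{-\gamma}$. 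When $\lambda=0$: fix a compact annulus $K=\{\delta_1\le d\le\delta_2\}$, $0<\delta_1<\delta_2<1/2$; null-recurrence of $\ell_n$ gives a.s.\ return to $K$ from every off-diagonal point, the first-return chain on $K$ has a stationary probability $\nu_K$ (unique by the noise-induced minorization), and $\mu^{(2)}(A):=\int_K\mathbb{E}_z\big[\sum_{i=0}^{\tau_K-1}\mathbbm{1}_{\{Z_i\in A\}}\big]\,d\nu_K(z)$ is $T^{(2)}_\omega$-stationary, finite on each $\{d\ge\varepsilon\}$ (hence Radon on $\mathbb{T}^2\setminus\Delta$) and of infinite total mass since $\mathbb{E}_{\nu_K}[\tau_K]=\infty$. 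Full support in both cases follows because Hypothesis~\ref{h:openaftertwo} makes the two-point motion off $\Delta$ topologically irreducible with an absolutely continuous minorization, so $\operatorname{supp}\mu^{(2)}=\mathbb{T}^2\setminus\Delta$. The two-sided asymptotics then come from sharp cocycle estimates: for $\lambda=0$, $\mu^{(2)}(\mathbb{T}^2\setminus\Delta_\varepsilon)$ equals, up to $O(1)$, the expected time a zero-drift excursion from level $\ln\delta_1$ spends above level $\ln\varepsilon$, which is $\asymp-\ln\varepsilon$ by the linear growth of the killed Green's function; for $\lambda>0$, the lower bound $\mu^{(2)}(\Delta_\varepsilon)\ge\alpha\,\varepsilon^{-\gamma}$ comes from a Cram\'er/renewal lower bound on the long-run fraction of time the reflected cocycle spends below level $\ln\varepsilon$, the exact exponential rate being $\gamma$; the hypothesis $-\tfrac12<\gamma$ enters here to keep the twisted operator acting on the chosen function spaces and the comparison errors (involving $\gamma$-powers of $DT_\omega$) integrable.

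The main obstacle, common to \eqref{i:main2:2} and \eqref{i:main2:3}, is twofold. First, turning the heuristic reduction near $\Delta$ into \emph{uniform} estimates: controlling the nonlinear corrections, the dependence of the cocycle increments on the moving base point, and the behaviour at the folding scale. Second, upgrading soft recurrence and tightness to the sharp two-sided asymptotics ($-\ln\varepsilon$ when $\lambda=0$, $\varepsilon^{-\gamma}$ when $\lambda>0$), which requires the full Nagaev--Guivarc'h-type spectral picture for the twisted annealed Koopman operators — analyticity and strict convexity of $\Lambda$, a spectral gap, and the ensuing local-limit and Green's-function estimates. Once these and Theorem~\ref{thm:main1} are in hand, the existence arguments (Krylov--Bogolyubov and the return-time construction) and part \eqref{i:main2:1} are comparatively routine.
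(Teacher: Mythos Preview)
Your overall strategy---reducing the two-point motion near $\Delta$ to a Markov additive process and analysing it via the twisted Koopman operators and the moment Lyapunov function---is the paper's. The soft parts (uniqueness for $\lambda\le 0$) are fine. Your return-time construction for $\lambda=0$ is close to the paper's inducing argument, but note that the paper uses a \emph{randomised} stopping time governed by a smooth cutoff $\xi$ rather than a hard first return to $K$: this is what makes the induced transition kernel weak-$*$ continuous on a compact set, so that Krylov--Bogolyubov applies. With a hard return time that continuity is not automatic, and you would have to supply it.

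The genuine gap is your handling of the folding. Because $T$ has degree two, points $(x,y)$ with $d(x,y)$ bounded away from $0$ can be mapped by a single $T^{(2)}_\omega$ arbitrarily close to (or onto) $\Delta$. This breaks two of your arguments. First, your direct Krylov--Bogolyubov proof for $\lambda>0$ relies on $\sup_n\mathbb{E}_z[d(Z_n)^\gamma]<\infty$, which does \emph{not} follow from $\mathbb{E}[(DT^n_\omega)^\gamma]=O(1)$: the linear approximation only controls $d(Z_n)$ while the orbit stays in a fixed $\Delta_R$, and each time the orbit folds from outside $\Delta_{R_{\min}}$ to a point with $d(Z_n)\approx 0$ the quantity $d(Z_n)^\gamma$ can be enormous. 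Under Hypothesis~\ref{h:genericnoise} the conditional density of $d(T^{(2)}_\omega(x,y))$ near $0$ behaves at worst like $r^{-1/2}$ (from quadratic tangencies), so $\mathbb{E}[d(T^{(2)}_\omega(x,y))^\gamma]<\infty$ only when $\gamma>-\tfrac12$. Hence your existence argument works only in that range, whereas the paper's inducing construction produces a stationary probability for \emph{all} $\lambda>0$ and uses $\gamma>-\tfrac12$ only for the asymptotics. Second, your stated reason for the restriction $\gamma>-\tfrac12$ (``to keep the twisted operator acting on the chosen function spaces'') is wrong: $P_q$ acts on $\mathcal{C}(\mathbb{T})$ for every $q\in\mathbb{R}$, and the comparison of $P_{(2)}$ with $TP$ near $\Delta$ holds uniformly for $q$ in any fixed compact interval (Lemma~\ref{lem:TP-P2}). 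The restriction is entirely about the mass that the folding injects into $\Delta_\varepsilon$ from outside $\Delta_{R_{\min}}$ in one step; this is exactly what can spoil the upper bound in part~\eqref{i:main2:3} when $\gamma\le -\tfrac12$ (see the splitting $\mathcal{G}_c\cup\mathcal{G}_d$ and the $\sqrt{\varepsilon}$ bound in the proof of Proposition~\ref{prop:probmeas}).
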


As in Theorem~\ref{thm:main1}, the most interesting aspect of this result is the intermittent case in part \eqref{i:main2:2}, where we find that ergodic invariant measures off the diagonal are infinite, together with the estimate on how such measures grow near the diagonal. 

{ {
For an ergodic stationary measure in case of a positive Lyapunov exponent, the estimates on $\mu^{(2)}(\Delta_\varepsilon)$ determine bounds on the frequency with which orbits of the random map are within distance $\varepsilon$ of each other, by Birkhoff's ergodic theorem.
}}
The asymptotics of the stationary invariant measure near the diagonal in the positive Lyapunov exponent scenario (in part \eqref{i:main2:3}) relies on the condition that $\gamma \in (-\frac{1}{2},0)$. If  $\gamma<-\frac{1}{2}$, then the asymptotics may be different due to
points being mapped more frequently close to the diagonal, see Proposition~\ref{prop:probmeas}. 


{  The construction of stationary measures off the diagonal for the two-point motion in parts \eqref{i:main2:2} and \eqref{i:main2:3} makes use of an inducing scheme with randomized return times, in order to apply a Krylov-Bogolyubov type argument. It should be noted that such reasoning does not provide uniqueness of stationary measures}.

The heart of the proof of the above theorem is presented in Section~\ref{s:stat}, with arguments relying on estimates from Sections~\ref{s:koopman},~\ref{s:trd}.
The existence of the stationary measure on $\Delta$ follows from Proposition~\ref{prop:uniquestatmeasure}.
For $\lambda <0$, Theorem~\ref{thm:main1} \eqref{i:thm1:syn} directly implies the unique stationary measure in Theorem~\ref{thm:main2} \eqref{i:main2:1}.
%
The results for $\lambda =0$ and $\lambda > 0$ follow from Proposition~\ref{prop:infmeas}, Proposition~\ref{prop:bounds} and Proposition~\ref{prop:probmeas}. 

We proceed to illustrate our results in an example.

\begin{example}\label{ex:T_nu} 
\begin{figure}[ht!]
    \centering
    \includegraphics[width=0.7\linewidth]{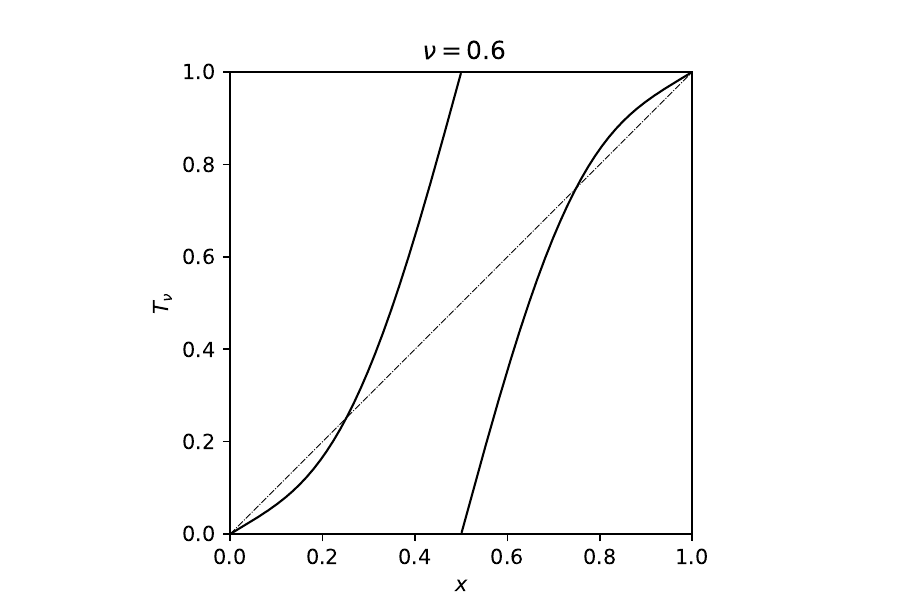}
    \caption{The graph of $T_{\nu}$ \eqref{eq:chatgpt} at $\nu=0.6$. }
    \label{fig:fa}
\end{figure}
We consider a one-parameter family of random circle endomorphism 
\[
x_n=T_{\nu,\omega}^n(x_0)
:= T_{\nu,\omega_{n-1}} \circ \cdots \circ T_{\nu,\omega_1} \circ T_{\nu,\omega_0} (x_0)
\]
with $\omega_i$  drawn i.i.d.\ from the uniform distribution on a subinterval of the circle \([-\vartheta,\vartheta]\), and
\begin{equation}\label{eq:RDS_ex}
    T_{\nu,a}(x) := T_\nu(x +a \pmod 1),
\end{equation}
with
\begin{equation}\label{eq:chatgpt}
    T_\nu(x) := \int_0^x \nu + 140 (2-\nu) t^3(1-t)^3 dt \pmod{1}.
\end{equation}
Note that $T_\nu(0)=0$ and $DT_\nu(0) = \nu$. 
In Figure \ref{fig:fa} we sketch the graph of $T_\nu$ for $\nu = 0.6$. 
{  Figure~\ref{fig:all_subfigs} depicts results of numerical experiments for this random family.  
We remark that it is possible to determine Lyapunov exponents and the Lyapunov moment functions rigorously 
 \cite{galatolo2025efficientcomputationstationarymeasures,MR4879452}. }

For parameter ranges discussed in this example $T_{\nu,\omega}$ satisfies the Hypotheses \ref{h:endo} to \ref{h:openaftertwo}, see Example \ref{ex:white-noise-continued}.

We first consider the behaviour of the Lyapunov exponent at a fixed value of $\nu= 0.6$ with varying $\vartheta$. In Figure~\ref{fig:bndlambda}, we observe noise-induced chaos, as the Lyapunov exponent increases monotonically from $-0.4$ to $0.45$ with increasing $\theta\in[0.1,0.5]$.
In this interval, the noise is 
large enough to ensure the existence of a unique stationary measure for $T_{0.6,\omega}$ on \(\mathbb{T}\). 

\begin{figure}[ht!]
    \centering
    
    \subfigure[Numerical approximation of the Lyapunov exponent for fixed $\nu = 0.6$ and varying $\vartheta$. The Lyapunov exponent increases monotonically  with the noise level, from negative to positive (noise-induced chaos).]{
        \includegraphics[width=0.45\textwidth]{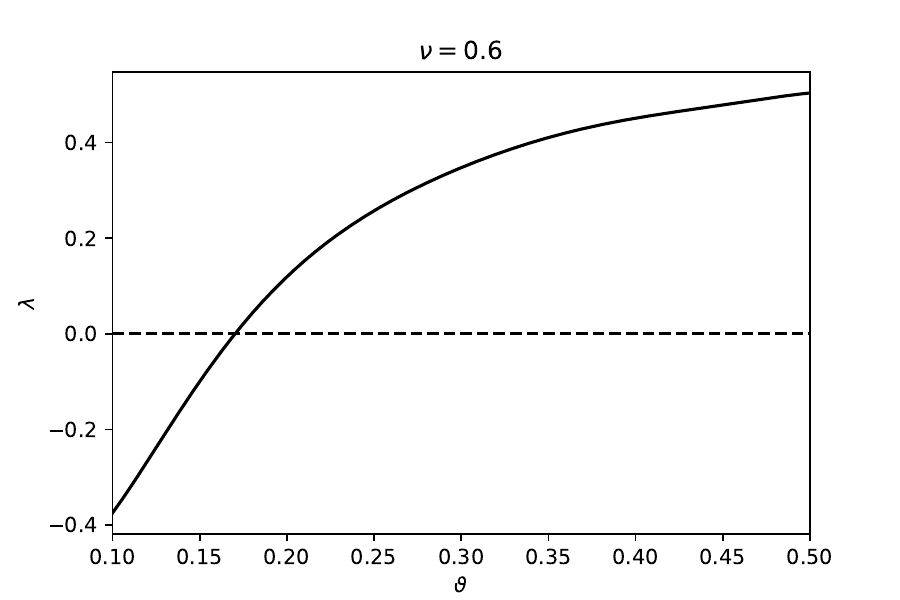} 
        \label{fig:bndlambda}
    }
    \quad
    \subfigure[Normalized distribution of distance between two orbits ($10^6$ iterations) with different initial conditions and the same noise realisation, consistent with Theorem~\ref{thm:main2} \eqref{i:main2:3}.]{
        \includegraphics[width=0.45\textwidth]{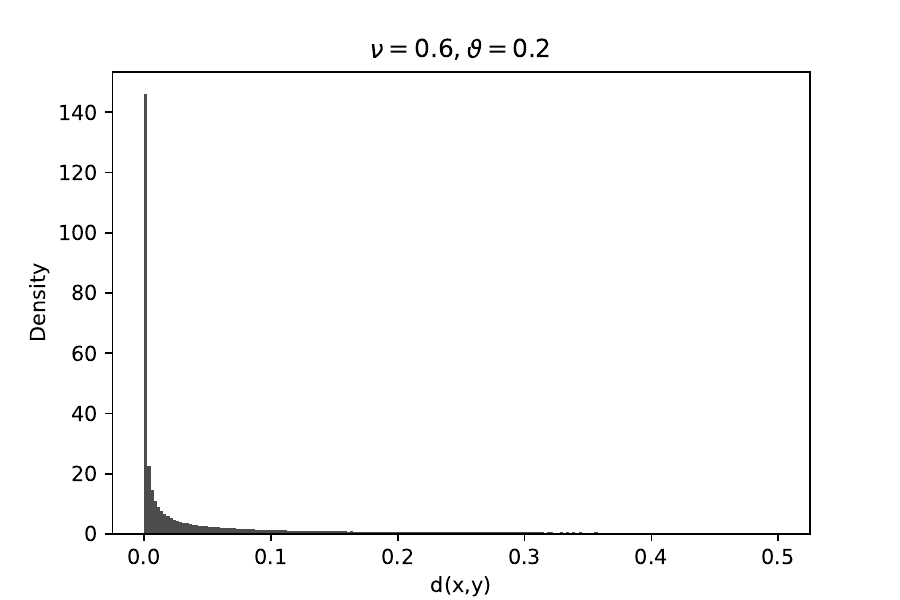} 
        \label{fig:distrL>}
    }

    \vspace{1em}

    \subfigure[Example of a time series of the distance between two orbits for $\vartheta = 0.17$, when $\lambda\approx 0$, close to intermittency. ]{
        \includegraphics[width=0.45\textwidth]{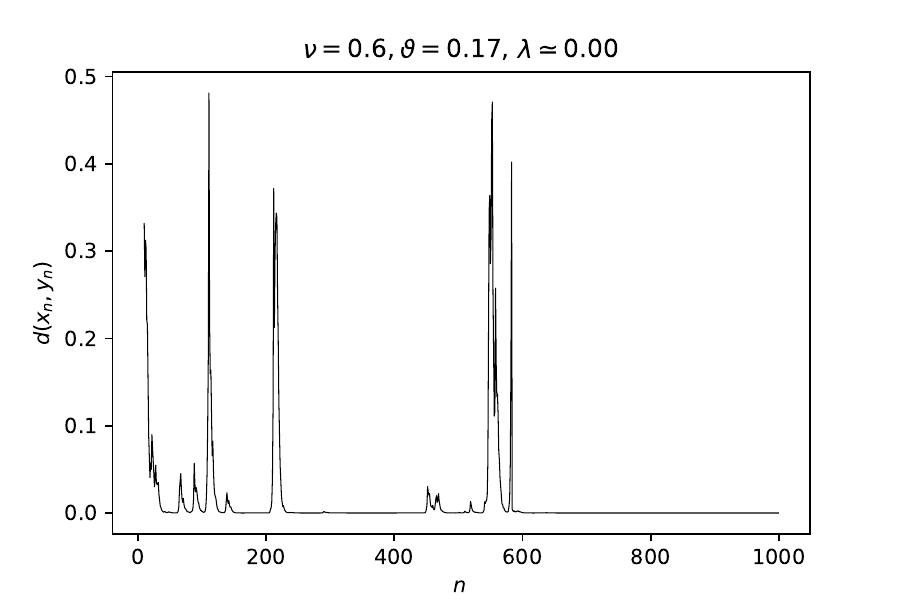} 
        \label{fig:timeL0}
    }
    \quad
    \subfigure[Example of a time series of the distance between two orbits for $\vartheta = 0.2$, when $\lambda\approx 0.11$, in the chaotic regime.]{
        \includegraphics[width=0.45\textwidth]{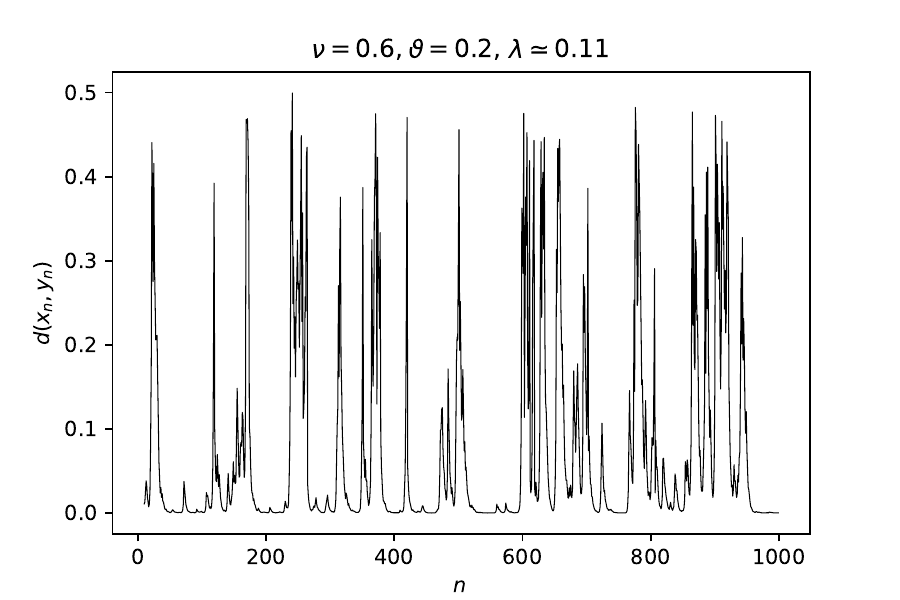}   
        \label{fig:timeL>}
    }
    \caption{Illustration of aspects of the dynamics in Example~\ref{ex:T_nu} regarding the family of random maps $T_{\nu,\omega}$ with $\nu = 0.6$.}
    \label{fig:all_subfigs}
\end{figure}

The dynamics of the two-point motions is illustrated by examples of time series for two different values of
\(\vartheta\) in Figure~\ref{fig:timeL0} ($\theta=0.17$, close to intermittency) and Figure~\ref{fig:timeL>} ($\theta=0.2$, chaos), displaying qualitative differences consistent with 
Theorem~\ref{thm:main1}.

Figure~\ref{fig:distrL>} shows the distribution of the two-point distance for an example time-series, displaying inverse-power-law behavior near zero, as asserted in
Theorem~\ref{thm:main2}. 

Rather than varying the noise level for fixed parameter $\nu$, one may also consider varying $\nu$ at fixed noise level. Fixing the noise level at $\theta=0.5$ (full noise), yields one-point motion orbits to be random i.i.d. sample from the stationary measure \(\mu = \left(T_\nu\right)_* (\mathrm{Leb})\), see Figure~\ref{fig:statmeas>}.

We include this example as it highlights the difference between the one- and two-point motion. In particular, at full noise, the one-point dynamics is essentially a full shift while the dynamics may be synchronising or chaotic.
Indeed, varying $\nu$ between $0.025$ and $0.2$ we observe (in Figure~\ref{fig:whitelambda} ) a monotonically increasing Lyapunov exponent going from negative (synchronisation) to positive (chaos).



\begin{figure}[ht!]
    \centering
    
 \subfigure[Numerical approximation of the Lyapunov exponent for fixed $\vartheta = 0.5$ and varying $\nu$. The Lyapunov exponent increases monotonically with $\nu$, from negative to positive.]{
        \includegraphics[width=0.45\textwidth]{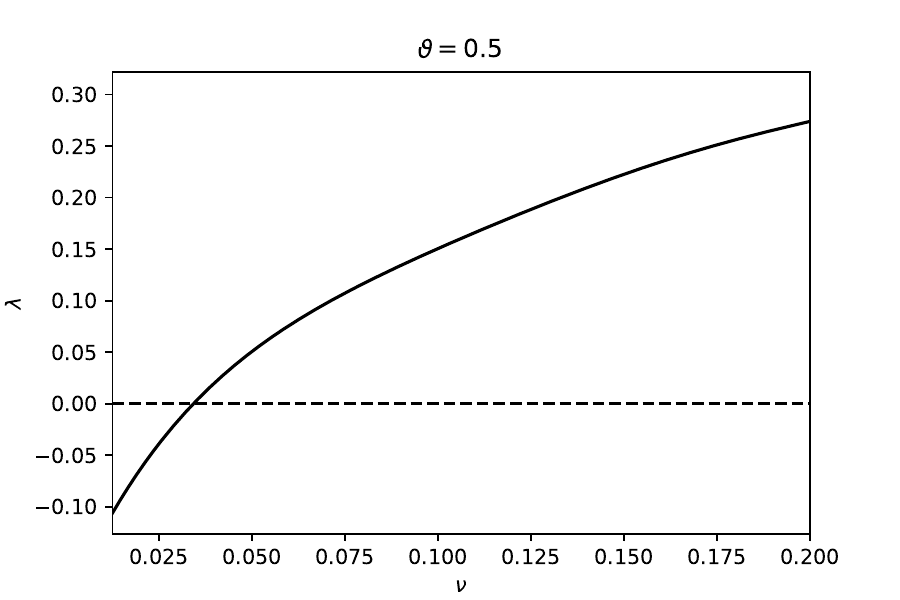} 
        \label{fig:whitelambda}
    }
    \quad
    \subfigure[Numerical approximation of the stationary density, with $\nu=0.6$ and $\vartheta = 0.5$.]{
        \includegraphics[width=0.45\textwidth]{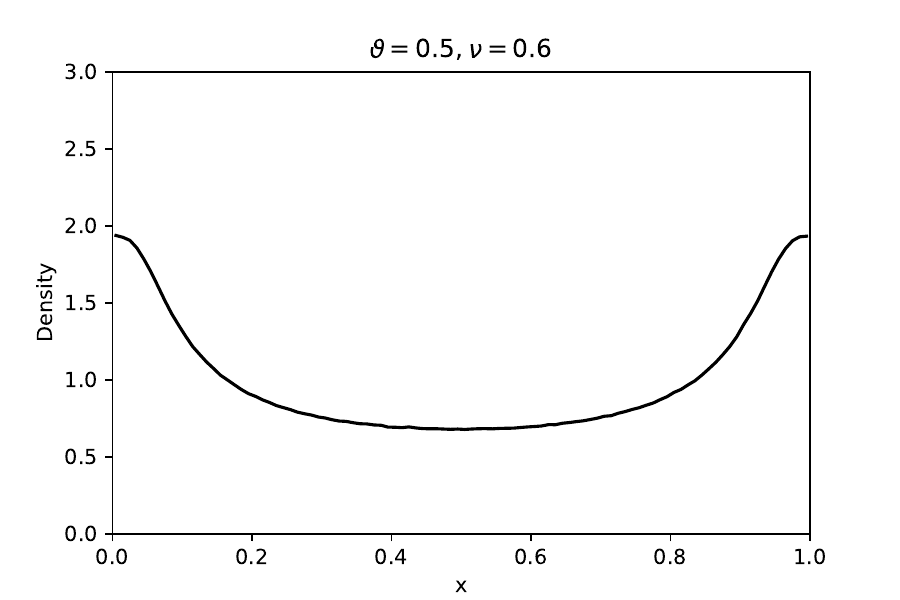} 
        \label{fig:statmeas>}
    }

\caption{Illustration of aspects of the dynamics of Example~\ref{ex:T_nu} regarding the family of random maps $T_{\nu,\omega}$, with $\vartheta = 0.5$ and varying $\nu$. }
\end{figure}
We revisit the full noise case in the context of this example,
in Example \ref{ex:white-noise-cont1} and Example \ref{ex:white-noise-cont2},
where this assumption leads to simplified statements.

\end{example}

\subsection{Context}

In this paper we have established a link between intermittency (of the two-point motion) and the transition to chaos in random dynamical systems. In the deterministic setting, intermittency has been recognized as a hallmark of only one of several routes to chaos \cite{MR576270}.


{ The link between intermittency and infinite ergodic measures has been previously observed for the one-point motion, concerning the behaviour of single trajectories.
First, this was established for deterministic dynamics near non-hyperbolic fixed points \cite{MR576460,MR1695915,MR599464}. Later, this was extended to intermittent dynamics near unstable invariant manifolds, also know as on-off intermittency  \cite{PhysRevE.49.1140,PhysRevLett.70.279}.
In random maps on bounded state spaces with common neutral fixed points, intermittency and infinite ergodic measures have been established in the presence of neutral fixed points  \cite{BB16,MR4024530} and on-average neutral fixed points \cite{MR722776,MR4706371}. 
See \cite{MR3600645,MR3826118,MR2033186}
for case studies of random logistic maps with zero Lyapunov exponents. 
Random dynamical systems on unbounded state spaces with zero Lyapunov exponent may also yield infinite stationary measures \cite{MR1428518,MR4434534,MR3098067,MR2954260,MR3342668}. 
For specific results on synchronisation on average (related to our notion of intermittency) in random maps, see also \cite{MR3519583,MR4142337}.


While the majority of research in ergodic theory of dynamical systems focuses on one-point motions, two-point motions are central to this paper. Crucially, our results on intermittency and infinite ergodic measures of the two-point motion of random maps with zero Lyapunov exponents do not imply intermittency and infinite ergodic measures for the one-point motion. In particular, the one-point motion of the random maps discussed in this paper do not display intermittency nor infinite ergodic measures.
}

The importance of two-point motions, or more general $n$-point motions,  has been recognized in the study of stochastic differential equations, see for instance \cite{MR1070361}, and in particular by Ledrappier \& Young \cite{MR968818,MR953818} and Baxendale \& Stroock \cite{MR968817,MR1144097} focusing on different Lyapunov exponent regimes.
More recently, their importance has been recognized in stochastic fluid dynamics \cite{MR4242626,MR4385127,MR4415776,MR4404792,PierreHumbert}.

Synchronisation in random dynamical systems constitutes the most elementary type of dynamical behaviour, and has been studied extensively  \cite{MR3663624,MR3862799,MR4228028}. In particular, it has been established in the negative Lyapunov exponent regime that synchronisation is equivalent to a mild contractability condition \cite{MR3820004}. Specifically, random circle homeomorphism have been discussed in \cite{MR756386,MR2117507,MR2425065,MR3719548}.

{ 
There is also an interest in iterated function systems \cite{MR977274} where the random parameter is chosen from a finite set, see also  \cite{ https://doi.org/10.48550/arxiv.2207.09987} for a result in the direction of this paper.
Our methods do not cover this setting, as we use absolute continuity of the noise to obtain spectral gap for a Koopman operator on a space of continuous functions. A possible strategy for    
discrete noise models might be to consider smaller function spaces. 


There are various problems beyond this paper that deserve further attention.
First of all, we expect our results to extend to higher-dimensional settings and allow for critical points (see \cite{galatolo2025efficientcomputationstationarymeasures} for a recent numerical study in this direction).
Furthermore, it is natural to study parametrized families of random dynamical systems and consider the dependence on the parameter of, in particular,  Lyapunov moment function and stationary measures of the two-point motion near zero Lyapunov exponent (see \cite{chalhoub2025analyticdependencelyapunovmoment} for a recent study in the case of random matrix products).
Finally, it would be of interest to obtain finer statistical information about the two-point dynamics in the presence of infinite stationary measures.}

\subsection{Organization of the paper}
This paper is divided into six sections. In Section~\ref{s:setting}, we discuss the underlying hypotheses of this paper and provide a summary of the strategy of the proof. In Section~\ref{s:koopman}, the technical machinery involving Koopman operators is developed, which is required in the subsequent sections. Section~\ref{s:trd} is dedicated to the proof of Theorem~\ref{thm:main1}, where the three different scenarios with zero, positive, and negative Lyapunov exponent are treated, respectively in Subsections~\ref{s:zero}, \ref{s:pos}, and \ref{s:neg}. The construction of stationary measures and the derivation of their properties, leading to Theorem~\ref{thm:main2}, are contained in Section~\ref{s:stat}.

\section{Setting and strategy of the proof}\label{s:setting}
This section introduces the hypotheses on the system and includes a summary of the strategy of the proof. 
We start with a circle endomorphism $T: \T\to \T$ with the following property.
    \vspace{0.5cm}
	\begin{hypothesis}\label{h:endo}
		$T$ is a circle endomorphism in $\mathcal{C}^2(\mathbb T)$, of degree two with derivative positively bounded from above and below; there exist positive real numbers $ a_1< a_2$, such that \[a_1<DT (x)<a_2\] for all $x \in \mathbb T$.
	\end{hypothesis}
    \vspace{0.5cm}
    Note that every point has exactly two inverse images under $T$. Degree two is not essential in this paper, higher degree works the same with minor modifications.
Define
\begin{equation}\label{eq:Rmin}
    R_{\min} \coloneqq \max \left\{ r \; ; \;  a_1 d(x,y) \leq d\left(T(x),T(y)\right) \leq a_2 d(x,y)  
 \text{ for all } x,y \in \T \text{ with } d(x,y)\leq r
\right\}.
\end{equation}
In particular $T(x) \neq T(y)$ if $d(x,y) \leq R_{\min}$.

	For $\omega \in \mathbb{T}$ write 
	\begin{align*} 
		T_\omega (x) &= T(x + \omega \pmod 1) 
	\end{align*}
	for the circle endomorphism obtained by composing $T$ with a translation.
	We  take a random process $\omega_i$, $i \in \mathbb{N}$, where the $\omega_i$ are independently drawn from a uniform distribution on 
 \[
\Omega_\vartheta \coloneqq [-\vartheta,\vartheta].
\] 
	This yields a random dynamical system
	\begin{align}\label{eq:rds}
		x_{n+1} &= T_{\omega_n} (x_n) 
	\end{align}
	for an initial point $x_0 \in \mathbb{T}$. 
    
	Let the sequence space 
\[
\Sigma_\vartheta \coloneqq \Omega_\vartheta^\mathbb{N}
\]
 be endowed with the product topology.
	We write $\omega  = (\omega_i)_{i\in\mathbb{N}}$ for points in ${\Sigma_\vartheta}$.
	Cylinders are sets 
	$[A_0,A_1\ldots,A_k] = \{ \omega \in {\Sigma_\vartheta} \; ; \; \omega_i \in A_i, 0\leq i \leq k\}$
	for Borel sets $A_i \subset \T$. Cylinders are the basis for the product topology.   
	Given the normalized Lebesgue measure $\mathrm{Leb}/(2\vartheta)$ on $\Omega_\vartheta$, write $\mathbb{P}$ for the corresponding product measure on ${\Sigma_\vartheta}$. 
For a function $X$ on $\Sigma_\vartheta$ we use common notation such as
\begin{align*}
\mathbb{E} \left[X \right] &\coloneqq \int_{\Sigma_\vartheta}  X(\omega)\, d\mathbb{P}(\omega).
\end{align*}
By identifying $\Omega_\vartheta$ with the cylinder $[\Omega_\vartheta]$ we can also use $\mathbb{P}$ for normalized Lebesgue measure $\mathrm{Leb}/(2\vartheta)$ on $\Omega_\vartheta$.
So, with a slight abuse of notation,
if a function $\omega \mapsto X(\omega)$ depends on a single symbol $\omega \in \Omega_\vartheta$, we write
$\mathbb{E} \left[X \right] = \int_{\Omega_\vartheta} X(\omega)\, d\mathbb{P}(\omega)$.
Recall that a measure $\mu$ on $\mathbb{T}$ is called a stationary measure if
\[
\int_{\Sigma_\vartheta}  \mu \left( \left( T_\omega\right)^{-1} (A) \right) \, d\mathbb{P} (\omega) =  \mu(A),
\]
for any (Borel) measurable set $A \subset \T$.

	The skew product map
	$\Theta: {\Sigma_\vartheta} \times \mathbb{T} \to  {\Sigma_\vartheta} \times \mathbb{T}$ 
	is defined by
	\[
	\Theta(\omega,x) \coloneqq (\sigma \omega , T_{\omega_0} (x)).
	\]
	Here $\sigma$ is the left shift operator $\sigma \omega \coloneqq (\omega_{i+1})_{i\in\mathbb{N}}$.
	With a slight abuse of notation we write
	\[
	T^n_\omega (x) \coloneqq T_{\omega_{n-1}} \circ \cdots \circ T_{\omega_0} (x)
	\]
	for iterates.
    
We compares two different trajectories by studying the random dynamical system. 
For $\omega \in \Sigma_\vartheta$, the two-point map $(x,y) \mapsto T^{(2)}_\omega (x,y)$ on $\mathbb{T}^2$ is the product
\[
(x,y) \mapsto (T_\omega (x) , T_\omega(y)). 
\]
	This yields the random dynamical system
	\begin{align}\label{eq:rds2}
		(x_{n+1},y_{n+1}) &=   T^{(2)}_{\omega_n} (x_n,y_n).
	\end{align}
	The two-point skew product map
	$\Theta^{(2)} : {\Sigma_\vartheta} \times \mathbb{T}^2 \to   {\Sigma_\vartheta} \times \mathbb{T}^2$
	is denoted by
	\[
	\Theta^{(2)} (\omega,x,y) = (\sigma\omega , T^{(2)}_\omega (x,y)).
	\]
A measure $\mu^{(2)}$ on $\mathbb{T}^2$ is a stationary measure of the random dynamical system $T^{(2)}_\omega$ on $\T^2$ if
\[
\int_{\Sigma_\vartheta}  \mu^{(2)} \left( \left( T^{(2)}_\omega\right)^{-1} (A) \right) \, d\mathbb{P} (\omega) =  \mu^{(2)}(A),
\]
for any (Borel) measurable set $A \subset \T^2$.

\subsection{Hypotheses}

We focus on random circle endomorphisms whose trajectories are not confined to subintervals
of the circle but spread over the entire circle.
 
	\vspace{0.5cm}
	\begin{hypothesis}\label{h:minorisation}
There is $k>0$ so that for any $x,y \in \mathbb{T}$, there is $\omega \in {\Sigma_\vartheta}$ so that
$T^k_\omega (x) = y$.
	\end{hypothesis}
		\vspace{0.5cm}

{ This hypothesis  in combination with absolute continuous noise guarantees the existence of a unique absolutely continuous stationary measure of full support for the one-point motion, but also has further applications that are used throughout the paper.}


\begin{proposition}\label{prop:uniquestatmeasure}
Suppose the random dynamical system described by \eqref{eq:rds}
with $\omega_n$ i.i.d. picked from a uniform distribution for $[-\vartheta,\vartheta]$, 
adheres to Hypotheses~\ref{h:endo}, \ref{h:minorisation}.

Then the random dynamical system admits an absolutely continuous stationary measure $\mu$ with full support and smooth density.
\end{proposition}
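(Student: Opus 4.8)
The plan is to construct the stationary measure via a transfer-operator (Perron--Frobenius) argument and then use Hypotheses~\ref{h:endo} and~\ref{h:minorisation} to upgrade its regularity and support. First I would introduce the annealed transfer operator $\mathcal{L}$ acting on densities on $\mathbb{T}$: for $\rho \in L^1(\mathbb{T})$,
\[
(\mathcal{L}\rho)(x) = \frac{1}{2\vartheta}\int_{-\vartheta}^{\vartheta} \sum_{z : T_\omega(z)=x} \frac{\rho(z)}{DT_\omega(z)}\, d\mathrm{Leb}(\omega),
\]
which by Hypothesis~\ref{h:endo} (each $T_\omega$ is a $\mathcal{C}^2$ degree-two covering with derivative in $[a_1,a_2]$) is a well-defined Markov operator preserving $\int_{\mathbb{T}}\rho\,d\mathrm{Leb}$. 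A fixed point $\rho^\ast$ of $\mathcal{L}$ with $\int \rho^\ast = 1$ gives the density of a stationary measure $\mu = \rho^\ast\,d\mathrm{Leb}$. Existence of a fixed point follows from a standard Lasota--Yorke / compactness argument: one shows $\mathcal{L}$ improves regularity — because of the integration over the noise parameter $\omega$ the operator has a smoothing effect, so $\mathcal{L}$ maps $L^1$ into a bounded set of $\mathcal{C}^1$ (or at least into a set with uniformly bounded variation), hence iterates of any initial density stay in a convex, weak-$\ast$ compact (or $\mathcal{C}^0$-compact via Arzel\`a--Ascoli) set; Schauder--Tychonoff (or simply taking Ces\`aro averages and extracting a weak limit) then yields a fixed density. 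The smoothing comes from writing the averaged operator, after the change of variables $\omega \mapsto T(z+\omega)$ for fixed $z$, as a convolution-type integral against $\rho$, which gains a derivative; bootstrapping gives $\rho^\ast \in \mathcal{C}^\infty$.

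Next I would establish positivity and full support using Hypothesis~\ref{h:minorisation}. The key point is a minorisation estimate: iterating $\mathcal{L}$ a fixed number $k$ of times, for any $x$ the set of noise strings $\omega$ with $T^k_\omega(z) = x$ for some preimage chain starting near a given point has positive $\mathbb{P}$-measure, uniformly; combined with the derivative bounds this yields $\mathcal{L}^k \rho \geq c\,\mathbbm{1}_{\mathbb{T}}\int_{\mathbb{T}}\rho\,d\mathrm{Leb}$ for some $c>0$ independent of $\rho \geq 0$. Applying this to $\rho^\ast$ gives $\rho^\ast \geq c > 0$ everywhere, so $\mu$ has full support. (This minorisation also gives uniqueness of $\mu$ — the operator is a strict contraction in a Hilbert metric / it has a spectral gap — though the statement of the proposition only asks for existence, absolute continuity, full support and smoothness.)

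The main obstacle I anticipate is making the smoothing/compactness step clean: one must verify that the averaged transfer operator genuinely maps $L^1$ into a relatively compact family in a topology strong enough to carry through to a fixed point, and that the fixed point is then as smooth as claimed. This requires carefully tracking how the $\omega$-integration interacts with the branch structure of $T$ (the preimages $z_1(x),z_2(x)$ and the jacobians $DT_\omega(z_i(x))$ vary smoothly in $x$ by Hypothesis~\ref{h:endo}), and using that $T$ has degree two with everywhere-positive derivative so there are no critical points and the branches are genuine $\mathcal{C}^2$ diffeomorphisms onto their images. Once the change of variables is set up correctly the derivative bounds $a_1 < DT < a_2$ make all the needed estimates uniform, and the regularity bootstrap and the minorisation from Hypothesis~\ref{h:minorisation} are then routine.
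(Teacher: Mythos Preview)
Your proposal is correct and follows exactly the approach the paper indicates: the paper omits the proof entirely, stating only that it ``can be obtained from standard arguments using Perron--Frobenius operators'' with a reference to \cite{MR1233850}, and your sketch fills in precisely those standard steps (smoothing of the annealed transfer operator via the noise average, compactness/fixed point, and minorisation from Hypothesis~\ref{h:minorisation} for full support). There is nothing to add.
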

The proof is omitted and be obtained from standard arguments using Perron-Frobenius operators.  A similar setup is in \cite{MR1233850}, to which we refer for details.



%

The following assumption provides a local source for contraction.
This is needed to achieve a negative or zero Lyapunov exponent.

\vspace{0.5cm}
\begin{hypothesis}\label{h:periodic}
	The map $T$ has a hyperbolic attracting periodic orbit.
\end{hypothesis}
\vspace{0.5cm}

We must
avoid that points in $\T^2 \setminus \Delta$ 
are mapped into $\Delta$ by $T^{(2)}_a$ with positive probability, and more generally we must
control and bound the probability that points are mapped very close to the diagonal by the two-point maps $T^{(2)}_a$. 
The following hypothesis provides us with an assumption to prevent this.

	\vspace{0.5cm}
	\begin{hypothesis}\label{h:genericnoise}
For every $(x,y) \in \mathbb{T}^2\setminus \Delta$, 
the curve $a \mapsto (T_a (x), T_a (y))$ in $\T^2$ intersects the diagonal $\Delta$ transversely or with
at most a single quadratic tangency.
	\end{hypothesis}
	\vspace{0.5cm}

{ {
	Hypothesis \ref{h:genericnoise} is a generic condition.
		For convenience we formulate the following lemma with the parameter $a$ in $T_a$ from the whole circle $\mathbb{T}$; taking $a$ from a larger interval only shrinks the set of circle endomorphisms that satisfy Hypothesis~\ref{h:genericnoise}.
		
		\begin{lemma}\label{lem:H4isgeneric}
			There is an open and dense set of degree two increasing circle endomorphisms $T: \mathbb{T} \to \mathbb{T}$ in the $\mathcal C^2$ topology so that the corresponding random system
			$T_a$ with $a \in \mathbb{T}$  satisfies Hypothesis~\ref{h:genericnoise}. 
		\end{lemma}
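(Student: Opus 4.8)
The plan is to realize Hypothesis~\ref{h:genericnoise} as a transversality condition, phrased for the jet of $T$ at pairs of points, and then invoke a parametric transversality / Thom--Boardman type argument together with a Baire-category count. First I would make the condition explicit. For $(x,y) \in \mathbb{T}^2 \setminus \Delta$, write $\varphi_{x,y}(a) := T(x+a) - T(y+a)$ on the circle, so that the curve $a \mapsto (T_a(x), T_a(y))$ meets $\Delta$ exactly at the zeros of $\varphi_{x,y}$, and the intersection is transverse (resp.\ a quadratic tangency) at $a_0$ precisely when $\varphi_{x,y}(a_0) = 0$ and $\varphi_{x,y}'(a_0) \neq 0$ (resp.\ $\varphi_{x,y}'(a_0) = 0$, $\varphi_{x,y}''(a_0) \neq 0$). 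Thus Hypothesis~\ref{h:genericnoise} says: there is no $(x,y,a)$ with $x \neq y$ at which $\varphi_{x,y}$ has a zero of order $\geq 3$, and no $(x,y,a)$ at which $\varphi_{x,y}$ has \emph{two distinct} quadratic tangencies (i.e.\ two distinct zeros $a_0 \neq a_1$ both with vanishing first derivative). Note $\varphi_{x,y}'(a) = DT(x+a) - DT(y+a)$ and $\varphi_{x,y}''(a) = D^2T(x+a) - D^2T(y+a)$, so these are all conditions on the $2$-jet of $T$ evaluated at the two points $x+a$ and $y+a$.

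Next I would set up the density. Work on the space $\mathcal{C}^2(\mathbb{T})$ with its topology; since it is not a Banach manifold in a way that immediately gives Sard--Smale, I would instead argue density by a direct perturbation and openness separately. For density: given $T$ and $\delta > 0$, I want to perturb $T$ within $\delta$ in $\mathcal{C}^2$ so that the two bad configurations are destroyed. The first bad set --- zeros of order $\geq 3$ --- is cut out by the three equations $T(u) = T(v) \pmod 1$ (in the appropriate lift sense), $DT(u) = DT(v)$, $D^2T(u) = D^2T(v)$ in the variables $(u,v) = (x+a, y+a)$ with $u \neq v$; this is three conditions on a two-dimensional parameter space $\{(u,v)\}$, so for generic $T$ it is empty. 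Concretely, I would add to $T$ a small multiple of a bump-type function supported near a putative bad pair and chosen so that the $2$-jet map $(u,v) \mapsto (T(u)-T(v), DT(u)-DT(v), D^2T(u)-D^2T(v))$ becomes transverse to $0$; since the target is $3$-dimensional and the source $2$-dimensional, transversality to $\{0\}$ means the preimage is empty. The second bad set --- two distinct quadratic tangencies at the same $a$ --- is governed by the equations $DT(x+a) = DT(y+a)$ and $DT(x+a') = DT(y+a')$ for $a \neq a'$; after eliminating one variable this is again an excess of equations over parameters, removable by a generic perturbation of $DT$. I would combine the two perturbations (they are independent in nature) and use a compactness argument on $\mathbb{T}^2 \setminus \Delta_\varepsilon$ together with the uniform expansion bound from Hypothesis~\ref{h:endo} (which, via $R_{\min}$, forces $\varphi_{x,y}$ to be nonvanishing when $d(x,y) \leq R_{\min}$ for \emph{every} $a$, so only pairs with $d(x,y) \geq R_{\min}$ need to be controlled, and these form a compact set) to get a single perturbation working for all pairs at once.

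For openness I would argue that the complement --- maps admitting some forbidden configuration --- is closed: a sequence $T_n \to T$ in $\mathcal{C}^2$ with forbidden configurations $(x_n, y_n, a_n)$ has a convergent subsequence (using that $d(x_n,y_n) \geq R_{\min}(T_n)$ which is bounded below near $T$ by the uniform bounds $a_1, a_2$), and $\mathcal{C}^2$-convergence of $T_n$ to $T$ passes the vanishing of $\varphi$, $\varphi'$, $\varphi''$ to the limit, producing a forbidden configuration for $T$. Hence the good set is open; together with density, this proves the lemma. The main obstacle I anticipate is making the density step genuinely clean: one must check that the relevant jet-evaluation maps, as $T$ ranges over $\mathcal{C}^2$, are \emph{submersive} onto the jet space at each fixed pair $(u,v)$ with $u \neq v$ (so that the standard "generic value / empty preimage by dimension count" argument applies), and then run a countable-cover Baire argument over a sequence of compact pieces $\{d(x,y) \in [R_{\min}, \tfrac12]\}$ intersected with finitely many $a$-windows --- care is needed that the tangency-at-two-points locus really does have positive codimension once one quotients by the diagonal action $(x,y,a) \mapsto (x+t, y+t, a-t)$, since that symmetry reduces the effective number of free parameters by one and one must confirm the equation count still wins.
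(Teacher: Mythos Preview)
Your approach via jet transversality and codimension counting is workable, but the paper takes a different and considerably more elementary route by exploiting the degree-two structure directly. Presenting $T$ via its two monotone branches $\psi_1,\psi_2$ with inverses $\chi_1,\chi_2:[0,1]\to[0,1]$, the condition $T_a(x)=T_a(y)$ with $x\neq y$ reads $\chi_2(z)-\chi_1(z)=y-x$ at $z=T_a(x)$; transversality of the curve with $\Delta$ becomes $D\chi_1(z)\neq D\chi_2(z)$, and a quadratic tangency becomes $D\chi_1(z)=D\chi_2(z)$ with $D^2\chi_1(z)\neq D^2\chi_2(z)$. Hence Hypothesis~\ref{h:genericnoise} collapses to the single one-variable requirement that $g(z)=\chi_2(z)-\chi_1(z)$ be Morse with pairwise distinct critical values --- an obviously dense condition (perturb $\chi_2$) and open by compactness of the relevant $(x,y)$-region. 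Your route instead needs the multijet transversality machinery to handle the two-tangency locus, and your description of that locus is incomplete: you listed only the derivative equations $DT(x+a)=DT(y+a)$ and $DT(x+a')=DT(y+a')$, but the value equations $T(x+a)=T(y+a)$ and $T(x+a')=T(y+a')$ are also required (otherwise you are counting critical points of $\varphi_{x,y}$, not tangencies with $\Delta$); with them one has four equations in three effective variables after the translation symmetry, so the count still closes. The paper's inverse-branch reduction buys a hands-on proof with no jet-bundle bookkeeping; your approach trades that elegance for a systematic framework, at the cost of having to verify submersivity of the multijet evaluation and run the Baire argument you sketch.
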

		
		\begin{proof}
			To study intersections of the curve $a \mapsto (T_a (x), T_a (y))$ with $\Delta$ we only need to consider nearby points $T(x+a), T(y+a)$.  We may thus write $T(x+a) - T(y+a) = 0$ instead of $d(T(x+a),T(y+a))=0$.
			Write  $T(x+a) = T(y+a)$ as $T(b+s) = T(b)$ with $b = x+a$ and $b+s = y+a$, that is, $s = y-x$.
			We are thus looking for zeros of the family of functions 
			\[
			f_s (b) \coloneqq T(b+s) - T(b),
			\] 
			with $b \in \mathbb{T}$.
			Note that zero's of $f_s$ for $b+s \neq b$  can only occur for $s$ away from 
			zero.
			The hypothesis can be rephrased as stating that the graph of the function $f_s$ has only transverse intersections with zero, or at most one quadratic tangency to the graph of the zero function for isolated values of $s$.
			As in classical bifurcation theory applied to bifurcations of equilibria in one-parameter families \cite{MR279827,MR339280,MR727698}, 
			for an open and dense set of degree two increasing circle endomorphisms, a zero is either transverse or with a quadratic tangency, and there is at most one zero with a quadratic tangency (a quadratic tangency corresponds to a saddle-node bifurcation, for generic families one finds at most one saddle-node bifurcation at a parameter value).
		\end{proof}
 }}       
 
The integrability statement in the following lemma is a consequence of Hypothesis~\ref{h:genericnoise}, and results in an estimate that is required in later arguments (Proposition~\ref{prop:chaos}, Proposition~\ref{prop:bounds} and Proposition~\ref{prop:probmeas}). 

\begin{lemma}\label{lem:logbnd}
    For all $R>0$ there exists $C_1>0$ with 
	\begin{align}\label{eq:resulthypo}
		\mathbb{E} \left[  -\ln(d(T_\omega^{(2)}(x,y))) \right] < C_1,
	\end{align}
	for all $(x,y) \in \T^2 \setminus \Delta_{R}$. 
\end{lemma}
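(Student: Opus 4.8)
The plan is to show that the only way the integrand $-\ln(d(T^{(2)}_\omega(x,y)))$ can blow up is when the parameter $\omega_0$ is close to a value at which the curve $a \mapsto (T_a(x),T_a(y))$ meets the diagonal, and that Hypothesis~\ref{h:genericnoise} forces $d(T^{(2)}_a(x,y))$ to vanish at worst quadratically in $a$ near such a value, which is integrable. First I would note that only the first symbol $\omega_0$ matters: $d(T^{(2)}_\omega(x,y)) = d(T_{\omega_0}(x),T_{\omega_0}(y))$, so it suffices to bound $\int_{\Omega_\vartheta} -\ln d(T_a(x),T_a(y))\, d\mathbb{P}(a)$. Fix $(x,y) \in \mathbb{T}^2\setminus\Delta_R$ and consider the smooth function $g(a) := w(T^{(2)}_a(x,y))$, the signed distance between $T_a(x)$ and $T_a(y)$, as in the proof of Lemma~\ref{lem:H4isgeneric}. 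We have $d(T_a(x),T_a(y)) = |g(a)|$ as long as this quantity is $< \tfrac12$, and when it is $\geq \tfrac12$ the logarithm is bounded, so that part of the integral is harmless; I only need to control the contribution near the zero set $Z := \{a \in \Omega_\vartheta : g(a)=0\}$.

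Next I would invoke Hypothesis~\ref{h:genericnoise}: at each $a_* \in Z$ either $g'(a_*) \neq 0$ (transverse) or $g'(a_*)=0$, $g''(a_*) \neq 0$ (quadratic tangency), and in the latter case there is at most one such $a_*$. In a neighbourhood of a transverse zero, $|g(a)| \geq c\,|a-a_*|$, so $-\ln|g(a)| \leq -\ln c - \ln|a-a_*|$, which is integrable. Near the single quadratic tangency, by Taylor expansion $|g(a)| \geq c'\,|a-a_*|^2$, so $-\ln|g(a)| \leq -\ln c' - 2\ln|a-a_*|$, again integrable. Since $Z$ is finite (the zeros of a nonzero analytic-type behaviour — more carefully, a smooth function whose zeros are all nondegenerate or the single degenerate one, hence isolated, on a compact interval), one covers $\Omega_\vartheta$ by finitely many such neighbourhoods plus a region where $|g|$ is bounded below, and sums the resulting finite bounds.

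The main obstacle — and the reason the statement is phrased as an $R$-dependent bound rather than a single universal constant — is obtaining the constant $C_1$ uniformly over all $(x,y) \in \mathbb{T}^2\setminus\Delta_R$, rather than merely for each fixed pair. Here I would use compactness of $\mathbb{T}^2\setminus\Delta_R$ together with the uniformity already extracted in the proof of Lemma~\ref{lem:H4isgeneric}: the number of transverse intersections and tangencies of $a \mapsto g(a)$ with $0$ is bounded uniformly in $(x,y) \in \mathbb{T}^2\setminus\Delta_R$, and there is a uniform positive lower bound on $|g''|$ at tangencies and (after shrinking) on $|g'|$ at transverse zeros. A compactness/continuity argument then yields neighbourhoods of uniform size on which the quadratic/linear lower bounds hold with uniform constants $c, c'$, and the number of such neighbourhoods is uniformly bounded; integrating the logarithmic bounds over each gives a total that depends only on $R$ (through these uniform constants), not on the particular $(x,y)$. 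This is exactly the point where one must be slightly careful, since the zero set $Z$ moves with $(x,y)$, but the uniform bounds on its cardinality and on the derivatives of $g$ there keep everything under control.
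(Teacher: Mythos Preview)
Your approach is essentially the paper's: reduce to the one-step map, set $f(a)=w(T^{(2)}_a(x,y))$, and use Hypothesis~\ref{h:genericnoise} to get at worst quadratic vanishing of $f$, which keeps $-\ln|f|$ integrable. The only genuine gap is in your uniformity step. You claim a uniform positive lower bound on $|g'|$ at \emph{transverse} zeros, but this fails: as $(x,y)$ ranges over $\mathbb{T}^2\setminus\Delta_R$ and approaches a configuration at which a quadratic tangency occurs, two transverse zeros of $g$ coalesce and $|g'|$ at each of them tends to $0$. Compactness does not rescue this, since the quantity ``minimum of $|g'|$ over transverse zeros'' is not lower semicontinuous in $(x,y)$.

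The paper sidesteps this by organizing the estimate around the \emph{critical points} of $f$ rather than its zeros. It first extracts a uniform bound $|D^2f(a)|\ge C$ valid on the region where both $|f|$ and $|Df|$ are small; this bound \emph{is} uniform over $\mathbb{T}^2\setminus\Delta_R$, by compactness together with Hypothesis~\ref{h:genericnoise} (cf.\ the proof of Lemma~\ref{lem:H4isgeneric}). On intervals where $|Df|\ge\delta$ the linear lower bound works with a uniform constant. On an interval around a critical point $d$, one obtains the factored lower bound $|f(a)|\ge c\,|(a-d_1)(a-d_2)|$ when $f$ has two nearby zeros $d_1,d_2$ (possibly equal), and this $c$ stays uniformly positive precisely because it comes from the second derivative. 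Your argument becomes complete once you replace the transverse/tangent dichotomy at zeros by this small-$|Df|$/large-$|Df|$ dichotomy.
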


\begin{proof}
 Let $w(x,y)$ denote the signed distance of $x,y\in\mathbb{T}$, defined as $w(x,y) =  \left( (y - x + \frac{1}{2}) \pmod1 \right) - \frac{1}{2}$. Consider the real valued function \[a \mapsto f(a) \coloneqq w\left( T^{(2)}_a (x,y) \right)\] on $\T$.
Hypothesis~\ref{h:genericnoise} implies that (see also the proof of Lemma~\ref{lem:H4isgeneric})
for $(x,y) \in \mathbb{T}^2 \setminus \Delta_\delta$, there exist $\delta>0, C>0$ so that 
\begin{align*} 
	\left| D^2 f (a) \right| &\geq C
\end{align*}
whenever $|f(a)| < \delta$ and $\left| Df(a) \right| < \delta$.

If $I \subset \T$ is an interval so that for $a \in I$, $|f(a)| < \delta$
and $\left| D f(a) \right| \geq \delta$,
then $\int_I -\ln  \left( |f(a)| \right)\, da$ is bounded by some $C_\delta >0$. 

To prove the lemma it remains to consider $f$ on an interval $J \subset \T$ so that $|f(a)| < 2 \delta$ for $a \in J$ and $J$ contains a point $d \in J$ with $Df(d) = 0$.
If $f(d) = 0$  and $Df(d) = 0$, we have a bound $0 < c (a-d)^2 < |f(a)|$ for some $c > 0$. The same bound applies if $Df(d) = 0$ and $f$ is never zero on $J$.
The possibility that is left is where $f$ has two zeros $f(d_1) = f(d_2) = 0$ for nearby points $d_1,d_2$ on different sides of $d$.
Then a bound $0 < c |(a-d_1) (a-d_2)| < |f(a)|$ for some $c > 0$ holds.
In all cases $c$ is uniformly bounded away from $0$.
As the logarithms of $c (a-d)^2$ and $ c |(a-d_1) (a-d_2)|$ are integrable, we find that
$\int_J -\ln  \left( |f(a)| \right)\, da$ is bounded by some $C_\delta >0$. 
Noting that $-\ln ( |f(a)| )$ is bounded if
$|f(a)| > \delta$, proves the lemma.  
\end{proof}

We make a final assumption that will be used to prove full support of stationary measures for the two-point motion. 
The random two-point system maps a point $(x,y)$ to a curve $a \mapsto (T_a (x),T_a(y))$ in $\mathbb{T}^2$. 
For the composition of two iterates, we have
\begin{align}
H_{a_0,a_1}(x,y) &\coloneqq \mathrm{det}\, \left( \frac{\partial}{\partial (a_0,a_1)}  T^{(2)}_{a_1} \circ T^{(2)}_{a_0}  (x,y)     \right)\nonumber
\\
&= DT_{a_1} ( T_{a_0}(x)) DT_{a_1} ( T_{a_0}(y)) \left( DT_{a_0} (x) - DT_{a_0} (y)  \right).\label{eq:H}
\end{align}
Consider now the following hypothesis.
\vspace{0.5cm}
	\begin{hypothesis}\label{h:openaftertwo}
		For each $(x,y) \in \mathbb{T}^2 \setminus \Delta$, there are $a_0,a_1 \in (-\vartheta,\vartheta)$ so that $H_{a_0,a_1}(x,y) \neq 0$.
	\end{hypothesis}
	\vspace{0.5cm}

This condition implies that for each $(x,y) \in \mathbb{T}^2 \setminus \Delta$, the image of 
\[
(a_0,a_1) \mapsto T^{(2)}_{a_1} \circ T^{(2)}_{a_0}  (x,y),
\] 
with $a_0,a_1 \in [-\vartheta,\vartheta]$, contains an open set.

Recall that a map $F: X \to X$ on a topological space $X$ is called topologically exact if for any open $U \subset X$, there is $n \in \mathbb{N}$ so that $F^n (U) = X$.

\begin{lemma}\label{lem:2pointexact}
The skew product systems  $\Theta: {\Sigma_\vartheta} \times \mathbb{T} \to  {\Sigma_\vartheta} \times \mathbb{T}$ and $\Theta^{(2)}: {\Sigma_\vartheta} \times \mathbb{T}^2 \to  {\Sigma_\vartheta} \times \mathbb{T}^2$ are topologically exact.
\end{lemma}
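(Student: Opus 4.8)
The plan is to reduce both assertions to the way a basic open set is transported under the skew product, and then to feed in Hypothesis~\ref{h:minorisation} together with a covering property of the base map $T$.

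A basic open set of $\Sigma_\vartheta\times\mathbb{T}$ (resp.\ of $\Sigma_\vartheta\times\mathbb{T}^2$) has the form $[A_0,\ldots,A_{m-1}]\times V$ with the $A_i\subseteq\Omega_\vartheta$ nonempty open and $V$ nonempty open in $\mathbb{T}$ (resp.\ in $\mathbb{T}^2$), and $\Theta^n(\omega,z)=(\sigma^n\omega,T^n_\omega(z))$. For $n\ge m$ the shift $\sigma^n$ sends $[A_0,\ldots,A_{m-1}]$ onto all of $\Sigma_\vartheta$, the symbols $\omega_m,\ldots,\omega_{n-1}$ are free, and the tail $\omega_n,\omega_{n+1},\ldots$ does not affect $T^n_\omega$. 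Hence $\Theta^n([A_0,\ldots,A_{m-1}]\times V)$ equals the whole space precisely when: for every target $z'$ one may fix $\omega_i\in A_i$ for $0\le i<m$, set $W:=T^m_{(\omega_0,\ldots,\omega_{m-1})}(V)$ (nonempty open), and find a word $\tau$ of length $n-m$ with $z'\in T^{n-m}_\tau(W)$. For $\Theta$ this is immediate from Hypothesis~\ref{h:minorisation}: pick any $w\in W$ and a word $\tau$ of length $k$ with $T^k_\tau(w)=z'$; so $n=m+k$ works uniformly in $z'$, and $\Theta$ is topologically exact.

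For $\Theta^{(2)}$ we exploit that $T^{(2)}_\omega=T_\omega\times T_\omega$ is a product map: replacing $V$ by a smaller product $J_1\times J_2$ of short arcs gives $W\supseteq K_1\times K_2$ with $K_i=T^m_{(\omega_0,\ldots,\omega_{m-1})}(J_i)$, and $|K_i|\ge\ell_0$ for some $\ell_0>0$ depending only on the given open set (using Hypothesis~\ref{h:endo} to control arc lengths from below while they remain short). The essential ingredient is a \emph{covering lemma}: for each $\ell_0>0$ there is $n_1$ such that every arc $K$ with $|K|\ge\ell_0$ admits a word $\tau_1$ of length $n_1$ with $T^{n_1}_{\tau_1}(K)=\mathbb{T}$. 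Granting it, apply the lemma to $K_1$; then $T^{(2),n_1}_{\tau_1}(K_1\times K_2)=\mathbb{T}\times L$ with $L:=T^{n_1}_{\tau_1}(K_2)$ nonempty open. Given any target $(p,q)\in\mathbb{T}^2$: choose $\ell^*\in L$ and, by Hypothesis~\ref{h:minorisation}, a word $\tau_2$ of length $k$ with $T^k_{\tau_2}(\ell^*)=q$; as $T^k_{\tau_2}$ is surjective there is $w$ with $T^k_{\tau_2}(w)=p$; as $T^{n_1}_{\tau_1}(K_1)=\mathbb{T}$ there is $u\in K_1$ with $T^{n_1}_{\tau_1}(u)=w$, and there is $v\in K_2$ with $T^{n_1}_{\tau_1}(v)=\ell^*$. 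Then $(u,v)\in K_1\times K_2\subseteq W$ and $T^{(2),n_1+k}_{\tau_1\tau_2}(u,v)=(p,q)$, so $n=m+n_1+k$ works for all targets and $\Theta^{(2)}$ is topologically exact. Observe that this argument is indifferent to whether $(u,v)$ or $(p,q)$ lies on $\Delta$, so diagonal targets need no separate treatment.

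There remains the covering lemma, which is where degree two and Hypothesis~\ref{h:minorisation} enter essentially. Each $T_\omega$ is orientation preserving, so $T^n_\tau$ is an orientation-preserving degree-$2^n$ circle map and the image of an arc $K$ is again an arc, of length $\int_K (T^n_\tau)'$ when this is $<1$ and all of $\mathbb{T}$ otherwise; thus $T^{n_1}_{\tau_1}(K)=\mathbb{T}$ is equivalent to $\int_K (T^{n_1}_{\tau_1})'\ge 1$, i.e.\ to the arc growing past length $1$. The idea is to grow $K$ multiplicatively: degree two forces $\int_{\mathbb{T}}DT=2$ (hence $a_2>2$, and $\{DT>1\}$ has definite length), while Hypothesis~\ref{h:minorisation} provides enough noise to steer the iterated arc away from the region where $T$ is non-expanding (e.g.\ near the attracting periodic orbit of Hypothesis~\ref{h:periodic}) long enough that its length multiplies and exceeds $1$ after a number of steps controlled by $\log(1/\ell_0)$; a compactness argument over the compact family of arcs of length $\ge\ell_0$, together with openness of the condition $T^{n_1}_{\tau_1}(K)=\mathbb{T}$, then yields a single uniform $n_1$. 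The main obstacle is precisely this expansion step — spreading a short arc over the whole circle in a \emph{uniformly bounded} number of steps despite $T$ being only non-uniformly expanding — and it is naturally handled with the transfer/Koopman operator estimates of Section~\ref{s:koopman}.
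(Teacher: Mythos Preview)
Your treatment of $\Theta$ is correct and in fact more direct than the paper's: Hypothesis~\ref{h:minorisation} gives uniform point-to-point reachability in $k$ steps, and that alone suffices for the one-point skew product. The paper instead routes through a repelling periodic orbit argument already at this stage, which is unnecessary for $\Theta$ itself but is what it really needs later for $\Theta^{(2)}$.

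For $\Theta^{(2)}$ your reduction is clean and your endgame (apply the covering lemma to $K_1$, then use surjectivity of $T^k_{\tau_2}$ together with~\ref{h:minorisation} on the second coordinate) is a nice variant of the paper's ``apply the covering lemma twice'' argument. The genuine gap is the covering lemma itself: you do not prove that an arbitrary nonempty arc can be blown up to all of $\mathbb{T}$ by some finite random composition. Your sketch (``steer the arc into the expanding region using~\ref{h:minorisation}, then let it grow multiplicatively'') is not a proof:~\ref{h:minorisation} lets you move a single \emph{point} anywhere, but gives no control over what happens to the \emph{length} of the arc during those $k$ steps --- along the steering word the arc may well be contracted by a factor $a_1^k$. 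You would need to exhibit a specific word along which the arc expands, and nothing in your argument produces one. The appeal to the Koopman operator estimates of Section~\ref{s:koopman} is misplaced: those are annealed (averaged-in-$\omega$) statements about the stationary measure and moment Lyapunov function, and they neither produce a specific noise realisation nor address the purely topological claim you need; invoking them here would also be logically out of order, since Lemma~\ref{lem:2pointexact} precedes that section.

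The paper closes this gap with a different, standard idea: for any fixed $a$, the $C^2$ degree-two endomorphism $T_a$ admits an uncountable transitive hyperbolic repelling set (Ma\~n\'e, \cite{MR806250}), hence a repelling periodic point $y$. A small neighbourhood of $y$ is strictly expanded by $T_a^{\mathrm{per}(y)}$ and, by a short invariant-interval argument, some iterate of that neighbourhood covers $\mathbb{T}$. Then~\ref{h:minorisation} is used only to send a point of the given arc onto $y$, after which the deterministic expansion around $y$ does the rest. Replacing your last paragraph by this argument (or any other honest proof that some $T^n_\omega$ maps a given arc onto $\mathbb{T}$) would make your proof complete.
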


\begin{proof}
We first consider $\Theta$ and we start with the following statement:
for any open interval $I \subset \T$ there is $\omega \in \Sigma_\vartheta$ and $k\in\mathbb{N}$ so that
$T_\omega^k (I) = \T$. The reasoning will also show that there is a cylinder $D \subset \Sigma_\vartheta$ that contains $\omega$, so that $T_\zeta^k (I) = \T$ for any $\zeta \in D$.

Given the interval $I$,  take $x \in I$. 
Let $y \in \mathbb{T}$ be a periodic point in an uncountable transitive  hyperbolic repelling set $\Lambda$ of a map $T_a$ \cite{MR806250}. 
Write $k$ for the period of $y$.
{ {
We claim that given an open neighborhood $V$ of $y$, there is an iterate
$T^{kn}_a$ that maps $V$ onto all of $\mathbb{T}$. Suppose not. If $V$ is small then $T_a^k$ is a diffeomorphism between $V$ and $T_a^k(V)$. Keep iterating until either some iterate $T_a^{kn}$ no longer defines a diffeomorphism between $T_a^{k(n-1)} (V)$ and $T_a^{kn}(V)$ (then $T_a^{kn} (V)$ covers $\mathbb{T}$, or this never happens. In the latter case,  $\cup_n T^{kn}_a (V)$ is an interval and $T_a^k$ is a diffeomorphism on it. This is not possible since $y$ is accumulated by repelling periodic points of different periods.   
}}

By Hypothesis~\ref{h:minorisation} there is $\varsigma \in \Sigma_\vartheta$ and $m \in \mathbb{N}$ so that $T_\varsigma^m (x) = y$. Hence there is $n \in \mathbb{N}$ so that $T^{kn}_a \circ T^m_\varsigma$ maps $I$ onto $\mathbb{T}$. By continuity the same applies to $T^{m+kn}_\nu$ for $\nu$ in a suitable cylinder $D$
that contains  $\varsigma_0\cdots \varsigma_{m-1} a \cdots a$ (with $nk$ times $a$). This proves the statement with which we started.

To conclude the proof of topological exactness of $\Theta$, let $U\subset \Sigma_\vartheta \times \mathbb{T}$ be an open set.
By shrinking $U$ we may assume that $U$ is a  product set	$U = C \times J$ of a cylinder	$C$ and an open interval $J$. 
There is an iterate $\Theta^j (U)$ that contains an open set 
$\Sigma_\vartheta \times I$ for an open interval $I \subset \T$.
Now use the above statement to establish that a further iterate covers $\Sigma_\vartheta \times \T$.

Next we consider $\Theta^{(2)}$.
	Take an open set $U \subset \Sigma_\vartheta \times \mathbb{T}^2$. By shrinking $U$ we may assume that $U$ is a  product set
	$U = C \times I \times J$ of a cylinder
	$C$ and open intervals $I,J$. 

	By the above reasoning, there is $l \in \mathbb{N}$ and $\omega \in  C$  so that $\left(T^{(2)}_\omega\right)^l (I \times J)$ contains $\mathbb{T} \times K$ for an open interval $K \subset  \mathbb{T}$. Applying the above reasoning again, there is $l \in \mathbb{N}$ so that
	$\left(T^{(2)}_\omega\right)^l (I\times J)$ equals $\mathbb{T}^2$. The  proof of topological exactness of $\Theta^{(2)}$ is concluded as above.
\end{proof}

Hypotheses~\ref{h:periodic},~\ref{h:openaftertwo} and Lemma~\ref{lem:2pointexact} show that $\mathbb P$-almost surely  all orbits of the two point motion get arbitrarily close to the diagonal $\Delta$. The following lemma formalizes this.

\begin{lemma}    \label{lem:proximal}
For all $\varepsilon>0$, there exists a $N\in\N$, $C>0$ such that for all $(x,y) \in \T^2$, we have 
\[
\mathbb{P} \left(  \left\{  \omega \in \Sigma_\vartheta \; ; \;  d(T_\omega^i(x),T^i_\omega(y))<\varepsilon \text { for some } 0\leq i \leq N \right\}   \right) > C.
\]
\end{lemma}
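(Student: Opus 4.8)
The plan is to show that, for a fixed $\varepsilon>0$, there is a uniform length $N$ such that every pair $(x,y)\in\mathbb{T}^2$ admits a finite steering word of length at most $N$, lying in a cylinder of definite positive probability, that drives the two-point orbit into $\Delta_\varepsilon$ at least once; the uniform constant $C$ then comes from a compactness argument on $\mathbb{T}^2$. Three ingredients are needed: (a) a fixed open target $W$ near the diagonal from which one robustly falls into $\Delta_\varepsilon$ (built from Hypothesis~\ref{h:periodic}); (b) robust reachability of $W$ from an arbitrary $(x,y)$ (built from Hypothesis~\ref{h:openaftertwo} together with topological exactness of $\Theta^{(2)}$, Lemma~\ref{lem:2pointexact}); and (c) compactness, which turns the pointwise statements into uniform ones.

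For (a): by Hypothesis~\ref{h:periodic}, $T$ has a hyperbolic attracting periodic orbit, say of period $q$ through $p$; since $DT>0$, the point $(p,p)$ is an attracting fixed point of $\left(T^{(2)}_0\right)^{q}$, where $T^{(2)}_0$ denotes the deterministic two-point map $(x,y)\mapsto(T(x),T(y))$, with single eigenvalue $(T^q)'(p)\in(0,1)$. Fix once and for all an open set $W\ni(p,p)$ whose closure lies in its basin of attraction. Since $\left(T^{(2)}_0\right)^{qn}\to(p,p)$ uniformly on $\overline{W}$ as $n\to\infty$, there is a multiple $M=qn$ with $\left(T^{(2)}_0\right)^{M}(\overline{W})\subset\Delta_{\varepsilon/2}$; by continuity of the $M$-fold composition in its $M$ noise symbols and in the state, and since $0\in(-\vartheta,\vartheta)$, there is then a cylinder $B=[B_0,\dots,B_{M-1}]$ with each $B_i$ a small interval around $0$ and $\mathbb{P}(B)>0$ such that $\left(T^{(2)}_\omega\right)^{M}(\overline{W})\subset\Delta_\varepsilon$ for all $\omega\in B$.

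For (b): let $(x,y)\in\mathbb{T}^2\setminus\Delta_\varepsilon$, so $(x,y)\notin\Delta$. By Hypothesis~\ref{h:openaftertwo}, in the form stated immediately after it, the two-step reachable set $\left\{\,T^{(2)}_{a_1}\circ T^{(2)}_{a_0}(x,y)\;;\;a_0,a_1\in[-\vartheta,\vartheta]\,\right\}$ contains a nonempty open set $O\subset\mathbb{T}^2$. By topological exactness of $\Theta^{(2)}$ (Lemma~\ref{lem:2pointexact}) there is $j\in\mathbb{N}$ with $\left(\Theta^{(2)}\right)^{j}(\Sigma_\vartheta\times O)=\Sigma_\vartheta\times\mathbb{T}^2$; since this image contains $\Sigma_\vartheta\times W$, there are $(u,v)\in O$ and a word $\eta_0,\dots,\eta_{j-1}$ with $\left(T^{(2)}_\eta\right)^{j}(u,v)\in W$. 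Choose $a_0^{\ast},a_1^{\ast}\in[-\vartheta,\vartheta]$ with $T^{(2)}_{a_1^{\ast}}\circ T^{(2)}_{a_0^{\ast}}(x,y)=(u,v)$ and chain the three passages $(x,y)\to(u,v)\to W\to\Delta_\varepsilon$. Using only that $O$, $W$ and $\Delta_\varepsilon$ are open and that the relevant finite compositions are continuous in both the state and the noise, one can thicken this construction: there exist an open neighbourhood $U_{(x,y)}\ni(x,y)$ in $\mathbb{T}^2$ and an open cylinder $C_{(x,y)}$ of length $n_{(x,y)}:=2+j+M$, with $\mathbb{P}(C_{(x,y)})>0$, whose first two symbols are confined near $a_0^{\ast},a_1^{\ast}$, whose next $j$ symbols are confined near $\eta_0,\dots,\eta_{j-1}$, and whose last $M$ symbols lie inside $B$, such that $\left(T^{(2)}_\omega\right)^{n_{(x,y)}}(x',y')\in\Delta_\varepsilon$ for all $(x',y')\in U_{(x,y)}$ and $\omega\in C_{(x,y)}$. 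For $(x,y)\in\Delta_\varepsilon$ take instead $U_{(x,y)}=\Delta_\varepsilon$, $n_{(x,y)}=0$ and $C_{(x,y)}=\Sigma_\vartheta$, the distance condition already holding at time $0$.

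For (c): $\{U_{(x,y)}\}_{(x,y)\in\mathbb{T}^2}$ is an open cover of the compact space $\mathbb{T}^2$, so there is a finite subcover $U_{(x_1,y_1)},\dots,U_{(x_m,y_m)}$; set $N:=\max_l n_{(x_l,y_l)}$ and $C:=\min_l\mathbb{P}(C_{(x_l,y_l)})>0$. Any $(x,y)$ lies in some $U_{(x_l,y_l)}$, and then every $\omega\in C_{(x_l,y_l)}$ satisfies $d\!\left(T^{i}_\omega(x),T^{i}_\omega(y)\right)<\varepsilon$ at the single time $i=n_{(x_l,y_l)}\le N$, which gives the asserted bound. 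I expect step (b) to be the crux. Hypothesis~\ref{h:openaftertwo} is indispensable there: topological exactness moves open sets, not single points, so one must first spread $(x,y)$ into a set with nonempty interior, which is exactly what the two-step map provides; and one must keep track of the fact that the routing time $j$, hence $n_{(x,y)}$, genuinely depends on $(x,y)$ and is not bounded a priori, which is precisely why uniformity is extracted only at the very end, from compactness.
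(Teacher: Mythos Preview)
Your proof is correct and follows essentially the same route as the paper: use Hypothesis~\ref{h:openaftertwo} to spread $(x,y)$ into an open set after two steps, invoke topological exactness (Lemma~\ref{lem:2pointexact}) to reach a target near the diagonal, thicken by continuity to obtain an open neighbourhood and a cylinder of positive probability, and finish with compactness of $\mathbb{T}^2$ to make $N$ and $C$ uniform.

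One minor difference: your step~(a), which uses Hypothesis~\ref{h:periodic} to build an intermediate target $W$ and a separate contraction stage into $\Delta_\varepsilon$, is not actually needed. Topological exactness already yields that the $N$-step reachable set from the open set $O$ is all of $\mathbb{T}^2$, which contains $\Delta_\varepsilon$; so one can target $\Delta_\varepsilon$ directly and skip the appeal to the attracting periodic orbit. The paper does exactly this and also organises compactness slightly differently (first extracting a uniform lower bound on the diameter of the open product sets $I_x\times I_y$ produced by Hypothesis~\ref{h:openaftertwo}, and then getting a uniform $N$ from the argument of Lemma~\ref{lem:2pointexact}), but the substance is the same.
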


\begin{proof}
Take $(x,y) \in \mathbb{T}^2 \setminus \Delta_\varepsilon$. By Hypothesis~\ref{h:openaftertwo} we find that
$\omega \mapsto \left(T^{(2)}_\omega\right)^2 (x,y)$ contains an open product set $I_x\times I_y \subset \T^2$. By compactness we find $\delta>0$ so that  $I_x$ and $I_y$ have diameter at least $\delta$, uniformly in $x,y \in    \mathbb{T}^2 \setminus \Delta_\varepsilon$.
The argument of Lemma~\ref{lem:2pointexact} shows that for any $(x,y) \in  \mathbb{T}^2 \setminus \Delta_\varepsilon$ there is an open neighborhood $U$ of $(x,y)$ and a positive integer $N$
so that    $\left(T^{(2)}_\omega \right)^N(x',y')$ covers $\T^2$ for any $(x',y') \in U$. 
By compactness $N$ is bounded uniformly 
in $(x,y) \in    \mathbb{T}^2 \setminus \Delta_\varepsilon$. This implies the lemma.
\end{proof}

\begin{example} \label{ex:white-noise-continued}
We continue from Example~\ref{ex:T_nu}, giving concise justifications that the required hypotheses hold. Hypotheses~\ref{h:endo} and \ref{h:periodic} are trivially satisfied. {  Hypothesis~\ref{h:minorisation} is obvious when $\vartheta = 0.5$, and can be checked via a simulation or some naive bounds. The main obstruction to Hypothesis~\ref{h:minorisation} would be 
that the noise is too small to let an orbit escape from a neighborhood of the stable fixed point.}

Regarding Hypothesis~\ref{h:genericnoise}, observe that there is exactly one point $y \in \T$ that has two preimages $x_1, x_2 \in \T$ with
\[
DT(x_1) = DT(x_2).
\]
At this point $y$, the second derivatives $DT(x_1)$ and $DT(x_2)$ 
have opposite signs.
The curvature of the curve $a \mapsto (T_a (x_1), T_a (x_2))$ at the point of tangency with $\Delta$
is therefore nonzero, and the tangency is quadratic.

Finally, by Equation \eqref{eq:H}, and the fact that for all $(x,y) \in \T^2\setminus \Delta$, there exists an $a_0 \in [-\vartheta,\vartheta]$ such that $DT_{a_0}(x) - DT_{a_0}(y) \neq 0$, Hypothesis~\ref{h:openaftertwo} follows.
  \hfill $\blacksquare$
 \end{example}

\subsection{Strategy of the proof}\label{s:strategy}

This section gives a helicopter view on the reasoning in Sections~\ref{s:koopman} and \ref{s:trd} that leads to Theorem~\ref{thm:main1} on dynamics of the random two-point maps.
For the study of orbits of the random two-point maps it is crucial to understand the duration of trajectories of two nearby points staying close to each other.
In terms of the two-point motion this is the duration of trajectories of the two-point motion staying close to the diagonal.
Suppose $x_0,y_0 \in \mathbb{T}$ are close, so that $d_0 = d(x_0,y_0)$ is close to zero. With $x_n = T^n_\omega (x_0)$, $y_n = T^n_\omega (y_0)$ and $d_n = d(x_n,y_n)$, we find that as long as $d_n$ is small,
\begin{align}\label{eq:approxDT}
d_{n+1} &\approx DT_{\omega_n} (x_n) d_n.
\end{align}
Taking logarithms $u_n = \ln (d_n)$ this reads
\begin{align}\label{eq:approxlnDT}
u_{n+1} &\approx u_n + \ln \left(DT_{\omega_n}(x_n)  \right),
\end{align}
which is a random walk driven by the one-point motion $x_{n+1} = T_{\omega_n} (x_n)$.

In the special case where $x_n$ is identically and independently distributed (see Example~\ref{ex:T_nu}, with $\vartheta = 0.5$), 
the approximation is a random walk with i.i.d. steps. But this is not true in general.
We derive estimates for the duration of passages near the diagonal by adapting reasoning 
in \cite{MR968817,MR1144097} for continuous time settings to our discrete time setting. We base our analysis on properties of the Koopman operator for the two-point system.

For the one-point motion, the annealed Koopman operator $P$
acting on a real valued function $\phi$ on $\T$ 
is defined as
\begin{align*}
	P \phi(x) &= \mathbb{E} \left[ \phi(T_\omega (x) ) \right].
\end{align*}
Here $\mathbb{E}$ stands for an expectation over $\omega \in \Sigma_\vartheta$.
Analogously the annealed two-point Koopman operator $P_{(2)}$ 
acting on  a real valued function $\phi$ on $\T^2\setminus\Delta$
is defined as
\[
P_{(2)}\phi (x,y) = \mathbb{E} \left[\phi(T_\omega^{(2)}(x,y))\right].
\]

To estimate stopping times for trajectories of the two-point motion from strips $\Delta_\delta \setminus \Delta_\varepsilon$ near the diagonal (so with $0 < \varepsilon < \delta$ small) we construct sub- and supermartingales for the stopped dynamics. The key statements are Proposition~\ref{prop:phieta} and Proposition~\ref{prop:W} below. 
The constructions rely on an analysis of $P_{(2)}$, which in turn is facilitated by the approximations \eqref{eq:approxDT} and \eqref{eq:approxlnDT} and a study of the corresponding linearized Koopman operator, 	defined for continuous real valued functions  $\phi$ on ${\T\times\R^+}$ by
\[
TP \phi(x,u) = \mathbb{E} \left[\phi(T_\omega(x), DT_\omega (x)u)\right].
\]
Formulas for the mentioned sub- and supermartingales are obtained from a study of the twisted Koopman operator  ${P}_q$ whose action on a real valued function $\psi$ on $\mathbb{T}$
is defined as  
\[
		{P}_q \psi(x) = \mathbb{E}
		\left[\frac{\psi\left(T_\omega(x)\right)}{(D T_\omega (x))^q}\right].
\]
This in turn connects to  the moment Lyapunov exponent
\begin{equation*} 
	\Lambda(q) = \lim_{n\to\infty} \frac{1}{n}\ln  \left(\mathbb{E} \left[ D T_\omega^n (x)^q \right]\right).  
\end{equation*}
The moment Lyapunov exponent  plays a central role in our analysis, similar to \cite{MR968817,MR1144097}.
Originally introduced by Molchanov  \cite{molcanov_structure_1978} and subsequently developed in particular by Arnold \cite{ArnoldStabFormula}, it can be viewed as a generalization of the Lyapunov exponent \cite{MR805125}. The relationship between the moment Lyapunov exponent and the Lyapunov exponent was first described in \cite{MR800244}. 

{ 
The above comments on methodology refer to a study of the two-point motion near the diagonal.  Due to the non-injectivity of $T$, trajectories of distinct points $(x,y) \in \mathbb{T}^2 \setminus \Delta$ may land on the diagonal $\Delta$ in finite time. 
Our analysis circumvents this issue in two ways. 
First, the local estimates for the Koopman operators are restricted to strips $\Delta_{R} \setminus \Delta$ where $R < R_{min}$. Within this radius, the map acts injectively, ensuring that distinct points no further than $R$ apart do not coalesce in a single iterate, allowing for well-defined expansion and contraction estimates. 
Second, regarding the global dynamics and measures construction in Section \ref{s:stat}, the set of points that map directly onto $\Delta$ (and thus leave $\mathbb{T}^2 \setminus \Delta$) is controlled by Hypothesis~\ref{h:genericnoise}. This ensures that the probability of landing on or close to the diagonal from a large distance is negligible for our growth rate estimates. 
}

\section{Koopman operators}\label{s:koopman}

This section develops theory of Koopman operators needed for our analysis on random dynamics.
The contents of this section are crucial for the arguments in the following sections, although the statements in the lemmas and propositions in the following sections can be read without reference to this section. 

Below we define in particular the annealed Koopman operator and the linearized Koopman operator for the random one-point maps, and the annealed Koopman operator for the random two-point maps. 
{  What we refer to as the annealed Koopman operator is also known, especially in the stochastics literature, as the Markov semigroup.}
We consider the twisted Koopman operator and the moment Lyapunov exponent and use it to obtain eigenfunctions for the linearized Koopman operator.
The approximation of the two-point random maps applied to nearby points by the linearized random map
allows us to obtain, from these eigenfunctions, functions on which the two-point Koopman operator acts in a
desired way: these functions appear in the construction of sub- and supermartingales for the random two-point maps considered near the diagonal.
 
\subsection{Twisted Koopman operator and moment Lyapunov function}\label{ss:twisted}

Write $\mathcal{C}(\mathbb{T},\mathbb{R})$ for the space of real valued continuous functions on $\mathbb{T}$.  Denote the $\mathcal C^0$-norm as $\|\cdot\|$.
The \textit{(annealed) Koopman operator} $P : \mathcal{C}(\mathbb{T},\mathbb{R})\to \mathcal{C}(\mathbb{T},\mathbb{R})$
is defined as
\begin{align*}
	P \phi(x) &\coloneqq \mathbb{E} \left[ \phi(T_\omega (x) ) \right].
\end{align*}

The \textit{twisted Koopman operator},  ${P}_q: \mathcal{C}(\T,\mathbb R) \to\mathcal{C}(\T,\mathbb R)$ with $q \in \mathbb R$, is defined for  continuous functions $\psi: \T \to \mathbb R$ by
\[
{P}_q \psi(x) \coloneqq \mathbb{E}
\left[\frac{\psi\left(T_\omega(x)\right)}{(D T_\omega (x))^q}\right].
\]
Note that  $P = {P}_0$. 

The following lemma establishes basic properties of $P_q$.
Similar statements in other settings are in \cite{StayHomble,ArnoldStabFormula}.

\begin{lemma}\label{lem:Pqcompact}
	For $q \in \mathbb{R}$, the operator $P_q$  on $\mathcal{C}(\mathbb{T},\mathbb{R})$ is { strictly positive} and compact.
\end{lemma}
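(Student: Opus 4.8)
The plan is to prove the three properties of $P_q$ separately, with positivity being immediate and compactness and irreducibility each requiring a short argument based on the structure of $T_\omega$ as a perturbation of a fixed endomorphism by a uniformly distributed translation.

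\textbf{Positivity.} If $\psi \geq 0$, then since $DT_\omega(x) > 0$ (indeed $DT_\omega(x) \in (a_1, a_2)$ by Hypothesis~\ref{h:endo}), the integrand $\psi(T_\omega(x))/(DT_\omega(x))^q$ is nonnegative, so $P_q \psi \geq 0$. This is essentially automatic.

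\textbf{Compactness.} The key point is that $P_q$ has a smoothing effect because the noise is distributed with a density. Explicitly, writing the expectation as an integral over $\omega \in [-\vartheta,\vartheta]$ with respect to $\mathrm{Leb}/(2\vartheta)$ and substituting $z = x + \omega \pmod 1$, one rewrites
\[
P_q \psi(x) = \frac{1}{2\vartheta} \int_{x-\vartheta}^{x+\vartheta} \frac{\psi(T(z))}{(DT(z))^q}\, dz,
\]
so that $P_q$ is an integral operator whose kernel (in the variable $z$, or equivalently a convolution-type kernel in $x$) is of the form $\frac{1}{2\vartheta}\mathbbm{1}_{[x-\vartheta,x+\vartheta]}(z)\,(DT(z))^{-q}$ times evaluation of $\psi\circ T$. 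Differentiating in $x$ produces only boundary terms $\frac{1}{2\vartheta}\big[(DT(x+\vartheta))^{-q}\psi(T(x+\vartheta)) - (DT(x-\vartheta))^{-q}\psi(T(x-\vartheta))\big]$, which is bounded by a constant times $\|\psi\|$. Hence $P_q$ maps the unit ball of $\mathcal{C}(\mathbb{T},\mathbb{R})$ into a bounded subset of $\mathcal{C}^1(\mathbb{T},\mathbb{R})$, which is precompact in $\mathcal{C}(\mathbb{T},\mathbb{R})$ by Arzelà–Ascoli. (One should note $DT \in \mathcal{C}^1$ and is bounded away from $0$ and $\infty$, so $(DT)^{-q}$ is $\mathcal{C}^1$ and the manipulation is legitimate.) This gives compactness.

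\textbf{Irreducibility.} Here I would use Hypothesis~\ref{h:minorisation} (or more elementarily just that $T_\omega$ is a surjective degree-two endomorphism with $\omega$ ranging over $[-\vartheta,\vartheta]$ and positive density). Irreducibility of a positive operator on $\mathcal{C}(\mathbb{T},\mathbb{R})$ means there is no nontrivial closed ideal (equivalently, no proper closed subset $K \subsetneq \mathbb{T}$ with $P_q$ mapping functions supported off $K$ to functions supported off $K$); concretely it suffices to show that for every nonempty open $U$ and every $x \in \mathbb{T}$ there is $n$ with $(P_q^n \mathbbm{1}_U)(x) > 0$. Since $P_q^n \psi(x) = \mathbb{E}[\psi(T^n_\omega(x))/(DT^n_\omega(x))^q]$ with positive weight, this reduces to: for every $x$ and every open $U$ there exist $n$ and a positive-measure set of $\omega$ with $T^n_\omega(x) \in U$. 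This follows from Hypothesis~\ref{h:minorisation} together with the fact that the map $\omega \mapsto T^n_\omega(x)$ is an open map (a composition of translations by the $\omega_i$ with the local diffeomorphisms given by branches of $T$), so its image contains an open set and the preimage of $U$ under it has positive measure whenever $U$ meets that image; taking $n = k$ from Hypothesis~\ref{h:minorisation} covers the case where $U$ is arbitrarily small around a target point $y$, and a continuity/compactness argument upgrades the single $\omega$ to a positive-measure set. I expect \textbf{irreducibility} to be the main obstacle, mainly because pinning down exactly which functional-analytic formulation of "irreducible" is meant and matching it cleanly to the dynamical statement requires some care; the cited references \cite{StayHomble,ArnoldStabFormula} presumably handle an analogous point and I would follow their template.
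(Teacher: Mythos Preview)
Your proposal is correct and follows essentially the same route as the paper: positivity is immediate, compactness comes from the change of variables $z = x+\omega$ turning $P_q\psi$ into a $\mathcal{C}^1$ function with derivative bounded by $C\|\psi\|$ and invoking Arzel\`a--Ascoli, and irreducibility comes from Hypothesis~\ref{h:minorisation}. The paper's irreducibility argument is in fact more streamlined than you anticipate: it simply observes that for the $k$ from Hypothesis~\ref{h:minorisation}, $P_q^k\psi > 0$ everywhere whenever $\psi \geq 0$ and $\psi \not\equiv 0$ (your continuity remark is exactly what upgrades the single $\omega$ to a positive-measure set), and declares this to be irreducibility without further functional-analytic unpacking.
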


\begin{proof}
	It is clear that $P_q \psi \geq 0$ if $\psi \geq 0$, that is, that $P_q$ is positive. 
	Hypothesis~\ref{h:minorisation} yields the following property.
	There exists $n \in \mathbb{N}$, so that
	for any $\psi\in \mathcal{C}(\mathbb{T},\mathbb{R})$ with $\psi \geq 0$ and $\psi \ne 0$,
	$P_q^n \psi > 0$ everywhere.
	This means that $P_q$ is { strictly positive}.
	
	Let $\mathcal{B}$ be the unit ball in  $\mathcal{C}(\mathbb{T},\mathbb{R})$.
	For $\psi \in \mathcal{B}$,  note that (for readability skipping the $\pmod 1$ from the arguments) 
	\[
	P_q \psi (x) = \frac{1}{2\vartheta} \int_{-\vartheta}^\vartheta  D T(x+\omega)^{-q} \psi (T(x+\omega))\, d\omega = \frac{1}{2\vartheta} \int_{x-\vartheta}^{x+\vartheta} DT (y)^{-q} \psi (T(y))\, dy
	\]
	is a continuously differentiable function of $x \in \mathbb{T}$.
	 As $\| P_q \psi\| \leq C$ for a constant $C$ (for $q>0$ we can take $C = a_2^{q}/\vartheta$, for $q<0$ we can take $C =  a_1^{q}/\vartheta$ with $a_1,a_2$ from Hypothesis~\ref{h:endo}), $P_q$ is a bounded operator.
	Now
	\begin{align*}
		\left| D P_q \psi (x)  \right| &= \frac{1}{2\vartheta} \left|  DT (x+\vartheta)^{-q} \psi (T(x+\vartheta)) - DT (x-\vartheta)^{-q} \psi (T(x-\vartheta))  \right|
		\\
		&\leq C \|\psi\|
	\end{align*}
	for the same constant $C$.
	It follows that $P_q \psi$ for $\psi \in \mathcal{B}$ is an equicontinuous family of functions. 
	By the Arzela–Ascoli theorem we get that $P_q$ is a compact operator.
\end{proof}

Using this lemma we get the following results on the dominant eigenvalue and corresponding eigenvector 
of $P_q$.

We make use of the $q$th moment Lyapunov exponent $\Lambda(q)$ (see \cite{ArnoldStabFormula,StayHomble,MR1671091})
defined as
\begin{equation}\label{momlf}
	\Lambda(q) \coloneqq \lim_{n\to\infty} \frac{1}{n}\ln  \left(\mathbb{E} \left[ D T_\omega^n (x)^q \right]\right).  
\end{equation}
We also write moment Lyapunov function, especially when discussing its dependence on $q$.
We will find that the limit
does not depend on $x$, and that it exists as an analytic function of $q$.
Denote $\mathbbm{1}_\mathbb{T}:\T\to \R$ as the constant function equal to 1 on the circle.

\begin{proposition}\label{prop:specnew}
	For $q \in \mathbb{R}$, ${P}_{q}$ has a dominant simple eigenvalue $e^{\Lambda(-q)}$.
	The rest of the spectrum of $P_q$ is contained in a disk of radius less then $e^{\Lambda(-q)}$.
	Write $\phi_q \in \mathcal{C} (\T,\mathbb R)$ for the dominant  eigenfunction of ${P}_{q}$, so
	\begin{equation}
		{P}_{q}\phi_{q} = e^{\Lambda(-q)} \phi_{q}.
		\label{eq:twistedeig}
	\end{equation}
	Then 
	\begin{enumerate}
		\item   $\phi_0 = \mathbbm{1}_\mathbb{T}$ and $e^{\Lambda (0)} = 1$,
		\item   $\phi_q$ is  a positive function,
		\item   $\Lambda(q)$ and $\phi_{q}$ depend analytically on $q$,
		\item   $\Lambda$ is convex,
            \item $\Lambda(q) \geq \lambda q$.

	\end{enumerate}				
\end{proposition}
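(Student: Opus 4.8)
The plan is to deduce the spectral statements from the Krein–Rutman theorem applied to the positive, compact, irreducible operator $P_q$ established in Lemma~\ref{lem:Pqcompact}, and then to identify the dominant eigenvalue with $e^{\Lambda(-q)}$ via a standard iteration argument. First I would invoke Krein–Rutman (or its Perron–Frobenius analogue for compact positive irreducible operators on the cone of nonnegative continuous functions): it yields a simple dominant eigenvalue $r(q) = \rho(P_q) > 0$ equal to the spectral radius, with a strictly positive eigenfunction $\phi_q$, and with the rest of the spectrum strictly inside the disk of radius $r(q)$ — here one uses irreducibility to rule out other eigenvalues on the spectral circle and compactness to ensure the spectrum away from $0$ consists of isolated eigenvalues. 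This immediately gives items (2) and most of the spectral picture, modulo identifying $r(q)$.

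Next I would compute $r(q)$. Iterating the eigenvalue relation, $P_q^n \phi_q = r(q)^n \phi_q$, and unfolding the definition of $P_q$ gives $P_q^n \psi(x) = \mathbb{E}\left[\psi(T_\omega^n(x)) / (DT_\omega^n(x))^q\right]$ by the chain rule $DT_\omega^n(x) = \prod_{i=0}^{n-1} DT_{\omega_i}(T_\omega^i(x))$ and the cocycle/Markov property. Applying this to $\psi = \phi_q$ and using that $\phi_q$ is bounded above and below by positive constants (it is continuous and strictly positive on the compact circle), I get
\[
c \leq \frac{1}{r(q)^n}\,\mathbb{E}\left[ (DT_\omega^n(x))^{-q} \right] \leq C
\]
uniformly in $x$ and $n$. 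Taking logarithms, dividing by $n$, and letting $n \to \infty$ shows that the limit defining $\Lambda(-q)$ in \eqref{momlf} exists, is independent of $x$, and equals $\ln r(q)$; hence $r(q) = e^{\Lambda(-q)}$, proving \eqref{eq:twistedeig} and the claim that $\Lambda(q)$ is a genuine limit independent of $x$.

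The remaining items are then quick. For (1): when $q=0$, $P_0 = P$ is the Koopman operator, $P\mathbbm{1}_\mathbb{T} = \mathbbm{1}_\mathbb{T}$, so $\mathbbm{1}_\mathbb{T}$ is the positive dominant eigenfunction and the eigenvalue is $1$, giving $\Lambda(0)=0$. For (3): analyticity of $q \mapsto \Lambda(q)$ and $q \mapsto \phi_q$ follows from analytic perturbation theory (Kato) — the family $q \mapsto P_q$ is analytic in operator norm (differentiate under the integral sign, using the uniform bounds $a_1 \le DT_\omega \le a_2$ so that $\omega,x \mapsto (DT_\omega(x))^{-q}$ and its $q$-derivatives are uniformly bounded), and the dominant eigenvalue is simple and isolated, so it and its spectral projection depend analytically on $q$. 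For (4): convexity of $\Lambda$ is Hölder's inequality applied to $q \mapsto \ln \mathbb{E}\left[(DT_\omega^n(x))^q\right]$ for each $n$, passing to the limit. For (5): Jensen's inequality gives $\ln \mathbb{E}[(DT_\omega^n(x))^q] \ge \mathbb{E}[\ln (DT_\omega^n(x))^q] = q\,\mathbb{E}[\ln DT_\omega^n(x)]$; integrating against the stationary measure $\mu$ and dividing by $n$, the right side converges to $q\lambda$ by the definition of the Lyapunov exponent and the Birkhoff/Furstenberg limit, while the left side converges to $\Lambda(q)$; to be careful one applies Jensen at each $n$ and takes $n\to\infty$, or notes $\Lambda$ is convex with $\Lambda(0)=0$ and $\Lambda'(0)=\lambda$ and hence lies above its tangent at $0$.

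The main obstacle I anticipate is the rigorous identification $r(q) = e^{\Lambda(-q)}$ together with the $x$-independence of the limit: this requires the two-sided bound on $\phi_q$ and a clean statement that $P_q^n$ composes as the annealed Koopman operator of the $n$-step cocycle, which in turn leans on the i.i.d.\ structure of $\omega$ and the chain rule for $DT_\omega^n$. Everything else (Krein–Rutman, Kato perturbation theory, Hölder, Jensen) is standard once Lemma~\ref{lem:Pqcompact} is in hand.
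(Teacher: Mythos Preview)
Your proposal is correct and follows essentially the same approach as the paper: both invoke Krein--Rutman via Lemma~\ref{lem:Pqcompact} to get the dominant simple positive eigenvalue and positive eigenfunction, then identify the spectral radius with $e^{\Lambda(-q)}$ by iterating $P_q^n$ and taking $\frac{1}{n}\ln(\cdot)$. The only minor technical variation is that you iterate $P_q^n$ on the eigenfunction $\phi_q$ and use its two-sided bounds, whereas the paper iterates on $\mathbbm{1}_\mathbb{T}$ and uses the spectral decomposition $P_q^n\mathbbm{1}_\mathbb{T} = r(q)^n\phi_q + o(r(q)^n)$; your route is arguably more elementary since it avoids invoking the full spectral projection. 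For items (3)--(5) the paper simply defers to the references \cite{StayHomble,ArnoldStabFormula}, while you spell out the standard arguments (Kato perturbation, H\"older, Jensen); one caution: your second suggestion for (5), via convexity and $\Lambda'(0)=\lambda$, would be circular here since in the paper's logic $\Lambda'(0)=\lambda$ is deduced \emph{from} (5) in Lemma~\ref{lem:momentderivatives}, so stick with your Jensen argument.
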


\begin{proof} 
	The proof follows ideas  as in \cite{StayHomble,ArnoldStabFormula}.
	The spectral properties of $P_q$ follow from Lemma~\ref{lem:Pqcompact} and the Krein-Rutman theorem (see for instance \cite[Section~19.5]{MR787404}).  
	Write $r(q) = \sigma\left(   P_{q} \right)$ for the spectral radius of $P_q$. By the Krein-Rutman theorem, this equals the dominant eigenvalue
	of $P_q$. {  The dominant eigenvalue is simple, because the operator is strictly positive.}
	We have 
	\[ P^n_q\mathbbm{1}_\mathbb{T} (x) =  \langle k_q , \mathbbm{1}_\mathbb{T} \rangle r(q)^n \phi_q (x)  + o(r(q)^n),
	\]
	as $n\to\infty$, uniformly in $x$, where $k_q\in \mathcal C(\T,\R)^* $ is a probability measure, see \cite{ArnoldStabFormula}. Now we rescale $\phi_q$, such that $\langle k_q , \mathbbm{1}_\mathbb{T} \rangle =1$ and using this, calculate
	\begin{align*}
		\lim_{n\to\infty} \frac{1}{n} \ln \left( \mathbb{E} \left[ \left(DT_\omega^n (x) \right)^{-q}  \right] \right)
		&= \lim_{n\to\infty}\frac{1}{n} \ln \left(P^n_q\mathbbm{1}_\mathbb{T} (x) \right)
		\\
		&=  \lim_{n\to\infty}\ln \left(  \left(P^n_q\mathbbm{1}_\mathbb{T} (x) \right)^{1/n}  \right)
		\\
		&= \lim_{n\to\infty} \ln \left( \left(r(q)^n \phi_q (x)  + o(r(q)^n) \right)^{1/n} \right)
		\\
		&=  \ln (r(q)).
	\end{align*}
	We find that $\lim_{n\to\infty} \frac{1}{n} \ln \left(\mathbb{E} \left[ \left(DT_\omega^n (x) \right)^{-q}  \right]\right)$ does not depend on $x$.
	The remaining properties of $\Lambda(-q)$ and $\phi_q$ follow from \cite{StayHomble,ArnoldStabFormula}.
\end{proof}

The following lemma connects the moment Lyapunov exponent and the Lyapunov exponent.

\begin{lemma}\label{lem:momentderivatives}
	The first derivative of the moment Lyapunov function at $q=0$ is equal to the Lyapunov exponent
	\begin{align}\label{eq:Lambda'(0)}
		\Lambda' (0) &= \lim_{n\to\infty} \frac{1}{n} \mathbb{E} \left[ \ln  \left( D T_\omega^n (x)  \right) \right]  = \lambda.
	\end{align}
\end{lemma}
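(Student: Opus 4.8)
The plan is to differentiate the defining relation \eqref{eq:twistedeig} with respect to $q$ at $q=0$ and read off $\Lambda'(0)$, then separately verify that this coincides with the Birkhoff-type limit $\lim_n \tfrac1n \mathbb{E}[\ln DT^n_\omega(x)]$ and that the latter equals the Lyapunov exponent $\lambda$ as defined via the stationary measure $\mu$.

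First I would exploit the analyticity from Proposition~\ref{prop:specnew}(3). Write $r(q)=e^{\Lambda(-q)}$ for the dominant eigenvalue and $\phi_q$ for the normalized dominant eigenfunction, so $P_q\phi_q = r(q)\phi_q$ with $\phi_0=\mathbbm 1_\mathbb{T}$, $r(0)=1$. Differentiating in $q$ and evaluating at $q=0$ gives $(\partial_q P_q)|_{q=0}\mathbbm 1_\mathbb{T} + P_0\dot\phi_0 = \dot r(0)\mathbbm 1_\mathbb{T} + \dot\phi_0$, where $\dot{}$ denotes $\partial_q$ at $0$. Since $(\partial_q P_q)\psi(x) = \mathbb{E}[-\ln(DT_\omega(x))\,\psi(T_\omega(x))]$, pairing with the invariant probability measure $k_0$ of $P_0^*$ (which is the stationary measure $\mu$, using uniqueness from Proposition~\ref{prop:uniquestatmeasure}) kills the $P_0\dot\phi_0-\dot\phi_0$ terms and yields $\dot r(0) = -\int_\mathbb{T}\mathbb{E}[\ln DT_\omega(x)]\,d\mu(x) = -\lambda$. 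Because $r(q)=e^{\Lambda(-q)}$, we get $\dot r(0) = -\Lambda'(0)$, hence $\Lambda'(0)=\lambda$.

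Next I would establish the middle equality $\Lambda'(0) = \lim_n \tfrac1n\mathbb{E}[\ln DT^n_\omega(x)]$. From the asymptotic $P^n_q\mathbbm 1_\mathbb{T}(x) = r(q)^n\phi_q(x)+o(r(q)^n)$ used in the proof of Proposition~\ref{prop:specnew}, one has $\mathbb{E}[(DT^n_\omega(x))^{-q}] = e^{n\Lambda(-q)}\phi_q(x)(1+o(1))$ uniformly in $x$, and by analyticity this expansion can be differentiated in $q$ (e.g.\ via Cauchy estimates on a complex neighbourhood of $0$, since $q\mapsto P_q$ is analytic into the bounded operators and the spectral projection depends analytically on $q$). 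Differentiating $\mathbb{E}[(DT^n_\omega(x))^{-q}]$ at $q=0$ gives $-\mathbb{E}[\ln DT^n_\omega(x)]$, and differentiating the right side gives $n\Lambda'(0)\phi_0(x) + (\text{lower order in }n)$, so dividing by $n$ and letting $n\to\infty$ yields $\lim_n \tfrac1n\mathbb{E}[\ln DT^n_\omega(x)] = \Lambda'(0)$, independent of $x$. Finally, the chain rule $\ln DT^n_\omega(x) = \sum_{i=0}^{n-1}\ln DT_{\omega_i}(x_i)$ together with stationarity of $\mu$ shows $\tfrac1n\mathbb{E}[\ln DT^n_\omega(x)] \to \int_\mathbb{T}\mathbb{E}[\ln DT_\omega(x)]\,d\mu(x) = \lambda$ whenever $x$ is $\mu$-typical, and the already-established $x$-independence of the limit promotes this to all $x$.

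The main obstacle I anticipate is justifying that the large-$n$ asymptotic $P^n_q\mathbbm 1_\mathbb{T} = e^{n\Lambda(-q)}\phi_q + o(e^{n\Lambda(-q)})$ can be differentiated in $q$ with uniform control, i.e.\ interchanging $\partial_q$ with the $n\to\infty$ limit. The clean way around this is to not differentiate the asymptotic directly but instead to use the analytic perturbation theory of the isolated simple eigenvalue: $q\mapsto r(q)$ is analytic by Proposition~\ref{prop:specnew}(3), so $\Lambda$ is analytic, and then the computation of $\dot r(0)$ via the spectral projection (the first-order Kato formula, paired against the left eigenmeasure $k_0=\mu$) is exact and needs no limiting argument. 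With $\Lambda'(0)=\lambda$ in hand, the identification with $\lim_n\tfrac1n\mathbb{E}[\ln DT^n_\omega(x)]$ then follows from Birkhoff's ergodic theorem applied to the skew product $\Theta$ with its ergodic invariant measure $\mathbb{P}\times\mu$ (ergodicity from Lemma~\ref{lem:2pointexact} / uniqueness of $\mu$), plus dominated convergence using the bound $a_1 < DT_\omega < a_2$ from Hypothesis~\ref{h:endo} to pass the expectation through the limit.
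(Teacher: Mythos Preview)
Your approach is correct but differs from the paper's. The paper argues in one line: since $\Lambda(0)=0$, $\Lambda(q)\geq \lambda q$ for all $q$ (Proposition~\ref{prop:specnew}(5), a Jensen-type bound), and $\Lambda$ is analytic, the function $q\mapsto \Lambda(q)-\lambda q$ is nonnegative with a zero at $q=0$, so its derivative vanishes there, giving $\Lambda'(0)=\lambda$. The identification with $\lim_n \tfrac1n\mathbb{E}[\ln DT^n_\omega(x)]$ is then deferred to the cited references \cite{ArnoldStabFormula,StayHomble}.

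You instead compute $\Lambda'(0)$ directly via the first-order Kato perturbation formula: differentiate $P_q\phi_q=r(q)\phi_q$ at $q=0$, pair with the left eigenmeasure $k_0=\mu$, and read off $\dot r(0)=-\lambda$. This is the standard spectral-perturbation route and is more self-contained, since it does not rely on the a priori inequality $\Lambda(q)\geq\lambda q$ (whose proof in the references itself passes through a version of the limit you are trying to establish). Your treatment of the middle equality is also more explicit than the paper's; note that a slightly cleaner variant than invoking Birkhoff plus dominated convergence is to observe $\tfrac1n\mathbb{E}[\ln DT^n_\omega(x)]=\tfrac1n\sum_{i=0}^{n-1}(P^i g)(x)$ with $g(y)=\mathbb{E}[\ln DT_\omega(y)]$ and use the spectral gap of $P_0$ (from Lemma~\ref{lem:Pqcompact}) to get Ces\`aro convergence to $\int g\,d\mu=\lambda$ for every $x$, bypassing the $\mu$-typicality step. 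Either way, both arguments are valid; the paper's buys brevity by leaning on Proposition~\ref{prop:specnew}(5), while yours buys self-containment at the cost of a longer computation.
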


\begin{proof}
    This follows from the fact that $\Lambda(0) =0$, $\Lambda(q) \geq \lambda q$ and the analyticity of $\Lambda$.
    For the complete argument, see  \cite{ArnoldStabFormula,StayHomble}.
\end{proof}


	
	

We have established that $\Lambda$ is a  convex function that vanishes at $0$. The following lemma shows the existence of a second zero if $\lambda \neq 0$. This second zero plays a 
prominent role in our analysis and also appears in the statements of the main theorem on stationary measures.

\begin{lemma}\label{lem:second0}
	    If  $\lambda \neq 0$ then there is a unique $\gamma \neq 0$, with opposite sign, such that $\Lambda(\gamma) = 0$. (We set $\gamma = 0$ if $\lambda = 0$.) 
\end{lemma}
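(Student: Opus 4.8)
The plan is as follows. We may assume $\lambda\neq0$. By Proposition~\ref{prop:specnew} and Lemma~\ref{lem:momentderivatives}, $\Lambda$ is convex and analytic with $\Lambda(0)=0$ and $\Lambda'(0)=\lambda$. Since $\Lambda'(0)\neq0$, convexity shows that $0$ is an endpoint of the interval $\{q:\Lambda(q)\le0\}$: if $\lambda>0$ it is the right endpoint, so $\Lambda>0$ on $(0,\infty)$ and $\Lambda<0$ on some $(-\delta,0)$; if $\lambda<0$ it is the left endpoint, so $\Lambda>0$ on $(-\infty,0)$ and $\Lambda<0$ on some $(0,\delta)$. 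In either case any nonzero root must have sign opposite to $\lambda$, and it remains only to exhibit one. Uniqueness will then follow from convexity and analyticity alone: three roots $a<b<c$ would force $\Lambda\le0$ on $[a,c]$ with $\Lambda(a)=\Lambda(b)=\Lambda(c)=0$, hence $\Lambda$ affine and then (being analytic) identically zero on $[a,c]$, contradicting $\Lambda'(0)=\lambda\neq0$.

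If $\lambda<0$ this is easy. A short computation with Fubini and the substitution $v=T(u)$ gives $\int_{\mathbb{T}}P_{-1}\psi\,dx=2\int_{\mathbb{T}}\psi\,dx$ for every $\psi\in\mathcal{C}(\mathbb{T},\mathbb{R})$ (this uses that $T$ has degree two). Testing this against the dominant eigenfunction $\phi_{-1}>0$ of $P_{-1}$, which by Proposition~\ref{prop:specnew} satisfies $P_{-1}\phi_{-1}=e^{\Lambda(1)}\phi_{-1}$, yields $e^{\Lambda(1)}\int_{\mathbb{T}}\phi_{-1}\,dx=2\int_{\mathbb{T}}\phi_{-1}\,dx$, so $\Lambda(1)=\ln2>0$. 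Since $\Lambda<0$ on $(0,\delta)$ and $\Lambda$ is continuous, the intermediate value theorem produces a root $\gamma\in(0,1)$.

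If $\lambda>0$ the idea is to show $\Lambda(q)\to+\infty$ as $q\to-\infty$, which together with $\Lambda<0$ on $(-\delta,0)$ and continuity produces a root $\gamma<0$. For this I would prove the estimate
\[
\Lambda(q)\ \ge\ \ln\tau+q\ln\kappa\qquad(q\le0)
\]
for suitable $\kappa\in(0,1)$ and $\tau\in(0,1)$; as $\ln\kappa<0$, the right-hand side tends to $+\infty$. This is where Hypothesis~\ref{h:periodic} enters. Let $z$ lie on a hyperbolic attracting periodic orbit of $T$ of period $p$ with multiplier $c=\prod_{i=0}^{p-1}DT(T^iz)<1$, fix $\kappa\in(c^{1/p},1)$, and use uniform continuity together with the contraction of $T^{p}$ near $z$ to choose $r,\rho>0$ so that whenever $d(x,z)<r$ and $|\omega_0|,\dots,|\omega_{p-1}|<\rho$ one has $d\big(T^{p}_{(\omega_0,\dots,\omega_{p-1})}(x),z\big)<r$ and $\prod_{i=0}^{p-1}DT_{\omega_i}(x_i)\le\kappa^{p}$; iterating $m$ times gives an event of probability at least $\big(\min(1,\rho/\vartheta)\big)^{pm}$ on which $DT^{pm}_\omega\le\kappa^{pm}$. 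By Hypothesis~\ref{h:minorisation}, continuity and compactness there are a fixed $k$ and $\tau_0>0$ with $\mathbb{P}\big(d(T^{k}_\omega(x),z)<r\big)\ge\tau_0$ uniformly in $x$, and $DT^{k}_\omega(x)\le a_2^{k}$ there. Composing the two stages with $n=k+pm$, and bounding $\mathbb{E}\big[(DT^n_\omega(x))^{q}\big]$ from below by its restriction to the resulting event (using that $t\mapsto t^{q}$ is decreasing for $q\le0$), yields the displayed estimate after letting $n\to\infty$ and discarding the $n$-independent factors.

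The routine parts — the convexity/analyticity bookkeeping, the transfer-operator identity for $P_{-1}$, and the intermediate value theorem — are straightforward. The real work is the estimate in the last paragraph: a lower bound, uniform in the starting point and decaying only exponentially in $n$, on the probability that the one-point orbit shadows the attracting periodic orbit for a time of order $n$, together with the accompanying control of the derivative cocycle. That shadowing estimate is the step I expect to be the main obstacle.
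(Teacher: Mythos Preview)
Your argument is correct. For $\lambda>0$ you follow essentially the paper's route: use Hypothesis~\ref{h:periodic} to find a contracting periodic orbit, Hypothesis~\ref{h:minorisation} to reach its neighbourhood with uniformly positive probability, and a shadowing estimate to force $\Lambda(q)\to+\infty$ as $q\to-\infty$, whence the root by the intermediate value theorem. Your treatment here is more detailed than the paper's but the idea is identical.

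Your $\lambda<0$ case, however, is genuinely different and neater. The paper argues by symmetry, invoking the existence of an \emph{expanding} periodic orbit (citing \cite{MR806250}) and repeating the shadowing estimate on the other side. You instead exploit the degree-two structure directly: the change of variables $v=T(u)$ gives $\int_{\mathbb{T}}P_{-1}\psi\,dx=2\int_{\mathbb{T}}\psi\,dx$, and testing on $\phi_{-1}>0$ yields the exact value $\Lambda(1)=\ln 2$, from which the root in $(0,1)$ follows immediately. This avoids any appeal to hyperbolic repelling sets and is sharper (it pins down $\Lambda(1)$ exactly, so in particular $\gamma\in(0,1)$). The paper's approach has the virtue of treating both signs by the same mechanism, but yours is more self-contained for this direction and uses only what is already on the table.
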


\begin{proof}
{ {
			Assume that $\lambda > 0$, which implies by \eqref{eq:Lambda'(0)} that $\Lambda'(0) > 0$.	
		By Hypothesis~\ref{h:periodic} there is a contracting periodic point $x_c$ of $T$. Let $k_c$ denote its period.
		By continuity of the map, there exist $\varepsilon > 0$ and $\delta > 0$ such that the set
		\[
		\mathcal{H}^c = \big\{ \omega \in \Sigma_\vartheta \; ; \; x \in B_\varepsilon(x_c) \text{ implies }   T^{k_c}_\omega (x) \in B_\varepsilon (x_c),\,
	 \ln(DT_\omega^{k_c}(x)) < -\delta \big\}
		\]
		has positive measure $\mathbb{P} (	\mathcal{H}^c ) > 0$.
		
		By Hypothesis \ref{h:minorisation}, we know that for any $x \in \T$, there is a positive probability of reaching $B_\varepsilon(x_c)$ in $k$ steps. For any $m = k + nk_c$ sufficiently large, we have
		\begin{align*}
			\mathbb{E} \left[ (DT_\omega^m(x))^{-q} \right]
			&\geq \mathbb{P}(x_k \in B_\varepsilon(x_c))\,   e^{-n \delta q}\,  \mathbb{P}(\mathcal{H}^c)^n.
		\end{align*}	
		Taking the logarithm and applying the limit $\lim_{n \to \infty} \frac{\ln(\cdot)}{n}$, it follows that $\Lambda(q) \to \infty$ as $q \to -\infty$.	
		By continuity of $\Lambda(q)$ and the fact that $\Lambda'(0) > 0$, there is a unique $\gamma < 0$ such that $\Lambda(\gamma) = 0$. 
		
The argument in the case where $\lambda < 0$  is analogous, using an expanding periodic orbit instead of the contracting periodic orbit.  The existence of an expanding periodic orbit $x_c$ of $T$ follows from \cite{MR806250}.
Let $k_c$ denote its period. 
For $\omega_0,\ldots,\omega_{k_c-1}$ near $0$, the maps $T^{k_c}_\omega$ have hyperbolic repelling fixed points near $x_c$. Since $\frac{\partial}{\partial \omega} T_\omega > 0$ everywhere, these hyperbolic repelling fixed points form a set that contains an open neighborhood of $x_c$. 
Using this, there exist $\varepsilon>0$, $\delta>0$ and $C>0$, such that for each $x \in B_\varepsilon (x_c)$ the set
\[
\mathcal{H}^c_{x} = \big\{ \omega \in \Sigma_\vartheta \; ; \;  T^{k_c}_\omega (x) \in B_\varepsilon (x_c),\, \ln(DT_\omega^{k_c}(x)) > \delta \big\}
\]
has strictly positive measure $\mathbb{P} (\mathcal{H}^c_{x}) > C$.  The proof can now be finished as above.
}}			
    
\end{proof}

We write $\partial_q \phi_q$ for the derivative of $\phi_q$ with respect to $q$, and likewise
$\partial^2_q \phi_q$ for the second order derivative with respect to $q$.
Recall from \eqref{eq:Lambda'(0)} that $\lambda = \Lambda'(0)$ and 
let
\begin{align*}
V &\coloneq \Lambda''(0).
\end{align*}

\begin{lemma}\label{lem:analyticeigen}
We have the following two equalities for $x \in \T$:
\begin{equation}
 \mathbb{E}\left[ \ln(D T_\omega (x) ) \right]	-\lambda = \left(P_0 - I\right){\partial_q \phi_0} (x) \label{eq:firstmom}
\end{equation}
and, if $\lambda = 0$, 
\begin{align}
	V = \left(P_0 - I\right){\partial_q^2 \phi_0} (x)-  2 \mathbb{E}\left[ {\partial_q \phi_0} (T_\omega(x)) \ln(D T_\omega (x)) \right] +   \mathbb{E}\left[ \ln^2(DT_\omega (x))\right]. 
	\label{eq:secondmom}
\end{align}
\end{lemma}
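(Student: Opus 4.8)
The plan is to differentiate the eigenvalue equation \eqref{eq:twistedeig} with respect to $q$ and evaluate at $q=0$, using the normalisation $\phi_0 = \mathbbm{1}_\mathbb{T}$, $\Lambda(0)=0$ from Proposition~\ref{prop:specnew}. Recall that $P_q \psi(x) = \mathbb{E}[(DT_\omega(x))^{-q}\psi(T_\omega(x))]$, so that $P_q = P_0$ composed with multiplication by $(DT_\omega(x))^{-q}$ inside the expectation; the $q$-dependence is smooth (analytic) by Proposition~\ref{prop:specnew}, so all the differentiations below are justified. Write $E(q) \coloneqq e^{\Lambda(-q)}$; note $E(0)=1$, $E'(0) = -\Lambda'(0) = -\lambda$, and $E''(0) = \Lambda'(0)^2 - \Lambda''(0) = \lambda^2 - V$.

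First I would differentiate \eqref{eq:twistedeig} once. Differentiating $P_q \phi_q = E(q)\phi_q$ in $q$ gives $(\partial_q P_q)\phi_q + P_q (\partial_q \phi_q) = E'(q)\phi_q + E(q)\partial_q\phi_q$. Here $(\partial_q P_q)\psi(x) = \mathbb{E}[-\ln(DT_\omega(x))(DT_\omega(x))^{-q}\psi(T_\omega(x))]$, so at $q=0$, since $\phi_0 = \mathbbm{1}_\mathbb{T}$, the term $(\partial_q P_0)\mathbbm{1}_\mathbb{T}(x) = -\mathbb{E}[\ln(DT_\omega(x))]$. Using $E(0)=1$ and $E'(0) = -\lambda$, we obtain $-\mathbb{E}[\ln(DT_\omega(x))] + P_0(\partial_q\phi_0)(x) = -\lambda + \partial_q\phi_0(x)$, which rearranges precisely to \eqref{eq:firstmom}: $\mathbb{E}[\ln(DT_\omega(x))] - \lambda = (P_0 - I)\partial_q\phi_0(x)$.

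Next, for \eqref{eq:secondmom} I would differentiate the eigenvalue equation twice and set $q=0$ with the additional assumption $\lambda = 0$. Differentiating twice yields
\[
(\partial_q^2 P_q)\phi_q + 2(\partial_q P_q)(\partial_q\phi_q) + P_q(\partial_q^2\phi_q) = E''(q)\phi_q + 2E'(q)\partial_q\phi_q + E(q)\partial_q^2\phi_q.
\]
Evaluate at $q=0$: we have $\phi_0 = \mathbbm{1}_\mathbb{T}$, so $(\partial_q^2 P_0)\mathbbm{1}_\mathbb{T}(x) = \mathbb{E}[\ln^2(DT_\omega(x))]$; the cross term is $2(\partial_q P_0)(\partial_q\phi_0)(x) = -2\mathbb{E}[\ln(DT_\omega(x))\,\partial_q\phi_0(T_\omega(x))]$; and on the right, using $\lambda = 0$ so $E'(0) = 0$, $E(0)=1$, $E''(0) = \lambda^2 - V = -V$, the first-derivative term drops out and we are left with $-V\,\mathbbm{1}_\mathbb{T}(x) + \partial_q^2\phi_0(x)$. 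Collecting terms gives $\mathbb{E}[\ln^2(DT_\omega(x))] - 2\mathbb{E}[\ln(DT_\omega(x))\,\partial_q\phi_0(T_\omega(x))] + P_0(\partial_q^2\phi_0)(x) = -V + \partial_q^2\phi_0(x)$, which is exactly \eqref{eq:secondmom} after moving $\partial_q^2\phi_0(x) = I\partial_q^2\phi_0(x)$ to the left.

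The only genuinely delicate point is justifying that $q \mapsto \phi_q$ is twice differentiable as a map into $\mathcal{C}(\mathbb{T},\mathbb{R})$ and that differentiation under the expectation sign is legitimate — but the analyticity of $\phi_q$ in $q$ is already recorded in Proposition~\ref{prop:specnew}(3), and the integrands $\omega \mapsto \ln(DT_\omega(x))$, $\ln^2(DT_\omega(x))$ are bounded uniformly in $x,\omega$ by Hypothesis~\ref{h:endo} (since $a_1 < DT_\omega(x) < a_2$), so dominated convergence applies and the interchange is routine. Thus the main content is purely the bookkeeping of the first and second $q$-derivatives of the eigenrelation at $q=0$; no substantial obstacle remains.
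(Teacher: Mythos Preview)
Your approach is identical to the paper's: differentiate the eigenrelation $P_q\phi_q = e^{\Lambda(-q)}\phi_q$ once and twice in $q$, then evaluate at $q=0$. The first-derivative step is carried out correctly and matches the paper verbatim.

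There is, however, a sign slip in the second-derivative step. With $E(q)=e^{\Lambda(-q)}$ and $g(q)=\Lambda(-q)$ one has $g''(q)=\Lambda''(-q)$ (two minus signs cancel), so
\[
E''(0)=\bigl(g''(0)+g'(0)^2\bigr)e^{g(0)}=\Lambda''(0)+\Lambda'(0)^2=\lambda^2+V,
\]
not $\lambda^2-V$. Under the hypothesis $\lambda=0$ this gives $E''(0)=+V$, and the right-hand side of your twice-differentiated identity at $q=0$ reads $V+\partial_q^2\phi_0(x)$ rather than $-V+\partial_q^2\phi_0(x)$. With this correction your rearrangement yields exactly \eqref{eq:secondmom}; as written, your computation would produce the formula with the wrong sign on $V$. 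This is the paper's equation~(3.5) evaluated at $q=0$, where the coefficient $\Lambda'(-q)^2+\Lambda''(-q)$ appears with a plus sign. Once the sign is fixed, nothing else needs to change.
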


\begin{proof}
Differentiating \eqref{eq:twistedeig} once with respect to $q$, yields for  $x \in \T$,
\begin{align}
	e^{\Lambda(-q)}{\partial_q \phi_q} (x) - e^{\Lambda(-q)}\Lambda'(-q)\phi_q(x)  &= \mathbb{E}\left[\partial_q\left( \frac{\phi_q\left(T_\omega(x)\right)}{D T_\omega (x) ^q}\right)\right]\nonumber
	\\
	&= ({P}_{q}{\partial_q \phi_q} )(x) - \mathbb{E}\left[ \frac{\phi_q\left(T_\omega(x)\right) \ln(DT_\omega (x))}{D T_\omega (x)^q}\right]. \label{eq:dp}
\end{align}		
{  Note that the derivative of $\phi_q(x), $ with respect to $q$, exists because of Proposition \ref{prop:specnew}. }

Equation \eqref{eq:dp} evaluated at $q = 0$ yields
\begin{equation*}
	{\partial_q \phi_0} (x) - \Lambda'(0)  = ({P}_{0}{\partial_q \phi_0} )(x) - \mathbb{E} \left[ \ln(DT_\omega (x)) \right].
\end{equation*}
Rewriting this proves \eqref{eq:firstmom}.

Next, differentiating \eqref{eq:dp} with respect to $q$ yields
\begin{multline}
	e^{\Lambda(-q)}\left({\partial^2_q \phi_q} - 2\Lambda'(-q) {\partial_q \phi_q} + \left(\Lambda'(-q)^2 +\Lambda''(-q) \right)\phi_q \right)(x)
	\\  \label{eq:d2p}
	= {P}_q{\partial_q^2 \phi_q} (x) - 2\mathbb{E} \left[ \frac{{\partial_q \phi_q} \left(T_\omega(x) \right)\ln(DT_\omega (x))}{D T_\omega (x)^q}\right]
	+ \mathbb{E}\left[\frac{\phi_q\left(T_\omega(x) \right)\ln^2(D T_\omega (x))}{D T_\omega (x)^q}\right].
\end{multline}
Evaluating \eqref{eq:d2p} for $q=0$ yields
\begin{multline}\label{eq:d2peval}
	 - 2\Lambda'(0) {\partial_q \phi_0} (x) + \Lambda'(0)^2  + \Lambda''(0) 
	\\
= \left( {P}_0 - I \right) {\partial_q^2 \phi_0} (x) - 2\mathbb{E} \left[ {\partial_q \phi_0} \left(T_\omega(x) \right)\ln(DT_\omega (x))  \right]
	+ \mathbb{E}\left[\ln^2(D T_\omega (x))\right].
\end{multline}
%
%
Rewriting, and plugging in $\lambda=0$, we obtain \eqref{eq:secondmom}.
\end{proof}

It is immediate from the convexity of $\Lambda$ that $V \geq 0$. Our set-up implies that in fact $V>0$, which we require in our analysis of the case with a zero Lyapunov exponent $\lambda$ (see Lemma~\ref{lem:martingale} below).

\begin{lemma}
We have $V>0$.
\end{lemma}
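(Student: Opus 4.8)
The plan is to argue by contradiction: suppose $V = \Lambda''(0) = 0$. Since $\lambda = 0$ (we are in the case where this is the relevant regime, or else $\Lambda$ has a second zero $\gamma \neq 0$ and convexity already forces $V > 0$ unless $\Lambda \equiv 0$ near $0$), convexity of $\Lambda$ together with $\Lambda(0) = 0$, $\Lambda'(0) = 0$ and $\Lambda''(0) = 0$ would push towards $\Lambda$ being flat, which we want to show is incompatible with Hypothesis~\ref{h:periodic}. More robustly, I would work directly with the identity \eqref{eq:secondmom} from Lemma~\ref{lem:analyticeigen}. Integrating \eqref{eq:secondmom} against the stationary measure $\mu$ kills the coboundary term $(P_0 - I)\partial_q^2\phi_0$ (since $\mu$ is $P_0$-stationary, $\int (P_0 - I)g\, d\mu = 0$), leaving
\[
V = \int_{\T} \mathbb{E}\left[\ln^2(DT_\omega(x))\right] d\mu(x) - 2\int_{\T}\mathbb{E}\left[\partial_q\phi_0(T_\omega(x))\ln(DT_\omega(x))\right] d\mu(x).
\]
The first term is manifestly $\geq 0$; the strategy is to recognize the whole right-hand side as the variance of an additive observable (a martingale-difference / CLT-type variance) and show it is strictly positive.

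Concretely, I would use \eqref{eq:firstmom}: writing $g = \partial_q\phi_0$ and $h(\omega, x) = \ln(DT_\omega(x))$, equation \eqref{eq:firstmom} says $\mathbb{E}[h(\omega,x)] - \lambda = (P_0 - I)g(x)$, i.e. (with $\lambda = 0$) the function $h(\omega,x) - g(T_\omega(x)) + g(x)$ has zero conditional expectation given $x$. A standard computation (the Gordin-type decomposition) then identifies $V$ as
\[
V = \int_{\T}\mathbb{E}\left[\left(h(\omega,x) - g(T_\omega(x)) + g(x)\right)^2\right] d\mu(x),
\]
which is the asymptotic variance of $\tfrac{1}{\sqrt n}\ln DT^n_\omega(x)$. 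Hence $V = 0$ would force $\ln(DT_\omega(x)) = g(T_\omega(x)) - g(x)$ for $\mathbb{P}\times\mu$-almost every $(\omega, x)$, i.e. $\ln DT_\omega$ is an exact coboundary with continuous transfer function $g$. I would upgrade the a.e. statement to an everywhere identity using continuity of all terms and full support of $\mu$ (Proposition~\ref{prop:uniquestatmeasure}), plus the fact that for each fixed $x$ the map $\omega \mapsto T_\omega(x)$ is a submersion onto an interval, so the identity $\ln DT_\omega(x) = g(T_\omega(x)) - g(x)$ must hold for all $\omega \in \Omega_\vartheta$ and all $x$.

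The final step — and the main obstacle — is deriving a contradiction from this coboundary identity. Summing the identity along a periodic orbit of the deterministic map $T$ (take $\omega \equiv 0$, or more precisely use the periodic orbit supplied by Hypothesis~\ref{h:periodic}): if $x_c$ is a period-$p$ point with $\omega = 0$ constant, telescoping gives $\sum_{i=0}^{p-1}\ln DT(x_i) = g(x_c) - g(x_c) = 0$, so the multiplier of every periodic orbit of $T$ equals $1$. But Hypothesis~\ref{h:periodic} gives a hyperbolic \emph{attracting} periodic orbit, whose multiplier has modulus strictly less than $1$, a contradiction. (One must be slightly careful that the coboundary identity is stated for the random maps $T_\omega$ with $\omega \in [-\vartheta,\vartheta]$, which includes $\omega = 0$; the attracting periodic orbit of $T = T_0$ is exactly such an orbit.) The delicate point to get right is the regularity/everywhere-vs-a.e. upgrade and making sure $g$ is genuinely continuous (which it is, being $\partial_q\phi_0$ with $\phi_q$ analytic in $q$ by Proposition~\ref{prop:specnew}), so that the periodic-orbit summation is valid pointwise. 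This completes the argument.

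\begin{proof}
Suppose for contradiction that $V = 0$. Integrating identity \eqref{eq:secondmom} against the stationary measure $\mu$ and using that $\int_\T (P_0 - I)\partial_q^2\phi_0\, d\mu = 0$ by stationarity, we obtain
\[
0 = V = \int_\T \mathbb{E}\left[\ln^2(DT_\omega(x))\right] d\mu(x) - 2\int_\T \mathbb{E}\left[\partial_q\phi_0(T_\omega(x))\ln(DT_\omega(x))\right] d\mu(x).
\]
Set $g \coloneqq \partial_q\phi_0 \in \mathcal C(\T,\R)$, which is continuous since $\phi_q$ depends analytically on $q$ by Proposition~\ref{prop:specnew}. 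Using \eqref{eq:firstmom} with $\lambda = 0$, namely $\mathbb{E}[\ln(DT_\omega(x))] = (P_0 - I)g(x)$, a direct expansion of the square gives
\[
\int_\T \mathbb{E}\left[\left(\ln(DT_\omega(x)) - g(T_\omega(x)) + g(x)\right)^2\right] d\mu(x) = V = 0.
\]
Hence $\ln(DT_\omega(x)) = g(T_\omega(x)) - g(x)$ for $\mathbb{P}\times\mu$-almost every $(\omega,x)$. Since $\mu$ has full support and smooth density by Proposition~\ref{prop:uniquestatmeasure}, and since for each $x$ the map $\omega \mapsto T_\omega(x)$ is smooth with everywhere nonzero derivative, continuity of both sides forces
\[
\ln(DT_\omega(x)) = g(T_\omega(x)) - g(x) \qquad \text{for all } \omega \in [-\vartheta,\vartheta],\ x \in \T.
\]
In particular, taking $\omega = 0$, we get $\ln(DT(x)) = g(T(x)) - g(x)$ for all $x \in \T$. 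Let $x_c$ be a point of the hyperbolic attracting periodic orbit from Hypothesis~\ref{h:periodic}, of period $p$, and write $x_i = T^i(x_c)$. Summing over $i = 0,\dots,p-1$ and telescoping,
\[
\ln\left(DT^p(x_c)\right) = \sum_{i=0}^{p-1}\ln(DT(x_i)) = \sum_{i=0}^{p-1}\left(g(x_{i+1}) - g(x_i)\right) = 0,
\]
so $|DT^p(x_c)| = 1$, contradicting hyperbolicity and attractivity of the orbit. Therefore $V > 0$.
\end{proof}
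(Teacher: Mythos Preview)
Your proof is correct for the case $\lambda = 0$ (the only case where the lemma is actually used, as noted just before its statement), and the core mechanism---deriving a coboundary identity $\ln DT_\omega(x) = g(T_\omega x) - g(x)$ and then extracting a contradiction---is the same as the paper's. The execution, however, differs in two respects that are worth noting. First, to obtain the coboundary identity: you integrate \eqref{eq:secondmom} against $\mu$ and recognise $V$ as a Gordin-type martingale variance, getting an almost-everywhere identity that you then upgrade to a pointwise one via continuity and full support; the paper instead works pointwise, rewriting \eqref{eq:d2peval} as
\[
V + (P_0 - I)\bigl((\partial_q\phi_0)^2 - \partial_q^2\phi_0\bigr)(x) = \mathbb{E}\bigl[(\partial_q\phi_0(T_\omega x) - \ln DT_\omega(x))^2\bigr] - \bigl(\mathbb{E}[\partial_q\phi_0(T_\omega x) - \ln DT_\omega(x)]\bigr)^2 \geq 0,
\]
and then uses a maximum-principle argument (via Hypothesis~\ref{h:minorisation}) to force the left-hand side to vanish identically when $V=0$. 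Second, for the contradiction: you telescope directly along the attracting periodic orbit of $T = T_0$ supplied by Hypothesis~\ref{h:periodic}, which is clean and short; the paper instead iterates the coboundary identity to conclude $\Lambda(q) = \lambda q$ for all $q$, and then appeals to (the proof of) Lemma~\ref{lem:second0} for the contradiction---which, under the hood, uses exactly the same periodic orbits. Your route is thus a shortcut. One small caveat: because you invoke \eqref{eq:secondmom}, your written proof tacitly assumes $\lambda = 0$, whereas the paper's argument (starting from \eqref{eq:d2peval}) covers general $\lambda$; your parenthetical remark in the plan that convexity plus a second zero forces $V>0$ when $\lambda\neq 0$ is not quite right (a convex analytic function can have $\Lambda''(0)=0$ with $\Lambda'(0)\neq 0$), so if you want the general statement you should either start from \eqref{eq:d2peval} or note that the same telescoping argument, now giving $\ln DT^p(x_c) = p\lambda$, still contradicts the coexistence of attracting and repelling periodic orbits.
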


\begin{proof}
Following \cite{MR1671091} we establish that $V = 0$ implies $\Lambda (q) = \lambda q$, which will lead to a contradiction.

Rewriting \eqref{eq:d2peval}, we obtain
\begin{multline}\label{eq:V+}
 V + \left(P_0 - I\right)  \left(  (\partial_q  \phi_0 )^2 - \partial^2_q \phi_0 \right) (x)
\\
=
\mathbb{E}\left[  \left( \partial_q \phi_0 (T_\omega(x))  - \ln(DT_\omega(x)) \right)^2 \right]
 -  \left(\partial_q \phi_0 (x) - \lambda\right)^2
\\
=
\mathbb{E}\left[  \left( \partial_q \phi_0 (T_\omega(x))  - \ln(DT_\omega(x)) \right)^2 \right]
 -  \left( \mathbb{E}\left[  \partial_q \phi_0 (T_\omega(x))  - \ln(DT_\omega(x))   \right]\right)^2. 
\end{multline}
The last step uses \eqref{eq:firstmom}. We conclude that
\[
V + \left(P_0 - I\right)  \left(  (\partial_q  \phi_0 )^2 - \partial^2_q \phi_0 \right) (x) \geq 0.
\]
Now suppose $V=0$, for the sake of contradiction.  From $ \left(P_0 - I\right)  \left(  (\partial_q  \phi_0 )^2 - \partial^2_q \phi_0 \right) (x) \geq 0$ we conclude that $(\partial_q  \phi_0 )^2 - \partial^2_q \phi_0$ is constant
(to see this, note that if a continuous function $\chi:\mathbb{T} \to \mathbb{R}$ takes a maximum at $x$, then the property $\mathbb{E} \left[ \chi (T_\omega(x)) \right] \geq \chi (x)$ with Hypothesis~\ref{h:minorisation} implies that $\chi$ is maximal at every point of $\mathbb{T}$).  
So in fact 
\[
\left(P_0 - I\right)  \left(  (\partial_q  \phi_0 )^2 - \partial^2_q \phi_0 \right) (x) = 0
\]
and \eqref{eq:V+} implies that
\[
 \partial_q \phi_0 (T_\omega(x))  - \ln(DT_\omega(x)) =  \partial_q \phi_0 (x) - \lambda
\]
for all $x$ and almost all $\omega$.
Then also
\[
 \partial_q \phi_0 (T^n_\omega(x))  - \ln(DT^n_\omega(x)) =  \partial_q \phi_0 (x) - n\lambda.
\]
As $\Lambda(q) = \lim_{n\to\infty} \frac{1}{n} \ln \left(  \mathbb{E} \left[ \left(DT^n_\omega(x) \right)^q \right] \right)$ we find $\Lambda(q) = q \lambda$. Lemma~\ref{lem:second0} shows that this is not the case in our set-up.
\end{proof}

\subsection{Linearized Koopman operator}\label{ss:linkoop}

We introduce the \textit{linearized Koopman operator}, 	defined for continuous real valued functions  $\phi$ on ${\T\times\R^+}$ by
\[
TP \phi(x,u) \coloneqq \mathbb{E} \left[\phi(T_\omega(x), DT_\omega (x)u)\right].
\]

Recall from Proposition~\ref{prop:specnew} that $\phi_q$ is the dominant eigenfunction of ${P}_q$. 
Define $\tilde{W}_q:{\T\times\R^+}\to \R$ by
\begin{align}\label{eq:tildeWq}
\tilde{W}_q(x,u) &= u^{-q}\phi_q\left(x\right).
\end{align}
We reserve the \emph{tilde} notation for functions on $\T\times \R^+$.

\begin{lemma}\label{lem:TPW}
The function $\tilde{W}_q$ is an eigenfunction of $TP$, with eigenvalue ${e^{\Lambda(-q)}}$:
\begin{equation}\label{eq:TPW}
	TP  \tilde{W}_q  (x,u) = e^{\Lambda(-q)} \tilde{W}_q  (x,u).
\end{equation}
\end{lemma}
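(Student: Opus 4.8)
The plan is to verify the eigenfunction identity \eqref{eq:TPW} by a direct computation, substituting the definition \eqref{eq:tildeWq} of $\tilde{W}_q$ into the definition of $TP$ and using the fact that $\phi_q$ is the dominant eigenfunction of the twisted Koopman operator $P_q$ (Proposition~\ref{prop:specnew}, Eq.~\eqref{eq:twistedeig}). First I would write out
\[
TP\tilde{W}_q(x,u) = \mathbb{E}\left[\tilde{W}_q\bigl(T_\omega(x), DT_\omega(x)\,u\bigr)\right]
= \mathbb{E}\left[\bigl(DT_\omega(x)\,u\bigr)^{-q}\,\phi_q\bigl(T_\omega(x)\bigr)\right].
\]
Then I would pull out the factor $u^{-q}$, which is independent of $\omega$, to obtain
\[
TP\tilde{W}_q(x,u) = u^{-q}\,\mathbb{E}\left[\frac{\phi_q\bigl(T_\omega(x)\bigr)}{\bigl(DT_\omega(x)\bigr)^{q}}\right]
= u^{-q}\,\bigl(P_q\phi_q\bigr)(x).
\]

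The final step is to invoke \eqref{eq:twistedeig}, namely $P_q\phi_q = e^{\Lambda(-q)}\phi_q$, which gives
\[
TP\tilde{W}_q(x,u) = u^{-q}\,e^{\Lambda(-q)}\,\phi_q(x) = e^{\Lambda(-q)}\,\tilde{W}_q(x,u),
\]
as claimed. I should note in passing that $\tilde{W}_q$ is a well-defined continuous function on $\mathbb{T}\times\mathbb{R}^+$: for $u>0$ the map $u\mapsto u^{-q}$ is continuous and finite, $\phi_q$ is continuous (indeed positive) by Proposition~\ref{prop:specnew}, and $DT_\omega(x)$ stays in the compact interval $[a_1,a_2]$ by Hypothesis~\ref{h:endo}, so the integrand in the expectation is bounded and the interchange of $\mathbb{E}$ with the extraction of $u^{-q}$ is trivially justified.

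I do not anticipate any serious obstacle here — the statement is essentially a repackaging of the twisted eigenvalue equation, with the variable $u$ playing a purely multiplicative, spectator role. The only point requiring a word of care is domain/regularity bookkeeping (that $\tilde{W}_q$ lands in the space of continuous functions on $\mathbb{T}\times\mathbb{R}^+$ on which $TP$ acts, and that the expectation is finite), which follows immediately from the uniform bounds on $DT_\omega$ in Hypothesis~\ref{h:endo} and the continuity and positivity of $\phi_q$.
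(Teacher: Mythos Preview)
Your proof is correct and follows essentially the same direct computation as the paper's own proof: expand $TP\tilde{W}_q$, factor out $u^{-q}$, recognize $P_q\phi_q$, and apply \eqref{eq:twistedeig}. Your additional remarks on well-definedness and integrability are fine but not present in the paper, which treats the computation as entirely routine.
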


\begin{proof}
This follows from a straightforward computation. For $(x,u) \in {\T\times\R^+}$,
\begin{align*}
	TP \tilde{W}_q (x,u)&=  \mathbb{E} \left[\tilde{W}_q(T_\omega(x),D T_\omega (x) u)\right] \\
	&= \mathbb{E} \left[(DT_\omega(x) u)^{-q}\phi_q\left(T_\omega (x)\right)\right] \\
	&= u^{-q}\mathbb{E} \left[DT_\omega(x) ^{-q}\phi_q\left(T_\omega (x)\right)\right]\\
	&=  u^{-q} {P}_q\phi_q(x)\\
	& = e^{\Lambda(-q)}  u^{-q} \phi_q(x)\\
	&= e^{\Lambda(-q)}\tilde{W}_q(x,u).
\end{align*}
\end{proof}

\begin{remark}
Consider a random sequence 
\[
(x_{n+1}, u_{n+1}) = \left(T_{\sigma^n\omega} (x_n), DT_{\sigma^n\omega} (x_n) u_n\right),
\]
with $x_0,u_0 \in \T\times\R^+$.
 Take $\gamma$ as in Lemma~\ref{lem:second0} such that $e^{\Lambda(\gamma)}=1$. 
Then
\[
(TP - I)  \tilde{W}_\gamma  (x,u) = 0,
\]
which shows that $\tilde{W}_\gamma(x_n,u_n)$ is a martingale. \hfill $\blacksquare$
\end{remark}

\begin{example}\label{ex:white-noise-cont1}
In the setting of Example~\ref{ex:T_nu} with $\vartheta = 0.5$, we obtain
\[
\tilde W_q(x,u) = \|\phi_q\|\,u^{-q},
\]
which is independent of $x$, {  since $P_q\psi$ is constant for every $\psi \in \mathcal C(\T,\R)$.} \hfill $\blacksquare$
\end{example}

\begin{lemma}\label{lem:TPphieta}
There exist $K>0$ and two continuous  functions 
\[ \tilde{\phi}, \tilde{\eta} \in \mathcal{C}^0 ({\T\times\R^+},\mathbb R), \] 
such that the following holds for $(x,u) \in {\T\times\R^+}$.
For $\tilde{\phi}$ we have				
\begin{align}
	\vert \tilde{\phi}(x,u) - \ln(u) \vert &\leq K,  \label{eq:TPphi_log}
	\\
	\left( TP-I\right)\tilde{\phi} (x,u) 
	&=\lambda.   \label{eq:TPphimartingale}
\end{align}
Assume $\lambda = 0$. Then for $\tilde{\eta}$ we have
\begin{align}
	\vert \tilde{\eta}(x,u) - \ln^2(u) \vert &\leq K|\ln(u)|, \label{eq:TPeta_log}
	\\
	\left( TP-I\right)\tilde{\eta}(x,u)  &= 
	V.  \label{eq:TPetamartingale}
\end{align}
\end{lemma}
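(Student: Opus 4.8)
The idea is to build $\tilde\phi$ and $\tilde\eta$ by differentiating the eigenfunction relation $TP\tilde W_q = e^{\Lambda(-q)}\tilde W_q$ from Lemma~\ref{lem:TPW} with respect to $q$ at $q=0$, mirroring exactly the structure of Lemma~\ref{lem:analyticeigen} for $P_q$. Recall $\tilde W_q(x,u) = u^{-q}\phi_q(x)$, so $\partial_q \tilde W_q(x,u) = -\ln(u)\,u^{-q}\phi_q(x) + u^{-q}\partial_q\phi_q(x)$, which at $q=0$ (using $\phi_0 = \mathbbm 1_\T$) gives $\partial_q\tilde W_0(x,u) = -\ln(u) + \partial_q\phi_0(x)$. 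This suggests setting
\[
\tilde\phi(x,u) \coloneqq \ln(u) - \partial_q\phi_0(x).
\]
Since $\partial_q\phi_0$ is a fixed continuous function on the compact circle $\T$, it is bounded, giving \eqref{eq:TPphi_log} with $K = \|\partial_q\phi_0\|$. For the martingale identity \eqref{eq:TPphimartingale}, apply $TP$ to $-\partial_q\tilde W_q$ and differentiate the eigenvalue relation: this is the analogue of deriving \eqref{eq:firstmom}, and the $\lambda$ on the right-hand side appears precisely because $\Lambda'(0)=\lambda$ (Lemma~\ref{lem:momentderivatives}). Concretely, $TP\tilde\phi(x,u) = \mathbb E[\ln(DT_\omega(x)u)] - \mathbb E[\partial_q\phi_0(T_\omega(x))] = \ln(u) + \mathbb E[\ln(DT_\omega(x))] - P_0\partial_q\phi_0(x)$, and by \eqref{eq:firstmom} this equals $\ln(u) + \lambda - \partial_q\phi_0(x) = \tilde\phi(x,u) + \lambda$.

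For $\tilde\eta$ in the case $\lambda=0$, take the second derivative of $\tilde W_q$ in $q$ at $q=0$:
\[
\partial_q^2\tilde W_0(x,u) = \ln^2(u) - 2\ln(u)\,\partial_q\phi_0(x) + \partial_q^2\phi_0(x),
\]
which motivates defining
\[
\tilde\eta(x,u) \coloneqq \ln^2(u) - 2\ln(u)\,\partial_q\phi_0(x) + \partial_q^2\phi_0(x).
\]
The bound \eqref{eq:TPeta_log} is immediate: $|\tilde\eta(x,u) - \ln^2(u)| \leq 2|\ln(u)|\,\|\partial_q\phi_0\| + \|\partial_q^2\phi_0\|$, which is $\leq K|\ln(u)|$ for $|\ln(u)|$ large and can be absorbed into a larger $K$ (or one states it as $\leq K(1+|\ln(u)|)$; I would check which form later arguments actually need, but the displayed form holds for $K$ large enough whenever $|\ln u|\geq 1$ and trivially can be arranged near $|\ln u|=0$ by enlarging $K$ since $\tilde\eta$ is bounded there). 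For \eqref{eq:TPetamartingale}, compute $TP\tilde\eta$ directly: expanding $\ln^2(DT_\omega(x)u) = \ln^2(u) + 2\ln(u)\ln(DT_\omega(x)) + \ln^2(DT_\omega(x))$ and collecting terms, one finds
\[
(TP-I)\tilde\eta(x,u) = 2\ln(u)\big(\mathbb E[\ln DT_\omega(x)] - (P_0-I)\partial_q\phi_0(x)\big) + \big(P_0-I\big)\partial_q^2\phi_0(x) - 2\mathbb E[\partial_q\phi_0(T_\omega x)\ln DT_\omega(x)] + \mathbb E[\ln^2 DT_\omega(x)].
\]
The coefficient of $\ln(u)$ vanishes by \eqref{eq:firstmom} with $\lambda=0$, and the remaining $x$-dependent terms equal $V$ by \eqref{eq:secondmom}. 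This gives $(TP-I)\tilde\eta = V$.

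**Main obstacle.** The computations themselves are routine, being essentially a repackaging of Lemma~\ref{lem:analyticeigen} into functions on $\T\times\R^+$ via the substitution $u \mapsto \ln(u)$. The one genuine point requiring care is the exact form of the growth bound \eqref{eq:TPeta_log}: my candidate $\tilde\eta$ satisfies $|\tilde\eta - \ln^2 u| \leq 2\|\partial_q\phi_0\|\,|\ln u| + \|\partial_q^2\phi_0\|$, so the stated bound $K|\ln u|$ only holds verbatim once $|\ln u|$ is bounded away from zero; near $u=1$ one instead has $\tilde\eta$ bounded, so enlarging $K$ and noting the inequality is only needed in a regime where $|\ln u|$ is large (as it will be when tracking escape from $\Delta_\varepsilon$ with $\varepsilon\to 0$) resolves it. I would phrase the proof so that this is transparent, and double-check that differentiability of $q\mapsto\phi_q$ (hence existence and continuity of $\partial_q\phi_0, \partial_q^2\phi_0$ on $\T$) is legitimately invoked from Proposition~\ref{prop:specnew}(3).
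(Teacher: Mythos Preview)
Your proposal is correct and follows essentially the same approach as the paper: the paper defines $\tilde\phi(x,u) = \ln(u) - \partial_q\phi_0(x)$ and $\tilde\eta(x,u) = \ln^2(u) - 2\ln(u)\,\partial_q\phi_0(x) + \partial_q^2\phi_0(x)$, then verifies the identities by direct computation using \eqref{eq:firstmom} and \eqref{eq:secondmom}, exactly as you do. Your observation about \eqref{eq:TPeta_log} failing literally at $u=1$ is sharper than the paper, which simply asserts the bounds follow from analyticity of $q\mapsto\phi_q$; in practice the lemma is only applied with $u = d(x,y) < R$ small (see Proposition~\ref{prop:phieta}), so $|\ln u|$ is bounded below and the additive constant is absorbed, just as you anticipated.
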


{  Concerning notation in the following proof and further below, $\pi_1$ 
is the function $\pi_1 (x,u) = x$ on $\T\times\R^+$,
$\pi_2$ stands for the function $\pi_2 (x,u) = u$ on $\T\times\R^+$. }

\begin{proof}[Proof of Lemma~\ref{lem:TPphieta}]
We give a constructive proof for Lemma~\ref{lem:TPphieta}, where we use Proposition~\ref{prop:specnew} and Lemma~\ref{lem:analyticeigen}.
We first find the function $\tilde{\phi}$ for which \eqref{eq:TPphi_log} and \eqref{eq:TPphimartingale} holds. Subsequently we find $\tilde{\eta}$ for which \eqref{eq:TPeta_log} and \eqref{eq:TPetamartingale} holds. 

For $ (x,u) \in {\T\times\R^+}$, 
let 
\begin{align}\label{eq:tildephi}
	\tilde{\phi}(x,u) &= \ln(u) -{\partial_q \phi_0} (x). 
\end{align}
Note that 
\begin{align*}
	\tilde{\phi}\in \mathcal{C}^0 ({\T\times\R^+},\mathbb R).
\end{align*} 
Furthermore, 
for all $(x,u) \in {\T\times\R^+}$, we have 
\begin{align*}
	\left(TP-I\right) \tilde{\phi} (x,u) &= 
	\left(TP-I\right)\ln(\pi_2)(x,u)  -\left(P_0-I\right) \left({\partial_q \phi_0} \right)(x) 
	\\  
	&= \mathbb{E} \left[\ln( DT_\omega(x) u )\right] - \ln(u) - \mathbb{E} \left[\ln( DT_\omega(x)  )\right] + \lambda 
	\\ 
	&=\lambda. 
\end{align*}
In the first line we use that ${\partial_q \phi_0} (x)$ only depends on $x$.
For the second equality we apply \eqref{eq:firstmom} in Lemma~\ref{lem:analyticeigen}.

For the second part of the proof, we take $\lambda=0$ and let 
\begin{align*}
	\tilde{\eta}(x,u) = \ln^2(u) -2\ln(u){\partial_q \phi_0} (x) +{\partial^2_q \phi_0} (x).
\end{align*}
And again note that 
\begin{align*}
	\tilde{\eta}\in \mathcal{C}^0 ({\T\times\R^+},\mathbb R).
\end{align*} 
From a straightforward computation, we obtain, for $(x,u)\in {\T\times\R^+}$ 
\begin{multline*}
	\left(TP-I\right)\eta (x,u)  = 
	\\  \left(TP-I\right) \ln^2(\pi_2)(x,u)  
	- 2 \left(TP-I\right) \ln(\pi_2){\partial_q \phi_0} (\pi_1)(x,u)
	+ \left(TP-I\right)  {\partial^2_q \phi_0} (\pi_1)(x,u). 
\end{multline*}
We analyse the terms on the right hand side separately. For the first term, we have
\begin{align}
	\left(TP-I\right) \left(\ln^2(\pi_2)\right)(x,u) 
	&= \mathbb{E} \left[\ln^2(DT_\omega(x) u)\right] -  \ln^2(u) \nonumber
	\\ 
	&= \mathbb{E} \left[\left(\ln(DT_\omega(x)) + \ln (u)\right)^2\right] -  \ln^2(u) \nonumber
	\\
	&=  \mathbb{E} \left[\ln^2(DT_\omega(x)) \right] + 2 \ln(u) \mathbb{E} \left[\ln(DT_\omega(x)) \right].          \label{eq:eta1}
\end{align}

For the second term we have
\begin{align}
    2\left(TP-I\right) &\ln(\pi_2){\partial_q \phi_0} (\pi_1)(x,u) 
	= 2\mathbb{E} \left[\ln(DT_\omega(x)u){\partial_q \phi_0} (T_\omega(x)) \right] - 2\ln(u){\partial_q \phi_0} (x) \nonumber
	\\
	&= 2\mathbb{E} \left[\ln(DT_\omega(x)){\partial_q \phi_0} (T_\omega(x)) \right] +2 \ln(u)\left(\mathbb{E} \left[ {\partial_q \phi_0} (T_\omega(x))\right] -  {\partial_q \phi_0} (x) \right) \nonumber
	\\
	&= 2\mathbb{E} \left[\ln(DT_\omega(x)){\partial_q \phi_0} (T_\omega(x)) \right] +2 \ln(u) (P-I) {\partial_q \phi_0} (x) \nonumber
	\\
	&= 2\mathbb{E} \left[\ln(DT_\omega(x)){\partial_q \phi_0} (T_\omega(x)) \right]+ 2\ln(u)\mathbb{E} \left[\ln(DT_\omega(x)) \right], \label{eq:eta2} 
\end{align}
where in the third to fourth line we make use of the properties of  ${\partial_q \phi_0} $, as described by \eqref{eq:firstmom} in  Lemma~\ref{lem:analyticeigen}.
For the third term, we have
\begin{align}
	\left(TP-I\right) {\partial^2_q \phi_0} (\pi_1)(x,u) &=V +2\mathbb{E}\left[\ln(DT_\omega(x) ){\partial_q \phi_0} \left(T_\omega(x)\right)\right]- \mathbb{E} \left[\ln^2(DT_\omega(x) )\right]\label{eq:eta3}. 
\end{align}
Here we use \eqref{eq:secondmom} in Lemma~\ref{lem:analyticeigen}. 
Combining \eqref{eq:eta1}, \eqref{eq:eta2} and \eqref{eq:eta3}, we conclude, for $(x,u)\in {\T\times\R^+}$, 
\begin{align*}
	\left(TP-I\right)\tilde\eta (x,u) =  V .
\end{align*}

Finally,  \eqref{eq:TPphi_log} and \eqref{eq:TPeta_log} follow from the analyticity in $q$ of $\phi_q(x)$(see Proposition~\ref{prop:specnew}).
\end{proof}

\begin{remark}
Consider a random sequence 
\[
(x_{n+1}, u_{n+1}) = \left(T_{\sigma^n\omega} (x_n), DT_{\sigma^n\omega} (x_n) u_n\right),
\]
with $(x_0,u_0) \in \T\times\R^+$.
 Then \eqref{eq:TPphimartingale}  expresses that the function $\tilde{\phi}(x_n,u_n) - n\lambda$ is a martingale. Assuming $\lambda=0$,  \eqref{eq:TPetamartingale}  expresses that the function $\tilde{\eta}(x_n,u_n) - nV$ is a martingale. \hfill $\blacksquare$
\end{remark}

\begin{example}\label{ex:white-noise-cont2}
In the case of Example~\ref{ex:T_nu}, with $\vartheta = 0.5$, we get $\tilde{\phi} (x,u) = \ln(u) + \|\partial_q\phi_0\|$ and $\tilde{\eta}(x,u) = \ln^2(u)-2\|\partial_q\phi_0\|\ln(u) + \|\partial_q^2\phi_0\|$. Both functions are independent of $x$,  {  as $P_q\psi$ is constant for every $\psi \in \mathcal C(\T,\R)$.} \hfill $\blacksquare$
		\end{example}
		
\subsection{Two-point Koopman operator}\label{ss:insideD}

The {\textit{(annealed) two-point Koopman operator} $P_{(2)}$
is defined for a real valued function $\phi$ on $\T^2\setminus\Delta$ by
\[
P_{(2)}\phi (x,y) \coloneqq \mathbb{E} \left[\phi(T_\omega^{(2)}(x,y))\right].
\] 

Define $\phi:\T^2 \setminus \Delta \to \mathbb{R}$ by  
$$
\phi(x,y) \coloneqq \ln(d(x,y)) - {\partial_q \phi_0} (x). 
$$ 
With $\tilde{\phi}$ from Lemma~\ref{lem:TPphieta} (see \eqref{eq:tildephi}) we have,
for $(x,y) \in \T^2 \setminus \Delta$,
\begin{equation}
	\phi(x,y) = \tilde{\phi}(x,d(x,y)). \label{eq:phi-tilde}
\end{equation}

Let $\eta:\T^2 \setminus \Delta \to \mathbb{R}$ be defined as   
$$
\eta(x,y) \coloneqq \ln^2(d(x,y)) -2{\partial_q \phi_0} (x)\ln(d(x,y)) + {\partial_q \phi_0} (x). 
$$ 
Note that for $(x,y) \in \T^2 \setminus \Delta$,
\begin{equation}
	\eta(x,y) = \tilde{\eta}(x,d(x,y)). \label{eq:eta-tilde}
\end{equation}		
Define $W_q: \T^2\setminus\Delta \to \R$ by 
\begin{equation}\label{eq:Wdef}
	W_q(x,y) \coloneqq d(x,y)^{-q}\phi_q(x).
\end{equation}
With $\tilde{W}_q$ from \eqref{eq:tildeWq} we have for $(x,y) \in \T^2\setminus\Delta$,
\begin{equation}
	W_q(x,y) = \tilde{W}_q(x,d(x,y)). \label{eq:W-tilde}
\end{equation}

 The next lemma compares the action of $TP$ and $P_{(2)}$.

\begin{lemma}\label{lem:TP-P2}
	There exist $R,B>0$, such that we have the following bounds, for  $(x,y)\in \Delta_R\setminus\Delta$,
    \begin{align}
		\left\vert TP\tilde\phi(x,d(x,y)) - P_{(2)}\phi(x,y)) \right\vert &\leq B d(x,y)\label{eq:(TP-P)phi}, \\
		\left\vert TP\tilde\eta(x,d(x,y)) - P_{(2)} \eta(x,y)\right\vert &\leq  B d(x,y)|\ln(d(x,y))|, \label{eq:(TP-P)eta} 
	\end{align}
	and for $q \in [-|\gamma| -1 , |\gamma| +1 ]$,	
        \begin{equation}
            \left|TP\tilde{W}_q(x,d(x,y))-P_{(2)}W_q(x,y)\right| \leq B d(x,y)^{-q+1}.\label{eq:TPW-PW}
        \end{equation}
\end{lemma}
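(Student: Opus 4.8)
The plan is to exploit the fact that all three quantities $\tilde\phi$, $\tilde\eta$, $\tilde W_q$ are built out of $\ln(u)$, $\ln^2(u)$, $u^{-q}$ composed with the flow of $u$ along $DT_\omega(x)u$, and that $TP$ feeds the exact linearized iterate $(T_\omega(x), DT_\omega(x)d(x,y))$ into them, whereas $P_{(2)}$ feeds the true iterate $(T_\omega(x), d(T_\omega(x),T_\omega(y)))$, with $T_\omega(y)$ and the first coordinate $T_\omega(x)$ also perturbed. First I would fix $R$ small enough that $R < R_{\min}$, so that for $(x,y)\in\Delta_R\setminus\Delta$ the quantity $d(T_\omega(x),T_\omega(y))$ is genuinely the image arc-length and satisfies the two-sided bound $a_1 d(x,y) \le d(T_\omega(x),T_\omega(y)) \le a_2 d(x,y)$ from \eqref{eq:Rmin}; in particular the two-point iterate never lands on the diagonal, so $P_{(2)}\phi$, $P_{(2)}\eta$, $P_{(2)}W_q$ are well defined on this strip. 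The core estimate is then a Taylor-expansion bound: writing $d' \coloneqq d(T_\omega(x),T_\omega(y))$, a $\mathcal C^2$ estimate on $T$ gives
\[
\left| d' - DT_\omega(x)\, d(x,y) \right| \le C\, d(x,y)^2
\]
uniformly in $\omega$ and in $(x,y)\in\Delta_R$, since the derivative of arc-length along the curve $t\mapsto d(T_\omega(x+t(y-x)\bmod 1))$ at $t=0$ is $DT_\omega(x)\,d(x,y)$ up to sign and the second derivative is bounded by $\|T\|_{\mathcal C^2}$. This is the one genuinely analytic input and I expect it to be the main obstacle — care is needed because $d(\cdot,\cdot)$ is only the quotient metric on $\T$, so one should lift to $\R$ on the short arc between $x$ and $y$ where everything is smooth, and one must also track that $|T_\omega(x) - $ (the first coordinate fed to $\tilde\phi$ etc.)$|$ is actually zero here, since both operators use $T_\omega(x)$ as the $x$-coordinate; the only discrepancy is in the $u$-slot.

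With that estimate in hand, the three bounds follow by applying the mean value theorem to the outer functions. For \eqref{eq:(TP-P)phi}: $\tilde\phi(x,u) = \ln(u) - \partial_q\phi_0(x)$, so $TP\tilde\phi(x,d(x,y)) - P_{(2)}\phi(x,y) = \mathbb{E}\big[\ln(DT_\omega(x)d(x,y)) - \ln(d')\big]$, and since $\ln$ has derivative $1/u$ with $u$ comparable to $d(x,y)$ (both $DT_\omega(x)d(x,y)$ and $d'$ lie between $a_1 d(x,y)$ and $a_2 d(x,y)$), the $\mathcal C^2$ estimate gives $|\ln(DT_\omega(x)d(x,y)) - \ln(d')| \le \frac{C d(x,y)^2}{a_1 d(x,y)} = \frac{C}{a_1} d(x,y)$; taking expectations yields \eqref{eq:(TP-P)phi}. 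For \eqref{eq:(TP-P)eta}: $\tilde\eta$ is a quadratic polynomial in $\ln(u)$ with $x$-dependent coefficients, so the difference is controlled by $|\ln^2(DT_\omega(x)d(x,y)) - \ln^2(d')|$ (the linear-in-$\ln u$ term was already handled) which by the mean value theorem is bounded by $2\max|\ln(\cdot)| \cdot |\ln(DT_\omega(x)d(x,y)) - \ln(d')| \le C' |\ln(d(x,y))|\, d(x,y)$ on the strip, giving \eqref{eq:(TP-P)eta}. For \eqref{eq:TPW-PW}: $\tilde W_q(x,u) = u^{-q}\phi_q(x)$, so the difference is $\mathbb{E}\big[\phi_q(T_\omega(x))\big((DT_\omega(x)d(x,y))^{-q} - (d')^{-q}\big)\big]$; since $u\mapsto u^{-q}$ has derivative $-q u^{-q-1}$ and both arguments are comparable to $d(x,y)$, this is bounded by $|q|\,\|\phi_q\|_{\infty}\, (a_1 d(x,y))^{-q-1} C d(x,y)^2 = B\, d(x,y)^{-q+1}$, with $B$ uniform for $q$ in the compact interval $[-|\gamma|-1,|\gamma|+1]$ because $\phi_q$ depends continuously (indeed analytically, by Proposition~\ref{prop:specnew}) on $q$ and hence $\|\phi_q\|_\infty$ is bounded there. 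Finally I would choose $B$ as the maximum of the three constants produced and $R$ as above (possibly shrunk once more to absorb constants), completing the proof.
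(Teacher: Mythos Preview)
Your proposal is correct and follows essentially the same route as the paper: a Taylor/$\mathcal C^2$ estimate giving $|d(T_\omega(x),T_\omega(y)) - DT_\omega(x)d(x,y)| \le C\,d(x,y)^2$ on a strip $\Delta_R$ with $R<R_{\min}$, followed by mean-value bounds on the outer functions $\ln(u)$, $\ln^2(u)$, $u^{-q}$ (the paper uses the equivalent $\ln(1+x)$ inequalities in place of your direct MVT for the $\phi$ case). The only cosmetic point is that in the $W_q$ estimate you should replace $(a_1 d(x,y))^{-q-1}$ by $\max(a_1^{-q-1},a_2^{-q-1})\,d(x,y)^{-q-1}$ to cover both signs of $-q-1$, but this is harmless on the compact $q$-interval.
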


\begin{proof}  
	{ {We will work out the estimates for \eqref{eq:(TP-P)phi}, then the computation for \eqref{eq:(TP-P)eta} is analogous.}}  We will sketch the proof for \eqref{eq:TPW-PW}.
	
	Recall that $w(x,y)$ denotes the signed distance between $x$ and $y$, for nearby points $x, y \in \mathbb{T}$.
	Take $R >0$ small enough so that $T_\omega$ is injective on all intervals of length $R$.
	For $(x,y) \in \Delta_{R}\setminus \Delta$,
	\begin{align}
		TP \tilde{\phi}(x,|w(x,y)|)&= \mathbb{E} \left[\ln(|DT_\omega(x) w(x,y)| )\right]  - P_0{\partial_q \phi_0} (x) \nonumber\\
		&= \ln(|w(x,y)|) + \mathbb{E} \left[\ln(DT_\omega(x)) \right]  -\mathbb{E} \left[{\partial_q \phi_0} \left(T_\omega(x)\right)\right].\label{eq:TP}
	\end{align}

	To determine $P_{(2)}\phi$, we use a Taylor expansion for $T_\omega$. 
	We obtain,
	\begin{equation*}
		w(T_\omega^{(2)}(x,y)) = DT_\omega(x)w(x,y) +\frac{1}{2} D^2 T_\omega (\xi)w(x,y)^2
		\label{eq:taytay},
	\end{equation*}
	for a $\xi$ in the interval between $x$ and $x + w(x,y)$.
	Therefore, for $(x,y) \in \Delta_{R}\setminus \Delta$ with $R>0$ small enough, 
	\begin{multline}
		P_{(2)} \phi (x,y) = \mathbb{E} \left[\ln\left(\left| w(T_\omega^{(2)}(x,y))\right|\right)\right] -P_0{\partial_q \phi_0} (x) 
		\\
		= \mathbb{E} \left[\ln\left(\left| DT_\omega(x) w(x,y) + \frac{D^2T_\omega(\xi)  w(x,y)^2}{2} \right|\right)\right]  - P_0{\partial_q \phi_0} (x) 
		\\
		= \ln(|w(x,y)|) + \mathbb{E} \left[\ln\left(DT_\omega(x)\right)  \right] 
		+ \mathbb{E} \left[\ln\left( \left| 1 +\frac{D^2T_\omega(\xi)  w(x,y)}{2 DT_\omega (x)} \right|\right) \right]  -
		\mathbb{E} [{\partial_q \phi_0} (T_\omega(x))].
		\label{eq:P}
	\end{multline}
	
	Combining \eqref{eq:TP} and \eqref{eq:P} yields, for $(x,y) \in \Delta_{R}\setminus \Delta$,
	\begin{align}
		\left| P_{(2)} \phi (x,y) - TP \phi (x,|w(x,y)|) \right| &\leq \left|
		\mathbb{E} \left[\ln\left( \left| 1 +\frac{D^2T_\omega(\xi)  w(x,y)}{2 DT_\omega (x)} \right|\right) \right] \right|
		\label{eq:2Bbdwithlog}.
	\end{align}
	The right hand side of \eqref{eq:2Bbdwithlog} can be bounded using the standard inequalities $x/(1+x)\leq \ln(1+x) \leq x$ for $x>-1$.
	For $R$ small we have  $\left|  \frac{D^2 T_\omega(\xi) w(x,y)}{2DT_\omega(x)}\right| < 1$ for $(x,y) \in \Delta_{R}\setminus \Delta$.
	Then for the upper bound we obtain
	\begin{align*}
		\mathbb{E} \left[\ln\left(1 + \frac{D^2 T_\omega(\xi)  w(x,y)}{2DT_\omega(x)}\right)\right] &\leq  \frac{|w(x,y)| \Vert D^2T_\omega\Vert }{2 a_1}.
	\end{align*}	
	Similarly for the lower bound, when we take $R$ small enough so that $|w(x,y)| \Vert D^2 T_\omega \Vert < a_1$ for $(x,y) \in \Delta_{R}\setminus \Delta$, we obtain
	\begin{align*}
		\mathbb{E} \left[ \ln\left(1 + \frac{D^2T_\omega(\xi) w(x,y)}{2DT_\omega(x)} \right)\right]  
		&\geq  \mathbb{E} \left[ \ddfrac{\frac{D^2T_\omega(\xi) w(x,y)}{2DT_\omega(x)}}{1 + \frac{D^2T_\omega(\xi) w(x,y)}{2DT_\omega(x)}} \right] 
		\\
		&\geq  \mathbb{E} \left[ \dfrac{D^2T_\omega(\xi) w(x,y)}{2DT_\omega(x) + D^2T_\omega(\xi) w(x,y)}\right] 
		\\
		&\geq -|w(x,y)|\Vert D^2T_\omega\Vert  \mathbb{E} \left[ \frac{1}{2DT_\omega(x) + D^2T_\omega(\xi) w(x,y)}\right] 
		\\
		&\geq \frac{-|w(x,y)|\Vert D^2T_\omega\Vert}{a_1}.
	\end{align*}
		Setting $B = \frac{\Vert D^2T_\omega \Vert}{a_1}$ finishes the estimates for  \eqref{eq:(TP-P)phi}.

    { 
    To prove \eqref{eq:(TP-P)eta}, we again perform an explicit calculation.
Note that $\eta = \ln^2(d(x,y)) - 2 \partial_q \phi_0 (x)\ln(d(x,y)) + \partial_q\phi_0(x)$ is made up from three terms. When bounding the difference $\left\vert TP\tilde\eta(x,d(x,y)) - P_{(2)} \eta(x,y)\right\vert$, the contribution from the last two terms is treated as above. 
    %
	It therefore suffices to bound the contribution from the first term $\ln^2(d(x,y))$.
	
	For $(x,y) \in \Delta_{R}\setminus \Delta$, factoring the Taylor expansion as in the previous step, we have
	\begin{align*}
		P_{(2)} \ln^2(d) (x,y) 
		&= \mathbb{E} \left[\ln^2\left(\left| DT_\omega(x)w(x,y) + \frac{D^2T_\omega(\xi) w(x,y)^2}{2} \right|\right)\right]
		\\
		&= \mathbb{E} \left[\left( \ln(|DT_\omega(x)w(x,y)|) + \ln\left( \left| 1 +\frac{D^2T_\omega(\xi) w(x,y)}{2 DT_\omega (x)} \right| \right) \right)^2 \right].
	\end{align*}
	Expanding the square and comparing with the term $\mathbb{E}[\ln^2(|DT_\omega(x)w(x,y)|)]$ from $TP \tilde{\eta}$, we find that the difference is bounded by
	\begin{align*}
		\left| P_{(2)} \eta (x,y) - TP \tilde{\eta} (x,d(x,y)) \right| 
		&\leq \mathbb{E} \left[ \left| 2\ln(|DT_\omega(x)w(x,y)|) \ln\left( 1 +\frac{D^2T_\omega(\xi) w(x,y)}{2 DT_\omega (x)} \right) \right| \right] 
		\\
		&+ \mathbb{E} \left[ \ln^2\left( 1 +\frac{D^2T_\omega(\xi) w(x,y)}{2 DT_\omega (x)} \right) \right] + O(d(x,y)).
	\end{align*}
    Using the inequality $u^2/(1+u)^2\leq \ln^2(1+u) \leq u^2$ for small $u$ (as in \eqref{eq:2Bbdwithlog}), the second term is bounded by $O(d(x,y)^2)$.
    For the first term, since $R$ is small and $|\ln(|DT_\omega(x)|)|$ is bounded, we have $|\ln(|DT_\omega(x)w(x,y)|)| \leq C |\ln(d(x,y))|$ for some constant $C$. Thus,
	\begin{align*}
		 \left| P_{(2)} \eta (x,y) - TP \tilde{\eta} (x,d(x,y)) \right| 
		 &\leq 2 C |\ln(d(x,y))| \cdot \frac{|w(x,y)| \Vert D^2T_\omega\Vert }{2 a_1} + O(d(x,y))
		 \\
		 &\leq B d(x,y) |\ln(d(x,y))|.
	\end{align*}

	We can choose $B$ such that \eqref{eq:(TP-P)eta} holds for $(x,y) \in \Delta_{R}\setminus \Delta$ with $R$ small enough.
    }
    
		For \eqref{eq:TPW-PW}, take $(x,y) \in \Delta_{R}\setminus \Delta$ with $R$ small and assume, without loss of generality, $x<y$.
	Then
	\begin{multline*}
		\left|TP\tilde{W_q}(x,d(x,y))-P_{(2)}W_q(x,y)\right| \nonumber
		\\
		\leq 
		\left\Vert \phi_q\right\Vert \mathbb{E}  \left[ \left|\left| DT_\omega(x) w(x,y) \right|^{-q} -  \left|DT_\omega(x) w(x,y) +\frac{1}{2} D^2T_\omega(\xi) w(x,y)^2 \right|^{-q}  \right| \right].
		\nonumber 
	\end{multline*}
	 With \eqref{eq:taytay} and the mean value theorem for $a \mapsto a^{-q}$, we get
	\begin{align*}
		\left|TP\tilde{W_q}(x,w(x,y))-P_{(2)}W_q(x,y)\right| 
		&\leq C_q |q| \mathbb{E} \left[ |w(x,y)|^{-q-1}|D^2T_\omega(\xi)w(x,y)|^2 \right]
		\\
		&\leq C_q |q| \Vert D^2T \Vert^2 |w(x,y)|^{-q+1}
		\\
		&\leq B d(x,y)^{-q+1},
	\end{align*}
    for some positive constant $C_q$. As $C_q|q|$ depends continuous on $q$, restricting $q$ to $[-|\gamma|-1,|\gamma|+1]$ allows us to uniformly bound it with a constant $B$. This completes the proof.
\end{proof}

The following lemma is a key lemma that adapts the equalities and estimates in  Lemma~\ref{lem:TP-P2} for $TP$ to the setting for $P_{(2)}$, but only near the diagonal $\Delta$,
using that near $\Delta$, $P_{(2)}$ can be approximated by $TP$ (Lemma~\ref{lem:TP-P2}).

\begin{proposition}\label{prop:phieta}
	There exists a $R,K>0$, and continuous integrable functions
	\[
	\phi^\pm, \eta^\pm \in \mathcal{C}^0(\mathbb{T}^2\setminus \Delta, \R),
	\]
	such that the following holds for $(x,y) \in \Delta_R\setminus \Delta$.
	
	\noindent For $\phi^\pm$ we have
	\begin{align}
		\vert \phi^\pm(x,y) - \ln(d(x,y)) \vert &\leq K,
		\label{eq:phi_log}
		\\
		\left( P_{(2)}-I\right)\phi^- (x,y) &\leq \lambda \leq   \left( P_{(2)}-I\right) \phi^+ (x,y). \label{eq:phimartingale}
	\end{align}
	Assume $\lambda=0$. Then for $\eta^\pm$ we have
	\begin{align}
		\vert \eta^\pm(x,y) - \ln^2(d(x,y)) \vert &\leq K|\ln(d(x,y))| 
		\label{eq:eta_log},
		\\
		\left( P_{(2)}-I\right)\eta^- (x,y)  &\leq V \leq  \left( P_{(2)}-I\right)\eta^+ (x,y).
		\label{eq:etamartingale}
	\end{align}
\end{proposition}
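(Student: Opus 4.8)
The plan is to build $\phi^\pm$ and $\eta^\pm$ by correcting the functions $\phi$ and $\eta$ (equivalently $\tilde\phi$, $\tilde\eta$) from Lemma~\ref{lem:TPphieta} with a small multiple of a suitable ``barrier'' function whose $(P_{(2)}-I)$-image dominates the error terms coming from Lemma~\ref{lem:TP-P2}. Concretely, by \eqref{eq:phi-tilde} and \eqref{eq:TPphimartingale} together with \eqref{eq:(TP-P)phi}, for $(x,y)\in\Delta_R\setminus\Delta$ we have
\[
\left|(P_{(2)}-I)\phi(x,y) - \lambda\right| \le B\,d(x,y) + \left|(P_{(2)}\phi - P_{(2)}\phi)(x,y)\right|,
\]
wait --- more precisely $(P_{(2)}-I)\phi(x,y) = (TP-I)\tilde\phi(x,d(x,y)) + \big(P_{(2)}\phi - TP\tilde\phi\big)(x,y) = \lambda + O(d(x,y))$ uniformly on $\Delta_R\setminus\Delta$. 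So $\phi$ itself already satisfies \eqref{eq:phimartingale} up to an error of size $B\,d(x,y)$. To absorb this error I would add $\mp c\, d(x,y)$ (or some comparable function) and check that $(P_{(2)}-I)$ applied to $d(x,y)$ is, near the diagonal, bounded below by a positive constant: indeed the two-point map contracts or expands distances by a definite factor on average only in general, but using Hypothesis~\ref{h:endo} and the Taylor expansion \eqref{eq:taytay}, $P_{(2)}\,d(x,y) = d(x,y)\,\mathbb{E}[DT_\omega(x)] + O(d(x,y)^2)$, which is $\le a_2\, d(x,y) + O(d^2)$ and $\ge a_1\,d(x,y)+O(d^2)$; this does not immediately give a sign for $(P_{(2)}-I)d$. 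The cleaner choice is therefore to perturb by $\pm c\, d(x,y)^\kappa$ for a small exponent $\kappa\in(0,1)$, since $P_{(2)}\,d^\kappa \approx d^\kappa\,\mathbb{E}[DT_\omega(x)^\kappa]$ and by Jensen/convexity one can arrange $\mathbb{E}[DT_\omega(x)^\kappa] \neq 1$; but the sign still depends on $x$.

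A more robust route, and the one I would actually pursue, is to use the twisted eigenfunction machinery directly: set
\[
\phi^\pm(x,y) = \phi(x,y) \pm K_0\, W_{-\kappa}(x,y)
\quad\text{for small }\kappa>0,
\]
where $W_{-\kappa}(x,y) = d(x,y)^{\kappa}\phi_{-\kappa}(x)$ from \eqref{eq:Wdef}. By Lemma~\ref{lem:TPW} and \eqref{eq:TPW-PW}, $(P_{(2)}-I)W_{-\kappa}(x,y) = (e^{\Lambda(\kappa)}-1)W_{-\kappa}(x,y) + O(d(x,y)^{\kappa+1})$. Since $\Lambda$ is convex with $\Lambda(0)=0$ and $\Lambda'(0)=\lambda$, for $\lambda\ne 0$ we get $e^{\Lambda(\kappa)}-1$ of a definite sign for small $\kappa$; and $W_{-\kappa}(x,y)\asymp d(x,y)^\kappa$ uniformly (as $\phi_{-\kappa}$ is continuous and positive by Proposition~\ref{prop:specnew}), so $(P_{(2)}-I)W_{-\kappa}$ has magnitude $\asymp d(x,y)^\kappa$, which \emph{beats} the error term $B\,d(x,y)$ from \eqref{eq:(TP-P)phi} once $\kappa<1$ and $d(x,y)$ is small enough (shrinking $R$). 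For $\lambda=0$ the error in \eqref{eq:phimartingale} is exactly the sign one needs: $(P_{(2)}-I)\phi = O(d)$, and adding $\pm K_0\,d(x,y)^{1/2}\phi_{-1/2}(x)$-type terms --- with $(P_{(2)}-I)$ of that term $\asymp (e^{\Lambda(1/2)}-1)d^{1/2} + O(d^{3/2})$ and $\Lambda(1/2)>0$ strictly since $V=\Lambda''(0)>0$ --- gives the two-sided bound $(P_{(2)}-I)\phi^-\le 0 = \lambda \le (P_{(2)}-I)\phi^+$. The inequality \eqref{eq:phi_log} is preserved because the correction term is $O(d(x,y)^\kappa)\to 0$, hence bounded, so it only changes the constant $K$. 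The construction of $\eta^\pm$ is identical in spirit: take $\eta^\pm(x,y) = \eta(x,y) \pm K_0\,d(x,y)^{1/2}\phi_{-1/2}(x)$ (or a similar barrier), use \eqref{eq:TPetamartingale} and \eqref{eq:(TP-P)eta} so that $(P_{(2)}-I)\eta = V + O(d(x,y)|\ln d(x,y)|)$, and note the barrier's image $\asymp d(x,y)^{1/2}$ dominates $d(x,y)|\ln d(x,y)|$ near the diagonal; \eqref{eq:eta_log} is preserved since the correction is $o(1)$, hence $o(|\ln d(x,y)|)$.

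I would organize the write-up as follows. First fix $\kappa\in(0,1)$ small enough that $\Lambda(\kappa)$ and $\Lambda(-\kappa)$ are both positive when $\lambda=0$ (possible since $\Lambda$ is strictly convex with $\Lambda(0)=0$, $\Lambda'(0)=0$, $V>0$); when $\lambda\ne0$ choose $\kappa$ small so that $e^{\Lambda(\kappa)}-1$ has the sign of $\lambda$. Second, record the three identities/estimates: $(P_{(2)}-I)\phi(x,y)=\lambda+E_\phi$, $(P_{(2)}-I)\eta(x,y)=V+E_\eta$ with $|E_\phi|\le B\,d(x,y)$, $|E_\eta|\le B\,d(x,y)|\ln d(x,y)|$ (from Lemmas~\ref{lem:TPphieta},~\ref{lem:TP-P2}), and $(P_{(2)}-I)W_{-\kappa}(x,y) = (e^{\Lambda(\kappa)}-1)W_{-\kappa}(x,y) + E_W$ with $|E_W|\le B\,d(x,y)^{\kappa+1}$. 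Third, using $c_1 d(x,y)^\kappa \le W_{-\kappa}(x,y)\le c_2 d(x,y)^\kappa$ (positivity and continuity of $\phi_{-\kappa}$, compactness), choose $R$ small so that $|E_\phi| + |E_W| \le \tfrac12 |e^{\Lambda(\kappa)}-1| c_1 d(x,y)^\kappa$ on $\Delta_R\setminus\Delta$, and pick $K_0$ large accordingly; then $\phi^\pm := \phi \pm K_0 W_{-\kappa}$ satisfy both \eqref{eq:phi_log} (correction is bounded) and \eqref{eq:phimartingale}. Fourth, repeat verbatim for $\eta^\pm$, noting $d^{\kappa}$ dominates $d|\ln d|$ near $0$. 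Finally, extend $\phi^\pm,\eta^\pm$ to all of $\mathbb{T}^2\setminus\Delta$ arbitrarily as continuous integrable functions (e.g.\ multiply the local definition by a cutoff supported in $\Delta_R$ plus any fixed continuous extension), since the proposition only asserts the inequalities on $\Delta_R\setminus\Delta$; integrability on $\mathbb{T}^2\setminus\Delta$ holds because $\ln d(x,y)$ and $\ln^2 d(x,y)$ are integrable against Lebesgue near $\Delta$. The main obstacle is the sign-chasing in step three: one must make sure that in the $\lambda=0$ case the \emph{same} sign of $e^{\Lambda(\pm\kappa)}-1$ (namely positive, from $V>0$) yields a correction of the correct sign on \emph{both} sides --- i.e.\ that adding $+K_0 W_{-\kappa}$ pushes $(P_{(2)}-I)\phi$ up and subtracting pushes it down past the error band --- and to verify the magnitude bookkeeping $d^\kappa \gg d, \; d^\kappa \gg d|\ln d|$ holds uniformly after shrinking $R$; everything else is routine bookkeeping with the estimates already in hand.
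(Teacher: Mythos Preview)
Your proposal is correct and follows essentially the same route as the paper: both correct $\phi$ and $\eta$ by adding $\pm c\, W_{-\kappa}$ (the paper writes $W_{-q_0}$ with $q_0\in(0,1/2)$ and $\Lambda(q_0)\neq 0$), using that $(P_{(2)}-I)W_{-\kappa}\asymp (e^{\Lambda(\kappa)}-1)\,d(x,y)^\kappa$ dominates the $O(d)$ and $O(d|\ln d|)$ error terms from Lemma~\ref{lem:TP-P2}. The only cosmetic differences are that the paper lets the \emph{sign} of the coefficient $c$ absorb the sign of $e^{\Lambda(q_0)}-1$ (so one need not separately arrange $\Lambda(\kappa)>0$ when $\lambda\neq 0$), and that $\phi^\pm,\eta^\pm$ are defined globally from the start---no cutoff is needed, since $\phi$, $\eta$, and $W_{-q_0}$ are already continuous on all of $\mathbb{T}^2\setminus\Delta$.
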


\begin{proof}  
	We can bound $\left(P_{(2)}-I\right)\phi(x,y)$, by applying the triangle inequality and using \eqref{eq:TPphimartingale}, \eqref{eq:phi-tilde} and  \eqref{eq:(TP-P)phi}:
	{  \begin{multline*}
		\left\vert\left(P_{(2)}-I\right)\phi (x,y) - \lambda\right\vert \leq
		\\ \overbrace{\left\vert TP\tilde{\phi}(x,d(x,y))-P_{(2)}\phi(x,y)\right\vert}^{\leq Bd(x,y) \text{ by \eqref{eq:(TP-P)phi}}} 
		+ 
		\overbrace{\left\vert\left(TP-I\right)\tilde\phi(x,d(x,y))-\lambda\right\vert}^{=0 \text{ by \eqref{eq:TPphimartingale}}}
		+
		\overbrace{\left\vert \phi(x,y)- \tilde{\phi}(x,d(x,y))\right\vert}^{= 0 \text{ by \eqref{eq:phi-tilde}}},
	\end{multline*} }
	for  $(x,y) \in \Delta_{R}\setminus \Delta$.
	So
	\begin{equation}
		\left\vert\left(P_{(2)}-I\right)\phi(x,y) - \lambda\right\vert \leq
		Bd(x,y), 
		\label{eq:(P2-I)phi}
	\end{equation}
	for  $(x,y) \in \Delta_{R}\setminus \Delta$.
	
	This basically means that $\left(P_{(2)}-I\right)\phi$ near the diagonal is close to $\lambda$.
	To obtain functions $\phi^\pm$ so that $P_{(2)} - I$ applied to them, after subtracting $\lambda$, has a definite sign, 
	we add suitable functions to $\phi$:
	we take $\phi^\pm:\T^2\setminus \Delta \to \R$ of the form 
	\begin{equation}\phi^\pm(x,y) = \phi(x,y) \pm c_1 W_{-{q_0}}(x,y), \label{eq:phipm} \end{equation}
	for a small positive value $q_0\in (0,1/2)$, such that $\Lambda(q_0) \neq 0$,
	and $|c_1|$  large enough (will be defined below), with sign such that $c_1 (e^{\Lambda(q_0)}-1) > 0 $. 
	Note that $\phi^\pm \in \mathcal{C}^0(\mathbb{T}^2\setminus \Delta, \R)$.
	
	By Proposition~\ref{prop:specnew}, we obtain
	\begin{align}
		\left(TP-I\right)\tilde W_{-q_0}(x,u) = \left(e^{\Lambda(q_0)}-1\right)  u^{q_0}\phi_{-q_0}(x), 
		\label{eq:TPW-W}
	\end{align}
	for   $(x,u) \in {\T\times\R^+}$.	
		
	Combining \eqref{eq:(P2-I)phi}, \eqref{eq:TPW-W}, \eqref{eq:TPW-PW} and \eqref{eq:W-tilde}  allows us to  prove the first inequality of \eqref{eq:phimartingale}, by choosing $|c_1|$ in \eqref{eq:phipm} large enough. For $(x,y) \in \Delta_R\setminus \Delta $, 
	\begin{multline}
		\left(P_{(2)}-I\right)\phi^+(x,y) - \lambda
		= \left(P_{(2)}-I\right)\phi(x,y) - \lambda + c_1\left(P_{(2)}-I\right)W_{{-q_0}}(x,y)
		\\
		= \overbrace{\left(P_{(2)}-I\right)\phi(x,y) - \lambda}^{\geq - B d(x,y) \text{ by } \eqref{eq:(P2-I)phi} } \qquad + \overbrace{ c_1 (TP-I)\tilde W_{{-q_0}}(x,d(x,y))}^{\geq  c_1 \left(e^{\Lambda(q_0)}-1\right) d(x,y)^{q_0}/C \text{ by }\eqref{eq:TPW-W}}
		\\
		+  \overbrace{ c_1 \left(P_{(2)}W_{{-q_0}}(x,y)-TP\tilde W_{{-q_0}}(x,d(x,y))\right)}^{\geq - |c_1| B d(x,y)^{q_0+1} \text{ by }  \eqref{eq:TPW-PW}} \quad + \quad \overbrace{c_1 \left(W_{{-q_0}}(x,y)-\tilde W_{{-q_0}}(x,d(x,y)) \right) }^{=0 \text{ by }\eqref{eq:W-tilde}} 
		\\
		\geq  - B d(x,y) + c_1\left(e^{\Lambda(q_0)}-1\right)  d(x,y)^{q_0}/C - |c_1|B d(x,y)^{q_0+1} ,
		\label{eq:(P2-I)}
	\end{multline}
	so that
	\begin{equation*}
		\left(P_{(2)}-I\right)\phi^+(x,y) \geq \lambda,
	\end{equation*}
	if $R$ is small and $|c_1|$ is chosen large enough.
	The second inequality regarding $\phi^-$ is obtained using similar bounds.
	The bound \eqref{eq:phi_log} for suitable $K>0$ is immediate from the expressions for $\phi^\pm$.
	
	We take $\lambda = 0$ and proceed with the construction of $\eta^\pm$. 
	We can bound $\left(P_{(2)}-I\right)\eta(x,y)$ by applying the triangle inequality and using \eqref{eq:TPetamartingale}, \eqref{eq:(TP-P)eta} and \eqref{eq:eta-tilde}. This yields 
	\begin{multline*}
		\left\vert\left(P_{(2)}-I\right)\eta(x,y)-V\right\vert 
		\leq \overbrace{\left\vert\left(TP\tilde{\eta}(x,d(x,y))-P_{(2)}\eta(x,y)\right)\right\vert}^{\leq Bd(x,y)|\ln(d(x,y))| \text{ by } \eqref{eq:TPetamartingale}}+
		\\
		\overbrace{\left\vert\left(TP-I\right)\eta(x,d(x,y))-V\right\vert}^{= 0 \text{ by } \eqref{eq:(TP-P)eta}}
		+ \overbrace{\left\vert \eta(x,y)- \tilde{\eta}(x,d(x,y))\right\vert}^{= 0 \text{ by } \eqref{eq:eta-tilde}} 
		\\
	\end{multline*} 
	so that
	\begin{align}
		\left\vert\left(P_{(2)}-I\right)\eta(x,y)-V\right\vert 
		&\leq Bd(x,y)|\ln(d(x,y))|,
		\label{eq:(P2-I)eta}
	\end{align} 
	for all $(x,y) \in \Delta_{R}\setminus \Delta.$
	We may not have that $\left(P_{(2)}-I\right)\eta (x,y) - V$ has a definite sign.
	We therefore add functions to $\eta$ (as we did to obtain $\phi^\pm$ from $\phi$) and let
	\begin{equation} 
		\eta^\pm(x,y) = \eta(x,y) \pm c_2 W_{{-q_0}}(x,d(x,y)), \label{eq:etapm} 
	\end{equation}
	with $|c_2|$  large enough (will be defined below), with sign such that $c_2 (e^{\Lambda(q_0)}-1) > 0 $. We clearly have  that $\eta^\pm \in  \mathcal{C}^0(\mathbb{T}^2\setminus \Delta, \R) $.
	
	Combining \eqref{eq:TPetamartingale},\eqref{eq:TPW-PW} and \eqref{eq:TPW-W}  allows us to prove the first inequality of \eqref{eq:etamartingale}, by choosing $c_2$ in \eqref{eq:etapm} large enough. For $(x,y) \in \Delta_R\setminus \Delta$, we get through a straightforward combination of previous inequalities,
	\begin{multline*}
		\left(P_{(2)}-I\right)\eta^+(x,y) 
		\\
		= \overbrace{\left(P_{(2)}-I\right)\eta(x,y)}^{\geq V -Bd(x,y)|\ln(d(x,y))| \text{ by }\eqref{eq:(P2-I)eta} } +  \overbrace{c_2 \left(P_{(2)}-I\right)W_{{-q_0}}(x,y),}^{\geq c_2\left(e^{\Lambda(q_0)}-1\right)  d(x,y)^{q_0}/C  - |c_2| B d(x,y)^{q_0+1} \text{ by }\eqref{eq:(P2-I)}}
	\end{multline*} 
	so that
	\[
	\left(P_{(2)}-I\right)\eta^+(x,y) \geq V,
	\]		
	for $R$ small enough, by choosing $c_2$ large enough.
	Here we use the fact that for $0<R<1$, and $q_0\in(0,1)$, there exists a $C>0$, such that for all $x \in (0,R)$, $Cx^{q_0} > -x\ln(x)$. 
	The second inequality in \eqref{eq:etamartingale} for $\eta^-$ is obtained using similar bounds.

	Finally, \eqref{eq:eta_log} for $K$ large enough is clear from the expression for $\eta^\pm$, using Proposition~\ref{prop:specnew}.
\end{proof}

\begin{remark}\label{rem:subandsuper}
We can use \eqref{eq:phimartingale} to get 
\begin{equation*}
    \pm \mathbb E\left[\phi^+(T_\omega(x),T_\omega(y)) \right] -\lambda \geq \pm \phi^\pm(x,y),
\end{equation*}
for $d(x,y)$ small enough. Similarly if we assume $\lambda =0$, we get 
\begin{equation*}
    \pm \mathbb E\left[\eta^+(T_\omega(x),T_\omega(y)) \right] -V \geq \pm \eta^\pm(x,y),
\end{equation*}
for $d(x,y)$ small enough.
Applying Doob's stopping time theorem (we refer to standard references such as \cite{MR1368405} or \cite{MR3930614}) with suitable stopping times we build sub- and supermartingales for such random sequences from the functions $\phi^+$, $\phi^-$, $\eta^+$ and $\eta^-$. This is done in the proof of Lemma~\ref{lem:martingale}.  \hfill $\blacksquare$

%
\end{remark}

\begin{proposition}\label{prop:W}
	There exists a $R,K>0$, and a family of continuous functions, 
	\[ W_q^\pm \in \mathcal{C}^0(\mathbb{T}^2\setminus \Delta, \R),\]
	for $q \in [-|\gamma| - 1/2, |\gamma|+1/2]$, such that the following holds for all $(x,y) \in \Delta_R\setminus \Delta$.
	
	\noindent For $W_q^\pm$ we have
	\begin{align}
		\frac{1}{K} d(x,y)^{-q} \leq W^\pm_q(x,y)  &\leq K d(x,y)^{-q},
		\label{eq:psi_log}
		\\
		\left( P_{(2)}-e^{\Lambda(-q)}\right)W_q^- (x,y) &\leq 0 \leq  \left( P_{(2)}-e^{\Lambda(-q)}\right) W_q^+(x,y).
		\label{eq:psimartingale}
	\end{align}
\end{proposition}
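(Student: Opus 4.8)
The plan is to adapt to $W_q$ the argument used for $\phi^\pm$ and $\eta^\pm$ in Proposition~\ref{prop:phieta}. First I would transfer the exact eigenrelation $TP\tilde W_q=e^{\Lambda(-q)}\tilde W_q$ of Lemma~\ref{lem:TPW} into an approximate eigenrelation for $P_{(2)}$: by \eqref{eq:TPW}, \eqref{eq:W-tilde} (for $W_q$ as in \eqref{eq:Wdef}) and the estimate \eqref{eq:TPW-PW} of Lemma~\ref{lem:TP-P2}, there are $R,B>0$ with
\[
\left|\left(P_{(2)}-e^{\Lambda(-q)}\right)W_q(x,y)\right|\le B\, d(x,y)^{-q+1},\qquad (x,y)\in\Delta_R\setminus\Delta,\ \ q\in[-|\gamma|-1,|\gamma|+1].
\]
Since $\phi_q$ is positive (Proposition~\ref{prop:specnew}) and, over a compact range of $q$, bounded above and away from $0$, we have $W_q(x,y)\asymp d(x,y)^{-q}$, so $\left(P_{(2)}-e^{\Lambda(-q)}\right)W_q$ is of strictly smaller order than $W_q$ near $\Delta$, but with no controlled sign. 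The remedy, exactly as for $\phi^\pm$, is to add a correction of strictly intermediate order.

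Concretely, fix a small $\epsilon_0\in(0,\tfrac{1}{4})$, pick $q_0=q_0(q)\in\{\epsilon_0,2\epsilon_0\}$ (chosen below), set $q'=q-q_0$, and define
\[
W_q^\pm(x,y):=W_q(x,y)\pm c\,W_{q'}(x,y),
\]
where $c=c_q\neq0$ has $c\bigl(e^{\Lambda(-q')}-e^{\Lambda(-q)}\bigr)>0$. Using Lemma~\ref{lem:TPW} at $q'$, \eqref{eq:TPW-PW} at $q'$ and \eqref{eq:W-tilde}, one gets, just as in the derivation of \eqref{eq:(P2-I)},
\begin{multline*}
\left(P_{(2)}-e^{\Lambda(-q)}\right)W_q^\pm(x,y)=\left(P_{(2)}-e^{\Lambda(-q)}\right)W_q(x,y)
\\
\pm c\left(P_{(2)}-e^{\Lambda(-q')}\right)W_{q'}(x,y)\pm c\bigl(e^{\Lambda(-q')}-e^{\Lambda(-q)}\bigr)W_{q'}(x,y),
\end{multline*}
for $(x,y)\in\Delta_R\setminus\Delta$. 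Here the last term is $\asymp\pm|c|\,d(x,y)^{-q'}$ with the sign we want, while the first two terms are $O(d^{-q+1})$ and $O(|c|\,d^{-q'+1})$; since $0<q_0<1$ both are of strictly smaller order than $d^{-q'}$, so shrinking $R$ (following the book-keeping of Proposition~\ref{prop:phieta}) gives \eqref{eq:psimartingale}. Because $q_0>0$ we have $W_{q'}(x,y)=d(x,y)^{-q}\,d(x,y)^{q_0}\phi_{q'}(x)$, a small perturbation of $W_q(x,y)$ near $\Delta$; hence for $R$ small the factor $\phi_q(x)\pm c\,d(x,y)^{q_0}\phi_{q'}(x)$ stays bounded above and away from $0$, which is \eqref{eq:psi_log}.

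The only point that goes beyond Proposition~\ref{prop:phieta}, and the one I expect to be the real obstacle, is that $R$ and $K$ must be chosen uniformly in $q\in[-|\gamma|-\tfrac{1}{2},|\gamma|+\tfrac{1}{2}]$; for this the ``spectral gap'' $\bigl|e^{\Lambda(-q')}-e^{\Lambda(-q)}\bigr|$ must be bounded below uniformly, and no fixed offset $q_0$ works, since for any fixed $q_0$ the map $q\mapsto\Lambda(-q+q_0)-\Lambda(-q)$ may vanish at some $q$. The resolution is strict convexity of $\Lambda$: $\Lambda$ is analytic and not affine (Lemma~\ref{lem:second0} together with $V=\Lambda''(0)>0$), so $\Lambda''$ has only isolated zeros and $\Lambda$ is strictly convex; hence for each $q$ the strictly convex function $t\mapsto\Lambda(-q+t)$ takes the value $\Lambda(-q)$ at $t=0$ and at most one other value of $t$, so at least one of $\epsilon_0,2\epsilon_0$ produces a nonzero gap. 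Choosing $q_0(q)\in\{\epsilon_0,2\epsilon_0\}$ to realize $\max\bigl(|e^{\Lambda(-q+\epsilon_0)}-e^{\Lambda(-q)}|,\,|e^{\Lambda(-q+2\epsilon_0)}-e^{\Lambda(-q)}|\bigr)$, this maximum is a continuous, strictly positive function of $q$ on a compact interval, hence $\ge\delta>0$; together with the constant $B$ of Lemma~\ref{lem:TP-P2} (uniform on $[-|\gamma|-1,|\gamma|+1]$, which contains both $q$ and $q'=q-q_0$ once $\epsilon_0<\tfrac{1}{4}$) and the uniform bounds on $\phi_q,\phi_{q'}$, this makes all of the above uniform in $q$. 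The remaining steps — fixing $|c|$ and then $R$ so that each remainder is at most, say, a third of the main term — are the same routine estimates as in the proof of Proposition~\ref{prop:phieta}.
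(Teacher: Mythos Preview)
Your proof is correct and follows the same strategy as the paper: obtain the approximate eigenrelation $\left|(P_{(2)}-e^{\Lambda(-q)})W_q\right|\le B\,d(x,y)^{-q+1}$ from \eqref{eq:TPW}, \eqref{eq:W-tilde} and \eqref{eq:TPW-PW}, then correct by $\pm c\,W_{q'}$ with $q'<q$ close to $q$ so that the term $c(e^{\Lambda(-q')}-e^{\Lambda(-q)})W_{q'}$ dominates.

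The one genuine addition in your write-up is the explicit treatment of uniformity of $R,K$ over $q\in[-|\gamma|-\tfrac12,|\gamma|+\tfrac12]$. The paper's proof simply picks, for each $q$, some $q_1\in(q-\tfrac12,q)$ with $\Lambda(-q)\neq\Lambda(-q_1)$ and does not discuss why the resulting $R$ can be taken independent of $q$; since the proposition is only applied at $q=-\gamma$ this is harmless for the paper's purposes, but the statement as written does claim uniform constants. Your two-offset device (choose $q_0\in\{\epsilon_0,2\epsilon_0\}$ maximising the gap, then use strict convexity of the analytic, non-affine $\Lambda$ to bound the maximum away from zero on the compact $q$-range) is a clean and correct way to supply this missing uniformity. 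Incidentally, once the gap is bounded below, any fixed nonzero $|c|$ works and only $R$ needs to be shrunk; the paper's phrase ``$c_3$ large enough'' is not actually needed, which your argument makes transparent.
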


\begin{proof}
	We can bound $\left( P_{(2)}-e^{\Lambda(-q)}\right)W_q (x,y)$, by applying the triangle inequality and using Lemma~\ref{lem:TP-P2}:
	\begin{multline*}
		\left\vert\left( P_{(2)}-e^{\Lambda(-q)}\right)W_q (x,y)\right\vert \leq
		\\ 
		\overbrace{	\left\vert P_{(2)}W_q (x,y) -TP\tilde W_q(x,d(x,y))  \right\vert}^{\leq B d(x,y)^{-q+1} \text{ by \eqref{eq:TPW-PW}}} 
		+ 
		\overbrace{\left\vert\left(TP-e^{\Lambda(-q)}\right)\tilde W_q(x,d(x,y))\right\vert}^{= 0 \text{ by \eqref{eq:TPW}}}\\
		+
		e^{\Lambda(-q)} \overbrace{\left\vert W_q(x,y)- \tilde{W}_q(x,d(x,y))\right\vert}^{= 0 \text{ by \eqref{eq:W-tilde}}},
	\end{multline*} 
	for  $(x,y) \in \Delta_{R}\setminus \Delta$.
	So
	\begin{equation}
		\left\vert\left( P_{(2)}-e^{\Lambda(-q)}\right)W_q (x,y)\right\vert \leq
		B d(x,y)^{-q+1}, 
		\label{eq:(P2-I)W}
	\end{equation}
	for  $(x,y) \in \Delta_{R}\setminus \Delta$.
	
	This basically means that $\left(P_{(2)}-e^{\Lambda(-p)}\right)W_q$ near the diagonal is close to 0.
	To obtain functions $W_q^\pm$ so that $P_{(2)} - e^{\Lambda(-p)}$ applied to them, has a definite sign, 
	we add suitable functions to $W_q$:
	we take $W_q^\pm:T^2\setminus \Delta \to \R$ of the form 
	\begin{equation*}
		W_q^\pm(x,y) = W_q(x,y) \pm c_3 W_{{q_1}}(x,y), 
	\end{equation*}
	for $q_1 \in (q-1/2, q)$, such that $\Lambda(-q) - \Lambda(-q_1) \neq 0$, 
	and $|c_3|$ large enough (will be defined below), with sign such that $c_3 (e^{\Lambda(-q)} - e^{\Lambda(-q_1)}) >0.$
	
	By applying \eqref{eq:(P2-I)W} we get for $(x,y) \in \Delta_R\setminus \Delta$,
	\begin{multline*}
		\left( P_{(2)}-e^{\Lambda(-q)}\right)W_q^+ (x,y)= \\
		\overbrace{\left(P_{(2)}-e^{\Lambda(-q)}\right)W_q(x,y)}^{\geq  -B d(x,y)^{-q+1} \text{ by }\eqref{eq:(P2-I)W} } +  \overbrace{c_3 \left(P_{(2)}-e^{\Lambda(-q_1)}\right)W_{{q_1}}(x,y)}^{\geq -c_3\left(B d(x,y)^{-q_1+1}\right) \text{ by }\eqref{eq:(P2-I)W}}\\
		+ \overbrace{c_3 (e^{\Lambda(-q_1)}-e^{\Lambda(-q)})W_{q_1}.}^{\geq c_3(e^{\Lambda(-q_1)}-e^{\Lambda(-q)})d(x,y)^{-q_1}/K \text{ by }\eqref{eq:Wdef}}
	\end{multline*}
	Picking $c_3$ large enough and $R$ small enough yields,
	\begin{equation*}
		\left( P_{(2)}-e^{\Lambda(-q)}\right)W_q^+ (x,y) \geq 0, \quad \text{ for } (x,y) \in \Delta_R\setminus \Delta.
	\end{equation*}
	Similarly we get
	\begin{equation*}
		\left( P_{(2)}-e^{\Lambda(-q)}\right)W_q^- (x,y) \leq 0, \quad \text{ for } (x,y) \in \Delta_R\setminus \Delta.
	\end{equation*}
	Now set $K$ again large enough such that \eqref{eq:psi_log} holds.
\end{proof}

\begin{remark}\label{rem:Wpmsubsuper}
	Suppose $\gamma$ is such that $e^{\Lambda(\gamma)} = 1$.  
We can use \eqref{eq:psimartingale} to get 
\begin{equation*}
    \pm \mathbb E\left[W_{-\gamma}^\pm(T_\omega(x),T_\omega(y)) \right] \geq \pm W_{-\gamma}^\pm(x,y),
\end{equation*}
for $d(x,y)$ small enough. Applying Doob's stopping time theorem  with suitable stopping times we build sub- and supermartingales for such random sequences from the functions $W^+_\gamma$ and $W^-_\gamma$. This is done in the proof of Lemma~\ref{lem:martingalepos}.  
\end{remark}

\section{Topological random dynamics}\label{s:trd}

In this section we prove {  Theorem \ref{thm:main1}} and construct tools to prove Theorem~\ref{thm:main2} in Section~\ref{s:stat}, for $\lambda \geq 0$. We look separately at cases with zero Lyapunov exponent,
positive Lyapunov exponent and negative Lyapunov exponent.  

%
%

\subsection{Zero Lyapunov exponent}\label{s:zero}

The next part of the analysis is to calculate escape probabilities and expected escape times for escape from neighborhoods of the diagonal and strips near the diagonal. The proofs in this section rely on an analysis of Koopman operators, which is developed in Section~\ref{s:koopman}. The statements can be read without reference to Section~\ref{s:koopman}, but for the proofs the reader has to familiarize with the results in Section~\ref{s:koopman}.

In order study escapes from strips near the diagonal, we define stopping times
\begin{align}
\label{eq:tau+}
\tau_{\delta,+}(x,y) &\coloneqq \min \{n\in \mathbb N \; ; \;  d(T_\omega^n(x), T_\omega^n(y))> \delta \},
\\
\label{eq:tau-}
\tau_{\varepsilon,-}(x,y) &\coloneqq \min \{n\in \mathbb N \; ; \; d(T_\omega^n(x), T_\omega^n(y))< \varepsilon \},
\end{align}
where  $(x,y) \in \Delta_R$ with $0<\varepsilon < d(x,y) < \delta<R $.
The following lemma addresses statistics of these stopping times.

\begin{lemma}\label{lem:martingale}
For $\lambda = 0$, there exists a sufficiently small $R>0$, and sufficiently large $K>0$, such that if $0<\varepsilon<d(x,y)<\delta< R$, then
			\begin{equation}
				\mathbb{P}(\min\{\tau_{\varepsilon,-}(x,y), \tau_{\delta,+}(x,y)\}< \infty) = 1, \label{eq:asfin}
			\end{equation}
as well as
\begin{equation}
				\frac{\ln\left( \frac{\delta}{d(x,y)}\right)-2K}{\ln\left(\frac{\delta}{\varepsilon}\right)} \leq \mathbb{P}\left(\tau_{\varepsilon,-}(x,y)< \tau_{\delta,+}(x,y)\right) \leq \frac{\ln\left( \frac{\delta}{d(x,y)}\right)+2K}{\ln\left(\frac{\delta}{\varepsilon}\right)} \label{eq:propbounds}
\end{equation}
and
\begin{multline}
			\frac{1}{V} \left( \ln\left( \frac{\delta}{d(x,y)} \right) \ln\left( \frac{d(x,y)}{\varepsilon} \right) - 6K |\ln (\varepsilon)| -2K^2 \right)  
\\
	\leq \mathbb{E} \left[\min\{\tau_{\varepsilon,-}(x,y), \tau_{\delta,+}(x,y)\}\right]  \label{eq:Ebounds}
\\
	\leq \frac{1}{V} \left( \ln\left( \frac{\delta}{d(x,y)} \right) \ln\left( \frac{d(x,y)}{\varepsilon} \right) +  6K |\ln (\varepsilon)| + 2K^2 \right). 
\end{multline}

{ 
Note that the estimates \eqref{eq:propbounds} and \eqref{eq:Ebounds} are only informative when both $\delta/d(x,y)$ and $d(x,y)/\varepsilon$ are sufficiently large.}
\end{lemma}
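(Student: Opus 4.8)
The plan is to analyse the two-point process $(x_n,y_n):=(T^{(2)}_\omega)^n(x,y)$ stopped at the exit time $\tau:=\min\{\tau_{\varepsilon,-}(x,y),\tau_{\delta,+}(x,y)\}$ from the strip $\overline{\Delta_\delta}\setminus\Delta_\varepsilon$, feeding the functions $\phi^\pm,\eta^\pm$ of Proposition~\ref{prop:phieta} into optional-stopping (Doob) arguments. First I would fix $R$ small enough that $R<R_{\min}$ and that, allowing for the bounded overshoot described next, the whole stopped trajectory stays inside the neighbourhood of $\Delta$ on which Proposition~\ref{prop:phieta} applies, and enlarge $K$ to absorb the constants $\ln a_2$ and $\ln(1/a_1)$ from Hypothesis~\ref{h:endo}. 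For $n<\tau$ one has $(x_n,y_n)\in\overline{\Delta_\delta}\setminus\Delta_\varepsilon\subset\Delta_R\setminus\Delta$, so (using $\lambda=0$) \eqref{eq:phimartingale} and \eqref{eq:etamartingale} say exactly that, for the stopped process, $\phi^+(x_n,y_n)$ and $\eta^+(x_n,y_n)-nV$ are submartingales while $\phi^-(x_n,y_n)$ and $\eta^-(x_n,y_n)-nV$ are supermartingales. The overshoot control is that, since $d(x_n,y_n)\le R<R_{\min}$ for $n<\tau$, one iterate scales the two-point distance by a factor in $[a_1,a_2]$; hence $d(x_\tau,y_\tau)\in[a_1\varepsilon,\varepsilon)$ on $\{\tau=\tau_{\varepsilon,-}\}$ and $d(x_\tau,y_\tau)\in(\delta,a_2\delta]$ on $\{\tau=\tau_{\delta,+}\}$, which together with \eqref{eq:phi_log} and \eqref{eq:eta_log} makes the stopped sequences $\phi^\pm(x_{n\wedge\tau},y_{n\wedge\tau})$ and $\eta^\pm(x_{n\wedge\tau},y_{n\wedge\tau})$ uniformly bounded and forces $\phi^\pm(x_\tau,y_\tau)=\ln\varepsilon+O(K)$, $\eta^\pm(x_\tau,y_\tau)=\ln^2\varepsilon+O(K|\ln\varepsilon|)$ on the inner exit and the analogous statement with $\delta$ on the outer exit.

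To obtain \eqref{eq:asfin}, I would apply optional stopping to the supermartingale $\eta^-(x_{n\wedge\tau},y_{n\wedge\tau})-(n\wedge\tau)V$ at the bounded time $n\wedge\tau$, which gives $V\,\mathbb{E}[n\wedge\tau]\le\eta^-(x,y)-\mathbb{E}[\eta^-(x_{n\wedge\tau},y_{n\wedge\tau})]\le\eta^-(x,y)+\tfrac14K^2$, using $\eta^-\ge\ln^2 d-K|\ln d|\ge-\tfrac14K^2$ on $\Delta_R\setminus\Delta$ (and, by the choice of $R$, at the overshoot point). Since $V>0$, letting $n\to\infty$ by monotone convergence yields $\mathbb{E}[\tau]\le(\eta^-(x,y)+\tfrac14K^2)/V<\infty$, so in particular $\tau<\infty$ $\mathbb{P}$-a.s.

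For \eqref{eq:propbounds}, now that $\tau<\infty$ a.s.\ and the stopped $\phi^\pm$-martingales are uniformly bounded (hence uniformly integrable), optional stopping at $\tau$ is valid: with $p:=\mathbb{P}(\tau_{\varepsilon,-}<\tau_{\delta,+})$, the submartingale $\phi^+$ gives $\ln d-K\le\phi^+(x,y)\le\mathbb{E}[\phi^+(x_\tau,y_\tau)]\le p(\ln\varepsilon+K)+(1-p)(\ln\delta+K)$, which rearranges to $p\le(\ln(\delta/d)+2K)/\ln(\delta/\varepsilon)$, and the supermartingale $\phi^-$ gives the matching lower bound. For \eqref{eq:Ebounds}, I would set $a:=-\ln\delta$, $b:=-\ln\varepsilon$, so $0<a<-\ln d<b$, and use the \emph{balanced} combinations $g^\pm:=\eta^\pm+(a+b)\phi^\pm$; since $(P_{(2)}-I)\phi^\pm$ has sign $\pm$ and $a+b>0$, \eqref{eq:phimartingale}--\eqref{eq:etamartingale} give $(P_{(2)}-I)g^+\ge V\ge(P_{(2)}-I)g^-$, so $g^+(x_n,y_n)-nV$ is a submartingale and $g^-(x_n,y_n)-nV$ a supermartingale for the stopped process. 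The point of the coefficient $a+b$ is that on either exit event the boundary value of $g^\pm$ is the same constant $-ab$ up to an $O(K|\ln\varepsilon|)$ error (indeed $b^2-(a+b)b=-ab=a^2-(a+b)a$). Optional stopping at $\tau$ then gives $V\,\mathbb{E}[\tau]=\mathbb{E}[g^\pm(x_\tau,y_\tau)]-g^\pm(x,y)$ up to a sign-definite error; inserting the boundary value $-ab$ and the initial value $g^\pm(x,y)\approx u^2-(a+b)u$ with $u:=-\ln d$ leaves $V\,\mathbb{E}[\tau]=-ab-u^2+(a+b)u+(\text{error})=(u-a)(b-u)+(\text{error})$, and since $(u-a)(b-u)=\ln(\delta/d)\ln(d/\varepsilon)$ — with all errors coming from \eqref{eq:phi_log}, \eqref{eq:eta_log} and the overshoot factors $a_1,a_2$, absorbable by enlarging $K$ — careful bookkeeping produces the error terms $6K|\ln\varepsilon|+2K^2$ of \eqref{eq:Ebounds}.

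I expect the main obstacle to be this last step: identifying the balanced coefficient $a+b=|\ln\delta|+|\ln\varepsilon|$ so that the two boundary contributions telescope into the product $\ln(\delta/d)\ln(d/\varepsilon)$, and then pushing all the approximation errors — from the $\pm$ bounds of Proposition~\ref{prop:phieta}, from the quadratic term $\ln^2 d$ meeting the linear error $K|\ln d|$, and from the bounded overshoot of the exit distance — through to the explicit constants in \eqref{eq:propbounds} and \eqref{eq:Ebounds}, while arranging that one radius $R$ simultaneously serves the martingale inequalities of Proposition~\ref{prop:phieta} and contains the overshoot of the exit point.
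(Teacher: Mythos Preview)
Your approach is largely sound and, for the expected-exit-time bound \eqref{eq:Ebounds}, genuinely different from the paper's, but there is a sign slip in your proof of \eqref{eq:asfin}.

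\textbf{The sign slip.} You apply optional stopping to the \emph{supermartingale} $\eta^-(x_{n\wedge\tau},y_{n\wedge\tau})-(n\wedge\tau)V$ and claim this yields $V\,\mathbb{E}[n\wedge\tau]\le\eta^-(x,y)-\mathbb{E}[\eta^-(x_{n\wedge\tau},y_{n\wedge\tau})]$. But the supermartingale inequality $\mathbb{E}[\eta^-_{n\wedge\tau}]-V\,\mathbb{E}[n\wedge\tau]\le\eta^-(x,y)$ rearranges to the \emph{lower} bound $V\,\mathbb{E}[n\wedge\tau]\ge\mathbb{E}[\eta^-_{n\wedge\tau}]-\eta^-(x,y)$, which is useless for finiteness. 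The fix is immediate: use the \emph{submartingale} $\eta^+(x_{n\wedge\tau},y_{n\wedge\tau})-(n\wedge\tau)V$ instead (this is what the paper does), giving $V\,\mathbb{E}[n\wedge\tau]\le\mathbb{E}[\eta^+_{n\wedge\tau}]-\eta^+(x,y)$, and then bound $\eta^+$ from above on the strip using \eqref{eq:eta_log} and the overshoot control.

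\textbf{The different route to \eqref{eq:Ebounds}.} For the exit-probability bounds \eqref{eq:propbounds} your argument is identical to the paper's. For \eqref{eq:Ebounds}, however, the paper proceeds by applying optional stopping to $\eta^\pm$ alone, conditioning on which boundary is hit, and then \emph{substituting the already-proved probability bounds} \eqref{eq:propbounds} into the resulting expression; the product $\ln(\delta/d)\ln(d/\varepsilon)$ emerges only after algebraic manipulation of $\ln^2\varepsilon-\ln^2\delta$ and $\ln^2\delta-\ln^2 d$. Your balanced combination $g^\pm=\eta^\pm+(a+b)\phi^\pm$ with $a=-\ln\delta$, $b=-\ln\varepsilon$ is a cleaner device: since $b^2-(a+b)b=a^2-(a+b)a=-ab$, the boundary value of $g^\pm$ is the same constant on both exits, so the unknown exit probability $p$ drops out automatically and the product appears directly as $(u-a)(b-u)$. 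This is the classical gambler's-ruin trick of matching boundary values, and it works here; it avoids the dependence of the error analysis on the already-proved \eqref{eq:propbounds}, at the cost of multiplying the $O(K)$ error in $\phi^\pm$ by the coefficient $a+b$, which is harmless since $a+b\le 2|\ln\varepsilon|$ and you are aiming for an $O(K|\ln\varepsilon|)$ error anyway.
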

        
\begin{proof}
We fix $R>0$ small enough and $K>0$ large enough such that Proposition \ref{prop:phieta} and Proposition \ref{prop:W} hold simultaneously. We follow the reasoning of \cite{MR968817,MR1144097}.	
As indicated above, we will use statements from Section~\ref{s:koopman}. The functions $\phi^\pm$ and $\eta^\pm$ come directly from Proposition~\ref{prop:phieta}.
Denote $(x_n,y_n) = T_\omega^n(x_0,y_0),$ for $(x_0,y_0)\in \T^2\setminus\Delta$, such that $0<\varepsilon<d(x,y)<\delta< R$. Now $\eta^+(x_n,y_n)$ stopped at $\min\{\tau_{\varepsilon,-}(x,y), \tau_{\delta,+}(x,y)\}$ is a submartingale, by Proposition~\ref{prop:phieta}, Remark~\ref{rem:subandsuper} and applying Doob's stopping time theorem. 
So, with 
$$
\tilde n = \min \{n, \tau_{\varepsilon,-}(x,y),  \tau_{\delta,+}(x,y)\},
$$
we have, for a fixed $n\in \N$, by Doob's stopping time theorem with bounded stopping time,
\[
\eta^+(x,y) \leq \mathbb{E} \left[\eta^+(x_{\tilde n},y_{\tilde n})-\tilde{n}V\right].
\]
Rewriting, we obtain
\begin{align*}
	\mathbb E[\tilde n] V&\leq \mathbb{E} \left[\eta^+(x_{\tilde n},y_{\tilde n})\right]-  \eta^+(x,y)\\
	&\leq \ln^2(d(x,y))+\ln^2(\varepsilon a_1) + K|\ln(\varepsilon a_1)|<\infty.
\end{align*}
{ Taking $n\to\infty$ and by the monotone convergence theorem, we have $$\mathbb E[\min \{\tau_{\varepsilon,-}(x,y),  \tau_{\delta,+}(x,y)\}] <\infty.$$ This implies  \eqref{eq:asfin}. }

By Proposition~\ref{prop:phieta} and Remark~\ref{rem:subandsuper} and applying Doob's stopping time theorem { now for almost surely bounded stopping time},
 $\phi^+(x_n,y_n)$ stopped at $\min\{\tau_{\varepsilon,-}(x,y), \tau_{\delta,+}(x,y)\}$ is a submartingale.
Therefore we have
\[
 	\phi^+(x,y) \leq \mathbb{E} \left[\phi^+(x_{\tilde n},y_{\tilde n})\right].
\]
Letting $n$ go to infinity, { applying the dominated convergence theorem}, and conditioning separately on  $\tau_{\varepsilon,-}(x,y)<  \tau_{\delta,+}(x,y)$ or $\tau_{\varepsilon,-}(x,y)<  \tau_{\delta,+}(x,y)$ , we get
			\begin{multline}
				\phi^+(x,y) \leq \lim_{
                n\to\infty} \mathbb{E} \left[\phi^+(x_{\tilde n},y_{\tilde n})\right]
\\
				= \mathbb{P} \left(\tau_{\varepsilon,-}(x,y)<  \tau_{\delta,+}(x,y)\right)\mathbb{E} \left[ \phi^+(x_{\tau_{\varepsilon,-}(x,y)},y_{\tau_{\varepsilon,-}(x,y)})\mid\tau_{\varepsilon,-}(x,y)<  \tau_{\delta,+}(x,y)\right]
\\
				+ \mathbb{P} \left(\tau_{\varepsilon,-}(x,y)>  \tau_{\delta,+}(x,y)\right)\mathbb{E} \left[ \phi^+(x_{\tau_{\delta,+}(x,y)},y_{\tau_{\delta,+}(x,y)})\mid\tau_{\varepsilon,-}(x,y)> \tau_{\delta,+}(x,y)\right].
\label{eq:phi2Bbdd}
			\end{multline}
By \eqref{eq:asfin} we have
\begin{align}\label{eq:sumisone}
P \left(\tau_{\varepsilon,-}(x,y)>  \tau_{\delta,+}(x,y)\right) 
&=
 1 - \mathbb{P} \left(\tau_{\varepsilon,-}(x,y)<  \tau_{\delta,+}(x,y)\right).
\end{align}
From \eqref{eq:phi_log} we get
\begin{align}\label{eq:est-phi+} 
\ln(d(x,y)) -   K  &\leq \phi^+(x,y).
\end{align}
On the iterate $n$ when the distance $d(x_{n},y_{n})$ first leaves $(\varepsilon,\delta)$, the distance is either in $(\varepsilon a_1,\varepsilon ]$ or in $[\delta, \delta a_2)$ by Hypothesis~\ref{h:endo}.
Therefore we can bound the conditional expectation in \eqref{eq:phi2Bbdd} in the following manner,
			\begin{multline}\label{eq:est-E<}
			 \mathbb{E} \left[ \phi^+(x_{\tau_{\varepsilon,-}(x,y)},y_{\tau_{\varepsilon,-}(x,y)})\mid\tau_{\varepsilon,-}(x,y)<  \tau_{\delta,+}(x,y)\right] \leq
\\
 \sup_{r \in [a_1\varepsilon ,\varepsilon)} \{\ln(r) + K\} \leq \ln(\varepsilon) + K
			\end{multline}
			and 
			\begin{multline}\label{eq:est-E>}
			\mathbb{E} \left[ \phi^+(x_{\tau_{\delta,+}(x,y)},y_{\tau_{\delta,+}(x,y)})\mid\tau_{\varepsilon,-}(x,y)> \tau_{\delta,+}(x,y)\right] \leq 
\\
\sup_{r \in [\delta ,a_2\delta ]} \{\ln(r) + K\}\leq \ln(\delta) + 2K,
			\end{multline}
for $K$ chosen large enough.
			Using the bounds \eqref{eq:est-phi+}, \eqref{eq:est-E<}, \eqref{eq:est-E>} and the equality \eqref{eq:sumisone}
in \eqref{eq:phi2Bbdd}, we get
			\begin{align*} 
\mathbb{P} \left(\tau_{\varepsilon,-}(x,y)<  \tau_{\delta,+}(x,y)\right) 
&\leq
				\ddfrac{\ln\left(\frac{d(x,y)}{ \delta}\right) -4K}{\ln\left(\frac{\varepsilon}{ \delta}\right)}.
			\end{align*}
			In a similar fashion we get the counterpart of  \eqref{eq:phi2Bbdd} for $\phi^-$,
			\begin{multline*}
				\phi^-(x,y) \geq \lim_{n\to\infty} \mathbb{E} \left[\phi^-(x_{\tilde n},y_{\tilde n })\right]
\\
				= \mathbb{P} \left(\tau_{\varepsilon,-}(x,y)<  \tau_{\delta,+}(x,y)\right)\mathbb{E} \left[ \phi^-(x_{\tau_{\varepsilon,-}(x,y)},y_{\tau_{\varepsilon,-}(x,y)})\mid\tau_{\varepsilon,-}(x,y)<  \tau_{\delta,+}(x,y)\right]
\\
				+ \mathbb{P} \left(\tau_{\varepsilon,-}(x,y)>  \tau_{\delta,+}(x,y)\right)\mathbb{E} \left[ \phi^-(x_{\tau_{\delta,+}(x,y)},y_{\tau_{\delta,+}(x,y)})\mid\tau_{\varepsilon,-}(x,y)> \tau_{\delta,+}(x,y)\right],
			\end{multline*}
	from which we obtain a bound 
			\begin{align}
\label{eq:Pleq}
				\mathbb{P} \left(\tau_{\varepsilon,-}(x,y)<  \tau_{\delta,+}(x,y)\right) 
&\geq 
\ddfrac{\ln\left(\frac{d(x,y)}{\delta}\right) +4K}{\ln\left(\frac{\varepsilon}{ \delta}\right)},
			\end{align}
again assuming that $K$ is chosen large enough.
		This finishes the proof of \eqref{eq:propbounds}.
			
			For \eqref{eq:Ebounds} we use the submartingale property of $\eta^+(x_n,y_n) { - nV}$  stopped at $\tau_{\varepsilon,-}(x,y)<  \tau_{\delta,+}(x,y)$ or $\tau_{\varepsilon,-}(x,y)<  \tau_{\delta,+}(x,y)$  (see Remark~\ref{rem:subandsuper}  and apply Doob's stopping time theorem.) and we let time go to infinity. Denoting 
\[
\tau = \min  \{\tau_{\delta,+}(x,y), \tau_{\varepsilon,-}(x,y)\},
\] 
this yields
			\begin{align*}
				\eta^+(x,y) \leq \mathbb{E} \left[\eta^+(x_\tau,y_\tau)  -V \tau \right].
			\end{align*}
			Rewriting and conditioning on $\tau_{\varepsilon,-}(x,y)<  \tau_{\delta,+}(x,y)$ and $\tau_{\varepsilon,-}(x,y)<  \tau_{\delta,+}(x,y)$ separately, we obtain
			\begin{multline}\label{eq:VEleq}
				V\mathbb{E} \left[\tau\right] \leq 
\mathbb{P}\left(\tau_{\varepsilon,-}(x,y)<  \tau_{\delta,+}(x,y)\right)
\mathbb{E} \left[\eta^+(x_\tau,y_\tau)|\tau_{\varepsilon,-}(x,y)<  \tau_{\delta,+}(x,y) \right] 
\\
				+ \mathbb{P}\left(\tau_{\varepsilon,-}(x,y)> \tau_{\delta,+}(x,y)\right)
\mathbb{E} \left[\eta^+(x_\tau,y_\tau)|\tau_{\varepsilon,-}(x,y)> \tau_{\delta,+}(x,y) \right]
				 -  \eta^+(x,y).
			\end{multline}	
As above we have bounds
\begin{align*}
\mathbb{E} \left[\eta^+(x_\tau,y_\tau)|\tau_{\varepsilon,-}(x,y)<  \tau_{\delta,+}(x,y) \right] 
&\leq 
 \ln^2(a_1 \varepsilon) + K \left| \ln(a_1 \varepsilon) \right|
\leq
 \ln^2(\varepsilon) + 3K \left| \ln(\varepsilon) \right| +2K^2,
\\
\mathbb{E} \left[\eta^+(x_\tau,y_\tau)|\tau_{\varepsilon,-}(x,y)> \tau_{\delta,+}(x,y) \right]
&\leq
\ln^2 (\delta) + K \left| \ln(\delta) \right|,
\end{align*}
Now for a different, larger $K$, we have $\mathbb{E} \left[\eta^+(x_\tau,y_\tau)|\tau_{\varepsilon,-}(x,y)<  \tau_{\delta,+}(x,y) \right] 
 \leq \ln^2(\varepsilon)+ K|\log (\varepsilon)|$
and we have
\begin{align*}
-\eta^+(x,y) 
&\leq
-\ln^2(d(x,y)) + K \left| \ln (d(x,y)) \right|.
\end{align*}
Plugging this and \eqref{eq:sumisone},  \eqref{eq:Pleq}, \eqref{eq:eta_log} into  the estimate \eqref{eq:VEleq}, we obtain
	\begin{multline*}				
					V\mathbb{E} \left[\tau\right] \leq  \ddfrac{\ln\left(\frac{d(x,y)}{\delta}\right) +4K}{\ln\left(\frac{\varepsilon}{ \delta}\right)}\left( \ln^2\left({\varepsilon}\right)-\ln^2\left(\delta\right)  + K \left|\ln\left(\frac{\varepsilon}{\delta} \right)\right| \right)\\
				+ \ln^2(\delta) + 2K |\ln(\delta)| - \ln^2(d(x,y)) + 2K |\ln(d(x,y))|.
			\end{multline*}
Note that $\ln^2\left({\varepsilon}\right)-\ln^2\left(\delta\right)  = \ln\left(\frac{\varepsilon}{\delta} \right)\ln(\varepsilon\delta)$ and $\ln^2(\delta) - \ln^2(d(x,y)) = \ln \left(\frac{\delta}{d(x,y)}\right) \ln(d(x,y)\delta)$.
As $|\ln(x)|$ is a decreasing function for $0<x<1$, we have $|\ln(\varepsilon)| < |\ln(d(x,y))| < |\ln(\delta)|$.
Using these identities and estimates we get
	\begin{multline*}
				V\mathbb{E} \left[\tau\right] \leq  \ln\left(\frac{d(x,y)}{\delta}\right)\ln(\varepsilon\delta) - \ln\left(\frac{d(x,y)}{\delta}\right)\ln\left(d(x,y)\delta\right)  +  12K|\ln(\varepsilon)| + 8K^2
\\
				\leq  \ln\left(\frac{d(x,y)}{\delta}\right)\ln\left(\frac{\varepsilon}{d(x,y)}\right) +  12K|\ln(\varepsilon)| + 8K^2.
			\end{multline*}
			 In a similar fashion we get 
	\begin{multline*}				
					V\mathbb{E} \left[\tau\right] \geq  \ddfrac{\ln\left(\frac{d(x,y)}{\delta}\right) -4K}{\ln\left(\frac{\varepsilon}{ \delta}\right)}\left( \ln^2\left(\varepsilon\right)-\ln^2\left(\delta\right)  + 2K \left|\ln\left(\frac{\varepsilon}{\delta} \right)\right| \right)\\
				+ \ln^2(\delta) - 2K |\ln(\delta)| - \ln^2(d(x,y)) + 2K |\ln(d(x,y))|,
			\end{multline*}
and from this,
\[
V\mathbb{E} \left[\tau\right] \geq  \ln\left(\frac{d(x,y)}{\delta}\right)\ln\left(\frac{\varepsilon}{d(x,y)}\right) 
- 12K|\ln(\varepsilon)| - 8K^2.
\]
Replace $2K$ by $K$ to get the statement of the lemma. This finishes the proof.
\end{proof}

From here on,  $R,K$ are fixed as in Lemma~\ref{lem:martingale}. 
We now formulate a result stating that trajectories of points $(x,y) \in \Delta_\delta$ will almost surely escape from $\Delta_\delta$.
However, the expected escape time will be infinite for $d(x,y)$ sufficiently small. 

\begin{lemma}\label{lem:stop}
	Let $R,K$ be as in Lemma~\ref{lem:martingale}. Then for all $(x,y) \in \T^2$, with $0<d(x,y)< \delta <R$,
	\begin{equation}\label{eq:taubnd}
		\mathbb{P}\left(\tau_{\delta,+}(x,y)< \infty \right) = 1
	\end{equation}
	and 
	\[
	\mathbb{E} \left[ \tau_{\delta,+}(x,y)\right] = \infty  \text{ whenever } d(x,y) <  e^{-6K}\delta.
	\]
\end{lemma}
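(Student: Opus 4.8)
The plan is to deduce both assertions directly from Lemma~\ref{lem:martingale} by sending the inner radius $\varepsilon$ to $0$ while keeping $d(x,y)$ and $\delta$ fixed. Fix $(x,y)$ with $0<d(x,y)<\delta<R$; then for every $\varepsilon\in(0,d(x,y))$ the hypotheses of Lemma~\ref{lem:martingale} hold, so \eqref{eq:asfin}, \eqref{eq:propbounds} and \eqref{eq:Ebounds} are all available. Since $\varepsilon<\delta$, the events $\{\tau_{\varepsilon,-}(x,y)<\tau_{\delta,+}(x,y)\}$ and $\{\tau_{\delta,+}(x,y)<\tau_{\varepsilon,-}(x,y)\}$ are disjoint, and by \eqref{eq:asfin} their union has full measure (a tie would require the distance to be simultaneously $<\varepsilon$ and $>\delta$, which is impossible). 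Hence $\mathbb{P}(\tau_{\delta,+}(x,y)<\tau_{\varepsilon,-}(x,y))=1-\mathbb{P}(\tau_{\varepsilon,-}(x,y)<\tau_{\delta,+}(x,y))$.

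For \eqref{eq:taubnd} I would note $\{\tau_{\delta,+}(x,y)<\tau_{\varepsilon,-}(x,y)\}\subseteq\{\tau_{\delta,+}(x,y)<\infty\}$ and combine this with the upper bound in \eqref{eq:propbounds}:
\[
\mathbb{P}\big(\tau_{\delta,+}(x,y)<\infty\big)\;\geq\;1-\mathbb{P}\big(\tau_{\varepsilon,-}(x,y)<\tau_{\delta,+}(x,y)\big)\;\geq\;1-\frac{\ln(\delta/d(x,y))+2K}{\ln(\delta/\varepsilon)}.
\]
Letting $\varepsilon\to0$ makes $\ln(\delta/\varepsilon)\to\infty$, so the right-hand side tends to $1$, which gives $\mathbb{P}(\tau_{\delta,+}(x,y)<\infty)=1$.

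For the expectation claim, since $\min\{\tau_{\varepsilon,-}(x,y),\tau_{\delta,+}(x,y)\}\leq\tau_{\delta,+}(x,y)$, the lower bound in \eqref{eq:Ebounds} yields, for every $\varepsilon\in(0,\min\{d(x,y),1\})$,
\[
\mathbb{E}\big[\tau_{\delta,+}(x,y)\big]\;\geq\;\frac{1}{V}\Big(\ln\tfrac{\delta}{d(x,y)}\cdot\ln\tfrac{d(x,y)}{\varepsilon}-6K\,|\ln\varepsilon|-2K^2\Big).
\]
Using $\ln(d(x,y)/\varepsilon)=|\ln\varepsilon|+\ln d(x,y)$, the right-hand side equals
\[
\frac{1}{V}\Big(\big(\ln\tfrac{\delta}{d(x,y)}-6K\big)\,|\ln\varepsilon|+\ln\tfrac{\delta}{d(x,y)}\cdot\ln d(x,y)-2K^2\Big).
\]
When $d(x,y)<e^{-6K}\delta$ the coefficient $\ln(\delta/d(x,y))-6K$ is strictly positive, so this expression diverges to $+\infty$ as $\varepsilon\to0$; since the left-hand side does not depend on $\varepsilon$, we conclude $\mathbb{E}[\tau_{\delta,+}(x,y)]=\infty$.

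I do not expect a substantive obstacle: Lemma~\ref{lem:martingale} carries the analytic weight and what remains is a routine $\varepsilon\to0$ passage. The only point requiring care is the bookkeeping in the last displays — rewriting $\ln(d(x,y)/\varepsilon)$ so that the coefficient multiplying the divergent term $|\ln\varepsilon|$ is exactly $\ln(\delta/d(x,y))-6K$, which is precisely what produces the stated threshold $e^{-6K}\delta$.
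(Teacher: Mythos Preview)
Your proposal is correct and follows the same route as the paper: both assertions are obtained by sending $\varepsilon\to 0$ in the bounds of Lemma~\ref{lem:martingale}. Your treatment of \eqref{eq:taubnd} is in fact more careful than the paper's, which simply cites \eqref{eq:asfin}; you correctly observe that \eqref{eq:asfin} alone does not suffice and that one also needs the upper bound in \eqref{eq:propbounds} to force $\mathbb{P}(\tau_{\varepsilon,-}<\tau_{\delta,+})\to 0$.
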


\begin{proof}
    The first statement follows from \eqref{eq:propbounds}. For any $\varepsilon \in (0, d(x,y))$, we have the inclusion of events
    \[
        \{ \tau_{\delta,+}(x,y) \leq \tau_{\varepsilon,-}(x,y) \} \subset \{ \tau_{\delta,+}(x,y) < \infty \}.
    \]
    Therefore,
    \[
        \mathbb{P}(\tau_{\delta,+}(x,y) < \infty) \geq \mathbb{P}(\tau_{\delta,+}(x,y) \leq \tau_{\varepsilon,-}(x,y)) = 1 - \mathbb{P}(\tau_{\varepsilon,-}(x,y) < \tau_{\delta,+}(x,y)).
    \]
    Recall the upper bound in \eqref{eq:propbounds}, 
    \[
        \mathbb{P}(\tau_{\varepsilon,-}(x,y) < \tau_{\delta,+}(x,y)) \leq \frac{\ln\left( \frac{\delta}{d(x,y)}\right)+2K}{\ln\left(\frac{\delta}{\varepsilon}\right)}.
    \]
    As $\varepsilon \to 0$, the denominator goes to infinity while the numerator remains fixed. Thus the term on the right hand side goes to 0, implying $\mathbb{P}(\tau_{\delta,+}(x,y) < \infty) = 1$.

    For the second statement, observe that for any $0 < \varepsilon < d(x,y)$,
    \[
        \tau_{\delta,+}(x,y) \geq \min\{\tau_{\varepsilon,-}(x,y), \tau_{\delta,+}(x,y)\}.
    \]
    Taking expectations and applying the lower bound from \eqref{eq:Ebounds} in Lemma \ref{lem:martingale}, we find
    \begin{align*}
        V \mathbb{E}[\tau_{\delta,+}(x,y)] &\geq V \mathbb{E}[\min\{\tau_{\varepsilon,-}(x,y), \tau_{\delta,+}(x,y)\}] \\
        &\geq \ln\left( \frac{\delta}{d(x,y)} \right) \ln\left( \frac{d(x,y)}{\varepsilon} \right) - 6K |\ln (\varepsilon)| - 2K^2.
    \end{align*}
    Recalling that for small $\varepsilon$, $\ln(d(x,y)/\varepsilon) = \ln(d(x,y)) + |\ln(\varepsilon)|$, we group the terms involving $|\ln(\varepsilon)|$:
    \begin{align*}
         V \mathbb{E}[\tau_{\delta,+}(x,y)] &\geq \ln\left( \frac{\delta}{d(x,y)} \right) \left( \ln(d(x,y)) + |\ln(\varepsilon)| \right) - 6K |\ln (\varepsilon)| - 2K^2 \\
         &= |\ln(\varepsilon)| \left[ \ln\left( \frac{\delta}{d(x,y)} \right) - 6K \right] + \ln\left( \frac{\delta}{d(x,y)} \right)\ln(d(x,y)) - 2K^2.
    \end{align*}
The hypothesis  $d(x,y) < e^{-6K}\delta$ gives $\ln\left( \frac{\delta}{d(x,y)} \right) - 6K > 0.$
Taking the limit $\varepsilon \to 0$ and applying the monotone convergence theorem,  yields
    \[
        \mathbb{E}[\tau_{\delta,+}(x,y)] = \lim_{\varepsilon \to 0} \mathbb{E}[\tau_{\delta,+}(x,y)\cdot \mathbbm{1}_{\{\tau_{\delta,+}(x,y)<\tau_{\epsilon,-}(x,y)\}}(\omega)] = \infty.
    \]
\end{proof}


From this lemma we know that for $(x_0,y_0) \in \Delta_\delta$, the expected number of orbit points 
\[(x_n,y_n) = \left( T^{(2)}_\omega \right)^n (x_0,y_0)
\] 
before escaping $\Delta_\delta$, with $(x_0,y_0) \in \Delta_{ e^{-6K}\delta}$,
is infinite.

The above results allow to conclude Theorem~\ref{thm:main1}\eqref{i:thm1:iter}.

\begin{proposition}\label{prop:SynchronOnAverage}
    If $\lambda=0$, 				
		for all $(x,y) \in \mathbb{T}^2\setminus \Delta$, 
		\[
			\lim_{n\to\infty} \frac{1}{n}\sum_{i=0}^{n-1}    d\left(T^i_\omega (x) ,T^i_\omega(y)\right)= 0
		~\textrm{and}~ 
			\limsup_{n\to\infty}d\left(T^n_\omega (x), T^n_\omega(y)\right) > 0,~\mathbb{P}-\mathrm{a.s.}
		\]
\end{proposition}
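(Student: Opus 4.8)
\emph{Setup and a preliminary fact.} The plan is to prove the two assertions separately, for each fixed $(x,y)\in\mathbb{T}^2\setminus\Delta$. Write $(x_n,y_n)=\big(T^{(2)}_\omega\big)^n(x,y)$ and $d_n=d(x_n,y_n)$, and keep $R,K$ as in Lemma~\ref{lem:martingale}, shrinking $R$ if necessary so that $R\le R_{\min}$ (so that the deterministic two-sided contraction bound $a_1 d_i\le d_{i+1}\le a_2 d_i$ holds whenever $d_i<R$). First I would record that the orbit $\mathbb{P}$-a.s.\ never meets $\Delta$: by Hypothesis~\ref{h:genericnoise} the curve $a\mapsto(T_a(x'),T_a(y'))$ meets $\Delta$ in only finitely many parameters for any $(x',y')\notin\Delta$, so a single step lands on $\Delta$ with probability zero, and a countable union over steps gives $d_n>0$ for all $n$, a.s.

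\emph{The $\limsup$ assertion.} It suffices to show $\mathbb{P}(d_n\to 0)=0$. On the full-measure event $\{d_n>0\text{ for all }n\}$, if also $d_n\to 0$ then, with $\rho:=R/2$, there is $N$ with $(x_n,y_n)\in\Delta_\rho$ for all $n\ge N$; hence, up to a null set, $\{d_n\to 0\}$ is contained in $\bigcup_{N}\{\,d_N\in(0,\rho),\ \tau_{\rho,+}(x_N,y_N)=\infty\,\}$. By the Markov property and Lemma~\ref{lem:stop} (which gives $\mathbb{P}_z(\tau_{\rho,+}=\infty)=0$ whenever $0<d(z)<\rho<R$), each event in this countable union is null, so $\mathbb{P}(d_n\to 0)=0$, i.e.\ $\limsup_n d_n>0$ a.s.

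\emph{The Cesàro assertion.} Since $d_i\le\tfrac12$, it is enough to show that for each fixed $\varepsilon\in(0,R)$ the fraction of $i<n$ with $d_i\ge\varepsilon$ tends to $0$ a.s., and then let $\varepsilon\downarrow 0$ along a sequence. Fix such an $\varepsilon$, put $\varepsilon_*=e^{-7K}\varepsilon$, and decompose the orbit into excursions, following the strategy of \cite{MR968817,MR1144097}: let $T_0$ be the first time the orbit enters $\Delta_{\varepsilon_*}$; given its $j$-th entry into $\Delta_{\varepsilon_*}$, at a point $z_j$ with $0<d(z_j)<\varepsilon_*$, let $E_j:=\tau_{\varepsilon,+}(z_j)$ be the subsequent first-escape time from $\Delta_\varepsilon$, and let $R_j$ be the further time until the orbit re-enters $\Delta_{\varepsilon_*}$ (finite a.s.\ by Lemma~\ref{lem:proximal}). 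During the first $E_j$ steps of excursion $j$ one has $d_i<\varepsilon$, so, writing $k=k(n)$ for the number of completed excursions by time $n$, the count of $i<n$ with $d_i\ge\varepsilon$ is at most $T_0+\sum_{j\le k+1}R_j$, while $n\ge\sum_{j\le k}E_j$; moreover $k(n)\to\infty$ a.s.\ since by Lemma~\ref{lem:proximal} the orbit enters $\Delta_{\varepsilon_*}$ infinitely often and by Lemma~\ref{lem:stop} it escapes $\Delta_\varepsilon$ from each such entry. It therefore remains to prove $\sum_{j\le k}R_j=O(k)$ and $\frac1k\sum_{j\le k}E_j\to\infty$ a.s. The first follows from Lemma~\ref{lem:proximal}, which gives a uniform bound $\mathbb{E}[R_j\mid\mathcal{G}_{j-1}]\le N/C$ together with uniform exponential tails, combined with the $L^2$ martingale strong law (Kronecker's lemma applied to $\sum_j j^{-1}(R_j-\mathbb{E}[R_j\mid\mathcal{G}_{j-1}])$), where $\mathcal{G}_{j-1}$ is the history up to the $j$-th entry into $\Delta_{\varepsilon_*}$.

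\emph{The crux and the main obstacle.} The second estimate rests on a uniform truncated-escape-time bound: for every $m$ there is $L(m)$ with $\mathbb{E}_z[\tau_{\varepsilon,+}\wedge L(m)]\ge m$ for all $z$ with $0<d(z)\le\varepsilon_*$. This is where Lemma~\ref{lem:stop} enters (it gives $\mathbb{E}_z[\tau_{\varepsilon,+}]=\infty$ for all such $z$, since $\varepsilon_*<e^{-6K}\varepsilon$); the deterministic bound $\tau_{\varepsilon,+}(z)\ge\ln(\varepsilon/d(z))/c_a$, with $c_a=\max(|\ln a_1|,\ln a_2)$, coming from Hypothesis~\ref{h:endo}, makes the estimate uniform when $d(z)$ is very small, and a compactness argument over the remaining range $d(z)\in[\varepsilon_*2^{-j-1},\varepsilon_*2^{-j}]$ (each point having infinite expected escape time), aided by Lemma~\ref{lem:martingale}, disposes of the rest. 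Granting this, applying the $L^2$ martingale strong law to the bounded variables $E_j\wedge L(m)$ gives $\liminf_k\frac1k\sum_{j\le k}E_j\ge\liminf_k\frac1k\sum_{j\le k}\mathbb{E}[E_j\wedge L(m)\mid\mathcal{G}_{j-1}]\ge m$, and letting $m\to\infty$ yields $\frac1k\sum_{j\le k}E_j\to\infty$. Then the fraction of $i<n$ with $d_i\ge\varepsilon$ is at most $\big(T_0+\sum_{j\le k+1}R_j\big)/\sum_{j\le k}E_j$, which tends to $0$ because the numerator is $O(k)$ while the denominator grows faster than linearly in $k=k(n)\to\infty$. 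Together with the preliminary fact and the $\limsup$ assertion, this proves the proposition. I expect the uniform truncated-escape-time estimate to be the main obstacle; the rest is bookkeeping with the excursion decomposition and elementary martingale limit theorems.
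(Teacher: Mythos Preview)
Your approach is essentially the same as the paper's: both use Lemma~\ref{lem:stop} (infinite expected escape time from $\Delta_\varepsilon$) and Lemma~\ref{lem:proximal} (uniform return-time bound) combined with a law-of-large-numbers argument, which the paper defers to \cite{MR2033186} and which you spell out correctly via the excursion decomposition and truncated-escape-time uniformity. For the $\limsup$ part your invocation of Lemma~\ref{lem:stop} is in fact the right tool; the paper cites Lemma~\ref{lem:proximal} there, which taken literally would only yield $\liminf_n d_n=0$.
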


\begin{proof}
    Let $\varepsilon>0$. For $(x,y) \in \mathbb{T}^2\setminus \Delta$ and $\omega \in \Sigma_\vartheta$, 
    consider the empirical count of iterates in $\Delta_\varepsilon$,
    \[
    N_\varepsilon(x,y,\omega) = \lim_{n\to\infty}\frac{\#\{i\in\N, 0\leq i \leq n-1: d(T_\omega^i(x),T_\omega^i(y))<\varepsilon \}}{n}.
    \]
{ {
We follow reasoning in  \cite{MR2033186}, adapting to our setting.
Let $(x_0,y_0)$ be a point outside $\Delta_\varepsilon$  and consider the orbit $(x_i,y_i) = \left(T_\omega^i (x_0), T_\omega^i (y_0)\right)$.
For almost all $\omega \in \Sigma_\vartheta$ there is an infinite sequence of times $S_i$ and $T_i$, $i \geq 0$, with $0 = S_0 < T_0 < S_1 < T_1 < \cdots$, such that for $j \geq 0$,
\begin{align*}
T_j  &= \min \left\{i > S_j \; : \; d(x_i,y_i) < \varepsilon e^{-6K} \right\},
\\
S_{j+1} &= \min \left\{i >   T_j  \; : \;  d(x_i,y_i) > \varepsilon  \right\}.
\end{align*}
We can also assume that $d(x_i,y_i)$ is never exactly $\varepsilon$, as this holds for almost all $\omega \in \Sigma_\vartheta$.
Write 
\[
\sigma_{j+1} = T_j - S_j, \qquad \tau_{j+1} = S_{j+1} - T_j.
\]
Given $(x_0,y_0)$ outside $\Delta_\varepsilon$, write
\[
 N_\varepsilon(n,\omega) = \frac{\#\{i\in\N, 0\leq i \leq n-1: d(x_i,y_i)<\varepsilon \}}{n}.
\]
Let $N_\sigma (n)$ be the largest index $i$ with $S_i \leq n$ and let $N_\tau (n)$ be the largest index $i$ with $T_i < n$.
Consider, for definiteness,  the case that $d(x_n,y_n) > \varepsilon$. 
Calculate
 \begin{align*}
N_\varepsilon(n,\omega) &\geq \frac{1}{T_{N_\tau (n)}} \sum_{i=1}^{T_{N_{\tau (n)}-1}}  \tau_i / (\sigma_i + \tau_i) 
\end{align*}
The dependence of, for instance, $\sigma_i$ on the orbit point $(x_{S_i}, y_{S_i})$, makes that we can not apply 
the strong law of large numbers for independent random variables. However, the arguments that lead to the strong law of large numbers, see for instance \cite[Section~2.5]{MR3930614} and \cite{MR722846,MR1192096}, combined with
the uniform bounds on the expected escape times $\sigma_i$ do give that for some $K>0$,  for almost all $\omega \in \Sigma_\vartheta$,
\begin{align*}
\limsup_{n\to\infty} \frac{1}{n} \sum_{i=0}^{n-1} \sigma_i &\leq K.
\end{align*}
Likewise,
\begin{align*}
\liminf_{n\to\infty} \frac{1}{n} \sum_{i=0}^{n-1} \tau_i &= \infty,
\end{align*}
for almost all $\omega \in \Sigma_\vartheta$. The case that $d(x_n,y_n) > \varepsilon$ is treated similarly.
We get that $\mathbb{P}$-almost surely $N_\varepsilon(x,y,\omega)=1$, for all $\varepsilon>0$ implying the first part of the statement. 
}}

     The second part follows from Lemma \ref{lem:proximal}. 
\end{proof}

\subsection{Positive Lyapunov exponent}\label{s:pos}

In this section we consider positive Lyapunov exponent $\lambda > 0$. This will be assumed to hold throughout the section. Furthermore, we will denote the second zero of the moment Lyapunov function $\Lambda$ as $\gamma$.

Consider  $0 < \delta < R$ and $(x,y) \in \mathbb{T}^2\setminus \Delta$ with $d(x,y) < \delta$. 
Let $\tau (x,y,\omega)$ be the minimal time with 
\[
d\left( \left(T^{(2)}_\omega\right)^{\tau(x,y,\omega)} (x,y)  \right) > \delta,
\] 
with $\tau (x,y,\omega)=\infty$ if this does not exist. 

\begin{lemma}\label{lem:returntime>}
Let $R,K$ be as in Lemma~\ref{lem:martingale}. Suppose $\lambda > 0$. For $0 < \delta < R$ and $(x,y) \in \mathbb{T}^2\setminus \Delta$ with $d(x,y) < \delta$ we have
\[
\mathbb{P} \left(  d(T^n_\omega(x), T^n_\omega(y)) > \delta  \text{ for some } n \in \mathbb{N} \right) = 1.
\]
Moreover, for some $K>0$,
\begin{equation}\label{eq:escape>}
 \frac{1}{\lambda}  \left(  \ln \left( \frac{\delta}{d(x,y)} \right) - 2K \right)  \leq \mathbb{E}\left[ \tau (x,y,\omega)  \right]  \leq \frac{1}{\lambda}  \left(  \ln \left( \frac{\delta}{d(x,y)}\right) + 2K \right).
\end{equation}
\end{lemma}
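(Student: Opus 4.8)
The plan is to run the scheme of Lemma~\ref{lem:martingale}, now exploiting that for $\lambda>0$ the functions $\phi^\pm$ from Proposition~\ref{prop:phieta} produce processes with a genuine linear drift. Write $\tau=\tau(x,y,\omega)=\tau_{\delta,+}(x,y)$ and $(x_n,y_n)=\left(T^{(2)}_\omega\right)^n(x,y)$, keeping $R,K$ as in Lemma~\ref{lem:martingale} with $R<R_{\min}$ (and $K$ enlarged, as there, to absorb single-step factors $\ln a_1,\ln a_2$). Since $d(x,y)<\delta<R_{\min}$, \eqref{eq:Rmin} gives $d(x_n,y_n)\ge a_1^{\,n}d(x,y)>0$ for $n\le\tau$, while $d(x_n,y_n)\le\delta$ for $n<\tau$ and $d(x_\tau,y_\tau)\in[\delta,a_2\delta)$; hence the stopped orbit stays in $\Delta_R\setminus\Delta$, never meets $\Delta$, so \eqref{eq:phi_log}--\eqref{eq:phimartingale} apply along it and each $\phi^\pm(x_n,y_n)$ is integrable. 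By \eqref{eq:phimartingale}, Remark~\ref{rem:subandsuper} and Doob's optional stopping theorem, $\phi^+(x_{n\wedge\tau},y_{n\wedge\tau})-\lambda(n\wedge\tau)$ is a submartingale and $\phi^-(x_{n\wedge\tau},y_{n\wedge\tau})-\lambda(n\wedge\tau)$ a supermartingale.

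For almost sure escape and the upper bound, the submartingale property gives $\phi^+(x,y)\le\mathbb{E}\!\left[\phi^+(x_{n\wedge\tau},y_{n\wedge\tau})\right]-\lambda\,\mathbb{E}[n\wedge\tau]$. Using $\phi^+(x,y)\ge\ln d(x,y)-K$ together with $\phi^+(x_{n\wedge\tau},y_{n\wedge\tau})\le\ln\delta+K$ along the stopped orbit (after enlarging $K$ to absorb $\ln a_2$), this yields $\lambda\,\mathbb{E}[n\wedge\tau]\le\ln(\delta/d(x,y))+2K$. Letting $n\to\infty$, monotone convergence gives $\mathbb{E}[\tau]<\infty$; in particular $\tau<\infty$ $\mathbb{P}$-a.s., which is the first assertion, and the upper bound in \eqref{eq:escape>} follows.

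For the lower bound, $\mathbb{E}[\tau]<\infty$ forces $n\,\mathbb{P}(\tau>n)\to0$. The supermartingale property gives $\lambda\,\mathbb{E}[n\wedge\tau]\ge\mathbb{E}\!\left[\phi^-(x_{n\wedge\tau},y_{n\wedge\tau})\right]-\phi^-(x,y)$. Split the expectation over $\{n\ge\tau\}$ and $\{n<\tau\}$: on $\{n\ge\tau\}$, $\phi^-(x_\tau,y_\tau)$ lies in the bounded interval $[\ln\delta-K,\ln(a_2\delta)+K]$ by \eqref{eq:phi_log}, so dominated convergence makes this part tend to $\mathbb{E}[\phi^-(x_\tau,y_\tau)]\ge\ln\delta-K$; on $\{n<\tau\}$, \eqref{eq:phi_log} and $d(x_n,y_n)\ge a_1^{\,n}d(x,y)$ give $|\phi^-(x_n,y_n)|\le n|\ln a_1|+|\ln d(x,y)|+|\ln\delta|+K$, so this part is $O\bigl(n\,\mathbb{P}(\tau>n)\bigr)\to0$. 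Since $\mathbb{E}[n\wedge\tau]\uparrow\mathbb{E}[\tau]$, letting $n\to\infty$ gives $\lambda\,\mathbb{E}[\tau]\ge\ln\delta-K-\phi^-(x,y)\ge\ln(\delta/d(x,y))-2K$, the lower bound in \eqref{eq:escape>}.

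\textbf{Main obstacle.} The only delicate step is the limit $n\to\infty$ in the lower bound, where $\phi^-(x_n,y_n)$ is a priori large and negative while the orbit lingers near $\Delta$; the argument above controls its contribution on $\{n<\tau\}$ using the deterministic geometric bound $d(x_n,y_n)\ge a_1^{\,n}d(x,y)$ before escape together with the tail estimate $n\,\mathbb{P}(\tau>n)\to0$ (a consequence of $\mathbb{E}[\tau]<\infty$). An alternative, closer to Lemma~\ref{lem:martingale}, is to stop at $\sigma_\varepsilon=\min\{\tau_{\varepsilon,-}(x,y),\tau_{\delta,+}(x,y)\}$ and to bound $\mathbb{P}(\tau_{\varepsilon,-}(x,y)<\tau_{\delta,+}(x,y))$ by applying optional stopping to the nonnegative supermartingale $W^-_{-\gamma}$ of Proposition~\ref{prop:W} and Remark~\ref{rem:Wpmsubsuper} (legitimate since $\Lambda(\gamma)=0$ with $\gamma<0$ by Lemma~\ref{lem:second0}), which via \eqref{eq:psi_log} gives $\mathbb{P}(\tau_{\varepsilon,-}(x,y)<\tau_{\delta,+}(x,y))\le K^2 d(x,y)^{\gamma}\varepsilon^{-\gamma}=o\bigl(1/|\ln\varepsilon|\bigr)$, so the contribution of escapes through $\partial\Delta_\varepsilon$ vanishes as $\varepsilon\to0$ while $\sigma_\varepsilon\uparrow\tau_{\delta,+}(x,y)$.
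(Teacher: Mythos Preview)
Your proof is correct and follows essentially the same approach as the paper: applying Doob's optional stopping to the sub- and supermartingales $\phi^\pm(x_n,y_n)-\lambda n$ from Proposition~\ref{prop:phieta} and Remark~\ref{rem:subandsuper}, then reading off the bounds via \eqref{eq:phi_log}. The paper's proof is much terser and simply writes down the resulting inequalities $\lambda\,\mathbb{E}[\tau]\le\mathbb{E}[\phi^+(x_\tau,y_\tau)-\phi^+(x,y)]$ and $\lambda\,\mathbb{E}[\tau]\ge\mathbb{E}[\phi^-(x_\tau,y_\tau)-\phi^-(x,y)]$ without your careful justification of the limit $n\to\infty$ (in particular, your control of the $\{n<\tau\}$ contribution via the deterministic bound $d(x_n,y_n)\ge a_1^{\,n}d(x,y)$ and the tail estimate $n\,\mathbb{P}(\tau>n)\to0$ fills a gap the paper leaves implicit).
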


\begin{proof}
Write $x_n = T^n_\omega (x)$ and $y_n = T^n_\omega (x)$. We also use shorthand notation $\tau$ for $\tau(x,y,\omega)$.
By Proposition~\ref{prop:phieta} and Remark~\ref{rem:subandsuper} we find, applying Doob's stopping time theorem,
\begin{align*}
\lambda \mathbb{E} [\tau ] \leq \mathbb{E} \left[  \phi^+  (x_\tau,y_\tau) - \phi^+ (x,y)   \right]
\leq \ln (\delta) - \ln(d(x,y)) + 2K 
\end{align*}
and 
\begin{align*}
	\lambda \mathbb{E} [\tau ] \geq \mathbb{E} \left[  \phi^-  (x_\tau,y_\tau) - \phi^- (x,y)   \right]
	\geq \ln (\delta) - \ln(d(x,y)) - 2K.
\end{align*}
The lemma follows.
\end{proof}

More detailed estimates can be obtained by analysing escape times from  strips $\Delta_\delta \setminus \Delta_\varepsilon$. The next lemma is the equivalent of Lemma~\ref{lem:martingale} for positive Lyapunov exponent.
Stopping times $\tau_{\delta,+}(x,y)$ and $\tau_{\varepsilon,-}(x,y)$ are defined as before in
\eqref{eq:tau+}, \eqref{eq:tau-}.

 \begin{lemma}\label{lem:martingalepos}
Let $R,K$ be as in Lemma~\ref{lem:martingale}. Suppose $\lambda > 0$.
Let $\gamma\neq 0$ be the negative value so that $\Lambda(\gamma) = 0$.  If $0<\varepsilon<d(x,y)<\delta< R$, then
 	\begin{equation}\label{eq:asfinpos}
 		\mathbb{P}(\min\{\tau_{\varepsilon,-}(x,y), \tau_{\delta,+}(x,y)\}< \infty) = 1. 
 	\end{equation}
 	Furthermore, there exists a $\kappa \in (0,1)$, such that if $0<\varepsilon<d(x,y)<\kappa\delta<\kappa R$, then
 	\begin{equation} \label{eq:propbounds>}
 		\frac{1}{K}\left(\frac{d(x,y)}{\varepsilon}\right)^{\gamma} \leq \mathbb{P}\left(\{\tau_{\varepsilon,-}(x,y)< \tau_{\delta,+}(x,y)\}\right) \leq {K}\left(\frac{d(x,y)}{\varepsilon}\right)^{\gamma}. 
 	\end{equation}
 \end{lemma}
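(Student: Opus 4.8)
The plan is to follow the template of the proof of Lemma~\ref{lem:martingale}, replacing the functions $\phi^\pm,\eta^\pm$ by the functions $W^\pm_{-\gamma}$ supplied by Proposition~\ref{prop:W}. Since $\gamma$ is the nonzero root of $\Lambda$ we have $e^{\Lambda(\gamma)}=1$, and $q=-\gamma=|\gamma|$ lies in the admissible range $[-|\gamma|-\tfrac12,|\gamma|+\tfrac12]$ of Proposition~\ref{prop:W}, so $e^{\Lambda(-q)}=1$ and \eqref{eq:psimartingale} together with Remark~\ref{rem:Wpmsubsuper} give, for $(x,y)\in\Delta_R\setminus\Delta$,
\[
\mathbb{E}\!\left[W^+_{-\gamma}(T^{(2)}_\omega(x,y))\right]\ge W^+_{-\gamma}(x,y),\qquad \mathbb{E}\!\left[W^-_{-\gamma}(T^{(2)}_\omega(x,y))\right]\le W^-_{-\gamma}(x,y).
\]
Writing $(x_n,y_n)=(T^{(2)}_\omega)^n(x,y)$ and $\tau=\min\{\tau_{\varepsilon,-}(x,y),\tau_{\delta,+}(x,y)\}$, and observing that for $n<\tau$ the point $(x_n,y_n)$ lies in $\{\varepsilon\le d\le\delta\}\subset\Delta_R\setminus\Delta$ (using $\delta<R$), it follows that $W^+_{-\gamma}(x_{\min\{n,\tau\}},y_{\min\{n,\tau\}})$ is a submartingale and $W^-_{-\gamma}(x_{\min\{n,\tau\}},y_{\min\{n,\tau\}})$ a supermartingale; by Hypothesis~\ref{h:endo} (and $\delta<R_{\min}$) these stopped processes take values in a fixed compact subinterval of $(0,\infty)$, hence are uniformly integrable. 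The statement \eqref{eq:asfinpos} is immediate from Lemma~\ref{lem:returntime>}, which already gives $\mathbb{P}(\tau_{\delta,+}(x,y)<\infty)=1$ and hence $\tau<\infty$ $\mathbb{P}$-almost surely (alternatively, one may argue as in Lemma~\ref{lem:martingale} with the bounded submartingale $\phi^+(x_n,y_n)-n\lambda$).

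With $\tau<\infty$ a.s.\ and the stopped processes uniformly integrable, Doob's optional stopping theorem gives $W^-_{-\gamma}(x,y)\ge\mathbb{E}[W^-_{-\gamma}(x_\tau,y_\tau)]$ and $W^+_{-\gamma}(x,y)\le\mathbb{E}[W^+_{-\gamma}(x_\tau,y_\tau)]$. For the upper bound in \eqref{eq:propbounds>} I will split the first of these on $\{\tau_{\varepsilon,-}<\tau_{\delta,+}\}$ and its complement: on the former, $d(x_\tau,y_\tau)<\varepsilon$ and $\gamma<0$ give $W^-_{-\gamma}(x_\tau,y_\tau)\ge\tfrac1K\varepsilon^\gamma$ by \eqref{eq:psi_log}, while on the latter the contribution is nonnegative and can be discarded; combined with $W^-_{-\gamma}(x,y)\le K\,d(x,y)^\gamma$ this yields $\mathbb{P}(\tau_{\varepsilon,-}<\tau_{\delta,+})\le K^2(d(x,y)/\varepsilon)^\gamma$. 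For the lower bound I split the second inequality the same way: using $d(x_\tau,y_\tau)\ge a_1\varepsilon$ on $\{\tau_{\varepsilon,-}<\tau_{\delta,+}\}$, $d(x_\tau,y_\tau)>\delta$ on the complement, \eqref{eq:psi_log}, the lower bound $W^+_{-\gamma}(x,y)\ge\tfrac1K d(x,y)^\gamma$, and $\mathbb{P}(\tau_{\delta,+}<\tau_{\varepsilon,-})\le1$, one obtains after rearranging
\[
\mathbb{P}(\tau_{\varepsilon,-}<\tau_{\delta,+})\ \ge\ \frac{\tfrac1K d(x,y)^\gamma-K\delta^\gamma}{K\,a_1^\gamma\,\varepsilon^\gamma}.
\]

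It remains to absorb the error term $K\delta^\gamma$, and this is where the hypothesis $d(x,y)<\kappa\delta$ is used. Since $\gamma<0$, $d(x,y)<\kappa\delta$ forces $\delta^\gamma<\kappa^{|\gamma|}d(x,y)^\gamma$, so fixing $\kappa\in(0,1)$ small enough that $K\kappa^{|\gamma|}\le\tfrac1{2K}$ gives $\tfrac1K d(x,y)^\gamma-K\delta^\gamma\ge\tfrac1{2K}d(x,y)^\gamma$, whence $\mathbb{P}(\tau_{\varepsilon,-}<\tau_{\delta,+})\ge\tfrac1{2K^2 a_1^\gamma}(d(x,y)/\varepsilon)^\gamma$; enlarging $K$ to swallow the various constants then gives \eqref{eq:propbounds>}. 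I expect this last step to be the only genuine subtlety: in contrast to the $\lambda=0$ case of Lemma~\ref{lem:martingale}, where the relevant logarithmic quantities combine additively and no restriction on $d(x,y)/\delta$ is needed, here the exit probabilities behave like those of a gambler's-ruin problem whose harmonic function is $d^\gamma$, so the leakage through the outer boundary $\partial\Delta_\delta$ contributes a term of relative size $(d(x,y)/\delta)^{|\gamma|}$ that must be dominated by keeping $d(x,y)$ well inside $\Delta_\delta$. Everything else — that the stopped processes are genuine sub-/supermartingales, uniform integrability, and the applicability of optional stopping — is routine given Proposition~\ref{prop:W}.
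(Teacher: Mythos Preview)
Your proof is correct and follows essentially the same approach as the paper's: both use the sub- and supermartingales $W^\pm_{-\gamma}$ from Proposition~\ref{prop:W} with Doob's optional stopping, split on the two exit events, and invoke the condition $d(x,y)<\kappa\delta$ to absorb the boundary term $\delta^\gamma$. Your treatment is slightly more streamlined in two places---you obtain \eqref{eq:asfinpos} directly from Lemma~\ref{lem:returntime>}, and for the upper bound you simply discard the nonnegative contribution on $\{\tau_{\delta,+}<\tau_{\varepsilon,-}\}$ rather than solving for the probability---and you supply the details of the lower bound that the paper leaves as ``derived similarly.''
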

 
 \begin{proof}
The proof is similar to the proof of Lemma~\ref{lem:martingale}. 
Let $(x,y) \in \Delta_{\delta}\setminus \Delta_\varepsilon$, so with $\varepsilon < d(x,y) < \delta$.
Let $\tau_{\varepsilon,-}$ and $\tau_{\delta,+}$ as in \eqref{eq:tau+} and \eqref{eq:tau-}. As before we get \eqref{eq:asfinpos}, that is, $\min\{\tau_{\varepsilon,-},\tau_{\delta,+}\}<\infty$ for almost all $\omega$.

We start with the upper bound in \eqref{eq:propbounds>}.
Using Remark~\ref{rem:Wpmsubsuper} and applying Doob's stopping time theorem, we get
			\begin{multline}
				W^-_{-\gamma}(x,y) \geq 
\\
				 \mathbb{P} \left(\tau_{\varepsilon,-}(x,y)<  \tau_{\delta,+}(x,y)\right)\mathbb{E} \left[ W^-_{-\gamma}(x_{\tau_{\varepsilon,-}(x,y)},y_{\tau_{\varepsilon,-}(x,y)})\mid\tau_{\varepsilon,-}(x,y)<  \tau_{\delta,+}(x,y)\right]
\\
				+ \mathbb{P} \left(\tau_{\varepsilon,-}(x,y)>  \tau_{\delta,+}(x,y)\right)\mathbb{E} \left[ W^-_{-\gamma}(x_{\tau_{\delta,+}(x,y)},y_{\tau_{\delta,+}(x,y)})\mid\tau_{\varepsilon,-}(x,y)> \tau_{\delta,+}(x,y)\right].
 \label{eq:W-stop>}
			\end{multline}
As before, since $\min\{\tau_{\varepsilon,-},\delta\}$ is almost surely finite (see Lemma~\ref{lem:martingale}), we have
\begin{align*}
\mathbb{P} \left(\tau_{\varepsilon,-}(x,y)>  \tau_{\delta,+}(x,y)\right) 
&=
 1 - \mathbb{P} \left(\tau_{\varepsilon,-}(x,y)<  \tau_{\delta,+}(x,y)\right).
\end{align*}
Furthermore, for some $K>1$ as in Proposition~\ref{prop:W}, we get
\begin{align*}
\mathbb{E} \left[ W^-_{-\gamma}(x_{\tau_{\varepsilon,-}(x,y)},y_{\tau_{\varepsilon,-}(x,y)})\mid\tau_{\varepsilon,-}(x,y)<  \tau_{\delta,+}(x,y)\right] &\geq  \frac{\varepsilon^{\gamma}}{K},
\\
\mathbb{E} \left[ W^-_{-\gamma}(x_{\tau_{\delta,+}(x,y)},y_{\tau_{\delta,+}(x,y)})\mid\tau_{\varepsilon,-}(x,y)> \tau_{\delta,+}(x,y)\right] &\geq \frac{\delta^{\gamma}}{K},
\end{align*}
and also 
$K d(x,y)^{\gamma} \geq  W^-_{-\gamma}(x,y)$.
Then \eqref{eq:W-stop>} yields
\begin{align*}  
 \mathbb{P} \left(\tau_{\varepsilon,-}(x,y) <  \tau_{\delta,+}(x,y)\right)
&\leq  \left( {K^2} d(x,y)^{\gamma} - \delta^{\gamma} \right) / \left(  \varepsilon^{\gamma} -  \delta^{\gamma}  \right).
\end{align*}
From this and a condition $d(x,y) \leq \kappa\delta$ for small enough $\kappa$, we get the stated bound. The lower bound in \eqref{eq:propbounds>} is derived similarly.
 \end{proof}
 
Using the above lemma we get the following statement in which we discuss, for orbit pieces starting in 
$\Delta_{\kappa\delta}\setminus\Delta_\varepsilon$ until escape from $\Delta_\delta$, the expected number of points in $\Delta_\varepsilon$. It is similar to Lemma~\ref{lem:gepsilonnetje}, but for positive Lyapunov exponent.
 
The results in this section easily allow to conclude Theorem~\ref{thm:main1}\eqref{i:thm1:chao}.

\begin{proposition}\label{prop:chaos}
If $\lambda > 0$, for all $(x,y) \in \mathbb{T}^2\setminus \Delta$, 
\[
\lim_{n\to\infty}\frac{1}{n}\sum_{i=0}^{n-1}   d \left(T^i_\omega (x), T^i_\omega(y)\right) > 0
~\textrm{and}~
\liminf_{n\to\infty}d\left(T^n_\omega (x) ,T^n_\omega(y)\right) = 0,~\mathbb{P}-\mathrm{a.s.}
\]	
\end{proposition}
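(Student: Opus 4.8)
\noindent The plan is to treat the two assertions separately, since they rest on opposite features of the dynamics. For $\liminf_n d(T^n_\omega(x),T^n_\omega(y))=0$ I would use only Lemma~\ref{lem:proximal}: fixing $\varepsilon>0$ with associated $N,C$, the strong Markov property of the two-point chain gives that, conditionally on $\mathcal{F}_{jN}$, the probability of staying outside $\Delta_\varepsilon$ throughout $[jN,(j+1)N]$ is at most $1-C$; hence $\mathbb{P}(d_i\ge\varepsilon\text{ for all }0\le i\le kN)\le(1-C)^k\to0$, so the orbit a.s.\ enters $\Delta_\varepsilon$, and restarting at the (stopping) entry time it does so infinitely often. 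Intersecting over $\varepsilon=1/m$, $m\in\N$, yields $\liminf_n d_n=0$ $\mathbb{P}$-a.s.\ (here and below $d_n=d(T^n_\omega(x),T^n_\omega(y))$).

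\noindent For the time average I would reduce to producing a fixed $\varepsilon_0\in(0,R)$ and a constant $c>0$ with $\liminf_{n}\frac1n\#\{0\le i<n:d_i\ge\varepsilon_0\}\ge c$ $\mathbb{P}$-a.s., which already forces $\frac1n\sum_{i<n}d_i\ge c\varepsilon_0>0$. (I would claim only that the $\liminf$ is positive; should the full limit be needed it follows a posteriori from Birkhoff's theorem applied to an ergodic component of the finite stationary measure $\mu^{(2)}$ built in Section~\ref{s:stat}.) First I would note that, by Hypothesis~\ref{h:genericnoise}, the set of symbols carrying a given off-diagonal point onto $\Delta$ is finite, so $\mathbb{P}$-a.s.\ the two-point orbit never meets $\Delta$, and I work on that event. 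Then, fixing $0<\varepsilon_0<\delta<R$, I set up a renewal-type decomposition using the stopping times $\rho_0=\inf\{n\ge0:d_n\ge\delta\}$ and, for $k\ge1$, $\sigma_k=\inf\{n>\rho_{k-1}:d_n<\varepsilon_0\}$, $\rho_k=\inf\{n>\sigma_k:d_n\ge\delta\}$; these are a.s.\ finite ($\rho_k$ by Lemma~\ref{lem:returntime>}, $\sigma_k$ by the Borel--Cantelli argument above), on each ``far phase'' $[\rho_{k-1},\sigma_k)$ one has $d_n\ge\varepsilon_0$ at every step, and I write $A_k=\sigma_k-\rho_{k-1}\ge1$, $B_k=\rho_k-\sigma_k\ge1$.

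\noindent The core is the control of the block lengths. Lemma~\ref{lem:proximal} gives a geometric conditional tail for $A_k$, so $\mathbb{E}[A_k\mid\mathcal{F}_{\rho_{k-1}}]\le M_1$. By the strong Markov property and Lemma~\ref{lem:returntime>}, $\mathbb{E}[B_k\mid\mathcal{F}_{\sigma_k}]\le\lambda^{-1}(\ln(\delta/d_{\sigma_k})+2K)$; since $d_{\sigma_k-1}\ge\varepsilon_0$, Lemma~\ref{lem:logbnd} (applied with its radius equal to $\varepsilon_0$) controls $\mathbb{E}[-\ln d_{\sigma_k}\mid\mathcal{F}_{\rho_{k-1}}]$, whence $\mathbb{E}[B_k\mid\mathcal{F}_{\rho_{k-1}}]\le M_3$, with $M_1,M_3$ depending only on $\varepsilon_0,\delta$. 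To pass from conditional means to an almost-sure statement I would bound the fluctuations: choosing $q\in(0,1/2)$ with $\Lambda(-q)<0$ (possible since $\Lambda'(0)=\lambda>0$), Proposition~\ref{prop:W} makes $n\mapsto e^{-n\Lambda(-q)}W_q^-(x_n,y_n)$ a supermartingale while the orbit stays in $\Delta_R$, and Doob's optional stopping theorem yields $\mathbb{P}(B_k>t\mid\mathcal{F}_{\sigma_k})\le K^2(\delta/d_{\sigma_k})^q e^{-t|\Lambda(-q)|}$; together with the estimate $\mathbb{E}[d_{\sigma_k}^{-q}\mid\mathcal{F}_{\rho_{k-1}}]\le C_q$ for $q<1/2$ (from the quadratic-tangency bound in the proof of Lemma~\ref{lem:logbnd}) this makes $\mathbb{E}[A_k^2\mid\mathcal{F}_{\rho_{k-1}}]$ and $\mathbb{E}[B_k^2\mid\mathcal{F}_{\rho_{k-1}}]$ uniformly bounded. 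A martingale strong law in the filtration $\mathcal{G}_k=\mathcal{F}_{\rho_k}$ then gives $\mathbb{P}$-a.s.\ that $1\le\liminf_K\frac1K\sum_{k\le K}A_k\le\limsup_K\frac1K\sum_{k\le K}A_k\le M_1$ and $\limsup_K\frac1K\sum_{k\le K}B_k\le M_3$. Finally, for $\rho_K\le n<\rho_{K+1}$ the far phases $1,\dots,K$ all lie in $[0,n)$, so $\#\{i<n:d_i\ge\varepsilon_0\}\ge\sum_{k\le K}A_k$; an elementary interpolation (using monotonicity of $t\mapsto(a+t)/(b+t)$ for $a\le b$) gives $\#\{i<n:d_i\ge\varepsilon_0\}/n\ge\inf_{J\ge K}(\sum_{k\le J}A_k)/\rho_J$, and since $\rho_J=\rho_0+\sum_{k\le J}(A_k+B_k)$ with $\sum_{k\le J}A_k\ge J$, the strong laws give $\liminf_J(\sum_{k\le J}A_k)/\rho_J\ge1/(M_1+M_3)=:c>0$, which completes the reduction.

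\noindent I expect the main obstacle to be precisely the interplay, special to the endomorphism setting, between folding and escape: a single step of $T^{(2)}_\omega$ can drop an off-diagonal point to an arbitrarily small distance from $\Delta$, which a priori could make the near-phase lengths $B_k$ heavy-tailed, and taming this requires Hypothesis~\ref{h:genericnoise} to simultaneously supply the $\mathbb{E}[-\ln d_{\sigma_k}]$ bound (for the conditional means) and the $\mathbb{E}[d_{\sigma_k}^{-q}]$ bound (which, together with the $W_q^-$ supermartingale of Proposition~\ref{prop:W}, upgrades per-block moment control to an almost-sure law of large numbers). Since Lemma~\ref{lem:returntime>} delivers only first moments of the escape time, establishing this tail control, and organizing the renewal decomposition so that the strong Markov property applies cleanly at the $\rho_k$ and $\sigma_k$, is where the real work lies.
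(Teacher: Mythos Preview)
Your proposal is correct and follows the same skeleton as the paper's proof: Lemma~\ref{lem:proximal} for the $\liminf$ assertion, and the combination of Lemma~\ref{lem:returntime>} with Lemma~\ref{lem:logbnd} and a law of large numbers for the positive time-average assertion. The paper's own argument is extremely terse---essentially two sentences citing these lemmas, the SLLN, and the reference \cite{MR2033186} for a worked-out version---so your explicit renewal decomposition into far phases $A_k$ and near phases $B_k$ is precisely the kind of machinery one needs to make that sketch rigorous.

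Where you go beyond the paper is in supplying second-moment control: you invoke the $W_q^-$ supermartingale (Proposition~\ref{prop:W}) to get exponential tails for $B_k$, and you use the quadratic-tangency structure behind Hypothesis~\ref{h:genericnoise} to bound $\mathbb{E}[d_{\sigma_k}^{-q}]$ for small $q$. The paper does not do this explicitly, instead deferring to \cite{MR2033186}; your route is more self-contained at the cost of more work. One small caution: the step ``$d_{\sigma_k-1}\ge\varepsilon_0$, so Lemma~\ref{lem:logbnd} controls $\mathbb{E}[-\ln d_{\sigma_k}\mid\mathcal{F}_{\rho_{k-1}}]$'' is slightly slippery because $\sigma_k-1$ is not a stopping time; the clean way is to show that for $(x,y)$ with $d(x,y)\ge\varepsilon_0$ the ratio $\mathbb{E}_a[(-\ln d(T_a^{(2)}(x,y)))1_{\{d<\varepsilon_0\}}]/\mathbb{P}_a(d(T_a^{(2)}(x,y))<\varepsilon_0)$ is uniformly bounded (this follows from the transversality/quadratic-tangency structure of Hypothesis~\ref{h:genericnoise}, and is implicit in the proof of Lemma~\ref{lem:logbnd}), after which the predictability of $\{\sigma_k\ge m\}$ gives the bound. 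You also rightly flag that the statement asserts a genuine limit, not only a $\liminf$; the paper glosses over this, and your suggestion to recover it a posteriori from Birkhoff applied to an ergodic component of $\mu^{(2)}$ is the natural fix.
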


\begin{proof}
The reasoning follows Proposition~\ref{prop:SynchronOnAverage}.
    Let $\varepsilon>0$. For $(x,y) \in \mathbb{T}^2\setminus \Delta$ and $\omega \in \Sigma_\vartheta$, 
    consider the empirical count of iterates in $\Delta_\varepsilon$,
    \[
    N_\varepsilon(x,y,\omega) = \lim_{n\to\infty}\frac{\#\{i\in\N, 0\leq i \leq n-1: d(T_\omega^i(x),T_\omega^i(y))<\varepsilon \}}{n}.
    \]
      By Lemma~\ref{lem:returntime>} and Lemma~\ref{lem:logbnd} and the strong law of large numbers, we get that $\mathbb{P}$-almost surely $N_\varepsilon(x,y,\omega)<1 $, for a $\varepsilon>0$ implying the first part of the statement.
     The second part follows from Lemma \ref{lem:proximal}. 
\end{proof}

\subsection{Negative Lyapunov exponent}\label{s:neg}

In this section we consider negative Lyapunov exponent $\lambda<0$. This will be assumed to hold throughout the section.
The following lemma can be obtained by the construction of local stable manifolds for $T^n_\omega$ of points in $\Delta$, which exists for almost all $\omega \in \Sigma_\vartheta$ \cite{MR1369243}. We provide a proof along the lines of our reasoning in previous sections, compare also \cite{MR872098}.

\begin{lemma}\label{lem:stablesetposprob}
Let $R,K$ be as in Lemma~\ref{lem:martingale}. Suppose $\lambda < 0$. 
There is $0 < \chi <1$ so that the following holds.
For each $0< \delta < R$ there is $0< \delta' < \delta$, so that for $(x,y)$ with $d(x,y) < \delta'$, 
\begin{align*}
\mathbb{P} \left(  d( T^n_\omega (x) , T^n_\omega (y)) < \delta \text{ for all } n \text{ and } \lim_{n\to \infty} d( T^n_\omega (x) , T^n_\omega (y))   = 0   \right) &> \chi.
\end{align*}
\end{lemma}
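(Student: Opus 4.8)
The plan is to reuse the supermartingale $\phi^+$ from Proposition~\ref{prop:phieta}, which in the case $\lambda<0$ satisfies $(P_{(2)}-I)\phi^+(x,y)\le\lambda<0$ on $\Delta_R\setminus\Delta$, so that $\phi^+(x_n,y_n)-n\lambda$ is a supermartingale for the stopped two-point process, stopped at $\tau_{\delta,+}(x,y)$ (the first exit from $\Delta_\delta$). Because $\lambda<0$, the drift pushes $\phi^+$, hence $\ln d(x_n,y_n)$, towards $-\infty$; the point is to quantify that on an event of probability bounded below the process never leaves $\Delta_\delta$ and $d(x_n,y_n)\to 0$. First I would fix $\delta<R$ and choose $\delta'$ small. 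Starting from $(x,y)$ with $d(x,y)<\delta'$, consider the stopping time $\tau=\tau_{\delta,+}(x,y)$. On the event $\{\tau<\infty\}$, at time $\tau$ the distance lies in $[\delta,a_2\delta)$ by Hypothesis~\ref{h:endo}, so $\phi^+(x_\tau,y_\tau)\ge\ln\delta-K$. The optional stopping inequality applied to the supermartingale $\phi^+(x_{n\wedge\tau},y_{n\wedge\tau})-(n\wedge\tau)\lambda$, together with $\lambda<0$ (so $-(n\wedge\tau)\lambda\ge 0$) and monotone/dominated convergence, gives
\[
\phi^+(x,y)\ \ge\ \mathbb{E}\!\left[\phi^+(x_{\tau},y_{\tau})\mathbf 1_{\{\tau<\infty\}}\right]-\lambda\,\mathbb{E}[\tau\mathbf 1_{\{\tau<\infty\}}]\ \ge\ (\ln\delta-K)\,\mathbb{P}(\tau<\infty),
\]
where I must be a little careful: on $\{\tau<\infty\}$ the stopped process is bounded below (distance stays $\ge$ something like $a_1^{\tau}\delta'$... ) so to make this clean I will instead bound $\mathbb{P}(\tau<\infty)$ directly. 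Since $\phi^+(x,y)\le\ln d(x,y)+K\le\ln\delta'+K$, rearranging yields
\[
\mathbb{P}(\tau_{\delta,+}(x,y)<\infty)\ \le\ \frac{\ln\delta'+K}{\ln\delta-K},
\]
valid once $\ln\delta-K>0$ is arranged (shrink $\delta$, or absorb signs: note both numerator and denominator are negative for small $\delta,\delta'$, and the ratio is $<1$ precisely when $\delta'<\delta e^{-2K}$). Thus choosing $\delta'<\delta e^{-2K}$ we obtain $\mathbb{P}(\tau_{\delta,+}(x,y)=\infty)\ge 1-\chi_0$ for an explicit $\chi_0<1$ independent of $(x,y)$ and of $\delta$; set $\chi=1-\chi_0$.

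On the event $A=\{\tau_{\delta,+}(x,y)=\infty\}$ of probability $\ge\chi$, the whole trajectory stays in $\Delta_\delta$, so the approximations of Section~\ref{s:koopman} apply for all $n$ and $\phi^+(x_n,y_n)-n\lambda$ is a genuine (non-stopped) supermartingale, bounded above by $\ln\delta+K$. A supermartingale that is bounded above converges a.s.\ to a finite limit; hence $\phi^+(x_n,y_n)-n\lambda$ converges a.s.\ on $A$, which forces $\phi^+(x_n,y_n)\to-\infty$ (since $-n\lambda\to+\infty$), and therefore $\ln d(x_n,y_n)\to-\infty$, i.e.\ $d(x_n,y_n)\to 0$, on $A$. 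Combined with $\mathbb{P}(A)\ge\chi$ this is exactly the assertion of the lemma, with $\chi\in(0,1)$ uniform. (A minor subtlety: to conclude a.s.\ convergence of the supermartingale on $A$ one can instead look at the nonnegative supermartingale $(\ln\delta+K)-(\phi^+(x_n,y_n)-n\lambda)$ and apply the standard supermartingale convergence theorem, citing \cite{MR1368405} as in Remark~\ref{rem:subandsuper}.)

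The main obstacle I anticipate is the bookkeeping around the stopping time $\tau_{\delta,+}$: one must make sure the optional stopping argument is legitimate (the stopped supermartingale is bounded below on $\{\tau<\infty\}$ only after one notes that at each step the distance contracts by at most the factor $a_1$, and before exit the distance is at most $\delta$, so $\phi^+$ of stopped iterates is uniformly bounded below by $\ln(a_1\delta)-K$ — enough to pass to the limit), and that the resulting bound on $\mathbb{P}(\tau<\infty)$ is strictly less than $1$ and uniform in $(x,y)\in\Delta_{\delta'}$ and in $\delta$. Extracting the constant $\chi$ independent of $\delta$ requires exactly the form $\delta'=\delta e^{-2K}$ (or smaller), which makes the ratio $(\ln\delta'+K)/(\ln\delta-K)$ equal to a constant $<1$ regardless of how small $\delta$ is; this is the one place where the uniformity claimed in the statement must be checked rather than asserted. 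Everything else is a direct transcription of the martingale machinery already set up for $\lambda=0$ and $\lambda>0$ in Lemmas~\ref{lem:martingale} and~\ref{lem:martingalepos}, compare \cite{MR872098}.
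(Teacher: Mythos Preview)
Your approach has a genuine gap, and in fact several compounding problems. First a sign issue: Proposition~\ref{prop:phieta} gives $(P_{(2)}-I)\phi^+\ge\lambda$, so $\phi^+(x_n,y_n)-n\lambda$ is a \emph{sub}martingale; the supermartingale is built from $\phi^-$. But even with $\phi^-$ the argument does not go through. The function $\phi^-\approx\ln d(x,y)$ is unbounded below on $\Delta_\delta\setminus\Delta$, so when you stop only at $\tau=\tau_{\delta,+}$ the contribution from $\{\tau>n\}$ cannot be dropped: on that event the process is still inside $\Delta_\delta$ and $d(x_n,y_n)$ may be arbitrarily small, making $\phi^-(x_n,y_n)$ arbitrarily negative. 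Your claim that the stopped process is bounded below by $\ln(a_1\delta)-K$ applies only at exit, not before. The same unboundedness breaks your convergence step: a supermartingale bounded \emph{above} need not converge (the convergence theorem needs $\sup_n\mathbb{E}[M_n^-]<\infty$), and the proposed fix $(\ln\delta+K)-M_n$ is a nonnegative \emph{sub}martingale, which also need not converge. Finally, even taking your displayed inequality at face value, with $\delta'=\delta e^{-2K}$ one computes $(\ln\delta'+K)/(\ln\delta-K)=1$, and for any fixed ratio $\delta'/\delta$ the quotient tends to $1$ as $\delta\to0$; a logarithmic Lyapunov function cannot yield a bound on $\mathbb{P}(\tau_{\delta,+}<\infty)$ uniform in $\delta$, as the lemma requires.

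The paper sidesteps all of this by using $W^-_{-\gamma}$ from Proposition~\ref{prop:W}, where $\gamma>0$ is the second zero of $\Lambda$ (Lemma~\ref{lem:second0}). Since $\Lambda(\gamma)=0$, the stopped process $W^-_{-\gamma}(x_n,y_n)$ is itself a supermartingale, and since $W^-_{-\gamma}\asymp d(x,y)^\gamma$ with $\gamma>0$ it is \emph{bounded} on $\Delta_\delta$. Optional stopping with both $\tau_{\varepsilon,-}$ and $\tau_{\delta,+}$ then gives the scale-invariant estimate $\mathbb{P}(\tau_{\varepsilon,-}<\tau_{\delta,+})\ge 1-K^2(d(x,y)/\delta)^\gamma$; letting $\varepsilon\to0$ yields a lower bound depending only on $d(x,y)/\delta$, hence a uniform $\chi$. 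An iteration on shrinking scales then upgrades $\liminf d(x_n,y_n)=0$ to the full limit.
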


\begin{proof}
Let $\gamma>0$ given by Lemma~\ref{lem:second0} be so that $\Lambda(\gamma)=0$.
The first part of the proof is similar to the proof of Lemma~\ref{lem:martingale} and Lemma~\ref{lem:martingalepos}. 
Take $(x,y) \in \Delta_{\delta}\setminus \Delta_\varepsilon$, so with $\varepsilon \leq d(x,y) < \delta$.
Let $\tau_{\varepsilon,-}(x,y)$ and $\tau_{\delta,+}(x,y)$ as in \eqref{eq:tau+} and \eqref{eq:tau-}. As before we get \eqref{eq:asfinpos}, that is, $\min\{\tau_{\varepsilon,-}(x,y),\tau_{\delta,+}(x,y)\}<\infty$ for almost all $\omega$. Thus, as in Lemma~\ref{lem:martingale},
\begin{align}\label{eq:sumisone(b)}
\mathbb{P} \left(\tau_{\varepsilon,-}(x,y)>  \tau_{\delta,+}(x,y)\right) 
&=
 1 - \mathbb{P} \left(\tau_{\varepsilon,-}(x,y)<  \tau_{\delta,+}(x,y)\right).
\end{align}

Using Remark~\ref{rem:Wpmsubsuper} and applying Doob's stopping time theorem, we get 
			\begin{multline}
				W^-_{-\gamma}(x,y) \geq 
\\
				 \mathbb{P} \left(\tau_{\varepsilon,-}(x,y)<  \tau_{\delta,+}(x,y)\right)\mathbb{E} \left[ W^-_{-\gamma}(x_{\tau_{\varepsilon,-}(x,y)},y_{\tau_{\varepsilon,-}(x,y)})\mid\tau_{\varepsilon,-}(x,y)<  \tau_{\delta,+}(x,y)\right]
\\
				+ \mathbb{P} \left(\tau_{\varepsilon,-}(x,y)>  \tau_{\delta,+}(x,y)\right)\mathbb{E} \left[ W^-_{-\gamma}(x_{\tau_{\delta,+}(x,y)},y_{\tau_{\delta,+}(x,y)})\mid\tau_{\varepsilon,-}(x,y)> \tau_{\delta,+}(x,y)\right].
 \label{eq:W-stop}
			\end{multline}
For some $K>1$ as in Proposition~\ref{prop:W}, we get
\begin{align*}
\mathbb{E} \left[ W^-_{-\gamma}(x_{\tau_{\varepsilon,-}(x,y)},y_{\tau_{\varepsilon,-}(x,y)})\mid\tau_{\varepsilon,-}(x,y)<  \tau_{\delta,+}(x,y)\right] &\geq\frac{\varepsilon^\gamma}{K},
\\
\mathbb{E} \left[ W^-_{-\gamma}(x_{\tau_{\delta,+}(x,y)},y_{\tau_{\delta,+}(x,y)})\mid\tau_{\varepsilon,-}(x,y)> \tau_{\delta,+}(x,y)\right] &\geq\frac{\delta^\gamma}{K},
\end{align*}
and also 
${K} d(x,y)^{\gamma} \geq  W^-_{-\gamma}(x,y)$.
 Using \eqref{eq:sumisone(b)}, \eqref{eq:W-stop} yields 
\begin{align}
\lim_{\varepsilon\to 0} \mathbb{P} \left(\tau_{\varepsilon,-}(x,y)<  \tau_{\delta,+}(x,y)\right)&\geq \lim_{\varepsilon\to 0} \left( {K^2} d(x,y)^\gamma - \delta^\gamma \right) / \left(  \varepsilon^\gamma -  \delta^\gamma  \right)
\nonumber \\ 
&= 1 - {K^2} \left( \frac{d(x,y) }{ \delta} \right)^\gamma.
\label{eq:Plowerbound}
\end{align}
The limit exists because the probability is monotone decreasing as $\varepsilon \to 0$. This lower bound for $\mathbb{P}$ is strictly positive if $d(x,y) / \delta$ is small enough.
The computation means that there is a strictly positive probability 
\[
\mathbb{P} \left(  d(x_n,y_n) < \delta \text{ for all } n\in\mathbb{N} \text{ and } \liminf_{n\to\infty} d(x_n,y_n) = 0   \right) \geq 1 - {K^2} \left( \frac{d(x,y) }{ \delta} \right)^\gamma,
\]
at least for $d(x,y)/\delta$ small enough.
Note that $\mathbb{P}$ goes to one if $d(x,y) / \delta$ goes to zero.

We obtain the lemma from the observation that the above argument applies to any $\delta$.
To finish the argument, take $0 < d_2 < \delta_2 < d(x,y) < \delta$ and, using \eqref{eq:Plowerbound}, consider 
 \begin{multline*}
\mathbb{P} \left(\tau_{d_2,-}(x,y) <  \tau_{\delta,+}(x,y)  \text{ and } d(x_i,y_i)< \delta_2 \text{ for all } i \geq  \tau_{d_2,-}(x,y)   \right)
\\
\geq 
\left(
1 - {K^2} \left( \frac{d(x,y) }{ \delta} \right)^\gamma 
\right) 
\left( 
1 - {K^2} \left( \frac{d_2}{ \delta_2} \right)^\gamma 
\right).  
\end{multline*}
The lemma follows by taking $d_2$ and $\delta_2$ to zero, with $d_2/\delta_2$ small enough, and noting that the bound on the right hand side stays strictly positive.
\end{proof}

Synchronisation of typical orbits expressed by Theorem~\ref{thm:main1}\ref{i:thm1:syn} is a consequence of the above lemma and our hypotheses on the random dynamical system.

\begin{proposition}\label{prop:synchron}
Suppose $\lambda < 0$. 
For all $x,y \in \mathbb{T}$, for $\mathbb{P}$-almost all $\omega \in {\Sigma_\vartheta}$,		
\[
		\lim_{n\to\infty} d(T^n_\omega (x) ,T^n_\omega(y)) = 0.
\]
\end{proposition}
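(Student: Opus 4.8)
The plan is to combine the two uniform estimates established above — Lemma~\ref{lem:stablesetposprob}, which lower‑bounds the probability of synchronising from points sufficiently close to the diagonal, and Lemma~\ref{lem:proximal}, which lower‑bounds the probability of reaching such a neighbourhood of $\Delta$ within a bounded number of steps — into a positive, initial‑point‑independent lower bound on the probability of synchronisation, and then to promote that to probability one by a martingale argument.

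For $(a,b)\in\mathbb{T}^2$ put $p(a,b)\coloneqq\mathbb{P}\big(\{\omega:\ d(T^n_\omega(a),T^n_\omega(b))\to0 \text{ as }n\to\infty\}\big)$. First I would fix $\delta\in(0,R)$ and apply Lemma~\ref{lem:stablesetposprob} to obtain $\chi\in(0,1)$ and $\delta'\in(0,\delta)$ with $p(a,b)>\chi$ whenever $d(a,b)<\delta'$. Next I would apply Lemma~\ref{lem:proximal} with $\varepsilon=\delta'$ to obtain $N\in\mathbb{N}$ and $C>0$ such that, for every $(a,b)\in\mathbb{T}^2$, the bounded stopping time $\tau\coloneqq\min\{\,0\le i\le N:\ d(T^i_\omega(a),T^i_\omega(b))<\delta'\,\}$ (with $\tau=\infty$ if no such $i$ exists) satisfies $\mathbb{P}(\tau\le N)>C$. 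Then I would glue the two estimates via the Markov property of the two‑point system: on each event $\{\tau=j\}\in\mathcal{F}_j$, $\mathcal{F}_j=\sigma(\omega_0,\dots,\omega_{j-1})$, the event $\{d(T^n_\omega(a),T^n_\omega(b))\to0\}$ equals $\{d(T^{j+k}_\omega(a),T^{j+k}_\omega(b))\to0 \text{ as }k\to\infty\}$, whose conditional probability given $\mathcal{F}_j$ is $p(T^j_\omega(a),T^j_\omega(b))$, and this is $>\chi$ because $d(T^j_\omega(a),T^j_\omega(b))<\delta'$ there; summing over $j\le N$ gives, for all $(a,b)\in\mathbb{T}^2$,
\[
p(a,b)\;\ge\;\mathbb{E}\big[\mathbbm{1}_{\{\tau\le N\}}\,p\big(T^\tau_\omega(a),T^\tau_\omega(b)\big)\big]\;\ge\;\chi\,\mathbb{P}(\tau\le N)\;>\;\chi C .
\]
Set $q\coloneqq\chi C>0$.

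To upgrade this uniform positive lower bound to probability one, fix $x,y\in\mathbb{T}$, write $d_n\coloneqq d(T^n_\omega(x),T^n_\omega(y))$, and let $A\coloneqq\{\omega:\ d_n\to0\}$. For every $n$, $A=\{d(T^{n+k}_\omega(x),T^{n+k}_\omega(y))\to0\text{ as }k\to\infty\}$, and conditionally on $\mathcal{F}_n$ the process $k\mapsto(T^{n+k}_\omega(x),T^{n+k}_\omega(y))$ is the two‑point random dynamical system restarted at the $\mathcal{F}_n$‑measurable point $(T^n_\omega(x),T^n_\omega(y))$, driven by the independent i.i.d.\ noise $(\omega_{n+j})_{j\ge0}$; hence $\mathbb{E}[\mathbbm{1}_A\mid\mathcal{F}_n]=p(T^n_\omega(x),T^n_\omega(y))\ge q$ almost surely. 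Since $n\mapsto\mathbb{E}[\mathbbm{1}_A\mid\mathcal{F}_n]$ is the Doob martingale of $\mathbbm{1}_A$, Lévy's zero--one law gives $\mathbb{E}[\mathbbm{1}_A\mid\mathcal{F}_n]\to\mathbbm{1}_A$ almost surely, whence $\mathbbm{1}_A\ge q>0$ almost surely and therefore $\mathbbm{1}_A=1$ almost surely, i.e.\ $\mathbb{P}(A)=1$. As $x,y$ were arbitrary, this is the assertion of Proposition~\ref{prop:synchron}.

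The one genuinely substantive step — everything else being bookkeeping (that $\tau$ is a bounded stopping time, that the Markov property applies, and that $(a,b)\mapsto p(a,b)$ is Borel measurable) — is this final upgrade: one must notice that the synchronisation event is invariant under restarting the process, so that its conditional probability given $\mathcal{F}_n$ equals $p$ evaluated at the current two‑point state, and then close the argument with Lévy's theorem. An alternative is a conditional Borel--Cantelli / geometric‑trials argument over the disjoint noise blocks $[mN,(m+1)N)$, which is a little more delicate because synchronisation is not decided by any finite time.
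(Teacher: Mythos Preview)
Your proof is correct and takes a genuinely different route from the paper's.

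The paper first establishes recurrence to arbitrarily small neighbourhoods of the diagonal: using Lemma~\ref{lem:proximal} it shows that almost surely the orbit enters $\Delta_{\delta'}$ infinitely often, and then (combining with the escape‑from‑strip argument) that it enters $\Delta_\varepsilon$ infinitely often for every $\varepsilon>0$, so that $\liminf_n d_n=0$ almost surely. From this sequence of visits to successively smaller neighbourhoods it invokes Lemma~\ref{lem:stablesetposprob} to conclude synchronisation --- implicitly a Borel--Cantelli / geometric‑trials argument over those infinitely many visits. You instead fuse Lemmas~\ref{lem:proximal} and~\ref{lem:stablesetposprob} in one step into a uniform, initial‑point‑independent lower bound $p(a,b)\ge q>0$ on the synchronisation probability, and then use shift‑invariance of the synchronisation event together with L\'evy's zero--one law to upgrade to probability one. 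Your route is shorter and more conceptual, and makes the final step (from ``positive probability'' to ``probability one'') completely rigorous in two lines; the paper's route is more elementary in that it avoids the martingale convergence theorem and yields the intermediate recurrence statement $\liminf_n d_n=0$ almost surely, but its concluding sentence (``Lemma~\ref{lem:stablesetposprob} now implies synchronisation'') is precisely the step that your L\'evy argument makes explicit.
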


\begin{proof}
Take $0 < \delta' < \delta$ as in Lemma~\ref{lem:stablesetposprob}.
By Lemma~\ref{lem:proximal} there is a strictly positive probability for an orbit $T^n_\omega (x,y)$ to enter $\Delta_{\delta'}$ in finitely many steps.
That is, there exists $C>0$ and $N>0$ so that for $(x,y) \in \mathbb{T}^2 \setminus \Delta_{\delta'}$,
\[
\mathbb{P} \left(  (T^n_\omega (x) , T^n_\omega (y)) \in \Delta_{\delta'} \text{ for some } 0 < n < N  \right) > C.
\]
We find that for $(x,y) \in \mathbb{T}^2 \setminus \Delta_{\delta'}$, 
\[
\mathbb{P} \left(  (T^n_\omega (x) , T^n_\omega (y)) \not \in \Delta_{\delta'} \text{ for all } 0 \leq n < kN  \right) \geq (1-C)^k,
\]
so that $\mathbb{P} \left(  (T^n_\omega (x) , T^n_\omega (y)) \not \in \Delta_{\delta'} \text{ for all } n  \right) = 0$. There are therefore almost surely infinitely many orbit points from 
$(T^n_\omega (x) , T^n_\omega (y))$ in $\Delta_{\delta'}$.

As before, for $0 < \varepsilon < \delta$ and $(x,y) \in \mathbb{T}^2$ with $\varepsilon < d(x,y) < \delta$,
$(T^n_\omega (x) , T^n_\omega (y))$ will be outside $\Delta_{\delta} \setminus \Delta_{\varepsilon}$
for some $n >0$, almost surely. Combined with the above we see that almost surely, orbit points
$(T^n_\omega (x) , T^n_\omega (y))$ are in $\Delta_\varepsilon$ infinitely often. This holds for any $\varepsilon>0$, so that
\[
\liminf_{n\to \infty} d(T^n_\omega (x), T^n_\omega (y)) = 0
\]
almost surely.
That is, given a sequence $\varepsilon_k$ that is decreasing to zero, there is a sequence of first iterates 
$(T^{n_k}_\omega (x) , T^{n_k}_\omega (y))$ inside $\Delta_{\varepsilon_k}$.
Lemma~\ref{lem:stablesetposprob} now implies synchronisation.
\end{proof}

\section{Stationary measures for the two-point motion}\label{s:stat}

This section contains both the construction of stationary measures on $\mathbb{T}^2\setminus \Delta$ for the random two-point maps, in the case of zero and positive Lyapunov exponent, 
and the derivation of their asymptotics at the diagonal. The results apply to zero and positive Lyapunov exponent.

\subsection{Construction by inducing}\label{s:inducing}

We construct a stationary Radon measure on $\mathbb{T}^2 \setminus \Delta$ for the two-point random dynamical system with Lyapunov exponent greater than or equal to zero.
We do this by inducing: we construct a stationary measure for a first return map on a domain away from the diagonal $\Delta$ by the Krylov-Bogolyubov method. 
Following work by 
Deroin, Kleptsyn, Navas and Parwani \cite{MR3098067}, who study random walks on the real line, 
we introduce random stopping
times in order to be able to apply a Krylov-Bogolyubov method to find stationary measures.

A stationary measure is obtained as usual by pushing forward the stationary measure for the first return map.
For a zero Lyapunov exponent, Lemma~\ref{lem:stop} implies that it takes in expectation infinite time to get away from the diagonal and therefore the stationary measure is not finite. 
For a positive Lyapunov exponent, a stationary measure for the two-point motion can be constructed just as in the case of zero Lyapunov exponent. The finite expectation of the escape time from $\Delta_\delta$ as expressed by Lemma~\ref{lem:returntime>}, shows that the total measure is finite. Normalizing the measure, we derive the existence of a stationary probability measure. 

{ {An alternative way to construct stationary measures for the positive Lyapunov exponent  case $\lambda>0$, might be through a Lyapunov function as in  \cite{PierreHumbert} for random diffeomorphisms. Our case is different due to the possibility that points  $(x,y)\in\mathbb{T}^2$ can be mapped onto the diagonal $\Delta$ by $T^{(2)}_a$.
}}

}

\begin{proposition}\label{prop:infmeas}
	The random dynamical system \eqref{eq:rds2} has the following properties:
	\begin{itemize}
		\item if $\lambda=0$, then the two-point motion	admits an infinite stationary Radon measure $\mu^{(2)}$ on $\mathbb{T}^2 \setminus \Delta$, and 
		\item if $\lambda>0$, then the two-point motion	admits a stationary probability measure $\mu^{(2)}$ on $\mathbb{T}^2 \setminus \Delta$.
	\end{itemize} 
\end{proposition}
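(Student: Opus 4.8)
The plan is to build $\mu^{(2)}$ by an inducing (first-return) construction on a compact region bounded away from $\Delta$, following the randomized-stopping-time method of Deroin–Kleptsyn–Navas–Parwani \cite{MR3098067}. Fix $\delta\in(0,R)$ with $R$ as in Lemma~\ref{lem:martingale} and set $Y:=\mathbb{T}^2\setminus\Delta_\delta$, a compact subset of $\mathbb{T}^2\setminus\Delta$. Two preliminary facts will be used. First, for $(x,y)\in\mathbb{T}^2\setminus\Delta$ Hypothesis~\ref{h:genericnoise} forces the curve $a\mapsto(T_a x,T_a y)$ to meet $\Delta$ in a set of $a$ of Lebesgue measure zero, so $T^{(2)}_\omega(x,y)\notin\Delta$ $\mathbb P$-a.s.; since this applies at every point off $\Delta$, a union bound over times shows an orbit started off $\Delta$ $\mathbb{P}$-a.s. never hits $\Delta$. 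Second, by Lemma~\ref{lem:stop} (for $\lambda=0$) and Lemma~\ref{lem:returntime>} (for $\lambda>0$), an excursion into $\Delta_\delta$ a.s. terminates. Hence the first return time $\tau(x,y,\omega)=\min\{n\ge1:(T^{(2)}_\omega)^n(x,y)\in Y\}$ is a.s.\ finite for every $(x,y)\in Y$, and the orbit segment $(x,y),\dots,(T^{(2)}_\omega)^{\tau-1}(x,y)$ lies in $\mathbb{T}^2\setminus\Delta$.

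The main obstacle is that the first-return Markov operator on $C(Y)$ is not Feller: $\tau$, and hence the return position, jumps where the one-step image crosses $\partial\Delta_\delta$, so ordinary Krylov–Bogolyubov does not directly apply. To get around this I would, following \cite{MR3098067}, replace $\tau$ by a randomized stopping time (using an auxiliary independent randomization of the ``return'' event) whose averaging operator $Q\colon C(Y)\to C(Y)$ is Feller, while $Q$-stationary measures still correspond to $T^{(2)}_\omega$-stationary measures on $\mathbb{T}^2\setminus\Delta$. Krylov–Bogolyubov then produces a $Q$-stationary probability measure $\nu$ on the compact space $Y$. Spreading $\nu$ along orbit segments,
\[
\mu^{(2)}(A)\;:=\;\int_Y\mathbb E_\omega\!\Big[\textstyle\sum_{k=0}^{\tau-1}\mathbbm 1_A\big((T^{(2)}_\omega)^k(x,y)\big)\Big]\,d\nu(x,y),
\]
a standard Kac-type computation shows $\mu^{(2)}$ is stationary for $T^{(2)}_\omega$; it satisfies $\mu^{(2)}\ge\nu$ (the $k=0$ term), hence $\mu^{(2)}\neq0$, and $\mu^{(2)}(\Delta)=0$ by the first preliminary fact. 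Its total mass is $\mu^{(2)}(\mathbb{T}^2\setminus\Delta)=\int_Y\mathbb E[\tau]\,d\nu$.

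It then remains to evaluate the total mass in the two cases. For $\lambda>0$ the expected length of an excursion from $Y$ into $\Delta_\delta$ is uniformly bounded: either the one-step image is in $Y$ (length $1$), or it lies in $\Delta_\delta\setminus\Delta$, and then the upper bound of Lemma~\ref{lem:returntime>} together with Lemma~\ref{lem:logbnd} (applied with $\Delta_\delta$, whose constant is $C_1$) gives the expectation bound
\[
1+\tfrac1\lambda\big(\mathbb E_\omega[-\ln d(T^{(2)}_\omega(x,y))]+2K\big)\;\le\;1+\tfrac1\lambda\big(C_1+2K\big)
\]
uniformly in $(x,y)\in Y$; hence $\int_Y\mathbb E[\tau]\,d\nu<\infty$, so $\mu^{(2)}$ is finite and normalizing gives the stationary probability measure. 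For $\lambda=0$ the measure must be infinite: if $\mu^{(2)}$ were finite its normalization $\bar\mu$ would be a stationary probability measure on $\mathbb{T}^2\setminus\Delta$, and by stationarity $\bar\mu(\Delta_\varepsilon)=\int\mathbb E_\omega\big[\frac1n\sum_{k=0}^{n-1}\mathbbm 1_{\Delta_\varepsilon}((T^{(2)}_\omega)^k(x,y))\big]\,d\bar\mu$ for every $n$; letting $n\to\infty$ and using the proof of Proposition~\ref{prop:SynchronOnAverage}, which shows this empirical frequency tends $\mathbb P$-a.s.\ to $1$ for every $\varepsilon>0$, dominated convergence yields $\bar\mu(\Delta_\varepsilon)=1$ for all $\varepsilon$, hence $\bar\mu(\Delta)=1$, contradicting $\bar\mu(\Delta)=0$. (Equivalently, as indicated in the text, one may argue directly: an excursion reaches $\Delta_{e^{-6K}\delta}$ with positive probability, whereafter Lemma~\ref{lem:stop} makes its expected remaining duration infinite.) Finally $\mu^{(2)}$ is Radon, i.e.\ finite on each $\mathbb{T}^2\setminus\Delta_\varepsilon$, because the expected time an excursion spends in a strip $\Delta_\delta\setminus\Delta_\varepsilon$ is finite by the estimate \eqref{eq:Ebounds} of Lemma~\ref{lem:martingale}. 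Throughout, the step I expect to be delicate is securing the Feller property — that is, implementing the randomized-stopping inducing so that Krylov–Bogolyubov genuinely applies — whereas the finite/infinite dichotomy of the total mass then follows from the escape-time estimates already established.
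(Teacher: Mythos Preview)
Your proposal is correct and follows essentially the same inducing-with-randomized-stopping approach as the paper, including the Krylov--Bogolyubov step on a compact set away from $\Delta$, the spreading formula, and the finite/infinite dichotomy via Lemmas~\ref{lem:returntime>} and~\ref{lem:stop}. The only notable difference is that the paper spreads along the \emph{randomized} stopping time $V$ (rather than the deterministic first-return time $\tau$) when verifying stationarity, and only afterwards observes (Remark~\ref{rem:restricted}) that this agrees with the first-return spreading you wrote down; keeping the same stopping time in both the fixed-point and the spreading step is what makes the ``Kac-type computation'' go through cleanly.
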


\begin{proof}
	For a fixed small $\varepsilon>0$ take the compact set
	\begin{align*} 
		\mathcal{K} &= \mathbb{T}^2 \setminus \Delta_\varepsilon.
	\end{align*} 
	Because $T$ is of degree two, each element in $\T$ has two distinct pre-images. The minimal distance between pre-images is smaller than $R_{\min}$ (recall \eqref{eq:Rmin}).  Take $\varepsilon< R_{\min}$,  so that
	points in $\T^2 \setminus (\mathcal{K} \cup \Delta)$ can not be mapped into $\Delta$ by $T^{(2)}_a$. 
	Note that there will be points in $\mathcal{K}$ that are mapped into $\Delta$ by some $T^{(2)}_a$. Namely,
	any $(x,y) \in \T^2$ with $x\ne y$ and $T_a (x) = T_a(y)$ lies in $\mathcal{K}$ and will be mapped into $\Delta$ by  $T^{(2)}_a$.
	By Lemma~\ref{lem:stop}, for any $(x_0,y_0)\in \mathcal{K}$, of 
	the random orbit { $(x_n,y_n) = \left(T^{(2)}_\omega \right)^n(x_0,y_0)$}
	has almost surely infinitely many points contained in $\mathcal{K}$. 
	
	Fix a smooth function $\xi: \T^2 \to [0,1]$ with support 
	disjoint from $\Delta$ and with $\xi \equiv 1$ on $\mathcal{K}$. For an initial point $(x_0,y_0)$ 
	consider the random stopping time $V(\omega) \geq 1$ (suppressing dependence on $(x_0,y_0)$) so that
	the probability $\mathbb{P} (V = n+1 \; | \; V \geq n)$ equals $\xi (x_{n+1},y_{n+1})$. So when the random orbit arrives at
	$(x_{n+1},y_{n+1})$ we stop with probability  $\xi (x_{n+1},y_{n+1})$ and continue with probability $1 - \xi (x_{n+1},y_{n+1})$. 
	
	We use $\mathbb{E}$ to denote the expectation both over ${\Sigma_\vartheta}$ and over  
	the random process defining the random stopping time.
	So $\mathbb{E} \left[ \delta_{(x_V ,y_V)} \right]$ is the distribution of the stopped point $(x_V,y_V)$. It is an element of the space $\mathcal{P} (\mathrm{supp}\, \xi)$  of probability measures on $ \mathrm{supp}\, \xi$, which we endow with the weak star topology.
	
	We claim that this distribution depends continuously on $(x_0,y_0)\in \mathcal{K}$ in the weak star topology.
	Namely, consider a sequence of distributions of stopped points for a converging sequence of initial points $(x_0^n,y_0^n)$. 
	From \eqref{eq:taubnd} we get that for $\zeta>0$ small there exists $N>0$ so that with probability at least $1 - \zeta$,
	the stopping time $V$ is smaller than $N$. As the maps $T_a$ depend continuously on $a$, the points $(x_V,y_V)$, $V<N$, for $(x_0^n,y_0^n)$ are close to those for $(x_0,y_0)$.
	Consequently,   the distribution $\mathbb{E} \left[ \delta_{(x_V ,y_V )} \right]$ of the stopped point
	depends continuously on $(x_0,y_0)$, for $(x_0,y_0) \in \mathcal{K}$, in the weak star topology.
	
	Define 
	\begin{align}\label{eq:Pxi}
		P_\xi \mu &= 
		\int_{\T^2} \mathbb{E}  \left[ \delta_{(x_V,y_V)} \right] \, d\mu (x_0,y_0)  
	\end{align}
	acting on $\mathcal{P} (\mathrm{supp}\, \xi)$.
	Then \eqref{eq:Pxi} is a continuous map from 
	$\mathcal{P} (\mathrm{supp}\, \xi)$ to itself.
	We can therefore apply the Krylov-Bogolyubov procedure of taking a converging subsequence of C\'esaro averages, to find a $P_\xi$ invariant probability measure $\varsigma_0$,
	\[
	P_\xi \varsigma_0 = \varsigma_0,
	\] 
	with ${ \text{supp}\,} \varsigma_0 \subset \text{supp}\, \xi$.
	
	We will use this to construct a stationary Radon measure on $\mathbb{T}^2 \setminus \Delta$.
	Consider the average measures 
	\begin{align}\label{eq:mx0y0}
		\overline{m}_{(x_0,y_0)} &\coloneqq \mathbb{E}  \left[  \sum_{j=0}^{V(\omega)-1} \delta_{(x_j,y_j)} \right].
	\end{align}
	Because $1 - \xi$ vanishes on $\mathcal{K}$, we can write , with $(x_j,y_j) = T^{(2)}_{\omega_{j-1}} \circ \cdots \circ T^{(2)}_{\omega_0} (x_0,y_0)$,
	\begin{align}\label{eq:formformbar}
		\overline{m}_{(x_0,y_0)} &= \sum_{n=0}^\infty \,\,\,\iint\displaylimits_{\overset{\omega_0,\ldots,\omega_{n-1}}{\in \T}}\, \prod_{j=1}^n  \left( 1 - \xi (x_j,y_j) \right) \delta_{(x_n,y_n)} \, d\mathbb{P}(\omega_0) \cdots d\mathbb{P}(\omega_{n-1}).   
	\end{align}
	Integrate over $(x_0,y_0)$ taken from the measure $\varsigma_0$ to obtain 
	\begin{align}\label{eq:defofm2}
		\mu^{(2)} &\coloneqq \int_{\T^2} \overline{m}_{(x_0,y_0)} \, d\varsigma_0 (x_0,y_0).
	\end{align}
	Written out this reads
	\begin{align*}
		\mu^{(2)}(A) &= \int_{\T^2} \mathbb{E}  \left[\sum_{j=0}^{V(\omega)-1} \mathbbm 1_A(T^j_\omega(x),T^j_\omega(y))\right] \, d\varsigma_0 (x_0,y_0)
	\end{align*}
	for Borel sets $A \subset \mathbb{T}^2 \setminus \Delta$.

	We claim that for any continuous function $\psi: \T^2 \to \mathbb{R}$ with support disjoint from $\Delta$,
	\begin{align}\label{eq:integral}
		\int_{\T^2} \psi \, d m &= \int_{\T^2} \mathbb{E}  \left[ \sum_{j=0}^{V(\omega)-1} \psi (x_j)\right] \, d\varsigma_0 (x_0,y_0)
	\end{align}
	is well defined and finite.
	We conclude from this that $\mu^{(2)}$ is a Radon measure that assigns finite measure to compact sets disjoint from $\Delta$.
	To establish the claim 
	note that there are $N>0$ and $q >0$ so that with probability at least $q$ an orbit starting in $\text{supp}\, \psi$ hits $\mathcal{K}$ in at most $N$ steps. 
	Note that iterates outside $\text{supp}\, \psi$ do not contribute 
	to the right hand side of \eqref{eq:integral}.
	We find an estimate 
	\[
	\mathbb{E} \left[\sum_{j=0}^{V(\omega)-1} \psi (x_j,y_j) \right]
	< \sum_{i=1}^\infty \max_{(x,y)\in \T^2} |\psi(x,y)| N  (1-q)^{i-1} < \infty.
	\]
	That is, $\mathbb{E} \left[ \sum_{j=0}^{V(\omega)-1} |\psi (x_j,y_j)| \right]$ is finite 
	and bounded uniformly on $\text{supp}\,\psi$.
	This implies that the right hand side of \eqref{eq:integral} is finite.
	
	We will establish that $\mu^{(2)}$ is stationary for the two-point maps.
	We must thus show $P \mu^{(2)} = \mu^{(2)}$ with
	\[
	P \mu^{(2)} \coloneqq
	\int_{\T}   \left( T_a^{(2)}\right)_\ast  \mu^{(2)} \, d\mathbb{P} (a).
	\]
	Applying $P$ yields, with $(x_j,y_j) = T^{(2)}_{\omega_{j-1}} \circ \cdots \circ T^{(2)}_{\omega_0} (x_0,y_0)$, 
	\begin{flalign*}
		P \overline{m}_{x_0,y_0} &=	
		\int_{\T} \left( T^{(2)}_a \right)_\ast  \overline{m}_{x_0,y_0} \, d\mathbb{P} (a)
		\\
		&= \sum_{n=0}^\infty \,\,\, \iint\displaylimits_{\overset{a,\omega_0,\ldots,\omega_{n-1}}{ \in \T}} \,
		\prod_{j=1}^n \left(1 - \xi( x_j,y_j) \right)
		\left( T^{(2)}_a \right)_\ast  \delta_{(x_n,y_n)}  \, d\mathbb{P}(a) d\mathbb{P}(\omega_0) \cdots d\mathbb{P}(\omega_{n-1}) 
		\\
		&= \sum_{n=0}^\infty \,\,\,\iint\displaylimits_{\overset{\omega_0,\ldots,\omega_{n}}{\in \T}} \,
		\prod_{j=1}^n \left(1 - \xi(x_j,y_j) \right) 
		\delta_{(x_{n+1},y_{n+1})} \, d\mathbb{P}(\omega_0) d\mathbb{P}(\omega_1) \cdots d\mathbb{P}(\omega_{n}) 
		\\
		&= \mathbb{E}   \left[ \sum_{j=1}^{V(\omega)} \delta_{(x_j,y_j)} \right]
	\end{flalign*}
	(again using that $1-\xi$ vanishes on $\mathcal{K}$).
	
	Compared to \eqref{eq:mx0y0}, the index $j$ is counting from $1$ to $V(\omega)$ instead of from $0$ to $V(\omega)-1$.
	We thus find
	\begin{align*}
		P  \overline{m}_{(x_0,y_0)} 
		&=
		\overline{m}_{(x_0,y_0)}  - \delta_{(x_0,y_0)} + \mathbb{E} \left[ \delta_{(x_{V(\omega)},y_{V(\omega)})}\right].
	\end{align*}	
	Integration over $\varsigma_0$ yields
	\begin{align*} P \mu^{(2)} &= P \int_{\T^2}   \overline{m}_{(x_0,y_0)} \, d\varsigma_0 (x_0,y_0) 
		\\
		&= \int_{\T^2} P \overline{m}_{(x_0,y_0)} \, d\varsigma_0 (x_0,y_0)  
		\\
		&= 	\int_{\T^2} 
		\overline{m}_{(x_0,y_0)}  \, d\varsigma_0 (x_0,y_0)   -  	\int_{\T^2}  \delta_{(x_0,y_0)}  \, d\varsigma_0 (x_0,y_0)  + 
		\int_{\T^2} \mathbb{E} \left[ \delta_{(x_{V(\omega)},y_{V(\omega)})} \right] \, d\varsigma_0 (x_0,y_0)  
		\\
		&= \mu^{(2)} - \varsigma_0 + P_\xi \varsigma_0
		\\
		&= \mu^{(2)},
	\end{align*}
	the last step by $P_\xi$ invariance of $\varsigma_0$.
	
	Note that the argument includes the observation that $\mu^{(2)}$
	assigns finite measure to compact sets disjoint from $\Delta$.
	
	For $\lambda = 0$, Lemma~\ref{lem:proximal} shows that iterates of points $(x,y) \in \mathbb{T}^2$ enter any small neighborhood of $\Delta$ with positive probability. 
	Combining this with Lemma~\ref{lem:stop} shows
	\begin{align*}
		\mu^{(2)}(\T^2) &= \int_{\T^2} \mathbb{E}  \left[\sum_{j=0}^{V(\omega)-1} \mathbbm 1_{\T^2}(T^j_\omega(x),T^j_\omega(y))\right] \, d\varsigma_0 (x_0,y_0)\\
		&= \infty.
	\end{align*}
	For $\lambda >0$,  by  Lemma~\ref{lem:returntime>}, we see that 
	
	\[
	X =  \int_{\T^2} \mathbb{E}  \left[\sum_{j=0}^{V(x,y,\omega)-1} \mathbbm 1_{\T^2}(T^j_\omega(x),T^j_\omega(y))\right] \, d\varsigma_0 (x,y)
	= \int_{\T^2} \mathbb{E} \left[ V(x,y,\omega) \right] \,d\varsigma_0 (x,y)<\infty.
	\]
	Now $\mu^{(2)}$ given by
	\begin{align*}  
		\mu^{(2)}(A) &= \frac{1}{X}\int_{\T^2} \mathbb{E}  \left[\sum_{j=0}^{V(x,y,\omega)-1} \mathbbm 1_{A}(T^j_\omega(x),T^j_\omega(y))\right] \, d\varsigma_0(x,y)
	\end{align*}
	is a stationary measure. 	Since $\mu^{(2)} (\T^2) = 1$, we find from this expression that $\mu^{(2)}$ is a probability measure. 
\end{proof}

%
%

\begin{remark}\label{rem:restricted}
	The stationary measure $\mu^{(2)}$ is obtained by pushing forward the measure $\varsigma_0$ on the support of a test function $\xi$. The test function is constant one on a suitable set $\mathcal{K}$. The construction shows that $\mu^{(2)}$ restricted to $\mathcal{K}$ equals $\varsigma_0$ restricted to $\mathcal{K}$ (see \eqref{eq:formformbar} and \eqref{eq:defofm2}). We can therefore also obtain $\mu^{(2)}$ from $\varsigma_0$ restricted to $\mathcal{K}$ by pushing forward. 
	That is, with
	\begin{align*}
		\widehat{m}_{(x_0,y_0)} &= \sum_{n=0}^\infty \,\,\,\iint\displaylimits_{\overset{\omega_0,\ldots,\omega_{n-1}}{\in \T}}\, \prod_{j=1}^n  \left( 1 - \mathbbm{1}_\mathcal{K} (x_j,y_j) \right) \delta_{(x_n,y_n)} \, d\lambda(\omega_0) \cdots d\lambda(\omega_{n-1})
	\end{align*}
	for $(x_0,y_0) \in \mathcal{K}$,
	we find
	\begin{align*}
		\mu^{(2)} &= \int_{\mathcal{K}} \widehat{m}_{(x_0,y_0)} \, d\varsigma_0 (x_0,y_0).
	\end{align*}
	\hfill $\blacksquare$
\end{remark}

\subsection{The support of the stationary measure}

Hypothesis~\ref{h:openaftertwo} and Lemma~\ref{lem:2pointexact} show that for all $\varepsilon>0$, there exists a $k\in \N$, such that
images under $\Theta^{(2)}$ of a set $\Sigma_\vartheta \times \{(x,y)\}$ for 
$(x,y) \in \mathbb{T}^2 \setminus \Delta_\varepsilon$, cover $\Sigma_\vartheta \times \mathbb{T}^2$. This implies that stationary measures for the two-point motion have full support, if they are obtained from an inducing scheme as in Section \ref{s:inducing}.

\subsection{The growth rate of the stationary measure at the diagonal for $\lambda=0$}

The following lemma discusses the expected number of such orbit points that lie inside strips $\Delta_\delta \setminus \Delta_\varepsilon$, in the limit of $\varepsilon$ going to zero.  The obtained bounds will be used below to derive the growth-rate near the diagonal of stationary measures for the two-point motion.   		

\begin{lemma}\label{lem:gepsilonnetje}
    Let $R,K$ be as in Lemma~\ref{lem:martingale}. Suppose $\lambda = 0$. 
	Assume	$0<\varepsilon<\delta<R$.  For $(x,y) \in \mathbb{T}^2$ with $0 < d(x,y) < \delta$, 
	define
	\[
	g_{\varepsilon,\delta}(x,y) = \mathbb{E} \left[ \sum_{i=0}^{\tau_{\delta,+}(x,y)} \mathbbm{1}_{[\varepsilon,\infty) }(d( T_\omega^i (x),T_\omega^i (y))) \right].
	\]
	Then
	\begin{multline*}
		\frac{1}{V}\left(\ln\left(\frac{\delta a_1}{d(x,y)}\right) -6K \right) \leq \liminf_{\varepsilon\to 0} \frac{g_{\varepsilon,\delta}(x,y)}{-\ln(\varepsilon)}\\
		\leq \limsup_{\varepsilon\to 0} \frac{g_{\varepsilon,\delta}(x,y)}{-\ln(\varepsilon)} \leq \frac{1}{V}\left(\ln\left(\frac{\delta a_2}{d(x,y)}\right) + 6K \right). 
	\end{multline*}
\end{lemma}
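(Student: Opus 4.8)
The plan is to express $g_{\varepsilon,\delta}(x,y)$ in terms of the passage statistics of Lemma~\ref{lem:martingale} and to sum the resulting series. Writing $(x_n,y_n)=(T^{(2)}_\omega)^n(x,y)$ and $d_n=d(x_n,y_n)$, I would use the strong Markov property to break the orbit, up to the first exit time $\tau_{\delta,+}(x,y)$ from $\Delta_\delta$, into alternating pieces: passages through the strip $\Delta_\delta\setminus\Delta_\varepsilon$, on which every index satisfies $d_i\ge\varepsilon$ and hence contributes to $g_{\varepsilon,\delta}$, and sojourns in $\Delta_\varepsilon$, on which no index contributes. The length of the first strip passage is exactly $\min\{\tau_{\varepsilon,-}(x,y),\tau_{\delta,+}(x,y)\}$, whose expectation is controlled by \eqref{eq:Ebounds}; the passage ends either by the orbit leaving $\Delta_\delta$, with probability $1-\mathbb P(\tau_{\varepsilon,-}(x,y)<\tau_{\delta,+}(x,y))$ bounded by \eqref{eq:propbounds}, or by its entering $\Delta_\varepsilon$. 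In the latter case, for $\varepsilon$ small the orbit cannot jump from $\Delta_\varepsilon$ past $\Delta_\delta$ in one step (Hypothesis~\ref{h:endo}), and by Lemma~\ref{lem:stop} applied with $\varepsilon$ in place of $\delta$ it almost surely returns to the strip, at a point with $d\in[a_1\varepsilon,a_2\varepsilon)$. Each further strip passage, started from distance $\asymp\varepsilon$, is again governed by Lemma~\ref{lem:martingale}. Summing the geometric series over successive passages, and tracking that the exit point at $\tau_{\delta,+}$ lies at $d\in[\delta,a_2\delta)$ while entries into $\Delta_\varepsilon$ occur at $d\in[a_1\varepsilon,\varepsilon)$ — which is where the factors $a_1,a_2$ in the statement enter — produces two‑sided bounds on $g_{\varepsilon,\delta}(x,y)$; dividing by $-\ln\varepsilon$ and letting $\varepsilon\to0$ gives the claimed $\liminf$/$\limsup$ inequalities.

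An equivalent route, in the spirit of Section~\ref{s:koopman}, is to construct nonnegative ``potential'' functions $g^{\pm}\in\mathcal C^0(\mathbb T^2\setminus\Delta,\mathbb R)$ with $(P_{(2)}-I)g^{+}\le-\mathbbm 1_{\{d\ge\varepsilon\}}\le(P_{(2)}-I)g^{-}$ on $\Delta_\delta$ and with $g^{\pm}(x,y)$ comparable to $\tfrac1V\ln\!\tfrac{\delta}{d(x,y)}\ln\!\tfrac{d(x,y)}{\varepsilon}$ — built, as in Proposition~\ref{prop:phieta} and Proposition~\ref{prop:W}, from $\eta^{\pm}$ (to realise the inhomogeneity in the strip), from $\phi^{\pm}$ (to be harmonic near the diagonal), and from a $W_q$‑type correction to fix signs at the matching locus $d=\varepsilon$. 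Doob's stopping theorem at $\tau_{\delta,+}(x,y)$ is then legitimate despite $\tau_{\delta,+}$ having infinite expectation (Lemma~\ref{lem:stop}), because $g^{\pm}\ge0$ lets one pass to the limit by monotone convergence in the count $\sum\mathbbm 1_{\{d_i\ge\varepsilon\}}$; this sandwiches $g_{\varepsilon,\delta}(x,y)$ between $g^{\pm}(x,y)-\mathbb E[g^{\pm}(x_{\tau_{\delta,+}},y_{\tau_{\delta,+}})]$, from which the bounds follow after the $\varepsilon\to0$ limit.

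The main obstacle is the boundary layer near $d=\varepsilon$. In the excursion approach this is because the estimates of Lemma~\ref{lem:martingale} carry additive (in $\log$‑scale) errors of order $K\lvert\ln\varepsilon\rvert$, which are comparable to the leading term as soon as a passage starts at distance $\asymp\varepsilon$; in particular \eqref{eq:propbounds} does not, by itself, give a useful lower bound on the probability of escaping $\Delta_\delta$ before re‑entering $\Delta_\varepsilon$, so a naive geometric sum of passage lengths need not converge to the right quantity. In the potential‑function approach the difficulty appears as a mismatch of first derivatives between the harmonic piece $\phi^{\pm}$ and the quadratic piece $\eta^{\pm}$ at $d=\varepsilon$ of order $\lvert\ln\varepsilon\rvert$, too large to be absorbed by a bounded $W_q$‑correction. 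Either way, the resolution I would pursue is to organise the estimate by scales — inserting an intermediate threshold, or a dyadic family $\varepsilon=\delta_N<\dots<\delta_0=\delta$ and bounding the expected occupation of each shell $\Delta_{\delta_{k-1}}\setminus\Delta_{\delta_k}$ separately — so that on each scale the errors stay lower order, and only then to sum; this bookkeeping is the delicate part of the argument.
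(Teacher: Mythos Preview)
Your excursion decomposition is exactly the right starting point, and it is what the paper does: condition on $\min\{\tau_{\varepsilon,-},\tau_{\delta,+}\}$ to obtain the renewal identity
\[
g_{\varepsilon,\delta}(x,y)=\mathbb E\bigl[\min\{\tau_{\varepsilon,-},\tau_{\delta,+}\}\bigr]+\mathbb P\bigl(\tau_{\varepsilon,-}<\tau_{\delta,+}\bigr)\,\mathbb E\bigl[g_{\varepsilon,\delta}(x_{\tau_{\varepsilon,-}},y_{\tau_{\varepsilon,-}})\,\big|\,\tau_{\varepsilon,-}<\tau_{\delta,+}\bigr].
\]
The gap in your plan is that you then try to iterate this and sum a geometric series over successive strip passages. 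As you correctly diagnose, each later passage starts at distance $\asymp\varepsilon$, where the bounds of Lemma~\ref{lem:martingale} carry additive errors of size $K|\ln\varepsilon|$ that are the same order as the main term, so the series cannot be controlled this way; your proposed multiscale repair is plausible but is real work you have not done.

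The paper avoids the boundary layer entirely with a one-line monotonicity trick. Set $g^-_{\varepsilon,\delta}(r)=\inf\{g_{\varepsilon,\delta}(x,y):a_1r\le d(x,y)\le r\}$ and observe that $r\mapsto g^-_{\varepsilon,\delta}(r)$ is non-increasing (starting closer to $\Delta$ can only increase the expected count before exit). Since the re-entry point after $\tau_{\varepsilon,-}$ has distance in $[a_1\varepsilon,\varepsilon)$, the continuation value is bounded below by $g^-_{\varepsilon,\delta}(\varepsilon)\ge g^-_{\varepsilon,\delta}(r)$ for \emph{every} $r\in(\varepsilon,\delta)$. Taking $r=d(x,y)$ turns the renewal identity into the self-referential inequality
\[
g^-_{\varepsilon,\delta}(r)\ \ge\ \mathbb E\bigl[\min\{\tau_{\varepsilon,-},\tau_{\delta,+}\}\bigr]\;+\;g^-_{\varepsilon,\delta}(r)\,\mathbb P\bigl(\tau_{\varepsilon,-}<\tau_{\delta,+}\bigr),
\]
which one solves for $g^-_{\varepsilon,\delta}(r)$ using \eqref{eq:propbounds} and \eqref{eq:Ebounds}; dividing by $|\ln\varepsilon|$ and letting $\varepsilon\to0$ then gives the lower bound directly, with the $a_1$ appearing because the infimum defining $g^-$ ranges over $[a_1r,r]$. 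The upper bound uses the analogous supremum $g^+_{\varepsilon,\delta}(r)$ over $[r,a_2r]$. No iteration, no multiscale bookkeeping, and no potential-function construction are needed: the monotonicity of $g^\pm_{\varepsilon,\delta}$ in the starting radius is the missing idea.
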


\begin{proof}
	We follow \cite[Proposition~5.6]{MR1144097}, with modifications needed for the discrete time setting. 
	Recall that $a_1,a_2$ are given in Hypothesis~\ref{h:endo}. Define, for $\varepsilon<r<\delta$,		
	\begin{align*}
		g_{\varepsilon,\delta}^-(r) &=  \inf \left\lbrace g_{\varepsilon,\delta}(x,y) \; ; \; ra_1\leq d(x,y) \leq r \right\rbrace,
		\\
		g_{\varepsilon,\delta}^+(r) &=  \sup \left\lbrace g_{\varepsilon,\delta}(x,y) \; ; \; r \leq d(x,y) \leq r a_2 \right\rbrace.
	\end{align*}
	Observe that $r \mapsto g^\pm_{\varepsilon,\delta}(r)$ is a monotone non-increasing function on $[\varepsilon,\delta]$, and constant on $(0,\varepsilon]$. 
	
	We first focus on  $g_{\varepsilon,\delta}^-$.
	Conditioning on the smallest stopping time $\tau_{\delta,+}$ or $\tau_{\varepsilon,-}$, which have been defined in \eqref{eq:tau+} and \eqref{eq:tau-}, we obtain
	\begin{align*}
		g_{\varepsilon,\delta}(x,y) &= \mathbb{P}\left(\tau_{\varepsilon,-}(x,y)> \tau_{\delta,+}(x,y)\right)\mathbb{E} [\tau_{\delta,+}(x,y)]
		\\
		&\qquad+ \mathbb{P}\left(\tau_{\varepsilon,-}(x,y)< \tau_{\delta,-}(x,y)\right) \mathbb{E} [\tau_{\varepsilon,-}(x,y)]
		\\
		&\qquad+ \mathbb{P}\left(\tau_{\varepsilon,-}(x,y)< \tau_{\delta,-}(x,y)\right)
		\mathbb{E} \left[g_{\varepsilon,\delta}(x_{\tau_{\varepsilon,-}},y_{\tau_{\varepsilon,-}})\mid \tau_{\varepsilon,-}(x,y)< \tau_{\delta,+}(x,y) \right] 
		\\
		&= \mathbb{E} \left[\min\{\tau_{\delta,+}(x,y),\tau_{\varepsilon,-}(x,y)\}\right]
		\\
		&\qquad+ \mathbb{P}\left(\tau_{\varepsilon,-}(x,y)< \tau_{\delta,+}(x,y)\right)\mathbb{E} \left[g_{\varepsilon,\delta}(x_{\tau_{\varepsilon,-}},y_{\tau_{\varepsilon,-}})\mid \tau_{\varepsilon,-}(x,y)< \tau_{\delta,+}(x,y) \right]
		\\
		&\geq \mathbb{E} \left[\min\{\tau_{\delta,+}(x,y),\tau_{\varepsilon,-}(x,y)\}\right] + g^-_{\varepsilon,\delta}(\varepsilon)\mathbb{P}\left(\tau_{\varepsilon,-}(x,y)< \tau_{\delta,+}(x,y)\right).
	\end{align*}
	Due to monotonicity of $g_{\varepsilon,\delta}^-(r)$ in $r$, we have for $\varepsilon <  r < \delta$,
	\begin{align*}
		g_{\varepsilon,\delta}^-(\varepsilon)&\geq g_{\varepsilon,\delta}^-(r)\\
		&\geq \inf \left\lbrace g_{\varepsilon,\delta}(x,y) \; ; \; a_1 r \leq d(x,y) \leq r \right\rbrace  \\
		&\geq\inf_{ra_1\leq d(x,y) \leq r}\left\lbrace \mathbb{E} \left[\min\{\tau_{\delta,+}(x,y),\tau_{\varepsilon,-}(x,y)\}\right] + g_{\varepsilon,\delta}^-(\varepsilon) \mathbb{P}\left(\tau_{\varepsilon,-}(x,y)< \tau_{\delta,+}(x,y)\right) \right\rbrace. 
	\end{align*}
	We get the lower bound for $g_{\varepsilon,\delta}(x,y)$ by setting $r = d(x,y)$, using  \eqref{eq:propbounds} and \eqref{eq:Ebounds} from Lemma~\ref{lem:martingale}, and $g_{\varepsilon,\delta}^-(\varepsilon)\geq  g_{\varepsilon,\delta}^-(r)$, 
	\begin{multline*}
		g_{\varepsilon,\delta}(x,y)\geq g_{\varepsilon,\delta}^-(r) \\
		\geq \inf_{ra_1\leq d(x,y)\leq r}\Bigg\{\frac{1}{V}\left( \ln\left( \frac{\delta}{d(x,y)} \right) \ln\left( \frac{d(x,y)}{\varepsilon} \right) 
		- 6K |\ln \varepsilon| + 2K^2 \right)
		\\
		+ g_{\varepsilon,\delta}^-(\varepsilon) \ddfrac{\ln\left( \frac{\delta}{d(x,y)}\right)-2K}{\ln\left(\frac{\delta}{\varepsilon}\right)}\Bigg\}
		\\ 
		\geq \frac{1}{V} \inf_{r/a\leq d(x,y)\leq r}\Bigg\{ \ln\left( \frac{\delta}{d(x,y)} \right) \ln\left( \frac{d(x,y)}{\varepsilon} \right)  \Bigg\}\\
		- 6K |\ln (\varepsilon)| + 2K^2 + g_{\varepsilon,\delta}^-(r) \ddfrac{\ln\left( \frac{\delta}{ra_1}\right)-2K}{\ln\left(\frac{\delta}{\varepsilon}\right)}.
	\end{multline*}
	Divide by $|\ln(\varepsilon)|$ and take $\liminf_{\varepsilon\to 0}$. This yields
	\[
	\liminf_{\varepsilon\to 0}\frac{g_{\varepsilon,\delta}^-(\delta)}{|\ln(\varepsilon)|}\geq\inf_{a_1 r<d(x,y)<r} \frac{\ln\left( \frac{\delta}{d(x,y)} \right)  - 6K }{V} \\
	\geq  \frac{\ln\left( \frac{\delta}{r/a} \right)  - 6K }{V}.
	\]
	The bound for $g^+_{\varepsilon,\delta} (r)$ is obtained by following a similar scheme. 
\end{proof}

%
%
%

The following proposition allows us to estimate on the growth-rate of the stationary measure $\mu^{(2)}$ near the diagonal, in the case $\lambda = 0$.

\begin{proposition}\label{prop:bounds}
	Suppose $\lambda = 0$. Given the stationary measure $\mu^{(2)}$, there exist $\alpha,\beta \in (0,\infty)$, such that 
	\begin{equation}\label{eq:measuremainprop}
		\alpha \leq \liminf_{\varepsilon \to 0} \frac{\mu^{(2)}(\mathbb{T}^2\setminus\Delta_\varepsilon)}{-\ln(\varepsilon)} \leq \limsup_{\varepsilon \to 0} \frac{\mu^{(2)}(\mathbb{T}^2\setminus\Delta_\varepsilon)}{-\ln(\varepsilon)} \leq \beta.
	\end{equation}
\end{proposition}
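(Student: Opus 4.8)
The plan is to combine the excursion representation of $\mu^{(2)}$ from Remark~\ref{rem:restricted} with the escape-count estimate of Lemma~\ref{lem:gepsilonnetje}. Fix once and for all $\varepsilon_0\in(0,\min\{R,R_{\min}\})$ and set $\mathcal{K}=\mathbb{T}^2\setminus\Delta_{\varepsilon_0}$; as in Section~\ref{s:inducing} we may write $\mu^{(2)}=\int_{\mathcal{K}}\widehat{m}_{(x_0,y_0)}\,d\varsigma_0(x_0,y_0)$ with $\widehat{m}_{(x_0,y_0)}=\mathbb{E}\big[\sum_{j=0}^{\rho-1}\delta_{(x_j,y_j)}\big]$ and $\rho$ the first return time of the two-point orbit to $\mathcal{K}$ (this decomposition is available for any stationary Radon measure on $\mathbb{T}^2\setminus\Delta$, since $\mu^{(2)}(\mathcal{K})<\infty$ and, by Lemma~\ref{lem:stop}, orbits return to $\mathcal{K}$ almost surely). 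Because $\mathbb{T}^2\setminus\Delta_\varepsilon=\mathcal{K}\sqcup(\Delta_{\varepsilon_0}\setminus\Delta_\varepsilon)$ and $\mu^{(2)}(\mathcal{K})=\varsigma_0(\mathcal{K})$ is finite and independent of $\varepsilon$, it suffices to prove \eqref{eq:measuremainprop} with $\mu^{(2)}(\mathbb{T}^2\setminus\Delta_\varepsilon)$ replaced by $\mu^{(2)}(\Delta_{\varepsilon_0}\setminus\Delta_\varepsilon)$. The crucial structural remark is that during one excursion the orbit leaves $\mathcal{K}$ at most once, at time $1$, and then stays in $\Delta_{\varepsilon_0}$ until time $\rho$; consequently, modulo the $\mathbb{P}$-null event on which an orbit distance equals $\varepsilon_0$, on $\{(x_1,y_1)\in\Delta_{\varepsilon_0}\}$ one has $\rho-1=\tau_{\varepsilon_0,+}(x_1,y_1)$, and the strong Markov property gives
\[
\mu^{(2)}(\Delta_{\varepsilon_0}\setminus\Delta_\varepsilon)=\int_{\mathcal{K}}\mathbb{E}\Big[\mathbbm{1}_{\{(x_1,y_1)\in\Delta_{\varepsilon_0}\}}\,\big(g_{\varepsilon,\varepsilon_0}(x_1,y_1)-1\big)\Big]\,d\varsigma_0(x_0,y_0),
\]
where $(x_1,y_1)=T^{(2)}_{\omega_0}(x_0,y_0)$ (note $d(x_1,y_1)>0$ almost surely by Hypothesis~\ref{h:genericnoise}, so $g_{\varepsilon,\varepsilon_0}(x_1,y_1)$ is well defined). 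Everything then reduces to controlling $g_{\varepsilon,\varepsilon_0}(x_1,y_1)$ after integration.

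For the \emph{upper bound} I would first record a non-asymptotic form of Lemma~\ref{lem:gepsilonnetje}: running the same recursion as in its proof, but keeping the explicit bounds \eqref{eq:propbounds} and \eqref{eq:Ebounds} of Lemma~\ref{lem:martingale}, yields a constant $C=C(\varepsilon_0,K,V)>0$ with
\[
g_{\varepsilon,\varepsilon_0}(x,y)\ \le\ \tfrac1V\big(\ln(\varepsilon_0 a_2/d(x,y))+C\big)\,(-\ln\varepsilon)+C
\]
for all sufficiently small $\varepsilon$ and all $0<d(x,y)<\varepsilon_0$. Substituting this into the displayed identity and using Lemma~\ref{lem:logbnd} with its parameter $R$ taken equal to $\varepsilon_0$ — which gives $\mathbb{E}\big[-\ln d(x_1,y_1)\big]<C_1$ uniformly over $(x_0,y_0)\in\mathcal{K}$ — one obtains $\mu^{(2)}(\Delta_{\varepsilon_0}\setminus\Delta_\varepsilon)\le\tfrac1V(\ln(\varepsilon_0 a_2)+C+C_1)(-\ln\varepsilon)+C$, hence the upper bound in \eqref{eq:measuremainprop} with $\beta=\tfrac1V(\ln(\varepsilon_0 a_2)+C+C_1)$; that $\beta>0$ is automatic, since $\beta$ dominates the quantity $\alpha$ produced below.

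For the \emph{lower bound} choose $\delta_3\in(0,\varepsilon_0)$ with $\ln(\varepsilon_0 a_1/\delta_3)-6K\ge 1$ (possible since $a_1<2$). Let $m$ be the first time $\ge1$ at which $(x_j,y_j)\in\Delta_{\delta_3}$ and let $E=\{m<\rho\}$, i.e. the event that the excursion reaches $\Delta_{\delta_3}$ before returning to $\mathcal{K}$; one checks $E\in\mathcal{F}_m$, since $E=\{d_j<\varepsilon_0,\ 1\le j\le m\}$ (up to a null set on $\{m=\infty\}$). After time $m$ the orbit escapes $\Delta_{\varepsilon_0}$ exactly at time $\rho$, so by the strong Markov property the expected number of visits to $\Delta_{\varepsilon_0}\setminus\Delta_\varepsilon$ in the interval $[m,\rho)$ equals $g_{\varepsilon,\varepsilon_0}(x_m,y_m)-1$, and on $E$ Lemma~\ref{lem:gepsilonnetje} gives $\liminf_{\varepsilon\to0}\big(g_{\varepsilon,\varepsilon_0}(x_m,y_m)-1\big)/(-\ln\varepsilon)\ge\tfrac1V\big(\ln(\varepsilon_0 a_1/d(x_m,y_m))-6K\big)\ge\tfrac1V$. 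Bounding the full excursion count below by the count on $[m,\rho)$ and applying Fatou's lemma (the relevant integrands being nonnegative) gives
\[
\liminf_{\varepsilon\to0}\frac{\mu^{(2)}(\Delta_{\varepsilon_0}\setminus\Delta_\varepsilon)}{-\ln\varepsilon}\ \ge\ \frac1V\int_{\mathcal{K}}\mathbb{E}\big[\mathbbm{1}_{E}\big]\,d\varsigma_0\ =:\ \alpha.
\]
Finally $\alpha>0$: if instead $\mathbb{E}[\mathbbm{1}_E]=0$ for $\varsigma_0$-a.e. starting point, then $\varsigma_0$-a.e. excursion avoids $\Delta_{\delta_3}$, whence $\mu^{(2)}(\Delta_{\delta_3})=\int_{\mathcal{K}}\mathbb{E}\big[\#\{0\le j<\rho:(x_j,y_j)\in\Delta_{\delta_3}\}\big]\,d\varsigma_0=0$, contradicting that $\mu^{(2)}$ has full support.

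The step I expect to be most delicate is the non-asymptotic refinement of Lemma~\ref{lem:gepsilonnetje} used in the upper bound — equivalently, producing a bound on $g_{\varepsilon,\varepsilon_0}(x,y)/(-\ln\varepsilon)$ that is uniform in small $\varepsilon$ and integrable against the law of $d(x_1,y_1)$, so that the integrability input of Lemma~\ref{lem:logbnd} (which in turn rests on Hypothesis~\ref{h:genericnoise}) can be applied without an interchange of $\limsup$ with the integral. Routine but necessary care is also needed for the $\mathbb{P}$-null edge cases when identifying $\rho$ with escape times, and for justifying the excursion decomposition $\mu^{(2)}=\int_{\mathcal{K}}\widehat{m}_{(x_0,y_0)}\,d\varsigma_0$ for the given measure.
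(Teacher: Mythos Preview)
Your proposal is correct and follows essentially the same route as the paper: both use the excursion representation of $\mu^{(2)}$ over $\mathcal{K}=\mathbb{T}^2\setminus\Delta_\delta$ from Remark~\ref{rem:restricted}, feed the entry point of each excursion into the count estimate of Lemma~\ref{lem:gepsilonnetje}, invoke Lemma~\ref{lem:logbnd} to control the integral of $-\ln d(x_1,y_1)$ for the upper bound, and obtain positivity of the lower bound from proximality (the paper via Lemma~\ref{lem:proximal}, you via full support). The one notable difference is the limit interchange in the upper bound: the paper writes ``applying Fatou's lemma'' to pass the $\limsup$ inside the integral---which is reverse Fatou and really needs a dominating function---whereas you propose first extracting a non-asymptotic bound $g_{\varepsilon,\varepsilon_0}(x,y)\le \tfrac1V(\ln(\varepsilon_0 a_2/d(x,y))+C)(-\ln\varepsilon)+C$ from the recursion in the proof of Lemma~\ref{lem:gepsilonnetje} and then integrating directly; your approach is the more careful one, and your flagging of this as the delicate step is apt.
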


\begin{proof}  	
	Temporary fix $\varepsilon>0$ small. 
	For $0 < \varepsilon < \delta$ for suitable small $\delta$, set $\mathcal K = \mathbb T^2 \setminus \Delta_\delta$.
	For $(x,y) \in \mathcal{K}$, write
	\[
	N (x,y,\omega) =  \min \{ n > 0 \; ; \; (T^n_\omega (x) , T^n_\omega (y)) \in \mathcal{K} \}
	\]
	for the first return time to $\mathcal{K}$. 
	By Remark~\ref{rem:restricted} we have that for all stationary Radon measures $\mu^{(2)}$ on $\T^2 \setminus \Delta$, the restricted measure $\mu_{\mathcal K} = \mu^{(2)}\vert_{\mathcal K}$ is a stationary measure for the induced process 
	\[
	(x_{n+1},y_{n+1}) = \left(T_\omega^{(2)}\right)^{N(x_n,y_n,\omega)} (x_n,y_n) 
	\]
	on  $\mathcal K$. For convenience we rescale $\mu_{\mathcal{K}}$ so that it becomes a probability measure,
	\[
	\mu_{\mathcal{K}} (\mathcal{K}) = 1.
	\]
	
	Denote the set $\mathcal{G} \subset \mathcal{K}\times {\Sigma_\vartheta}$ as the union of $(x,y,\omega) \in \mathcal{K}\times {\Sigma_\vartheta}$, such that there exists  $\tau(x,y,\omega)\in \N$ with the following properties,
	\begin{enumerate}
		\item for  $0< i <\tau$,  $(T_\omega^i(x),T_\omega^i(y)) \notin \mathcal{K}$,
		\item $(T_\omega^{\tau(x,y,\omega)}(x),T_\omega^{\tau(x,y,\omega)}(y)) \in \Delta_{e^{-7K} a_1 \delta}$.
	\end{enumerate} 
	Here $K$ is as in Lemma~\ref{lem:martingale}.
	By Lemma~\ref{lem:proximal}, we have $(\mu_{\mathcal K} \times \mathbb P) (\mathcal G) > 0$.
	
	Now we have the ingredients to prove the lower bound for the $\liminf$.  For all measurable sets $A \subset \mathbb T^2$, from Remark~\ref{rem:restricted} we obtain
	\begin{equation} \label{eq:induced}
		\mu^{(2)}(A) =  \int_{\mathcal K  \times {\Sigma_\vartheta}}  \sum_{j=0}^{N(x,y,\omega)-1} \mathbbm 1_{A}(T^j_\omega(x),T^j_\omega(y)) \, d(\mu_{\mathcal K}\times \mathbb P) (x,y,\omega).
	\end{equation}
	By \eqref{eq:induced} and Lemma~\ref{lem:gepsilonnetje},
	\begin{align*}
		\mu^{(2)}(\mathbb T^2 \setminus \Delta_{\varepsilon}) &=  \int_{\mathcal K  \times {\Sigma_\vartheta}}  \sum_{j=0}^{N(x,y,\omega)-1} \mathbbm 1_{\mathbb T^2 \setminus \Delta_{\varepsilon}}(T^j_\omega(x),T^j_\omega(y)) \, d(\mu_{\mathcal K}\times \mathbb P) (x,y,\omega)\\
		&\geq   \int_{\mathcal G}  \sum_{j=0}^{N(x,y,\omega)-1} \mathbbm 1_{\mathbb T^2 \setminus \Delta_{\varepsilon}}(T^j_\omega(x),T^j_\omega(y)) \, d(\mu_{\mathcal K}\times \mathbb P) (x,y,\omega)\\
		&\geq   \int_{\mathcal G}  \sum_{j=\tau(x,y,\omega)}^{N(x,y,\omega)-1} \mathbbm 1_{\mathbb T^2 \setminus \Delta_{\varepsilon}}(T^j_\omega(x),T^j_\omega(y)) \, d(\mu_{\mathcal K}\times \mathbb P) (x,y,\omega)\\
		&\geq   \int_{\mathcal G}  g_{\varepsilon,\delta}\left(  \left(T_\omega^{(2)}\right)^{\tau(x,y,\omega)}(x,y)\right) \, d(\mu_{\mathcal K}\times \mathbb P) (x,y,\omega).
	\end{align*}
	To get the first inequality of \eqref{eq:measuremainprop} we divide both sides by $-\ln(\varepsilon)$, take the $\liminf$, and apply Lemma~\ref{lem:gepsilonnetje}. This yields
	\begin{align*}
		\liminf_{\varepsilon\to 0}\frac{\mu^{(2)}(\mathbb T^2 \setminus \Delta_{\varepsilon})}{-\ln(\varepsilon)} 	&\geq \frac{(K + \ln (a_1)) (\mu_K\times\mathbb P) (\mathcal G)}{V}.
	\end{align*}
	This proves the lower bound for the $\liminf$.
	
	To get the upper bound for the  $\limsup$ we use a similar technique. Here we have to incorporate 
	the possibility that points in $\mathcal{K}$ are mapped onto, or close to, $\Delta$ by a single iterate of $T^{(2)}_\omega$. When pushing forward $\mu_{\mathcal{K}}$, this moves mass directly to small neighborhoods of $\Delta$. 
	Write 
	$
	\mathcal{K} \times \Sigma_\vartheta = \mathcal{R}_0 \cup \mathcal{R}_+
	$
	as a union of sets on which $N$ is either equal to $1$ or is bigger than $1$,
	\begin{align*}
		\mathcal{R}_0 = \left\{ (x,y,\omega) \in \mathcal{K} \times \Sigma_\vartheta \; ; \; N(x,y,\omega) = 1\right\},
		\\
		\mathcal{R}_+ = \left\{ (x,y,\omega) \in \mathcal{K} \times \Sigma_\vartheta \; ; \; N(x,y,\omega) > 1\right\}.
	\end{align*}
	The set $\mathcal{R}_+$ is a disjoint union of a set 
	\[
	\mathcal{G}_d  = \mathcal{R}_+ \cap \left( \Delta_{R_\text{min}} \times \Sigma_\vartheta\right)
	\]
	(recall \eqref{eq:Rmin} for the definition of $R_{\min}$) and its complement $\mathcal{G}_c$, which is contained in
	$\left(\Delta_{a_2 \delta}\setminus \Delta_\delta\right) \times \Sigma_\vartheta$.
	For $(x,y,\omega) \in \mathcal{G}_c$ we find $T^{(2)}_\omega (x,y) \subset \Delta_{\delta}\setminus \Delta_{a_1 \delta}$.
	We have 
	\begin{multline}
		\mu^{(2)}(\mathbb T^2 \setminus \Delta_{\varepsilon}) =  
		\int_{\mathcal K}  \int_{{\Sigma_\vartheta}}
		\sum_{j=0}^{N(x,y,\omega)-1} \mathbbm 1_{\mathbb T^2 \setminus \Delta_{\varepsilon}}(T^j_\omega(x),T^j_\omega(y)) \, 
		d\mathbb P(\omega)d\mu_{\mathcal K} (x,y)
		\\
		=  \int_{\mathcal K}  \int_{{\Sigma_\vartheta}}  \sum_{j=0}^{N(x,y,\omega)-1} \mathbbm 1_{\mathbb T^2 \setminus \Delta_{\varepsilon}}(T^j_\omega(x),T^j_\omega(y)) \, d\mathbb P(\omega)d\mu_{\mathcal K} (x,y) 
		\\
		\leq
		\iint_{\mathcal{R}_0}\,  d\mathbb P(\omega)d\mu_{\mathcal K} (x,y)
		+ \iint_{\mathcal{R}_+}   \sum_{j=1}^{N(x,y,\omega)-1} \mathbbm 1_{\mathbb T^2 \setminus \Delta_{\varepsilon}}(T^j_\omega(x),T^j_\omega(y)) \, d\mathbb P(\omega)d\mu_{\mathcal K} (x,y) 
		\\
		\leq \mu_\mathcal{K} (\mathcal{K}) + \iint_{\mathcal{R}_+}   g_{\varepsilon,\delta}( T_\omega^{(2)}(x,y) ) \, d\mathbb P(\omega)d\mu_{\mathcal K} (x,y) \label{eq:m2up}.
	\end{multline}
	
	Recall that by Lemma~\ref{lem:logbnd}, we have the existence of           $C_1>0$ with 
	\begin{align}
		\mathbb{E} \left[  -\ln(d(T_\omega^{(2)}(x,y))) \right] < C_1,
	\end{align}
	for all $(x,y) \in \mathcal{K}$. 
	
	To  conclude the proposition we
	divide \eqref{eq:m2up} by $-\ln(\varepsilon)$ and take the $\limsup$. Doing this, applying Fatou's lemma and using Lemma~\ref{lem:gepsilonnetje} and \eqref{eq:resulthypo}, yields
	\begin{multline*}
		\limsup_{\varepsilon \to 0} \frac{\mu^{(2)}(\mathbb T^2 \setminus \Delta_{\varepsilon})}{-\ln(\varepsilon)}
		\\
		\leq 
		\limsup_{\varepsilon \to 0} \frac{\mu_\mathcal{K} (\mathcal{K})}{-\ln(\varepsilon)}+ \limsup_{\varepsilon \to 0}   \iint_{\mathcal{G}_c \cup \mathcal{G}_d}  
		\frac{g_{\varepsilon,\delta}(T_\omega^{(2)}(x,y) )}{-\ln(\varepsilon)} \, d\mathbb P(\omega)d\mu_{\mathcal K} (x,y)
		\\
		\leq 
		\iint_{\mathcal{G}_c \cup \mathcal{G}_d}  
		\limsup_{\varepsilon \to 0}\frac{g_{\varepsilon,\delta}(T_\omega^{(2)}(x,y) )}{-\ln(\varepsilon)} \, d\mathbb P(\omega)d\mu_{\mathcal K} (x,y)
		\\
		\leq  
		\iint_{\mathcal{G}_c \cup \mathcal{G}_d}  \frac{1}{V}\left(\ln\left(\frac{\delta a_2}{d(T_\omega^{(2)}(x,y) )}\right) + 6K \right) \, d\mathbb P(\omega)d\mu_{\mathcal K} (x,y)
		\\
		\leq 
		\frac{\ln(\delta a_2) + 6K + C_1}{V}, 
	\end{multline*}
	which finishes the proof.
\end{proof}

\subsection{The growth rate of the stationary measure at the diagonal for $\lambda >0$}

The following proposition allows us to estimate on the growth-rate of the stationary measure $\mu^{(2)}$ near the diagonal, in the case of $\lambda >0$.

\begin{proposition}\label{prop:probmeas}
	Suppose $\lambda >0$ and let $\gamma$ be the negative value so that $\Lambda(\gamma)=0$. Suppose $\gamma \in (-1/2,0)$. Given the stationary measure $\mu^{(2)}$, there exist $\alpha,\beta \in (0,\infty)$, such that 
	\begin{equation}\label{eq:measuremainprop>}
		\alpha \leq \liminf_{\varepsilon \to 0} \frac{\mu^{(2)}(\Delta_\varepsilon)}{\varepsilon^{-\gamma}} \leq \limsup_{\varepsilon \to 0} \frac{\mu^{(2)}(\Delta_\varepsilon)}{\varepsilon^{-\gamma}} \leq \beta.
	\end{equation}
\end{proposition}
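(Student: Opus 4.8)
The plan is to mirror the structure of the proof of Proposition~\ref{prop:bounds}, replacing the logarithmic estimates for $\lambda = 0$ by the power-law estimates for $\lambda > 0$ coming from Lemma~\ref{lem:martingalepos}, and replacing the quantity $\mu^{(2)}(\mathbb{T}^2\setminus\Delta_\varepsilon)$ by $\mu^{(2)}(\Delta_\varepsilon)$. First I would fix a small $\delta < R$ and set $\mathcal{K} = \mathbb{T}^2\setminus\Delta_\delta$, so that by Remark~\ref{rem:restricted} the restriction $\mu_{\mathcal K} = \mu^{(2)}|_{\mathcal K}$ is (after normalization) a stationary probability measure for the first-return map to $\mathcal K$ with return time $N(x,y,\omega)$, and $\mu^{(2)}$ is recovered by the inducing formula \eqref{eq:induced}. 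The key new technical input is an analogue of Lemma~\ref{lem:gepsilonnetje}: I would define, for $0 < d(x,y) < \delta$,
\[
h_{\varepsilon,\delta}(x,y) = \mathbb{E}\left[\sum_{i=0}^{\tau_{\delta,+}(x,y)} \mathbbm{1}_{\Delta_\varepsilon}(T_\omega^i(x),T_\omega^i(y))\right],
\]
the expected number of visits to $\Delta_\varepsilon$ before escaping $\Delta_\delta$, and prove bounds of the form
\[
\frac{1}{K'}\, d(x,y)^\gamma \,\varepsilon^{-\gamma} \le \liminf_{\varepsilon\to 0}\frac{h_{\varepsilon,\delta}(x,y)}{\varepsilon^{-\gamma}} \le \limsup_{\varepsilon\to 0}\frac{h_{\varepsilon,\delta}(x,y)}{\varepsilon^{-\gamma}} \le K'\, d(x,y)^\gamma
\]
for $(x,y)$ with $d(x,y)$ not too close to $\delta$. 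The proof of these bounds would follow \cite[Prop.~5.6]{MR1144097} with discrete-time modifications: condition on whether $\tau_{\varepsilon,-} < \tau_{\delta,+}$ or not, use that once inside $\Delta_\varepsilon$ the number of further visits to $\Delta_\varepsilon$ before returning to the $[\varepsilon, a_1^{-1}\varepsilon]$-shell and then either re-entering or escaping satisfies a renewal-type recursion, with the one-step visit-probability controlled by \eqref{eq:propbounds>} in Lemma~\ref{lem:martingalepos}; the resulting geometric series has ratio $\approx 1 - K^{\pm 1}(d/\varepsilon)^\gamma \cdot(\text{something})$, which sums to order $\varepsilon^{-\gamma}$ rather than $|\ln\varepsilon|$. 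Here the hypothesis $\gamma > -1/2$ enters: it guarantees that $W_q^\pm$ with $q$ near $-\gamma$ are available from Proposition~\ref{prop:W} (which needs $q \in [-|\gamma|-1/2, |\gamma|+1/2]$) and, more importantly, that the error term $Bd(x,y)^{-q+1}$ in \eqref{eq:TPW-PW} is genuinely lower order compared to $d(x,y)^{-q}$ uniformly, so the sub/supermartingale comparison closes.

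For the lower bound on $\liminf_\varepsilon \mu^{(2)}(\Delta_\varepsilon)/\varepsilon^{-\gamma}$, I would use Lemma~\ref{lem:proximal} to produce a set $\mathcal{G}\subset\mathcal K\times\Sigma_\vartheta$ of positive $(\mu_{\mathcal K}\times\mathbb{P})$-measure on which, after a bounded number $\tau(x,y,\omega)$ of steps outside $\mathcal K$, the orbit lands in some fixed small ball $\Delta_{c\delta}\setminus\Delta$ (with $c$ chosen so the $h$-estimate applies), and then bound
\[
\mu^{(2)}(\Delta_\varepsilon) \ge \int_{\mathcal G} h_{\varepsilon,\delta}\!\left(\left(T_\omega^{(2)}\right)^{\tau(x,y,\omega)}(x,y)\right)\, d(\mu_{\mathcal K}\times\mathbb{P})(x,y,\omega),
\]
divide by $\varepsilon^{-\gamma}$, and take $\liminf$ using the lower bound for $h$. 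For the upper bound, as in Proposition~\ref{prop:bounds}, I would split $\mathcal{K}\times\Sigma_\vartheta = \mathcal{R}_0\cup\mathcal{R}_+$ according to whether the return time is $1$ or larger, note that $\mathcal{R}_0$ contributes at most $\mu_{\mathcal K}(\mathcal K)$ (a bounded term, hence $o(\varepsilon^{-\gamma})$), and bound the $\mathcal{R}_+$-contribution by $\iint_{\mathcal{R}_+} h_{\varepsilon,\delta}(T_\omega^{(2)}(x,y))\, d\mathbb{P}\,d\mu_{\mathcal K}$; then divide by $\varepsilon^{-\gamma}$, apply Fatou, use the $\limsup$ bound for $h$, and control the resulting $\int d(T_\omega^{(2)}(x,y))^\gamma\, d\mathbb{P}\,d\mu_{\mathcal K}$ by integrability — this is where $\gamma > -1/2$ is again used, via Lemma~\ref{lem:logbnd} or a mild strengthening of it, to ensure $(x,y)\mapsto \mathbb{E}[d(T_\omega^{(2)}(x,y))^\gamma]$ is bounded on $\mathcal K$ (points near the two-preimage locus get mapped near $\Delta$, and $d(\cdot)^\gamma$ with $\gamma>-1/2$ stays integrable there by Hypothesis~\ref{h:genericnoise}).

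The main obstacle I anticipate is establishing the $h_{\varepsilon,\delta}$ estimate cleanly: unlike the $\lambda = 0$ case where visit counts diverge logarithmically and the renewal argument is robust, here the total number of visits to $\Delta_\varepsilon$ is a geometric random variable whose success probability $\sim (d(x,y)/\varepsilon)^{-\gamma}$ (note $-\gamma > 0$) is small, and I need uniform two-sided control of its expectation, including the behaviour of the chain between successive visits to $\Delta_\varepsilon$ — in particular the possibility of direct jumps toward the diagonal from the endomorphism structure must be absorbed, using again that such jumps only occur from a controlled region and land at distance bounded below on $\mathcal{G}$-type sets. Getting the constants to be uniform in $(x,y)$ over the relevant region (away from $\partial\Delta_\delta$, where the $\kappa$ in Lemma~\ref{lem:martingalepos} matters) and confirming that the subdominant error terms in Lemma~\ref{lem:TP-P2} really are subdominant when $\gamma\in(-1/2,0)$ is the delicate part; everything else is a transcription of the $\lambda=0$ argument.
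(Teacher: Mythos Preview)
Your strategy is essentially the paper's own: the key lemma you call $h_{\varepsilon,\delta}$ is exactly the paper's $f_{\varepsilon,\delta}$ in Lemma~\ref{lem:spendtimepos}, proved there precisely via Lemma~\ref{lem:martingalepos} as you outline, and the lower and upper bounds for $\mu^{(2)}(\Delta_\varepsilon)$ are obtained by the same inducing representation and the same set $\mathcal{G}$.

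Two points deserve correction. First, your reasons (a) and (b) for why $\gamma>-1/2$ matters are red herrings: Proposition~\ref{prop:W} already covers $q=-\gamma$ regardless, and the error $Bd(x,y)^{-q+1}$ is always lower order than $d(x,y)^{-q}$. Only your reason (c), the integrability of $d(T_\omega^{(2)}(x,y))^\gamma$ under Hypothesis~\ref{h:genericnoise} (at worst a square-root singularity), is the genuine constraint, and that is exactly what the paper uses. Second, and more substantively for the upper bound: the bound $h_{\varepsilon,\delta}(x,y)\le K(d(x,y)/\varepsilon)^\gamma$ from Lemma~\ref{lem:spendtimepos} is stated only for $d(x,y)>\varepsilon$, but after one iterate from $\mathcal{K}$ the orbit may land \emph{inside} $\Delta_\varepsilon$ (the endomorphism can collapse pairs). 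The paper handles this by carving out the set $\mathcal{G}_\varepsilon=\{(x,y,\omega):T_\omega^{(2)}(x,y)\in\Delta_\varepsilon\}$ explicitly: on $\mathcal{G}_\varepsilon$ one uses Lemma~\ref{lem:returntime>} to bound the time spent in $\Delta_\varepsilon$ logarithmically, and Hypothesis~\ref{h:genericnoise} to bound $(\mu_{\mathcal K}\times\mathbb P)(\mathcal{G}_\varepsilon)\le C\sqrt{\varepsilon}$, so that this piece is $O(\sqrt{\varepsilon})=o(\varepsilon^{-\gamma})$ precisely when $\gamma>-1/2$. Your plan to absorb everything into $\int d(T_\omega^{(2)}(x,y))^\gamma$ can be made to work too, since for $d(x,y)<\varepsilon$ one has $h_{\varepsilon,\delta}(x,y)\le C(1+|\ln(d(x,y)/\varepsilon)|)\le K'(d(x,y)/\varepsilon)^\gamma$, but you should state and justify this extension rather than leave it implicit.
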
 

To prove the above proposition, we first consider orbits near the diagonal. 

 \begin{lemma}\label{lem:spendtimepos}
Let $R$ be as in Lemma~\ref{lem:martingale}. Suppose $\lambda > 0$. 
Assume $0 < \varepsilon<\delta<R$. For $(x,y) \in \mathbb{T}^2$ with $0 < d(x,y) < \delta$, define
\[
f_{\varepsilon,\delta} (x,y) = \mathbb{E}\left[\sum_{i=0}^{\tau_{\delta,+}(x,y,\omega)-1}\mathbbm{1}_{(0,\varepsilon]}(d(T_\omega^i(x),T_\omega^i(y))) \right].
\]
There exist $K < \infty$, $\kappa \in(0,1)$, such that for $(x,y) \in \mathbb T^2$, if $0<\varepsilon<d(x,y)<\kappa\delta<\kappa R$, then
 	\[
 	\frac{1}{K}\left(\frac{d(x,y)}{\varepsilon}\right)^\gamma 
\leq 
f_{\varepsilon,\delta} (x,y) 
	\leq {K}\left(\frac{d(x,y)}{\varepsilon}\right)^\gamma.  
 	\]
 \end{lemma}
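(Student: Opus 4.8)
The plan is to mirror the strategy already used for the zero–exponent case (Lemma~\ref{lem:gepsilonnetje}), but now using the power-law martingale functions $W^\pm_{-\gamma}$ from Proposition~\ref{prop:W} in place of the logarithmic functions $\phi^\pm,\eta^\pm$, and exploiting the sharp escape-probability bounds \eqref{eq:propbounds>} from Lemma~\ref{lem:martingalepos}. First I would introduce, for $\varepsilon<r<\delta$, the quantities
\[
f^-_{\varepsilon,\delta}(r) = \inf\{ f_{\varepsilon,\delta}(x,y) \; ; \; a_1 r \le d(x,y) \le r\},\qquad
f^+_{\varepsilon,\delta}(r) = \sup\{ f_{\varepsilon,\delta}(x,y) \; ; \; r \le d(x,y) \le a_2 r\},
\]
and note $r\mapsto f^\pm_{\varepsilon,\delta}(r)$ is non-decreasing as $r\to 0$ (more time is spent below $\varepsilon$ when starting closer to the diagonal) and is constant for $r\le\varepsilon$ up to a bounded factor coming from Hypothesis~\ref{h:endo}.

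The core is a renewal-type identity. For $(x,y)$ with $\varepsilon<d(x,y)<\kappa\delta$, condition on the first of the stopping times $\tau_{\varepsilon,-}$, $\tau_{\delta,+}$: on the event $\{\tau_{\delta,+}<\tau_{\varepsilon,-}\}$ the orbit never enters $\Delta_\varepsilon$ before escaping, contributing $0$; on the event $\{\tau_{\varepsilon,-}<\tau_{\delta,+}\}$ the orbit enters $\Delta_\varepsilon$, spends a bounded-in-expectation number of steps there (bounded using Lemma~\ref{lem:martingale}/\ref{lem:martingalepos} applied on the strip $\Delta_\varepsilon\setminus\Delta_{a_1\varepsilon}$, giving an $O(1)$ contribution since we land in $[a_1\varepsilon,\varepsilon]$), and then restarts from a point at distance in $[a_1\varepsilon,\varepsilon]$, contributing $\mathbb{E}[f_{\varepsilon,\delta}(x_{\tau_{\varepsilon,-}},y_{\tau_{\varepsilon,-}})]$. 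This yields
\[
f_{\varepsilon,\delta}(x,y) = \mathbb{P}(\tau_{\varepsilon,-}<\tau_{\delta,+})\Big( O(1) + \mathbb{E}\big[ f_{\varepsilon,\delta}(x_{\tau_{\varepsilon,-}},y_{\tau_{\varepsilon,-}}) \,\big|\, \tau_{\varepsilon,-}<\tau_{\delta,+}\big]\Big),
\]
and upon passing to $f^\pm_{\varepsilon,\delta}$ and using the monotone bounds $f^-_{\varepsilon,\delta}(\varepsilon)\le \mathbb{E}[f_{\varepsilon,\delta}(\cdot)|\cdots]\le f^+_{\varepsilon,\delta}(\varepsilon)$ together with \eqref{eq:propbounds>}, one gets
\[
f^-_{\varepsilon,\delta}(\varepsilon)\ \ge\ \inf_{a_1 r\le d\le r}\Big\{ \tfrac{1}{K}\big(\tfrac{d}{\varepsilon}\big)^\gamma\big(O(1)+f^-_{\varepsilon,\delta}(\varepsilon)\big)\Big\}
\]
for $r=d(x,y)$, and a matching upper estimate for $f^+_{\varepsilon,\delta}$. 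Solving the resulting (self-referential) inequalities for $f^\pm_{\varepsilon,\delta}(\varepsilon)$ — here the condition $\gamma>-1/2$ (equivalently $(d/\varepsilon)^\gamma$ not too explosive relative to the time spent in $\Delta_\varepsilon$, cf. the remark after Theorem~\ref{thm:main2}) keeps the geometric-series constants finite and bounded uniformly in $\varepsilon$ — gives $f^\pm_{\varepsilon,\delta}(\varepsilon)=\Theta(1)$, and then feeding this back into the displayed identity at a general starting point $(x,y)$ produces
\[
\tfrac{1}{K}\big(\tfrac{d(x,y)}{\varepsilon}\big)^\gamma \le f_{\varepsilon,\delta}(x,y) \le K\big(\tfrac{d(x,y)}{\varepsilon}\big)^\gamma,
\]
after enlarging $K$, which is the claim. (The restriction $d(x,y)<\kappa\delta$ is exactly what is needed to invoke \eqref{eq:propbounds>}.)

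The main obstacle I anticipate is the bookkeeping in the self-referential step: one must control the time spent inside $\Delta_\varepsilon$ on each visit and the number of successive returns to $\Delta_\varepsilon$ before final escape from $\Delta_\delta$, and show the resulting series of contributions converges with $\varepsilon$-uniform bounds. This is precisely where $\gamma\in(-1/2,0)$ enters: the probability of re-entering $\Delta_\varepsilon$ from distance $\sim\varepsilon$ before escaping is $1-O((\varepsilon/\delta)^{-\gamma})$-close to some $p<1$, while each excursion into $\Delta_\varepsilon$ costs $O(1)$ in expectation by the strip estimates of Lemma~\ref{lem:martingalepos}; the geometric sum converges and the bound $f^\pm_{\varepsilon,\delta}(\varepsilon)=\Theta(1)$ follows, whereas if $\gamma\le-1/2$ these estimates degrade and a different (larger) asymptotic would result, consistent with Proposition~\ref{prop:probmeas}'s hypothesis. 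Everything else (monotonicity of $f^\pm_{\varepsilon,\delta}$, the $a_1,a_2$ factors from Hypothesis~\ref{h:endo} when crossing the boundary of $\Delta_\varepsilon$ or $\Delta_\delta$, measurability) is routine and parallels Lemma~\ref{lem:gepsilonnetje} closely.
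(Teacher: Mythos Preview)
Your overall renewal strategy --- condition on whether the orbit hits $\Delta_\varepsilon$ before escaping $\Delta_\delta$, then set up a self-referential inequality for $f^\pm_{\varepsilon,\delta}(\varepsilon)$ --- is the same as the paper's. But there is a genuine gap in the upper-bound step, and a misconception about the role of $\gamma$.

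The gap is in ``solving the self-referential inequality''. For the upper bound you need $f^+_{\varepsilon,\delta}(\varepsilon)\le O(1)+p\,f^+_{\varepsilon,\delta}(\varepsilon)$ with some $p<1$ uniform in $\varepsilon$. You assert that the probability of re-entering $\Delta_\varepsilon$ from distance $\sim\varepsilon$ is ``close to some $p<1$'', but Lemma~\ref{lem:martingalepos} only gives $\mathbb{P}(\tau_{\varepsilon,-}<\tau_{\delta,+})\le K(d(x,y)/\varepsilon)^\gamma$, and when the exit point satisfies $d(x,y)\in[\varepsilon,a_2\varepsilon]$ this upper bound is $\le K a_2^\gamma$, which need not be $<1$ because of the constant $K$. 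The paper resolves this by inserting a buffer: from an entry point in $[a_1\varepsilon,\varepsilon]$ it first waits until escape from $\Delta_{K_2\varepsilon}$ for a fixed large constant $K_2$ (chosen so that $K_0(K_2a_2)^\gamma=\tfrac12$), bounding that excursion time by Lemma~\ref{lem:returntime>} as $O(\ln K_2)=O(1)$, and only then applies \eqref{eq:propbounds>} from level $K_2\varepsilon$ to get return probability $\le\tfrac12$. Without this buffer trick your geometric series has ratio that may exceed $1$ and the argument collapses.

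Second, the condition $\gamma\in(-\tfrac12,0)$ plays no role in this lemma; the statement and the paper's proof hold for any $\gamma<0$. That restriction enters only later in Proposition~\ref{prop:probmeas}, where it controls the mass that points in $\mathbb{T}^2\setminus\Delta_{R_{\min}}$ deposit directly into $\Delta_\varepsilon$ via the (at worst quadratic) tangencies of Hypothesis~\ref{h:genericnoise}, giving a $\sqrt{\varepsilon}$ term that must be dominated by $\varepsilon^{-\gamma}$. Your attribution of the $-\tfrac12$ threshold to the convergence of the renewal series here is incorrect. Also, for the $O(1)$ time-per-visit you should invoke Lemma~\ref{lem:returntime>} (the $\lambda>0$ escape-time estimate), not Lemma~\ref{lem:martingale} which is the $\lambda=0$ case.
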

 
 \begin{proof}
	Let $\kappa$ be as in Lemma~\ref{lem:martingalepos}.  Let $K_0$ be $K$ from Lemma~\ref{lem:martingalepos} and set $K_1 = e^{2 \lambda K_0}$. By a straightforward computation for the lower bound, comparable to the proof of Lemma~\ref{lem:gepsilonnetje}, we obtain
 	\begin{align*}
 		f_{\varepsilon,\delta}(x,y) &\geq \mathbb{P}\left(\tau_{\varepsilon/K_1,-}(x,y)< \tau_{\delta,+}(x,y)\right) \mathbb{E}\left[\tau_{\varepsilon,+}(T^{(2)\tau_{\varepsilon/K_1}}_\omega (x,y)) \mid \tau_{\varepsilon/K_1,-}(x,y)< \tau_{\delta,+}(x,y) \right]\\
 		&\geq \frac{1}{K_0}\left(\frac{K_1 d(x,y)a_1}{\varepsilon}\right)^\gamma\left(\frac{1}{\lambda}\left(\ln\left(\frac{a_1}{K_1 }\right) -2K_0\right) \right).
 	\end{align*}
 	Set
 	\begin{equation}\label{eq:feps}
 		f_{\varepsilon,\delta}^+(r) = \sup_{r a_1 < d(x,y)\leq r} \mathbb{E}\left[\sum _{i=0}^{\tau_{\delta,+}(x,y,\omega)-1} \mathbbm{1}_{(0,\varepsilon]}(d(T^{i}_\omega(x),T^i_\omega(y)))\right]
 	\end{equation}
 	and observe that $f_{\varepsilon,\delta}^+$ is monotone decreasing in $r$. Let $K_2 = \left(\frac{1}{2K_0}
    \right)^{1/\gamma}/a_2$. When $K_2\varepsilon < \kappa R$, we have
 	\begin{align*}
 		f_{\varepsilon,\delta}^+(\varepsilon) &\leq \sup_{\varepsilon a_1 < d(x,y)\leq \varepsilon} \mathbb{E}\left[\tau_{(K_2\varepsilon), +}(x,y)\right] + \sup_{\varepsilon K_2  < d(x,y)\leq \varepsilon K_2 a_2} \mathbb{P}\left(\tau_{\varepsilon,-}(x,y)< \tau_{\delta,+}(x,y)\right)f_{\varepsilon,\delta}^+(\varepsilon)\\
 		&\leq \frac{1}{\lambda}  \left(  \ln \left( \frac{K_2}{a_1}\right) + 2K_0 \right)+ K_0\left({K_2a_2} \right)^\gamma f_{\varepsilon,\delta}^+(\varepsilon).
 	\end{align*}
 	Therefore $f_{\varepsilon,\delta}^+(\varepsilon) < \infty$.
 	So, using $f_{\varepsilon,\delta}(x,y) \leq \mathbb{P}\left(\tau_{\varepsilon,-}(x,y)< \tau_{\delta,+}(x,y)\right) f_{\varepsilon,\delta}^+(\varepsilon) $, we get the desired result. 
 \end{proof}

\begin{proof}[Proof of Proposition \ref{prop:probmeas}]
	We follow reasoning of Proposition~\ref{prop:bounds}, using Lemma~\ref{lem:returntime>} and Lemma~\ref{lem:spendtimepos}.  Let $R,K>0$ be as in Lemma~\ref{lem:spendtimepos}. 
	For $0<\delta<R$ small, take $\mathcal{K} = \mathbb{T}^2 \setminus \Delta_\delta$.
	As in Proposition~\ref{prop:infmeas},  a finite measure $\sigma_0$ is constructed and from the restriction $\mu_\mathcal{K}$ of $\sigma_0$ to $\mathcal{K}$, the measure $\mu^{(2)}$ is obtained by pushing forward $\mu_\mathcal{K}$ by the two-point maps. See Remark~\ref{rem:restricted}.
	
	 By rescaling $\mu_\mathcal{K}$ we may assume 
	\[
	\int_{\T^2} \mathbb{E} \left[ V(x,y,\omega) \right] \, d\mu_\mathcal{K} (x,y)=1.
	\]
	Fix $\varepsilon > 0$ small enough and let $\delta <R$. For the stationary measure of the two-point motion we have by Proposition~\ref{prop:infmeas} that
	\begin{align*}
		\mu^{(2)}(\Delta_\varepsilon) &= \int_{\T^2} \mathbb{E}  \left[\sum_{j=0}^{V(x,y,\omega)-1} \mathbbm 1_{\Delta_\varepsilon}(T^j_\omega(x),T^j_\omega(y))\right] \, d\mu_\mathcal{K} (x_0,y_0).
	\end{align*}

    We remark that the condition $|\gamma|<1/2$ is to bound the mass that is transported to neighborhoods of the diagonal from outside $\Delta_{R_\text{min}}$ by this construction (recall \eqref{eq:Rmin} for the definition of $R_{\min}$). 
	
	As before, see the proof of Proposition~\ref{prop:bounds}, denote the set of $\mathcal{G} \subset(\T^2\setminus \Delta_\delta) \times {\Sigma_\vartheta}$ as the union of $(x,y,\omega) \in (\T^2\setminus \Delta_\delta)\times {\Sigma_\vartheta}$, such that there exists an  $\tau(x,y,\omega)\in \N$, with the following properties:
	\begin{enumerate}
		
		\item for all $i \in \N, 0< i <\tau$,  $(T_\omega^i(x),T_\omega^i(y)) \notin (\T^2\setminus \Delta_\delta)$,
		
		\item $(T_\omega^{\tau(x,y,\omega)}(x),T_\omega^{\tau(x,y,\omega)}(y)) \in \Delta_{\kappa\delta}$.
		
	\end{enumerate}  	
	Here $\kappa$ as in the proof of Lemma~\ref{lem:spendtimepos}.
	
	For the lower bound in \eqref{eq:measuremainprop>} we consider only orbit pieces near $\Delta$.
	From Lemma~\ref{lem:proximal} we know that $(\mu_{\delta} \times \mathbb P) (\mathcal G) > 0$.	
	\begin{align*}
		\mu^{(2)}(\Delta_\varepsilon) 
		&\geq  
		\int_{\mathbb T^2 \setminus \Delta_\delta \times \Sigma_\vartheta }
		\sum_{i=0}^{\tau_{\delta,+}(T_\omega(x),T_\omega(y),\omega)-1}\mathbbm{1}_{(0,\varepsilon]}(d(T^i_\omega(x),T^i_\omega(y))) \, d(\mu_\mathcal{K} \times \mathbb P) (x,y,\omega)
		\\
		&\geq  
		\int_{\mathcal  G }
		\sum_{i=0}^{\tau_{\delta,+}(T_\omega(x),T_\omega(y),\omega)-1}\mathbbm{1}_{(0,\varepsilon]}(d(T^i_\omega(x),T^i_\omega(y))) \, d(\mu_\mathcal{K} \times \mathbb P) (x,y,\omega)
		\\
		&\geq  
		\int_{\mathcal  G }
		\sum_{i=\tau(x,y,\omega)}^{\tau_{\delta,+}(T_\omega(x),T_\omega(y),\omega)-1}\mathbbm{1}_{(0,\varepsilon]}(d(T^i_\omega(x),T^i_\omega(y))) \, d(\mu_\mathcal{K} \times \mathbb P) (x,y,\omega)
		\\
		&\geq  
		\frac{\varepsilon^{-\gamma}}{K(\kappa \delta )^{-\gamma}}  \left(\mu_\mathcal{K} \times \mathbb P\right)  (\mathcal G).
	\end{align*}
	
	We proceed with an upper bound. Here we must include orbits that start outside $\Delta_{R_\text{min}}$ and jump 
	directly into $\Delta_\delta$, and also directly into $\Delta_\varepsilon$. This can be seen as pushing forward mass from $\mathcal{K}$ into $\Delta_\varepsilon$ in a single iterate.
	We divide the  set $\mathcal{G}$ into two subsets, $\mathcal{G} = \mathcal{G}_d \cup \mathcal{G}_c$, with a set 
	\[
	\mathcal{G}_d = \mathcal{G} \cap \left(\mathbb{T}^2 \setminus \Delta_{R_\text{min}} \times \Sigma_\vartheta\right)
	\]
	and its complement  
	$\mathcal{G}_c \subset \left(\Delta_{\delta/a_1}\setminus \Delta_\delta\right) \times \Sigma_\vartheta$.
    
	Using this division, we get
	\begin{multline*}
		\mu^{(2)}(\Delta_\varepsilon) \leq 
		\int_{\mathbb T^2 \setminus \Delta_\delta \times \Sigma_\vartheta }
		\sum_{i = 0}^{\tau_{\delta,+}(x,y,\omega)-1}\mathbbm{1}_{(0,\varepsilon]}(d(T_\omega^i(x),T_\omega^i(y))) \, d(\mu_\mathcal{K} \times \mathbb P) (x,y,\omega)
		\\
		\leq 
		\int_{\mathcal{G}_c}
		\sum_{i = 0}^{\tau_{\delta,+}(x,y,\omega)-1}\mathbbm{1}_{(0,\varepsilon]}(d(T_\omega^i(x),T_\omega^i(y))) \, d(\mu_\mathcal{K} \times \mathbb P) (x,y,\omega)
		\\
		+ 
		\int_{\mathcal{G}_d}
		\sum_{i = 0}^{\tau_{\delta,+}(x,y,\omega)-1}\mathbbm{1}_{(0,\varepsilon]}(d(T_\omega^i(x),T_\omega^i(y))) \, d(\mu_\mathcal{K} \times \mathbb P) (x,y,\omega).
	\end{multline*}
	
	The first integral can be  bounded from above by using Lemma~\ref{lem:spendtimepos}. Recall that $f^+_{\varepsilon,\delta}$ defined in 
	\eqref{eq:feps} is a monotone function. Now
	\begin{multline*}
		\int_{\mathcal{G}_c}
		\sum_{i = 0}^{\tau_{\delta,+}(x,y,\omega)-1} 
		\mathbbm{1}_{(0,\varepsilon]}(d(T^i_\omega(x),T^i_\omega(y))) \, d(\mu_\mathcal{K} \times \mathbb P) (x,y,\omega)
		\\
		\leq \int_{\mathcal{G}_c} f^+_{\varepsilon,\delta}(d(T^{(2)}_\omega(x,y))) d(\mu_\mathcal{K} \times \mathbb P) (x,y,\omega) 
		\leq  f^+_{\varepsilon,\delta}(\kappa \delta)
		\leq K \left(\frac{\varepsilon}{a_1 \kappa \delta}\right)^{-\gamma}.  
	\end{multline*}
	
	To handle the second integral we consider separately the subset $\mathcal{G}_\varepsilon \subset \mathcal{G}_d$ of points
	that are mapped directly in $\Delta_\varepsilon$, thus
	\[
	\mathcal{G}_\varepsilon = \left\{  (x,y,\omega) \in \mathcal{G}_d \; ; \;  T^{(2)}_\omega (x,y) \in \Delta_\varepsilon  \right\}.  
	\]
	For $(x,y) \in \Delta_\varepsilon$ let, similar to  \eqref{eq:tau+}, 
	\begin{align*}
		\tau_{\varepsilon,+}(x,y) &= \min \{n\in \mathbb N \; ; \;  d(T_\omega^n(x), T_\omega^n(y))> \varepsilon \},
		\\
		\tau_{\delta,+}(x,y) &= \min \{n\in \mathbb N \; ; \;  d(T_\omega^n(x), T_\omega^n(y))> \delta \}.
	\end{align*}
	By Lemma~\ref{lem:returntime>} we have an estimate
	\[
	\frac{1}{\lambda}  \left(  \ln \left( \frac{\varepsilon}{d(x,y)} \right) - 2K \right)  \leq \mathbb{E}\left[ \tau _{\varepsilon,+}(x,y)  \right]  \leq \frac{1}{\lambda}  \left(  \ln \left( \frac{\varepsilon}{d(x,y)}\right) + 2K \right)
	\]
	for some constant $K>0$.
	For $(x,y,\omega) \in \mathcal{G}_\varepsilon$ we find $\tau_{\varepsilon,+}(x,y,\omega) < \tau_{\delta,+}(x,y,\omega)$ almost surely.
	This allows us to write
	\begin{multline*}
		\int_{\mathcal{G}_d}
		\sum_{i = 0}^{\tau_{\delta,+}(x,y,\omega)-1}\mathbbm{1}_{(0,\varepsilon]}(d(T_\omega^i(x),T_\omega^i(y))) \, d(\mu_\mathcal{K} \times \mathbb P) (x,y,\omega) 
		\\
		\leq
		\int_{\mathcal{G}_\varepsilon }\left(\sum_{i = 0}^{\tau_{\varepsilon,+}(x,y,\omega)-1} + 
		\sum_{i = \tau_{\varepsilon,+}(x,y,\omega) }^{\tau_{\delta,+}(x,y,\omega)-1}\right)\mathbbm{1}_{(0,\varepsilon]}(d(T_\omega^i(x),T_\omega^i(y))) \, d(\mu_\mathcal{K} \times \mathbb P) (x,y,\omega)
		\\
		+ \int_{\mathcal{G}_d \setminus \mathcal{G}_\varepsilon} \sum_{i = 0 }^{\tau_{\delta,+}(x,y,\omega)-1}
		\mathbbm{1}_{(0,\varepsilon]}(d(T_\omega^i(x),T_\omega^i(y))) \, d(\mu_\mathcal{K} \times \mathbb P) (x,y,\omega)
		\\
		\leq  \int_{\mathcal{G}_\varepsilon} \tau_{\varepsilon, +}(T^{(2)}_\omega(x,y)) 
		\, d(\mu_\mathcal{K} \times \mathbb P) (x,y,\omega)
		+ \int_{\mathcal{G}_\varepsilon}  f^+_{\varepsilon,\delta} (\varepsilon) \, d(\mu_\mathcal{K} \times \mathbb P) (x,y,\omega)
		\\
		+ \int_{\mathcal{G}_d\setminus \mathcal{G}_\varepsilon} f^+_{\varepsilon,\delta}( d( T^{(2)}_\omega (x,y) ) )  \, d(\mu_\mathcal{K} \times \mathbb P) (x,y,\omega).
	\end{multline*}
	To bound the first integral in the final expression we use Lemma~\ref{lem:returntime>}, Lemma~\ref{lem:logbnd} and Hypothesis \ref{h:genericnoise} to get or any $0 < s < 1$ the existence of a constant $\tilde{C} > 0$, such that
	\[
	\int_{\mathcal{G}_d}\tau_{\delta, +}(T^{(2)}_\omega(x,y))  \, d(\mu_\mathcal{K} \times \mathbb P) (x,y,\omega) 
	\leq 
	\tilde{C} \varepsilon^s. 
	\] 
	Hypothesis~\ref{h:genericnoise} shows the existence of a constant $C>0$ with 
	$(\mu_{\mathcal{K}}\times\mathbb P) (\mathcal{G}_\varepsilon) < C \sqrt{\varepsilon}$. 
	Lemma~\ref{lem:returntime>} shows that
	\[
	f^+_{\varepsilon,\delta} (\varepsilon) \leq C \frac{1}{\lambda}
	\]
	for some $C>0$.
	We get therefore a bound
	\[
	\int_{\mathcal{G}_\varepsilon}  f^+_{\varepsilon,\delta} (\varepsilon) \, d(\mu_\mathcal{K} \times \mathbb P) (x,y,\omega) \leq C \sqrt{\varepsilon}
	\]
	for some $C>0$.
	The third integral is bounded by
	\begin{multline*}
		\int_{\mathcal{G}_d\setminus \mathcal{G}_\varepsilon} f_{\varepsilon,\delta}^+(d(T_\omega^{(2)}(x,y)))\,d(\varsigma_\delta \times \mathbb P) (x,y,\omega) 
		\\
		\leq \int_{\mathcal{G}_d\setminus \mathcal{G}_\varepsilon}  K \left(\frac{\varepsilon}{d(T_\omega^{(2)}(x,y))}\right)^{-\gamma} \, d(\varsigma_\delta \times \mathbb P)
		\\
		\leq K \varepsilon^{-\gamma} \int_{\mathcal{G}_d\setminus \mathcal{G}_\varepsilon} \, d(T_\omega^{(2)}(x,y))^{\gamma}d(\varsigma_\delta \times \mathbb P)(x,y,\omega) 
		\leq C  \varepsilon^\gamma,
	\end{multline*}
	for some $C>0$.
	Here we use Lemma~\ref{lem:spendtimepos} and Hypothesis~\ref{h:genericnoise}, and $\gamma < 1$.
	Combining the above analysis yields
	\begin{align*}
		\limsup_{\varepsilon\to 0 }\frac{\mu^{(2)}(\Delta_\varepsilon)}{\varepsilon^{-\gamma}} &\leq C
	\end{align*}
	for some $C>0$, if $\gamma \in (-1/2,0)$.
\end{proof}

\subsection*{Acknowledgements}
The authors gratefully acknowledge useful discussions with Dennis Chemnitz, Gr\'egory Desplats, Maximilian Engel, Rainer Klages, Julian Newman, Dmitry Turaev and David Villringer. We thank the anonymous referees for their thorough reading and insightful suggestions, which led to a number of improvements in the exposition. VPHG and JSWL have been supported by the EPSRC
Centre for Doctoral Training in Mathematics of Random Systems: Analysis, Modelling and Simulation (EP/S023925/1). VPHG thanks the Mathematical Institute of Leiden University and the Institut de Recherche Mathématiques de Rennes for their hospitality. VPHG also thanks the Scientific High Level Visiting Fellowships (SSHN) of the Higher Education, Research and Innovation Department of the French Embassy in the United Kingdom for their support.
JSWL has been supported by the EPSRC (EP/Y020669/1) and thanks JST (Moonshot R \& D Grant Number JPMJMS2021 - IRCN, University of Tokyo) and GUST (Kuwait) for their research support.
	\bibliographystyle{plain} 
		\bibliography{exp-contr-biblio_VG} 

\end{document}